\def\div{ \hbox{\rm div}\,  }
\newtheorem{theorem}{Theorem}[section]
\newtheorem{lemma}{Lemma}[section]
\newtheorem{proposition}{Proposition}[section]
\newtheorem{remark}{Remark}[section]
\newtheorem{defn}{Definition}[section]
\def\var{\varepsilon}
\def\bma#1\ema{{\allowdisplaybreaks\begin{aligned}#1\end{aligned}}}
\numberwithin{equation}{section}
\begin{document}

\allowdisplaybreaks

\title{{\textbf{%Stability of large-data global solutions in the
High-capillarity limit and smoothing effect of large solutions for a multi-dimensional generic non-conservative compressible two-fluid model}}}

\author{Ling-Yun Shou, Jiayan Wu, Lei Yao, and Yinghui Zhang}

%{School of Mathematical Sciences and Mathematical Institute, Nanjing Normal University, Nanjing 210023, China}
%\email{shoulingyun11@gmail.com }

%\author{Jiayan Wu}

%\address{School of Mathematics, South China University of Technology, Guangzhou 510641, China}
%\email{wujiayan@scut.edu.cn}

%\author{Lei Yao}
%\address{School of Mathematics and Statistics, Northwestern Polytechnical University, Xi’an 710129, China}
%\email{yaolei@nwpu.edu.cn}

%\author{Yinghui Zhang}

%\address{School of Mathematics and Statistics, Guangxi Normal University, Guilin 541004, Guangxi, China }
%\email{yinghuizhang@mailbox.gxnu.edu.cn}

\date{}
%\subjclass[2010]{ 58J50}

%\keywords{Non-conservative compressible two-fluid; Partial dissipation; Global existence; Optimal decay; Littewood-Paley decomposition}

\maketitle

\begin{abstract} 
We investigate the global existence and long-time behavior of large solutions, in the high-capillarity regime, for a general multidimensional non-conservative compressible two-fluid model with the capillary pressure relation \(f(\alpha^{-}\rho^{-})=P^{+}-P^{-}\).
Our main contributions are threefold.
First, for sufficiently large capillarity coefficients, we prove the existence and uniqueness of global solutions in critical Besov spaces for large initial perturbations, under the sharp stability condition
\(-\frac{s_{-}^{2}(1,1)}{\alpha^{-}(1,1)}<f^{\prime}(1)<0\),
thereby removing the additional negativity restriction assumed by Evje--Wang--Wen [Arch. Ration. Mech. Anal. 221:1285--1316, 2016].
Second, we  give a rigorous justification of the global-in-time convergence to the incompressible Navier-Stokes flows and obtain explicit convergence rates in critical spaces for ill-prepared data.  
Third, if additionally the initial perturbation lies in a lower-regularity Besov space, we derive optimal decay rates for the solution and its derivatives of any order, revealing a long-term smoothing effect.
To the best of our knowledge, this work provides the first global large-amplitude well-posedness theory for multidimensional compressible two-fluid flows. Our key ingredient is to capture the nonlinear coupling between capillarity and viscosity, through which the parabolic structure dominates the underlying hyperbolic dynamics and reveals the coexistence of dispersive (two-phase Gross–Pitaevskii) and parabolic regularization mechanisms.
\end{abstract}

\renewcommand{\thefootnote}{}

\footnotemark\footnotetext{\textbf{2020 MSC.} 76T10; 76N10.}
\footnotetext{\textbf{Keywords.} Non-conservative compressible two-fluid model; Capillarity effect; Incompressible limit; Large initial perturbation; Critical regularity}

\maketitle
\section{Introduction}\label{section1}
\subsection{Background}
Multiphase fluids are ubiquitous in nature and  also in various industrial applications, such as nuclear power, chemical processing, oil and gas manufacturing.  In this paper, we investigate the mathematical theory of the following generic non-conservative compressible two-fluid system with capillarity effects in $\mathbb{R}^{d}$ ($d\geq2$):
\begin{equation}\label{system1}
 \left\{
   \begin{array}{ll}
     \partial_t (\alpha^{\pm}\rho^{\pm}) +\text{div}(\alpha^{\pm}\rho^{\pm}u^{\pm})=0,\\
     \partial_t (\alpha^{\pm}\rho^{\pm}u^{\pm}) +\text{div}(\alpha^{\pm}\rho^{\pm}u^{\pm}\otimes u^{\pm})+\alpha^{\pm}\nabla P^{\pm}(\rho^{\pm})=\text{div}(\alpha^{\pm}\tau^{\pm}+ \mathcal{K}^\pm),\\
P^{+}(\rho^{+})-P^{-}(\rho^{-})=f(\alpha^-\rho^{-}).   \end{array}
 \right.
\end{equation}
Here $0\leq \alpha^{\pm}\leq 1 $ are the volume fractions of the $+$ and  $-$ phases with $\alpha^++\alpha^-=1$. The quantities $\rho^{\pm}(x,t)\geq 0$, $u^{\pm}(x,t)$ and $P^{\pm}(\rho^{\pm})$
denote the densities, velocities, and pressure functions of the two phases, respectively. %$\widetilde{\gamma}^{\pm}\geq 1$, $A^{\pm}>0$ are positive constants. In what follows, we set $A^+=A^-=1$ without loss of generality. 
The viscous stress tensors are
\begin{align} 
    \tau^{\pm}:=2\mu^{\pm}(\rho^\pm)\mathbb{D}u^\pm+\lambda^{\pm}(\rho^\pm)\text{div}u^{\pm} {\rm Id},\label{system2}
\end{align}
where $\mathbb{D}u^\pm=\frac{\nabla u^\pm+(\nabla u^\pm)^{\top}}{2}$ the deformation tensor, and the viscosity coefficients $\mu(\rho^\pm), \lambda(\rho^\pm)$ satisfy
\begin{align}\label{viscosity}
    \mu^\pm(s), \lambda^\pm(s)\in C^{\infty}(\mathbb{R}_+),\quad \mu(\rho^\pm)>0,\quad 2\mu(\rho^\pm)+\lambda(\rho^\pm)>0.
\end{align}
The Korteweg (capillarity) tensor is given by
\begin{equation}
\begin{aligned}
\mathcal{K}^\pm:=\kappa \alpha^{\pm}\rho^{\pm}\,\mathrm{div}\big(m(\alpha^{\pm}\rho^{\pm})\nabla (\alpha^{\pm}\rho^{\pm})\big) \mathrm{Id}&+\frac{1}{2}\kappa^\pm\big(m(\alpha^{\pm}\rho^{\pm})-\alpha^{\pm}\rho^{\pm} m'(\alpha^{\pm}\rho^{\pm})\big)\lvert\nabla (\alpha^{\pm}\rho^{\pm})\lvert^{2}\mathrm{Id}\\
&\quad-\kappa^\pm m(\alpha^{\pm}\rho^{\pm})\nabla (\alpha^{\pm}\rho^{\pm})\otimes \nabla(\alpha^{\pm}\rho^{\pm}).
\end{aligned}
\end{equation}
Noticing that for smooth enough $\alpha^\pm \rho^\pm$ and $m(s)$, one has
\begin{align}
\text{div} \mathcal{K}^\pm:=\kappa  \alpha^{\pm}\rho^{\pm}\nabla\left(m(\alpha^{\pm}\rho^{\pm})\Delta(\alpha^{\pm}\rho^{\pm})+\frac{1}{2}m'(\alpha^{\pm}\rho^{\pm}) |\nabla(\alpha^{\pm}\rho^{\pm})|^2\right),
\end{align}
where $\kappa>0$ is the capillarity coefficient. The pressure functions $P^{\pm}$ are assumed to satisfy the general conditions
\begin{align}
P^{\pm}(s)\in C^{\infty}(\mathbb{R}_{+}),\quad \partial_{s} P^{\pm}(s)>0\quad\text{for}\quad s>0.\label{system3}
\end{align}
The function
  $f$, called the capillary pressure, belongs to $C^3([0,
\infty))$ and is  strictly decreasing  near the
equilibrium. For background on this model, we refer to \cite{Bearbook, Brennenbook,BDGG2010, BHL2012, EvjeWangWang2016,Ishiibook,Prospererttibook}   and the references
therein; see in particular the overview in the introduction of  \cite{BDGG2010}.  We consider the Cauchy problem for the two-fluid system \eqref{system1} supplemented with the initial data
\begin{align}\label{d}
	&(\alpha^{\pm}, \rho^\pm, u^{\pm})(x,0)=(\alpha^{\pm}_{0}, \rho^{\pm}_{0}, u^{\pm}_{0})(x)\rightarrow (\bar{\alpha}^\pm,\bar{\rho}^\pm,0),\quad |x|\rightarrow \infty,
\end{align}
with the given constants $\bar{\alpha}^\pm, \bar{\rho}^\pm>0$.

%However, it is well--knownthat as far as mathematical analysis of two--fluid model is concerned, there are many technical challenges. Some of them involve, for example:
From the analytical viewpoint, the two-fluid system presents several difficulties, including:
\begin{itemize}
\item \textbf{Partial dissipation.} The mass conservation equations lack dissipation, whereas the momentum equations contain viscous dissipation.

\item \textbf{Degeneracy to single-phase regions.} Transitions to regions where $\alpha^{+} \rho^{+}$ or $\alpha^{-} \rho^{-}$ vanishes may occur when either the volume fractions $\alpha^{\pm}$ or the densities
$\rho^{\pm}$ become zero.

\item \textbf{Non-conservative structure.} The non-conservative terms $\alpha^{\pm} \nabla P^{\pm}$ in the momentum equations prevent a straightforward adaptation of methods developed for single-phase models.
\end{itemize}

In the excellent work \cite{EvjeWangWang2016}, Evje-Wang-Wen proved the global existence and decay rates of small, strong solutions to \eqref{system1} without capillary effects ($\kappa^\pm=0$) under the assumption that
\begin{align} 
    -\frac{s_-^2(1,1)}{\alpha^-(1,1)}<f'(1)<\frac{\eta-s_-^2(1,1)}{\alpha^-(1,1)}<0,\label{stability0}
\end{align}
where $\eta>0$ is a small fixed constant, and $s_-$ denotes the sound speed of the fluid $-$.
It should be mentioned that the technical smallness of $\eta$ is essential for closing the higher–order energy estimates;  
see \cite{EvjeWangWang2016}, pp. 1302–1308 for details. Building on  \cite{EvjeWangWang2016}, Lai-Wen-Yao \cite{LWY2017} established the vanishing–capillarity limit for the non-conservative compressible two–fluid model \eqref{system1}.
In the opposite direction, Wu-Yao-Zhang \cite{WuYaoZhangin} proved instability when $f^{\prime}(1)>0$.
For the two–fluid model \eqref{system1} with equal pressure $P^+(\rho^+)=P'(\rho^-)$ (i.e., $f\equiv0$), 
Bresch-Desjardins-Ghidaglia-Grenier \cite{BDGG2010} proved the existence of global weak solutions
on the periodic domain for  $1<\gamma^\pm <6$. Later, Cui–Wang–Yao–Zhu \cite{CWYZ2016} derived time–decay rates of strong solutions for the model \eqref{system1} with the special density–dependent viscosities and equal coefficients
%the following special density–dependent viscosities with equal viscosity
%coefficients, and equal capillary coefficients: 
\begin{align}\label{1.6}
   \mu^\pm(\rho^{\pm})=\nu\rho^\pm\quad\text{with}\quad \nu>0, \quad \lambda^{\pm}(\rho^{\pm})=0,\quad \kappa^\pm=\kappa>0.
\end{align}
More recently, Li–Wang–Wu–Zhang \cite{LWWZ2023}  extended the result of \cite{CWYZ2016}  to the case of general constant viscosities, as in \eqref{system2}.
As for the three-dimensional 
two-fluid model \eqref{system1} with equal pressures ($f\equiv0$) and without capillary effects ($\kappa^\pm=0$), 
Bresch-Huang-Li  \cite{BHL2012} showed the existence
of global weak solutions in one dimension, when the capillary effects are absent ($\kappa=0$) and
 $\gamma^\pm >1$.
Subsequently, Wu-Yao-Zhang \cite{WYZMathAnn} obtained global stability for the 3D Cauchy problem with sufficiently small initial data in
$L^1\cap H^3$, and  
very recently,  \cite{TWYZ2025} established   the vanishing capillary limit based on that result. Shou-Wu-Yao-Zhang \cite{SWYZ} removed the $L^1$ condition in  \cite{WYZMathAnn} and established the global existence and large-time behavior of solutions for data close to equilibrium in the low-regularity space $\dot{B}^{-1/2}_{2,1}\cap\dot{B}^{5/2}_{2,1}$.  In conclusion, all existing results for \eqref{system1} heavily rely on the smallness of either the initial data or the parameter ${\eta}$ in the condition \eqref{stability0}.

On the other hand, a natural problem is to identify the minimal regularity assumption of initial data that ensures the global well-posedness of solutions for \eqref{system1}. Observe that the system \eqref{system1} remains  invariant for all $l>0$ by the transformation
\begin{align*}
\alpha^\pm (t,x)\rightsquigarrow \alpha^\pm(l^{2}t,lx),\quad  \rho^\pm(t,x)\rightsquigarrow \rho^\pm(l^{2}t,lx) \ \ \hbox{and} \ \ u^\pm(t,x)\rightsquigarrow l u^\pm(l^{2}t,lx)
\end{align*}
up to a change of the pressure term $P^\pm $ into $ l^{2}P^\pm$. Therefore, one can employ the so-called {\emph{critical}} functional spaces to study
\eqref{system1}, endowed with norms that enjoy scaling invariance. A natural example of the critical space for initial data reads
\begin{align*}
\alpha_0^\pm-\bar{\alpha}^\pm, \rho_0^\pm-\bar{\rho}^\pm\in \dot{B}^{\frac{d}{2}}_{2,1}\quad \text{and} \quad u_0^\pm \in \dot{B}^{\frac{d}{2}-1}_{2,1}.
\end{align*}
When studying global existence, it is also natural to additionally require $\alpha_0^\pm-\bar{\alpha}^\pm, \rho_0^\pm-\bar{\rho}^\pm\in \dot{B}^{\frac{d}{2}-1}_{2,1}$ due to the symmetric hyperbolic-parabolic structure of the system.

We are also interested in studying the effects of large capillarity coefficient on the regularity and decay properties of the solutions for the two-fluid model \eqref{system1}.  In Korteweg-type diffuse interface models, the capillarity coefficient sets the thickness of the interfacial layer: a larger coefficient corresponds to smoother and more diffuse interfaces (see, e.g., \cite{LT1998}). Moreover, in compressible single-fluid systems, capillarity is known to have a stabilizing and regularizing effect. In the large-capillarity regime, enhanced dissipation overwhelms acoustic propagation, so the long-time dynamics are effectively governed by a strongly diffusive mechanism; see \cite{AH2017,AHM2020,BDGS2008,BGL2019,CDX2021,CV-Song-2024} and references for the study of one-fluid systems. However, to the best of our knowledge, there are no results regarding how capillarity influences the dynamics of compressible two-fluid flows.

In this paper, we investigate the global stability, large-capillarity limit, and long-time behavior of the Cauchy problem for the multidimensional two-fluid system \eqref{system1} in the regime where $\kappa$ is suitably large. As far as we are aware, no previous work has addressed the mathematical theory of global well-posedness for this system without restrictive smallness requirements on both the initial data or without the structural assumption \eqref{stability0} with a suitably small $\eta>0$. This paper aims to improve the known results with large initial data and a sharp structural condition that is more general than  \eqref{stability0}. Our main contributions are threefold:
\begin{itemize}
\item[(1)] \textbf{Global large solutions at critical regularity.} For sufficiently large capillarity coefficients, we prove the global existence and uniqueness of large solutions with critical regularity under the stability condition 
$$
-\frac{s_{-}^{2}(1,1)}{\alpha^{-}(1,1)}<f^{\prime}(1)<0,
$$
which stands in sharp contrast to the case $f'(1)>0$, where the system becomes unstable \cite{WuYaoZhangin}, and to the borderline case $f'(1)=0$ where the densities exhibit only slower decay \cite{CWYZ2016}.

\item[(2)] \textbf{Large–capillarity limit for ill–prepared data.} We justify the global–in–time singular limit of the two-fluid system \eqref{system1} to the incompressible Navier–Stokes equations
\begin{equation}\label{INS}
\left\{
\begin{aligned}
&\partial_t  v^\pm +v^\pm\cdot\nabla v^{\pm}-   \Delta  v^\pm +\nabla \pi^\pm= 0 , \\
&\operatorname{div}v^\pm=0, \\
&v^\pm(0,x)=v_0^\pm(x),
\end{aligned}
\right.
\end{equation}
as $\kappa\rightarrow \infty$, and provide explicit rates of convergence  in the critical space for ill–prepared data.
\item[(3)] \textbf{Optimal decay of arbitrary–order derivatives.} Under the additional assumption that the initial perturbation lies in a lower–regularity Besov space, we derive optimal decay rates for the solution and for derivatives of any order, which reveals the smoothing effect of the two-fluid system terms even at large times for only critical data. 
\end{itemize}
%First, for large capillarity coefficients, we establish the existence and uniqueness of global large solutions in critical Besov spaces, without requiring ${\eta}$ to be small.
%Second, for ill-prepared data, we justify the global-in-time large-capillarity limit {\red{to the incompressible Navier-Stokes equations}} and provide explicit convergence rates.
%Third, under the additional condition that the initial perturbation lies in a lower-regularity Besov space, we derive optimal decay rates for the solution and its derivatives of arbitrary order.

%To our knowledge, this is the first work to establish global stability, the large-capillarity limit, and long-time asymptotics for solutions of the multidimensional two-fluid model \eqref{system1} with large initial perturbations and without smallness constraints on ${\eta}$. %Our approach combines a careful analysis of the linearized system, strong dispersive effects induced by large capillarity, and the full dissipative structure inherent in the two-fluid model.

\vspace{2mm}

This paper is organized as follows. In the rest of Section \ref{section1}, we give a reformation of the system \eqref{system1},  introduce our main results and  then provide  our main strategies. In Section \ref{notations}, we briefly explain  the notations used and recall the Littlewood-Paley decomposition, Besov spaces and related analysis tools. In Section \ref{section2}, we prove global existence and uniqueness of large solutions in critical Besov spaces with sufficiently large capillarity coefficients. Section \ref{section3} is devoted to proving optimal decay rates for the solution and for derivatives of any order. Finally, in Appendix \ref{appendixA} we collect several useful tools in Besov spaces. We also provide proofs of the dispersive and Strichartz estimates in Appendix \ref{appendixB}, and establish decay and smoothness for the incompressible Navier–Stokes equations in Appendix \ref{appendixC}.

\subsection{Reformulation}

Here we reformulate the system \eqref{system1} as follows. From the pressure relation \eqref{system1}$_3$, we have
\begin{equation}\label{1.9}
\mathrm{d} P^+-\mathrm{d} P^-= \mathrm{d} f(\alpha^-\rho^-)
\end{equation}
and 
\begin{equation} \label{1.11}
\mathrm{d} P^+ = s_{+}^{2}\mathrm{d}\rho^+,\  \mathrm{d} P^- = s_{-}^{2}\mathrm{d}\rho^-,
\end{equation}
 where $s_{\pm}^{2}:=\frac{\mathrm{d} P^{\pm}}{\mathrm{d}
\rho^{\pm}}\left(\rho^{\pm}\right)=\bar{\gamma}^{\pm}
\frac{P^{\pm}\left(\rho^{\pm}\right)}{\rho^{\pm}}$ represents the  
sound speed of each phase, respectively. Following \cite{BDGG2010}, introduce the fractional densities
\begin{equation}\label{1.10}
R^{\pm}:=\alpha^{\pm} \rho^{\pm},
\end{equation}
which, together with $\alpha^++\alpha^-=1$, implies  
\begin{equation}\label{1.13}
\mathrm{d} \rho^{+}=\frac{1}{\alpha^{+}}\left(\mathrm{d}
R^{+}-\rho^{+} \mathrm{d} \alpha^{+}\right), \quad \mathrm{d}
\rho^{-}=\frac{1}{\alpha^{-}}\left(\mathrm{d} R^{-}+\rho^{-}
\mathrm{d} \alpha^{+}\right).
\end{equation}
Using \eqref{1.9} and \eqref{1.10}, we obtain
\begin{equation}\label{1.14}
\mathrm{d} \alpha^{+}=\frac{\alpha^{-} s_{+}^{2}}{\alpha^{-}
\rho^{+} s_{+}^{2}+\alpha^{+} \rho^{-} s_{-}^{2}} \mathrm{d}
R^{+}-\frac{\alpha^{+}\alpha^- }{\alpha^{-} \rho^{+}
s_{+}^{2}+\alpha^{+} \rho^{-}
s_{-}^{2}}\left(\frac{s_{-}^{2}}{\alpha^-}+f^{\prime}\right)
\mathrm{d} R^{-}. \end{equation}
 Substituting \eqref{1.14} into \eqref{1.13} yields
$$
\mathrm{d} \rho^{+}=\frac{\rho^{+} \rho^{-}
s_{-}^{2}}{R^{-}\left(\rho^{+}\right)^{2}
s_{+}^{2}+R^{+}\left(\rho^{-}\right)^{2} s_{-}^{2}}\left(\rho^{-}
\mathrm{d} R^{+}+\left(\rho^++\rho^+\frac{\alpha^-f^{\prime}}{s_-^2}\right)\mathrm{d} R^{-}\right),
$$
and
$$
\mathrm{d} \rho^{-}=\frac{\rho^{+} \rho^{-}
s_{+}^{2}}{R^{-}\left(\rho^{+}\right)^{2}
s_{+}^{2}+R^{+}\left(\rho^{-}\right)^{2} s_{-}^{2}}\left(\rho^{-}
\mathrm{d} R^{+}+\left(\rho^+-\rho^-\frac{\alpha^+f^{\prime}}{s_+^2}\right)\mathrm{d} R^{-}\right),
$$
which, together with \eqref{1.9}, gives the pressure differentials
$$
\mathrm{d} P^+=\mathcal{C}^2\left(\rho^{-} \mathrm{d}
R^{+}+\left(\rho^++\rho^+\frac{\alpha^-f^{\prime}}{s_-^2}\right) \mathrm{d} R^{-}\right),\quad \mathrm{d} P^-=\mathcal{C}^2\left(\rho^{-} \mathrm{d}
R^{+}+\left(\rho^+-\rho^-\frac{\alpha^+f^{\prime}}{s_+^2}\right) \mathrm{d} R^{-}\right), 
$$
where
$$
\mathcal{C}^2:=\frac{s_{-}^{2} s_{+}^{2}}{\alpha^{-} \rho^{+}
s_{+}^{2}+\alpha^{+} \rho^{-} s_{-}^{2}}.
$$

Next, since  $\alpha^++\alpha^-=1$, we have  
\begin{align} \label{1.15}
\frac{R^+}{\rho^+} +\frac{R^-}{\rho^-} =1\quad\text{and hence}\quad \rho^- \;=\;\frac{R^-\,\rho^+}{\rho^+ - R^+}\,. 
\end{align}
From the pressure relation \eqref{system1}$_3$, define
$$
\varphi(\rho^+) \;=\; P^+(\rho^+)\;-\;P^-\!\Bigl(\frac{R^-\,\rho^+}{\rho^+ - R^+}\Bigr)\;-\;f(R^-),
$$
and determine $\rho^+$ from $\varphi(\rho^+)=0.$

Fixing $R^\pm$ and  differentiating with respect to $\rho^+$ gives
$$
\varphi'(\rho^+)
=\;(P^+)'(\rho^+)
\;+\;(P^-)' \!\Bigl(\tfrac{R^-\,\rho^+}{\rho^+ - R^+}\Bigr)\;\frac{R^-\,R^+}{(\rho^+ - R^+)^2}
\;=\;
s_+^2
\;+\;
s_-^2\,\frac{R^-\,R^+}{(\rho^+ - R^+)^2},
$$
where $s_\pm^2=(P^\pm)'(\cdot)$ denotes the sound speed of each phase separately.  By construction $R^+<\rho^+<\infty$, it is clear that $\varphi'(\rho^+)>0$ on $(R^+,\infty)$ while $\varphi$ maps $(R^+,\infty)$ onto $(-\infty,\infty)$.  By the implicit function theorem, there is a unique solution
$$
\rho^+=\rho^+(R^+,R^-)\in (R^+,\infty).
$$
Finally, combining this with $\alpha^++\alpha^-=1$, \eqref{1.11}  and \eqref{1.15}, we set
$$
\rho^\pm=\rho_\pm(R^+,R^-), 
\quad
\alpha^\pm=\alpha_\pm(R^+,R^-),
\quad
\mathcal{C}=\mathcal{C}(R^+,R^-),
$$
%where the mappings $\phi_\pm$, $\psi_\pm$ and $\varsigma$ are given explicitly   on
see p. 614 of \cite{BDGG2010} for more details.

Our central objective is to determine whether, in the regime $\kappa \to \infty$, 
the system---where the two fluids may obey distinct pressure laws---admits global-in-time solutions for a class of  large initial data. 
Throughout this paper, we assume that the viscosity is given by \eqref{1.6}; 
in fact, our results remain valid for more general viscosity coefficients \eqref{viscosity} under suitable assumptions at equilibrium.

%Our center goal is whether, in the regime $\kappa\to\infty$, the system—allowing the two fluids to have distinct pressure laws—admits global-in-time solutions for arbitrarily large initial data. Throughout the paper, we assume that the viscosity takes the form \eqref{1.6}, and in fact, our results hold for more general viscosity coefficients under suitable assumptions at equilibrium.
%\begin{align}\label{1.8}
 %  \mu^\pm(\rho^{\pm})=\nu\rho^\pm\quad\text{with}\quad \nu>0, \quad \lambda^{\pm}(\rho^{\pm})=0,\quad \kappa^\pm=\kappa>0.
%\end{align}
%Indeed, as we shall see later on, the mathematical structure discovered in the single phase case becomes much more constrained in the two-phase model and only one case seems to work on.
%Nevertheless, the same method can be applied to the Cauchy problem for the non-conservative compressible two-fluid model \eqref{system1} even when $ \mu^\pm$ and $\lambda^{\pm}$ are constants.

%subject to the constriction
%\begin{align}
%     P^{+}(\rho^{+})-P^{-}(\rho_{-})=f(\alpha^-\rho^-).\label{pressure}
%\end{align}

Therefore, we can rewrite  \eqref{system1}-\eqref{d} into the following equivalent form 
\begin{equation}\label{reformulat1}
\left\{
\begin{aligned}
&\partial_t R^\pm + \operatorname{div}(R^\pm u^\pm) = 0, \\
&\partial_t (R^+ u^+) + \operatorname{div}(R^+ u^+ \otimes u^+) + \alpha^+ \mathcal{C}^2 \left( \rho^-\nabla R^++\left(\rho^++\rho^+\frac{\alpha^-f^{\prime}}{s_-^2}\right)\nabla R^-\right)\\
&\quad\quad= \operatorname{div} \left( 2\nu R^+ \mathbb{D}u^+   \right) 
+ \kappa R^+ \nabla \left(  m(R^+) \Delta R^+ + \frac{1}{2}m'(R^+) |\nabla R^+|^2\right) , \\
&\partial_t (R^- u^-) + \operatorname{div}(R^- u^- \otimes u^-) + \alpha^- \mathcal{C}^2 \left( \rho^-\nabla R^++\left(\rho^+-\rho^-\frac{\alpha^+f^{\prime}}{s_+^2}\right)\nabla R^-\right) \\
&\quad\quad= \operatorname{div} \left( 2\nu R^- \mathbb{D}u^-   \right)
+ \kappa R^- \nabla \left(   m(R^-)\Delta R^- + \frac{1}{2}m'(R^-) |\nabla R^-|^2 \right),
\end{aligned}
\right.
\end{equation}
with the initial data
\begin{align}\label{reformulat2}
	&(R^{+}, R^{-}, u^{+},u^{-})(x,0)=(R^{+}_{0,\kappa}, R^{-}_{0,\kappa}, u^{+}_{0,\kappa},u^{-}_{0,\kappa})(x)\rightarrow (\bar{R}^+,\bar{R}^-,0,0),\quad |x|\rightarrow \infty,
\end{align}
where $\bar{R}^\pm:=\bar{\alpha}^\pm \bar{\rho}^\pm$. Without loss of generality, in what follows, we shall assume
$$
\bar{\alpha}^\pm \bar{\rho}^\pm=\partial_s P^\pm(1) =\nu = m(1) = 1.
$$

%and \begin{align*}
 %  \mu^\pm(\rho^{\pm})=\nu\rho^\pm\quad\text{with}\quad \nu>0, \quad \lambda^{\pm}(\rho^{\pm})=0,\quad \kappa^\pm=\kappa>0,
%\end{align*}
%and  the pressures $P^{\pm}$ satisfy
%\begin{align*}
%P^{\pm}(s)\in C^{\infty}(\mathbb{R}_{+}),\quad \partial_{s} P^{\pm}(s)>0\quad\text{for}\quad s>0.\label{system3}
%\end{align*}

Denote the density fluctuations by
 $$
 n^{\pm}:=\sqrt{\kappa} L(R^\pm) \quad \mbox{with}\quad L(R^\pm):=\int_1^{R^\pm} \sqrt{\frac{m(\tau)}{\tau}}{\rm d}\tau.
 $$
If $n^\pm$ is uniformly bounded and $\kappa$ is sufficiently large such that $R^\pm$ is close to $1$, then we have   $\partial_{R^\pm} \big( \kappa^{-\frac{1}{2}} n^\pm\big)=\sqrt{\frac{m(R^\pm)}{R^\pm}}>0$ and therefore the inverse function theorem implies that 
$$
R^\pm=1+\mathcal{O}(1) \kappa^{-\frac{1}{2}} n^\pm.
$$
Formally, as $\kappa\rightarrow\infty$, $R^\pm$ is expected to converge to $1$, and the dynamics of \eqref{system-perturb1} are governed by the  incompressible Navier-Stokes equations \eqref{INS}.

We consider the system in terms of $(n^\pm,u^\pm)$. Note that  the fraction density $R^{\pm}$ is also given by $R^\pm=L^{-1}(\kappa^{-\frac12 }n^\pm)$. %Note that  the fraction densities $R_{\kappa}^{\pm}$ are also given by $R^\pm=L^{-1}(\kappa^{-\frac12 }n^\pm)$. In what follows, we drop the subscript $\kappa$ and write $n^{\pm}$ in place of $n_{\kappa}^{\pm}$ for simplicity.
 Define 
 \begin{equation}
    \begin{aligned}
        Q(R^\pm) :=\frac{1}{R^\pm}-1,\qquad \phi(R^\pm):=R^\pm-1, \qquad \psi(R^{\pm}):=\sqrt{R^\pm m(R^\pm)}-1.
    \end{aligned}
\end{equation}
A simple calculation on \eqref{reformulat1} leads to the
following perturbation problem:
\begin{equation}\label{system-perturb1}
\left\{
\begin{aligned}
&\partial_t n^+  +v^+\cdot\nabla n^++\sqrt{\kappa }\operatorname{div} u^+ = F_1, \\
&\partial_t u^+ + v^+\cdot\nabla u^+  +\frac{\beta_1}{\sqrt{\kappa }} \nabla n^+ + \frac{\beta_2}{\sqrt{\kappa }} \nabla n^- - \Delta u^+ -\nabla\operatorname{div}u^+-\sqrt{\kappa }\nabla \Delta n^+ = F_2, \\
&\partial_t n^-+v^-\cdot\nabla n^- + \sqrt{\kappa}\operatorname{div} u^- = F_3, \\
&\partial_t u^- + v^-\cdot\nabla u^- + \frac{\beta_3 }{\sqrt{\kappa}}\nabla n^+ +  \frac{\beta_4}{\sqrt{\kappa }} \nabla n^- - \Delta u^--\nabla\operatorname{div}u^-  - \sqrt{\kappa} \nabla \Delta n^- = F_4,\\
&(n^\pm,u^\pm)(x,0)=(n_{0,\kappa}^\pm,u_{0,\kappa}^\pm)
\end{aligned}
\right.
\end{equation}
with $n_{0,\kappa}^\pm:=\sqrt{\kappa}L(R_{0,\kappa}^\pm)$,
\begin{equation*}
\left\{
\begin{aligned}
   \beta_1:&=\frac{\mathcal{C}^{2}(1,1)\rho^-(1,1)}{\rho^+(1,1)}, \qquad  \beta_2:=\mathcal{C}^{2}(1,1)+\frac{\mathcal{C}^{2}(1,1)\alpha^-(1,1)f'(1)}{s_-^2(1,1)},\\
 \beta_3:&=\mathcal{C}^{2}(1,1),\qquad\qquad\quad~~\beta_4:=\frac{\mathcal{C}^{2}(1,1)\rho^+(1,1)}{\rho^-(1,1)}-\frac{\mathcal{C}^{2}(1,1)\alpha^+(1,1)f'(1)}{s_+^2(1,1)}, 
\end{aligned}
\right.
\end{equation*}
and
\begin{equation}\label{F1F2F3F4}
    \left\{
\begin{aligned}
    F_1:=&-\sqrt{\kappa}\widetilde{\psi}(\kappa^{-\frac{1}{2}}n^+)\text{div}u^+-(u^+-v^+)\cdot\nabla n^+,\\
    F_2:=& - (u^+-v^+) \cdot \nabla u^++\nu(1+\widetilde{Q}( \kappa^{-\frac{1}{2}}n^+))\operatorname{div}\left( 2\widetilde{\phi}(\kappa^{-\frac{1}{2}}n^+)\mathbb{D}u^+\right)+\nu \widetilde{Q}( \kappa^{-\frac{1}{2}}n^+) (\Delta u^++\nabla\operatorname{div}u^+)\\
    & -\widetilde{g}_1^+(\kappa^{-\frac{1}{2}}n^+,\kappa^{-\frac{1}{2}}n^{-})\kappa^{-\frac{1}{2}} \nabla n^+ -\widetilde{g}_2^+(\kappa^{-\frac{1}{2}}n^+,\kappa^{-\frac{1}{2}}n^{-})\kappa^{-\frac{1}{2}}\nabla n^{-}\\
    &+\nabla\Big(\frac12|\nabla n^+|^2\Big)+ \sqrt{\kappa}\nabla\Big(\widetilde{\psi}(\kappa^{-\frac{1}{2}}n^+ )\Delta n^+\Big):=\sum_{l=1}^7F_2^l,\\
    F_3:=&-\sqrt{\kappa}\widetilde{\psi}(\kappa^{-\frac{1}{2}}n^-)\text{div}u^--(u^--v^-)\cdot\nabla n^-,\\
    F_4:=&- (u^--v^-) \cdot \nabla u^-+\nu(1+\widetilde{Q}( \kappa^{-\frac{1}{2}}n^-))\operatorname{div}\left( 2\widetilde{\phi}(\kappa^{-\frac{1}{2}}n^-)\mathbb{D}u^-\right)+\nu \widetilde{Q}( \kappa^{-\frac{1}{2}}n^-) (\Delta u^-+\nabla\operatorname{div}u^-) \\
    &-\widetilde{g}_4^-(\kappa^{-\frac{1}{2}}n^+,\kappa^{-\frac{1}{2}}n^{-})\kappa^{-\frac{1}{2}} \nabla n^- -\widetilde{g}_3^-(\kappa^{-\frac{1}{2}}n^+,\kappa^{-\frac{1}{2}}n^{-})\kappa^{-\frac{1}{2}} \nabla n^+\\
    &+\nabla\Big(\frac12|\nabla n^-|^2\Big)+ \sqrt{\kappa}\nabla(\widetilde{\psi}\Big(\kappa^{-\frac{1}{2}}n^-)\Delta n^-\Big):=     \sum_{l=1}^7F_4^l.\\
\end{aligned}
    \right.
\end{equation}
Here we denote
$\widetilde{Q}(n^\pm)=Q\circ L^{-1}(n^\pm)$ (similarly, definitions for $\widetilde{\phi}$, $\widetilde{\psi}$, and $\widetilde{g}_j $ with $j=1,2,3,4$)  and  the nonlinear functions of $(R^+,R^-)$ are  defined  by
\begin{equation*}
    \left\{
    \begin{aligned}
        g_1(R^+,R^-)&:=\frac{( \mathcal{C}^{2}\rho^-)(R^+,R^-)}{\rho^+(R^+,R^-)}\sqrt{\frac{R^+}{m(R^+)}}-\beta_1,\\
        g_4(R^+,R^-)&:=\left( \frac{(\mathcal{C}^{2}\rho^+)(R^+,R^-) }{\rho^-(R^+,R^-)}   -\frac{(\mathcal{C}^{2}\alpha^+)(R^+,R^-)f'(R^-)}{s_+^2(R^+,R^-)} \right)\sqrt{\frac{R^+}{m(R^+)}}-\beta_4,\\
    \end{aligned}
    \right.
\end{equation*}
\begin{equation*}
    \left\{
    \begin{aligned}
        g_2(R^+,R^-)&:= \left(\mathcal{C}^{2}(R^+,R^-) 
         +\frac{(\mathcal{C}^{2}\alpha^-)(R^+,R^-)f'(R^-)}{s_-^2(R^+,R^-)}\right)\sqrt{\frac{R^-}{m(R^-)}}-\beta_2,\\
         g_3(R^+,R^-)&:= \mathcal{C}^{2}(R^+,R^-)\sqrt{\frac{R^+}{m(R^+)}}-\beta_3.
    \end{aligned}
    \right.
\end{equation*}
%and 
%\begin{equation}
 %   \begin{aligned}
  %      Q(R^\pm) :=\frac{1}{R^\pm}-1,\qquad \phi(R^\pm):=R^\pm-1, \qquad \psi(R^{\pm}):=\sqrt{R^\pm m(R^\pm)}-1.
%    \end{aligned}
%\end{equation}

\subsection{Main results}
Our first result establishes the global well-posedness of the Cauchy problem for the two-fluid system \eqref{system1}  in two dimensions with arbitrarily large initial data, provided the capillarity coefficient is sufficiently large. We also prove the convergence of the system \eqref{system1}  to the incompressible Navier-Stokes equations \eqref{INS}  with an explicit rate.

\begin{theorem}\label{cor1}
    Let $d=2$. Assume that 
    \begin{align} 
    -\frac{s_-^2(1,1)}{\alpha^-(1,1)}<f'(1)<0 \label{stability} 
\end{align}
holds,  and the initial data $(n_{0,\kappa}^\pm,u_{\kappa,0}^\pm)$ satisfies $\mathcal{P}u^\pm_{0,\kappa}=v_0^\pm\in \dot{B}^{0}_{2,1}$ and $$
    (\kappa^{-\frac{1}{2}}n_{0,\kappa}^\pm,\nabla n_{0,\kappa}^\pm, u^\pm_{0,\kappa})\in \dot{B}^{0}_{2,1}\quad \text{uniformly in}\quad \kappa.
    $$
    There exists a constant $\kappa_1>0$ such that if $\kappa\geq \kappa_1$, then the Cauchy problem \eqref{system-perturb1} admits a unique global solution $(n_\kappa^\pm, u_\kappa^\pm)$ satisfying
    $$
    \bigl(\kappa^{-\frac{1}{2}}n_\kappa^\pm,\;\nabla n_{\kappa}^\pm,\;u_{\kappa}^\pm\bigr)
\;\in\;
C\bigl(\mathbb{R}_+;\dot{B}_{2,1}^{0}\bigr)
\;\cap\;
L^1\bigl(\mathbb{R}_+;\dot{B}_{2,1}^{2}\bigr).
$$
Moreover, it holds for any $2<p<\infty$ that
\begin{equation}\label{eq:incompressible-limit:01}
\begin{aligned}
&\bigl\|(\kappa^{-\frac{1}{2}}n_\kappa^\pm,\nabla n_{\kappa}^\pm,\mathcal{Q}u_{\kappa}^\pm)\bigr\|_{L^2(\mathbb{R}_+;\dot B^{\frac 2 p}_{p,1})}+\bigl\|\mathcal{P} u_{\kappa}^\pm - v^\pm\bigr\|_{L^\infty(\mathbb{R}_+;\dot B^{0}_{2,1})\cap L^1(\mathbb{R}_+;\dot B^{2}_{2,1})}
\;\le\;
C\,\kappa^{-\frac{1}{4}(1-\frac{2}{p})}.
\end{aligned}
\end{equation}
\end{theorem}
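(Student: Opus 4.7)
The overall strategy is to treat \eqref{system-perturb1} as a perturbation of two essentially decoupled linear systems in the regime $\kappa\gg 1$. After the Helmholtz decomposition $u^{\pm}=\mathcal{P}u^{\pm}+\mathcal{Q}u^{\pm}$, the incompressible parts $\mathcal{P}u^{\pm}$ satisfy Stokes-type heat equations, whereas the compressible parts $(n^{\pm},\mathcal{Q}u^{\pm})$ satisfy a Gross--Pitaevskii-type system. A direct computation on the linearised compressible block (discarding the nonlinearity and the $\kappa^{-1/2}$ cross-pressure couplings) gives the dispersion relation $\tau=-|\xi|^{2}\pm i|\xi|^{2}\sqrt{\kappa-1}$, which simultaneously provides parabolic dissipation $e^{-|\xi|^{2}t}$ and a dispersive phase of speed $\sqrt{\kappa}|\xi|^{2}$. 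The phase factor of speed $\sqrt{\kappa}$ is the source of the Strichartz-type gain $\kappa^{-\frac{1}{4}(1-\frac{2}{p})}$ announced in \eqref{eq:incompressible-limit:01}: it is the interpolation between the trivial parabolic bound at $p=2$ and the endpoint Schr\"odinger-type estimate $((\sqrt{\kappa})^{-1/2}=\kappa^{-1/4})$ at $p=\infty$.

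Next I would run a Friedrichs iteration and close a continuity argument on the functional
\begin{equation*}
X(T):=\sum_{\pm}\Big\{\|(\kappa^{-\frac{1}{2}}n^{\pm},\nabla n^{\pm},u^{\pm})\|_{L^{\infty}_T(\dot{B}^{0}_{2,1})\cap L^{1}_T(\dot{B}^{2}_{2,1})}+\kappa^{\frac{1}{4}(1-\frac{2}{p})}\|(\kappa^{-\frac{1}{2}}n^{\pm},\nabla n^{\pm},\mathcal{Q}u^{\pm})\|_{L^{2}_T(\dot{B}^{\frac{2}{p}}_{p,1})}\Big\},
\end{equation*}
aiming for a uniform bound $X(\infty)\le CM_{0}$, where $M_{0}$ is the size of the initial data, for every $\kappa\ge\kappa_{1}(M_{0})$. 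The linear ingredients are the hybrid parabolic--Strichartz bound for the Gross--Pitaevskii block (built from Appendix~\ref{appendixB}), paired with the standard maximal-regularity estimate for the heat equation governing $\mathcal{P}u^{\pm}$ and $w^{\pm}:=\mathcal{P}u^{\pm}-v^{\pm}$. The nonlinear bounds on $F_{1},\dots,F_{4}$ in $L^{1}_{T}(\dot{B}^{0}_{2,1})$ rely on the paraproduct and composition rules of Appendix~\ref{appendixA}, using the fact that all smooth coefficients $\widetilde{Q},\widetilde{\phi},\widetilde{\psi},\widetilde{g}_{i}^{\pm}$ vanish at the origin and are evaluated at $\kappa^{-1/2}n^{\pm}$, so each Moser composition automatically produces a $\kappa^{-1/2}$ factor; in particular the nominally dangerous capillary term $\sqrt{\kappa}\nabla(\widetilde{\psi}(\kappa^{-1/2}n^{\pm})\Delta n^{\pm})$ is rescued by $\widetilde{\psi}(s)=O(s)$ and reduces to a harmless quadratic expression in $\nabla n^{\pm}$. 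Transport and bilinear terms involving $v^{\pm}\cdot\nabla(\cdot)$ or cross-phase couplings $\kappa^{-1/2}\nabla n^{\mp}$ are bounded using the $L^{2}_{T}(\dot{B}^{2/p}_{p,1})$ Strichartz norm together with the global INS bounds from Appendix~\ref{appendixC}, so that every such contribution carries at least one factor $\kappa^{-\frac{1}{4}(1-\frac{2}{p})}$. Once $X(\infty)\le CM_{0}$ is secured, the first half of \eqref{eq:incompressible-limit:01} is already part of $X$, while $w^{\pm}$ satisfies a heat equation whose forcing has size $O(\kappa^{-\frac{1}{4}(1-\frac{2}{p})})$ in $L^{1}(\dot{B}^{0}_{2,1})$, which yields the second half.

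The main obstacle is the closure of this bootstrap at critical regularity in two dimensions with large initial data. The embedding $\dot{B}^{1}_{2,1}(\mathbb{R}^{2})\hookrightarrow L^{\infty}$ is borderline, and many of the nonlinear contributions in $F_{2},F_{4}$ sit at the very edge of the product and composition rules. To succeed, the parabolic $L^{1}_{T}(\dot{B}^{2}_{2,1})$ smoothing and the Strichartz $L^{2}_{T}(\dot{B}^{2/p}_{p,1})$ norm must be used jointly, so that every nonlinear contribution picks up a strictly negative power of $\kappa$; only then can one choose $\kappa\ge\kappa_{1}(M_{0})$ large enough to absorb the full nonlinearity and close the global-in-time continuity argument without any smallness assumption on the data.
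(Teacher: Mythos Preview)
Your overall architecture matches the paper's: Helmholtz split, parabolic energy for the full solution, Strichartz for the compressible block via the Gross--Pitaevskii structure, and heat-equation comparison for $\mathcal{P}u^{\pm}-v^{\pm}$. But the bootstrap as you describe it does not close, for one specific reason.

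The sentence ``every nonlinear contribution picks up a strictly negative power of $\kappa$'' is false, and this is exactly the point where large data become delicate. Look at the convection of the incompressible part by $v^{\pm}$: in the equation for $\mathcal{P}u^{\pm}$ (or for $w^{\pm}=\mathcal{P}u^{\pm}-v^{\pm}$) you meet $\mathcal{P}(v^{\pm}\cdot\nabla\mathcal{P}u^{\pm})$, and in the energy estimate for $(n^{\pm},u^{\pm})$ you meet $v^{\pm}\cdot\nabla u^{\pm}$. Neither $v^{\pm}$ nor $\mathcal{P}u^{\pm}$ carries any Strichartz gain; both live purely in the incompressible sector and are $O(1)$ in $\kappa$. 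If you try to put these terms on the right as sources in $L^{1}_{T}(\dot B^{0}_{2,1})$, you get $\|v^{\pm}\|_{L^{2}_{T}(\dot B^{1}_{2,1})}\|u^{\pm}\|_{L^{2}_{T}(\dot B^{1}_{2,1})}\sim M_{0}^{2}$ with no $\kappa^{-\delta}$, and the continuity argument collapses for large $M_{0}$.

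The paper's fix is to keep $v^{\pm}\cdot\nabla(\cdot)$ on the \emph{left} as a transport term. After localisation, $\int v^{\pm}\cdot\nabla f_{j}\,f_{j}\,dx=0$ by $\operatorname{div}v^{\pm}=0$, and the commutator $[\dot\Delta_{j},v^{\pm}\cdot\nabla]f$ is bounded by $\|v^{\pm}\|_{\dot B^{2}_{2,1}}\|f\|_{\dot B^{0}_{2,1}}$. This produces a linear-in-the-unknown term with integrable coefficient $\|v^{\pm}(\tau)\|_{\dot B^{2}_{2,1}}$, which Gr\"onwall converts into a harmless $\kappa$-independent factor $e^{C\mathcal{V}_{\infty}}$. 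Only \emph{after} this absorption do the remaining nonlinearities (those involving $\mathcal{Q}u^{\pm}$, $\nabla n^{\pm}$, or $\mathcal{P}u^{\pm}-v^{\pm}$) all carry $\kappa^{-\delta}$. Correspondingly the paper tracks three functionals $\mathcal{E}_{t},\mathcal{D}_{t},\mathcal{W}_{t}$ rather than your single $X(T)$: the a priori inequalities read $\mathcal{E}_{t}\le Ce^{C\mathcal{V}_{t}}(\mathcal{E}_{0}+\mathcal{W}_{t}\mathcal{E}_{t}+\kappa^{-\delta}\cdots)$ and $\mathcal{W}_{t}\le Ce^{C\mathcal{V}_{t}}(\kappa^{-\delta}+\mathcal{W}_{t}\mathcal{E}_{t}+\kappa^{-\delta}\cdots)$, and the cross term $\mathcal{W}_{t}\mathcal{E}_{t}$ is absorbed because $\mathcal{W}_{t}=O(\kappa^{-\delta})$, not because $\mathcal{E}_{t}$ is small. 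Your functional $X(T)$ lacks a separately weighted slot for $\mathcal{W}_{t}$, so this absorption is not visible.

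A secondary point: ``discarding the $\kappa^{-1/2}$ cross-pressure couplings'' is too casual for the dispersive step. The paper diagonalises the $4\times4$ linear block exactly via the combinations $N_{1}^{+},N_{2}^{-},M_{1}^{+},M_{2}^{-}$ with eigenvalues $r_{\pm}$; the stability condition \eqref{stability} is precisely what makes $r_{\pm}>0$ (and what makes the Lyapunov functional in the energy step positive). Without this algebra you do not get two clean scalar Gross--Pitaevskii symbols to which the Strichartz machinery of Appendix~\ref{appendixB} applies.
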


For the case $d\geq3$, we also have the global well-posedness of the Cauchy problem for the two-fluid system under a mild smallness condition only depending on the incompressible parts for initial velocities, i.e., the initial data for the limiting incompressible Navier-Stokes equations \eqref{INS}.

\begin{theorem}\label{cor2}
    Let $d\geq 3$ and suppose that \eqref{stability} holds. Assume that the initial data $(n_{0,\kappa}^\pm,u_{\kappa,0}^\pm)$ satisfies $\mathcal{P}u^\pm_{0,\kappa}=v_0^\pm\in \dot{B}^{\frac{d}{2}-1}_{2,1}$, $$
    (\kappa^{-\frac{1}{2}}n_{0,\kappa}^\pm,\nabla n_{0,\kappa}^\pm, u^\pm_{0,\kappa})\in \dot{B}^{\frac{d}{2}-1}_{2,1}\quad\text{uniformly in}\quad \kappa\quad\text{and} \quad \|v_0^\pm\|_{\dot{B}^{\frac{d}{2}-1}_{2,1}}\leq \alpha_0,
    $$
 where  the constant $\alpha_0$ is given in \eqref{smallnessv0}. There exists a constant $\kappa_2>0$ such that if $\kappa\geq \kappa_2$, then the Cauchy problem \eqref{system-perturb1} admits a unique global solution 
 $(n_\kappa^\pm, u_\kappa^\pm)$ satisfying 
 $$
 \bigl(\kappa^{-\frac{1}{2}}n_\kappa^\pm,\;\nabla n_{\kappa}^\pm,\;u_{\kappa}^\pm\bigr)
\;\in\;
C\bigl(\mathbb{R}_+;\dot{B}_{2,1}^{\frac{d}{2}-1}\bigr)
\;\cap\;
L^1\bigl(\mathbb{R}_+;\dot{B}_{2,1}^{\frac{d}{2}+1}\bigr).
$$
Moreover, it holds for any $2<p\leq \frac{2d}{d-2}$ that
\begin{equation}\label{eq:incompressible-limit:02}
\begin{aligned}
&\bigl\|(\kappa^{-\frac{1}{2}}n_\kappa^\pm,\nabla n_{\kappa}^\pm,\mathcal{Q}u_{\kappa}^\pm)\bigr\|_{L^2(\mathbb{R}_+;\dot B^{\frac d p}_{p,1})}+\bigl\|\mathcal{P} u_{\kappa}^\pm - v^\pm\bigr\|_{L^\infty(\mathbb{R}_+;\dot B^{\frac d2-1}_{2,1})\cap L^1(\mathbb{R}_+;\dot B^{\frac d2+1}_{2,1})}
\;\le\;
C\,\kappa^{-\frac{1}{4}(\frac{d}{2}-\frac{d}{p})}.
\end{aligned}
\end{equation}
\end{theorem}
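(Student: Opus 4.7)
The plan is to mirror the architecture of the proof of Theorem~\ref{cor1} but to work at the higher critical level $\dot{B}^{\frac{d}{2}-1}_{2,1}$, and to exploit the smallness hypothesis $\|v_0^\pm\|_{\dot{B}^{\frac{d}{2}-1}_{2,1}}\le\alpha_0$ to compensate for the absence of a large-data global theory for \eqref{INS} in dimension $d\ge 3$. First I would split $u_\kappa^\pm=\mathcal{P}u_\kappa^\pm+\mathcal{Q}u_\kappa^\pm$ and decompose the system \eqref{system-perturb1} into an \emph{incompressible block} $\mathcal{P}u_\kappa^\pm$, which is a perturbation of \eqref{INS}, and a \emph{dispersive block} $(\kappa^{-\frac{1}{2}}n_\kappa^\pm,\nabla n_\kappa^\pm,\mathcal{Q}u_\kappa^\pm)$, whose linearization about equilibrium has a two-phase Gross--Pitaevskii-type structure with large frequency parameter $\sqrt{\kappa}$, coupled with the full parabolic viscosity $-\Delta u^\pm$. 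The stability condition \eqref{stability} is precisely what makes the $2\times 2$ matrix coupling the two linearized $n^\pm$-equations sign-definite, so that the dispersive block is genuinely dispersive.

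Next I would set up the uniform-in-$\kappa$ a priori estimate in the critical solution space
$$
Y_\kappa\;:=\;C\bigl(\mathbb{R}_+;\dot{B}^{\frac{d}{2}-1}_{2,1}\bigr)\cap L^1\bigl(\mathbb{R}_+;\dot{B}^{\frac{d}{2}+1}_{2,1}\bigr).
$$
The parabolic block $\mathcal{P}u_\kappa^\pm$ is controlled by Fujita--Kato-type heat estimates, exactly as in the classical critical theory of \eqref{INS} in $d\ge 3$, the threshold $\alpha_0$ being chosen so that the nonlinear term $\mathcal{P}(u_\kappa^\pm\cdot\nabla u_\kappa^\pm)$ is absorbable. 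The dispersive block is controlled by the Strichartz-type estimates proved in Appendix~\ref{appendixB}, which produce, for every $2<p\le \frac{2d}{d-2}$, the gain
$$
\bigl\|(\kappa^{-\frac{1}{2}}n_\kappa^\pm,\nabla n_\kappa^\pm,\mathcal{Q}u_\kappa^\pm)\bigr\|_{L^2(\mathbb{R}_+;\dot{B}^{\frac{d}{p}}_{p,1})}\;\lesssim\;\kappa^{-\frac{1}{4}(\frac{d}{2}-\frac{d}{p})}
$$
already appearing on the right-hand side of \eqref{eq:incompressible-limit:02}. Thanks to this $\kappa$-gain, combined with the product and composition estimates in Besov spaces recalled in Appendix~\ref{appendixA}, all cross terms between the two blocks and all self-interactions of the dispersive block collected in \eqref{F1F2F3F4}, including the quasilinear contributions $\nabla|\nabla n_\kappa^\pm|^2$ and $\sqrt{\kappa}\,\nabla(\widetilde{\psi}(\kappa^{-\frac{1}{2}}n_\kappa^\pm)\Delta n_\kappa^\pm)$, can be rendered arbitrarily small once $\kappa\ge \kappa_2$. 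A standard continuity/bootstrap argument, combined with a Friedrichs-type approximation scheme and a routine uniqueness step in $d\ge 3$, then upgrades this a priori bound to the global existence and uniqueness claimed in the theorem.

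Finally, for the incompressible limit, let $v^\pm$ be the global solution of \eqref{INS} with data $v_0^\pm$ furnished by Appendix~\ref{appendixC}, and set $w^\pm:=\mathcal{P}u_\kappa^\pm-v^\pm$. Then $w^\pm$ solves a perturbed Stokes equation whose source is (i) linear in $w^\pm$ with a small coefficient, absorbable on the left, and (ii) quadratic in the dispersive block, with no $\sqrt{\kappa}$ prefactor. Applying the above Strichartz estimate to (ii) and the parabolic estimate to $w^\pm$ yields the rate $\kappa^{-\frac{1}{4}(\frac{d}{2}-\frac{d}{p})}$ asserted in \eqref{eq:incompressible-limit:02}. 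The main obstacle, and the precise place where $\|v_0^\pm\|_{\dot{B}^{\frac{d}{2}-1}_{2,1}}\le \alpha_0$ is indispensable, is that in $d\ge 3$ the nonlinearity $v^\pm\cdot\nabla v^\pm$ at critical regularity is only borderline and no large-data counterpart of Theorem~\ref{cor1} is available; the resolution is that the $\kappa$-gain from Strichartz makes the feedback of the (possibly large) dispersive block onto $\mathcal{P}u_\kappa^\pm$ negligible, so that the Fujita--Kato smallness bootstrap for $\mathcal{P}u_\kappa^\pm$ closes independently of the size of $(n_{0,\kappa}^\pm,\mathcal{Q}u_{0,\kappa}^\pm)$.
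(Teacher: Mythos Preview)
Your proposal is correct and matches the paper's approach: the paper simply notes that Theorem~\ref{cor2} follows directly from Theorem~\ref{thm:global-existence} combined with Proposition~\ref{prop:incompressible}, and what you have sketched is precisely the content of the proof of Theorem~\ref{thm:global-existence} specialized to the case $\mathcal{P}u_{0,\kappa}^\pm=v_0^\pm$ with $\|v_0^\pm\|_{\dot{B}^{\frac{d}{2}-1}_{2,1}}\le\alpha_0$. The only minor tactical difference is that the paper, in proving Theorem~\ref{thm:global-existence}, treats the convection contribution $v^\pm\cdot\nabla(\cdot)$ by commutator estimates and Gr\"onwall (producing the factor $e^{C_0\mathcal{V}_t}$ in Proposition~\ref{prop1}) rather than by smallness of $\mathcal{P}u_\kappa^\pm$; this makes Theorem~\ref{thm:global-existence} applicable to any global solution $v^\pm$ of \eqref{INS}, not just small ones, but in the setting of Theorem~\ref{cor2} the two tactics are equivalent.
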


Theorems \ref{cor1} and \ref{cor2}  follow directly from Theorem \ref{thm:global-existence} and Proposition \ref{prop:incompressible} below. We establish global stability for the two-fluid model \eqref{system1} under large, critically regular perturbations in the high-capillarity regime, assuming that the solutions of the corresponding incompressible Navier-Stokes equations \eqref{INS} $v^\pm$  exist globally in time within critical spaces.

\begin{theorem}\label{thm:global-existence}
Let $d\geq2 $ and suppose that \eqref{stability} holds.
%\begin{align} 
 %   -\frac{s_-^2(1,1)}{\alpha^-(1,1)}<f'(1)<0. \label{stability} %-3\left( \frac{\alpha^{-}(1,1)\rho^{-}(1,1)}{s_+^2(1,1)\rho_+(1,1)}+\frac{4\alpha^{-}(1,1)}{s_{-}^2(1,1)}\right)^{-1}<0,
%\end{align}
 Assume that the initial data $(n_{0,\kappa}^\pm,u_{\kappa,0}^\pm)$ satisfies
\begin{align}\label{initialenergy}
\|(\kappa^{-\frac{1}{2}}n_{0,\kappa}^\pm,\nabla n_{0,\kappa}^\pm, u^\pm_{0,\kappa})\|_{\dot{B}_{2,1}^{\frac{d}{2}-1}}\leq M_0,
\end{align}
where $M_0>0$ is a given constant independent of $\kappa$. If %$v_0^\pm$ is the strong limit of the incompressible part $\mathcal {P} u^\pm_{0,\kappa}$ in $\dot{B}^{\frac{d}{2}-1}_{2,1}$, and 
$v^\pm$ is the unique global solution to the Cauchy problem \eqref{INS} for the incompressible Navier–Stokes equations  with the initial datum $v^\pm_0$ such that
$$
v^\pm\;\in\;
C\bigl(\mathbb{R}_+;\dot B^{\frac d2-1}_{2,1}\bigr)
\;\cap\;
L^1\bigl(\mathbb{R}_+;\dot B^{\frac d2+1}_{2,1}\bigr),
$$
then there exists \(\kappa_0=\kappa_0(M_0,\nu,f,P^\pm)>0\), depending only on the initial data \footnote{The explicit choice of $\kappa_0$ can be found in \eqref{kappa0}.}, such that for all \(\kappa\ge\kappa_0\), under
\begin{align}\label{avelocity}
    \|\mathcal {P} u^\pm_{0,\kappa}-v^\pm_0\|_{\dot{B}^{\frac{d}{2}-1}_{2,1}}\leq \kappa^{-\delta},
\end{align}
 the Cauchy problem  \eqref{system-perturb1} admits a unique global-in-time solution $ (n_\kappa^\pm, u_\kappa^\pm) $ satisfying
\begin{align}\label{12455}
\Bigl(\kappa^{-\frac{1}{2}}n_\kappa^\pm,\;\nabla n_{\kappa}^\pm,\;u_{\kappa}^\pm\Bigr)
\;\in\;
C\bigl(\mathbb{R}_+;\dot{B}_{2,1}^{\frac{d}{2}-1}\bigr)
\;\cap\;
L^1\bigl(\mathbb{R}_+;\dot{B}_{2,1}^{\frac{d}{2}+1}\bigr).
\end{align}
Furthermore, the following error estimate holds: 
\begin{equation}\label{eq:incompressible-limit}
\begin{aligned}
&\bigl\|(\kappa^{-\frac{1}{2}}n_\kappa^\pm,\nabla n_{\kappa}^\pm,\mathcal{Q}u_{\kappa}^\pm)\bigr\|_{L^2(\mathbb{R}_+;\dot B^{\frac d p}_{p,1})}+\bigl\|\mathcal{P} u_{\kappa}^\pm - v^\pm\bigr\|_{L^\infty(\mathbb{R}_+;\dot B^{\frac d2-1}_{2,1})\cap L^1(\mathbb{R}_+;\dot B^{\frac d2+1}_{2,1})}
\;\le\;
C\,\kappa^{-\delta},
\end{aligned}
\end{equation}
where $C>0$ is a constant independent of $\kappa$, and the pair \((\delta,p)\) is given by
\begin{equation}
\left\{
\begin{aligned}\label{condi-p}
&2<p\leq\tfrac{2d}{d-2},\quad \delta =\frac{1}{4}(\frac{d}{2}-\frac{d}{p}),\quad \text{if}\quad d\geq3,\\
&2<p<\infty,~~\quad \delta =
 \frac{1}{4}(1-\frac{2}{p}),\quad\,  \text{if}\quad d=2.
\end{aligned}
\right.
 \end{equation}
In particular, it holds that
\begin{equation}
\left\{
\begin{aligned}
(\nabla n_{\kappa}^\pm,\;\mathcal{Q} u_{\kappa}^\pm)\;& \longrightarrow\;0
~~\quad\text{in}\quad
 L^2\bigl(\mathbb{R}_+;\dot B^{\frac{d}{p} }_{p,1}\bigr),\\
% &(\nabla n_{\kappa}^\pm,\;\mathcal{Q} u_{\kappa}^\pm)\;\longrightarrow\;0\quad\text{in}\quad L^2\bigl(\mathbb{R}_+;\dot B^{\frac{d}{p} }_{p,1}\bigr),\\
\mathcal{P} u_{\kappa}^\pm\;&\longrightarrow\;v^\pm\quad \text{in}\quad
L^\infty\bigl(\mathbb{R}_+;\dot{B}_{2,1}^{\frac{d}{2}-1}\bigr)
\;\cap\;
L^1\bigl(\mathbb{R}_+;\dot{B}_{2,1}^{\frac{d}{2}+1}\bigr).
\end{aligned}
\right.
\end{equation}
\end{theorem}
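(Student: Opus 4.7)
The plan is to run a global continuity argument after splitting the unknown into the given incompressible Navier–Stokes background, which is large but globally controlled, and a dispersive/compressible perturbation that remains of size $\kappa^{-\delta}$ for all times. I would first apply the Helmholtz–Leray decomposition $u^\pm=\mathcal{P}u^\pm+\mathcal{Q}u^\pm$ and introduce the incompressible correction $w^\pm:=\mathcal{P}u^\pm-v^\pm$. System \eqref{system-perturb1} then splits into: (i) a coupled evolution for $(n^\pm,\mathcal{Q}u^\pm)$ of two-phase Gross–Pitaevskii type with added parabolic dissipation, (ii) a heat-type equation for $w^\pm$ convected by the large drift $v^\pm$, and (iii) the known flow $v^\pm$. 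Under \eqref{initialenergy}--\eqref{avelocity}, the initial norms of $\kappa^{-1/2}n_{0}^\pm$, $\nabla n_{0}^\pm$, $\mathcal{Q}u_{0}^\pm$ and $w_{0}^\pm$ are all either intrinsically small in $\kappa$ or bounded by $M_0$, which is the input of the Strichartz gain below.

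Next, I would diagonalise the linear pressure coupling by using the eigenstructure of $\bigl(\begin{smallmatrix}\beta_1&\beta_2\\\beta_3&\beta_4\end{smallmatrix}\bigr)$; the stability condition \eqref{stability} is exactly what guarantees that both eigenvalues are real and positive, so each diagonal mode solves a Gross–Pitaevskii equation for $(n,\mathcal{Q}u)$ perturbed by parabolic dissipation on the velocity. Applying the dispersive and Strichartz estimates of Appendix \ref{appendixB} and the rescaling $t\mapsto\sqrt{\kappa}\,t$ hidden in the $\sqrt{\kappa}$-coefficients of the linearised system, one obtains
\begin{equation*}
\bigl\|(\kappa^{-\frac{1}{2}}n_\kappa^\pm,\nabla n_\kappa^\pm,\mathcal{Q}u_\kappa^\pm)\bigr\|_{L^2(\mathbb{R}_+;\dot B^{\frac{d}{p}}_{p,1})}
\;\lesssim\;\kappa^{-\delta}\bigl(M_0+\mathcal{N}\bigr),
\end{equation*}
where $(p,\delta)$ are as in \eqref{condi-p} and $\mathcal{N}$ collects the nonlinear source. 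Coupled with the standard maximal-regularity estimate for the full mixed hyperbolic–parabolic system, one also gets a (possibly large) bound in $\widetilde L^\infty_t\dot B^{\frac{d}{2}-1}_{2,1}\cap L^1_t\dot B^{\frac{d}{2}+1}_{2,1}$ for $(\kappa^{-\frac12}n^\pm,\nabla n^\pm,u^\pm)$.

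For $w^\pm$, the equation is a heat equation with transport by the globally integrable drift $v^\pm\in L^1_t\dot B^{\frac{d}{2}+1}_{2,1}$. Using the linear Navier–Stokes theory of Appendix \ref{appendixC} one gets
\begin{equation*}
\|w^\pm\|_{L^\infty_t(\dot B^{\frac{d}{2}-1}_{2,1})\cap L^1_t(\dot B^{\frac{d}{2}+1}_{2,1})}
\;\lesssim\;\exp\!\bigl(C\|v^\pm\|_{L^1_t \dot B^{\frac{d}{2}+1}_{2,1}}\bigr)\bigl(\|w_0^\pm\|_{\dot B^{\frac{d}{2}-1}_{2,1}}+\mathcal{S}\bigr),
\end{equation*}
where the source $\mathcal{S}$ gathers the Leray-projected nonlinear terms of $F_2,F_4$ together with the quadratic contributions $w^\pm\!\cdot\!\nabla w^\pm+w^\pm\!\cdot\!\nabla v^\pm+v^\pm\!\cdot\!\nabla w^\pm$. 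The initial contribution is of order $\kappa^{-\delta}$ by \eqref{avelocity}, and the nonlinear source is controlled by paraproduct and composition laws in critical Besov spaces.

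Finally, I close the argument by a standard continuation procedure. The crucial observation for the nonlinear estimates is that every genuinely nonlinear term in $F_1,\dots,F_4$ is either (a) quadratic in quantities carrying the $\kappa^{-\delta}$ Strichartz smallness, or (b) a product of a large factor — essentially $v^\pm$ or the large velocity $u^\pm$ in $\widetilde L^\infty_t\dot B^{\frac{d}{2}-1}_{2,1}\cap L^1_t\dot B^{\frac{d}{2}+1}_{2,1}$, interpolated to $L^2_t\dot B^{\frac{d}{2}}_{2,1}$ — with a Strichartz-small factor like $\nabla n^\pm$ or $\mathcal{Q}u^\pm$ in $L^2_t\dot B^{\frac{d}{p}}_{p,1}$. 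These combine by Hölder and paraproduct laws into bounds of the form $C(M_0,\|v^\pm\|)\,\kappa^{-\delta}$, which for $\kappa\ge\kappa_0(M_0,\|v^\pm\|)$ are strictly smaller than the a priori assumption; a bootstrap then extends the solution to $\mathbb{R}_+$ and yields \eqref{eq:incompressible-limit}. The main obstacle is exactly making the large background $v^\pm$ cohabit with the Strichartz gain: one must take $p$ just above $2$ so that the dual exponent $p'$ falls in the admissible Strichartz range, and use interpolation of $v^\pm$ between the two endpoint Navier–Stokes norms so that the resulting product is integrable in time and $\kappa^{-\delta}$ survives. Uniqueness follows from an analogous energy argument at the level $\dot B^{\frac{d}{2}}_{2,1}$ on the difference of two solutions.
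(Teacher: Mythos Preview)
Your proposal is correct and follows essentially the same route as the paper: Helmholtz decomposition with the incompressible error $w^\pm=\mathcal{P}u^\pm-v^\pm$, diagonalisation of the $\beta$-matrix (the paper's $r_\pm$) to reduce to two Gross--Pitaevskii--type blocks, Strichartz estimates from Appendix~\ref{appendixB} yielding the $\kappa^{-\delta}$ gain on $(\nabla n^\pm,\mathcal{Q}u^\pm)$, a Gr\"onwall-based heat estimate for $w^\pm$, and a bootstrap to close. Two minor remarks: the full energy bound on $(\kappa^{-1/2}n^\pm,\nabla n^\pm,u^\pm)$ in $\widetilde L^\infty_t\dot B^{\frac{d}{2}-1}_{2,1}\cap L^1_t\dot B^{\frac{d}{2}+1}_{2,1}$ is not entirely ``standard maximal regularity'' --- the paper constructs an explicit Lyapunov functional (Lemma~\ref{lemma21}) that relies both on \eqref{stability} for positivity and on a cancellation between the quasi-linear Korteweg terms in \eqref{energy4}--\eqref{energy5}; and uniqueness is carried out at the level $\dot B^{\frac{d}{2}-1}_{2,1}$, not $\dot B^{\frac{d}{2}}_{2,1}$.
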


Theorem \ref{thm:global-existence} relies on the global existence of the solution $v^\pm$ to the Cauchy problem \eqref{INS} for the incompressible Navier–Stokes equations.
%We recall the global well-posedness result for the incompressible Navier-Stokes equations in the critical Besov space:
The standard global existence theorem for the incompressible Navier-Stokes equations in scaling critical spaces is given as follows. The small-data results are well-known in $\mathbb{R}^d$ ($d\geq2$), cf. Fujita-Kato \cite{fujita1}, Cannone \cite{cannone1}, Chemin \cite{chemin2}, et al. For the case $d=2$, the smallness can be removed (see Danchin \cite{danchin17Adv}).

\begin{proposition}\label{prop:incompressible}
  For $d\geq2$, assume that the initial data $v_0^\pm$ satisfies 
   \begin{itemize}
       \item {\rm(}$d=2${\rm)}: $v_0^\pm\in \dot{B}^{0}_{2,1}$. 
       \item {\rm(}$d\geq3${\rm)}: $v_0^\pm\in \dot{B}^{\frac{d}{2}-1}_{2,1}$, and there exists a generic constant $\alpha_0$ such that 
       \begin{align}\label{smallnessv0}
       \|v_0^\pm\|_{\dot{B}^{\frac{d}{2}-1}_{2,1}}\leq \alpha_0.
       \end{align}
   \end{itemize}
     Then the Cauchy problem \eqref{INS} has a unique global solution $v^\pm$ satisfying $v^\pm\in C(\mathbb{R}_+;\dot{B}^{\frac{d}{2}-1}_{2,1})$ and
     \begin{align}\label{es:in}
     \|v^\pm\|_{\widetilde{L}^{\infty}(\mathbb{R}_+;\dot{B}^{\frac{d}{2}-1}_{2,1})}+\|v^\pm\|_{L^1(\mathbb{R}_+;\dot{B}^{\frac{d}{2}+1}_{2,1})}\leq C \|v_0^\pm\|_{\dot{B}^{\frac{d}{2}-1}_{2,1}},
     \end{align}
     for a generic constant $C>0$.
\end{proposition}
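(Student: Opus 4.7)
The plan is to reformulate \eqref{INS} via Duhamel's formula as
\[
v^\pm(t) \;=\; e^{t\Delta}v_0^\pm \;-\; \int_0^t e^{(t-s)\Delta}\,\mathcal{P}\,\mathrm{div}(v^\pm\otimes v^\pm)(s)\,ds,
\]
and to close a fixed-point argument in the critical functional space
\[
E_d \;:=\; \widetilde{L}^\infty(\mathbb{R}_+;\dot B^{d/2-1}_{2,1})\,\cap\, L^1(\mathbb{R}_+;\dot B^{d/2+1}_{2,1}).
\]
Two ingredients are essential. First, the maximal regularity estimate for the heat semigroup, $\|e^{t\Delta}v_0^\pm\|_{E_d}\lesssim\|v_0^\pm\|_{\dot B^{d/2-1}_{2,1}}$, which follows from Littlewood--Paley localization and the pointwise frequency bound $\|e^{t\Delta}\dot\Delta_j f\|_{L^2}\lesssim e^{-c2^{2j}t}\|\dot\Delta_j f\|_{L^2}$. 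Second, the bilinear estimate for the convective term, obtained from the Besov product law combined with the interpolation $\|v\|_{L^2_t\dot B^{d/2}_{2,1}}\lesssim\|v\|_{\widetilde L^\infty_t\dot B^{d/2-1}_{2,1}}^{1/2}\|v\|_{L^1_t\dot B^{d/2+1}_{2,1}}^{1/2}$, yielding
\[
\Bigl\|\int_0^t e^{(t-s)\Delta}\,\mathcal{P}\,\mathrm{div}(u\otimes w)(s)\,ds\Bigr\|_{E_d} \;\lesssim\; \|u\|_{E_d}\|w\|_{E_d}.
\]

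For $d\geq 3$ under the smallness condition \eqref{smallnessv0}, I would deduce directly from the two estimates above that the Duhamel map is a contraction on a small ball of $E_d$, so the Banach fixed point theorem produces a unique global solution satisfying \eqref{es:in}. Uniqueness in $C(\mathbb{R}_+;\dot B^{d/2-1}_{2,1})$ follows from the same bilinear estimate applied to the difference of two solutions and a short Gronwall argument. Continuity in time with values in $\dot B^{d/2-1}_{2,1}$ is obtained a posteriori from the integral equation.

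For $d=2$, the same iteration yields only local existence on some maximal interval $[0,T^*)$ since no smallness is assumed. To upgrade to $T^*=\infty$, I would exploit the embedding $\dot B^0_{2,1}\hookrightarrow L^2$ (a consequence of $\ell^1\hookrightarrow\ell^2$ applied to Littlewood--Paley blocks) together with the classical two-dimensional energy identity
\[
\|v^\pm(t)\|_{L^2}^2 + 2\int_0^t\|\nabla v^\pm(s)\|_{L^2}^2\,ds \;=\; \|v_0^\pm\|_{L^2}^2,
\]
to secure an unconditional $L^\infty_t L^2\cap L^2_t\dot H^1$ bound. Propagating the critical regularity $\dot B^0_{2,1}$ up to $T^*$ is the main obstacle: one would localize in frequency, apply the transport--diffusion estimate block by block, and control the convection commutator via a logarithmic interpolation of Brezis--Gallouet type together with an Osgood inequality on $\|v^\pm\|_{\widetilde L^\infty_t(\dot B^0_{2,1})}$. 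This delicate borderline analysis, which hinges on the $\ell^1$-summability built into $\dot B^0_{2,1}$ and on the gain of one derivative provided by the parabolic smoothing, has been carried out in detail by Danchin \cite{danchin17Adv}; the conclusion of Proposition~\ref{prop:incompressible} in the case $d=2$ follows from that argument.
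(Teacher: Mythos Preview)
Your proposal is correct and aligns with the paper's treatment: the paper does not supply its own proof of this proposition but simply records it as a known result, citing Fujita--Kato \cite{fujita1}, Cannone \cite{cannone1}, and Chemin \cite{chemin2} for the small-data case $d\ge 2$, and Danchin \cite{danchin17Adv} for the removal of smallness when $d=2$. Your sketch via Duhamel's formula, heat-semigroup maximal regularity, and the bilinear estimate in $E_d$ is precisely the standard Fujita--Kato scheme underlying those references, and your deferral to \cite{danchin17Adv} for the delicate 2D large-data step matches the paper exactly.
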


\begin{remark}\normalfont
For the two-fluid system \eqref{system1}, all existing results heavily rely on the smallness of  initial data and the stronger structural condition \eqref{stability0}.  Moreover, to the best of our knowledge, there has been no result addressing how the capillarity coefficient influences the required size of initial data or the long-time behavior of solutions.
In Theorem \ref{thm:global-existence}, we establish the global well-posedness and high-capillarity limit of solutions with critical regularity for arbitrarily large initial perturbations, provided that the corresponding incompressible Navier–Stokes system admits a global solution and the capillary pressure $f$ 
satisfies the condition \eqref{stability} (a sharp requirement for stability, which is necessary to ensure that the 
energy functional $\mathcal{E}_j(t)$ remains positive (see \eqref{2.20} and \eqref{2.21} for details) Indeed, for $f'(1)>0$, the instability has been investigated in \cite{WuYaoZhangin}.
\end{remark}

\begin{remark}\normalfont
The key idea here is that by exploiting the Schr\"odinger-type dispersive operators arising from the large-capillarity linearization together with fully diffusive dissipation mechanisms, we derive sharp error estimates between solutions of the two-fluid system and the incompressible Navier–Stokes equations. As a result, for sufficiently large capillarity coefficient $\kappa$, we establish the existence and uniqueness of a global solution to the two-fluid system without requiring any smallness condition on the initial data.
\end{remark}

\begin{remark}\normalfont
Our results apply to general {\emph{ill-prepared}} data satisfying $\div u^\kappa_0=\mathcal{O}(1)$, without requiring $\div u^\kappa_0\rightarrow 0$ as $\kappa\rightarrow \infty$. 
\end{remark}

Finally, if the initial perturbation is uniformly bounded in $\dot{B}^{\sigma_1}_{2,\infty}$ ($-\frac{d}{2}\leq \sigma_1<\frac{d}{2}-1)$, we establish the optimal time-decay rates of the solution.

\begin{theorem}\label{thm:decay-transfer}
Let  $d\geq2 $ and let $ -\frac d2 \leq \sigma_1< \frac d2 -1.$ 
Let $ (n_{\kappa}^\pm,u_{\kappa}^\pm)$ be the global-in-time solution to the Cauchy problem \eqref{system-perturb1} constructed in Theorems \ref{cor1}-\ref{cor2}.  Assume the initial data additionally satisfies
\begin{equation}\label{eq:extra-data}
\bigl(\kappa^{-\frac{1}{2}} n_{\kappa,0}^\pm,\;\nabla n_{\kappa,0}^\pm,\;u_{\kappa,0}^\pm, \;v_0^\pm\bigr)
 \in \dot B^{\sigma_1}_{2,\infty}\quad\text{uniformly in}\quad \kappa.
\end{equation}
Then for all $t\geq1$, the solution $ (n_{\kappa}^\pm,u_{\kappa}^\pm) $ satisfies
\begin{equation}\label{eq:cns-decay}
\big\|D^\alpha\bigl( \kappa^{-\frac{1}{2}}n_{\kappa}^\pm,\;\nabla n_{\kappa}^\pm,\;u_{\kappa}^\pm \bigr)(t)\big\|_{L^2}
\;\leq C\;(1+t)^{-\frac{\lvert\alpha\rvert}{2}+\frac{\sigma_1}{2}}
\quad\text{for all }~~\lvert\alpha\rvert>\sigma_1,
\end{equation}
where the constant $C>0$ is independent of $t$ and $\kappa$.
\end{theorem}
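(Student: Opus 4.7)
My proof would follow the standard two-step pattern for extracting optimal decay rates in critical Besov spaces developed by Danchin and collaborators: first propagate a negative Besov norm globally in time, then interpolate with the high-regularity integrability supplied by Theorems \ref{cor1}--\ref{cor2} to obtain algebraic decay, and finally convert to $L^2$ estimates on derivatives via Besov--Sobolev embeddings.

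The first step is to establish the uniform bound
\[
\sup_{t\ge 0} \bigl\|(\kappa^{-\frac12}n_\kappa^\pm,\nabla n_\kappa^\pm,u_\kappa^\pm)(t)\bigr\|_{\dot B^{\sigma_1}_{2,\infty}} \le C,
\]
with $C$ independent of $t$ and $\kappa$. Applying the dyadic block $\dot\Delta_j$ to the perturbation system \eqref{system-perturb1} produces an $L^2$ energy inequality at each level $j$, where the dissipative/dispersive structure inherited from the Korteweg tensor and the velocity viscosity provides the $2^{2j}$-damping at high frequencies. The nonlinear source terms $F_1$--$F_4$ are estimated in $\dot B^{\sigma_1}_{2,\infty}$ by Bony's paraproduct decomposition: one factor is placed in the low-regularity norm and the remaining factors in the critical Besov spaces whose $L^1_t(\dot B^{d/2+1}_{2,1})$ and $L^\infty_t(\dot B^{d/2-1}_{2,1})$ bounds come from Theorems \ref{cor1}--\ref{cor2}. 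A time-integration of the resulting inequality, combined with Gr\"onwall, yields the uniform $\dot B^{\sigma_1}_{2,\infty}$ bound.

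Combining this low-regularity bound with the heat-like decay of the linearized semigroup at low frequencies,
\[
\|e^{t\mathcal L}f_0\|_{\dot B^s_{2,1}} \lesssim (1+t)^{-\frac{s-\sigma_1}{2}}\,\|f_0\|_{\dot B^{\sigma_1}_{2,\infty}},\qquad s>\sigma_1,
\]
and a Duhamel/bootstrap argument that again uses the nonlinear paraproduct estimates in the high norm, I would deduce
\[
\bigl\|(\kappa^{-\frac12}n_\kappa^\pm,\nabla n_\kappa^\pm,u_\kappa^\pm)(t)\bigr\|_{\dot B^{s}_{2,1}} \lesssim (1+t)^{-\frac{s-\sigma_1}{2}},\qquad s>\sigma_1.
\]
For any multi-index with $|\alpha|>\sigma_1$, the desired estimate \eqref{eq:cns-decay} follows by applying this inequality at $s=|\alpha|$ and using the embedding $\dot B^{|\alpha|}_{2,1}\hookrightarrow \dot H^{|\alpha|}$. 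For large $|\alpha|$ this also encodes the smoothing effect advertised in the introduction: the parabolic component of the linearization upgrades regularity well beyond the critical index, even though the data sit only at the critical level.

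The main obstacle is the uniform-in-time propagation in $\dot B^{\sigma_1}_{2,\infty}$. Because $\sigma_1<d/2-1$ and may be as negative as $-d/2$, the paraproduct/remainder estimates tolerate essentially no logarithmic losses, and standard product laws must be refined. In particular, the capillarity terms $\sqrt\kappa\,\nabla\bigl(\widetilde\psi(\kappa^{-1/2}n^\pm)\Delta n^\pm\bigr)$ appearing in $F_2^7$ and $F_4^7$ carry a $\sqrt\kappa$ prefactor which must be absorbed using the Gross--Pitaevskii-type dispersive/damping structure of the linearized system together with the compensating $\kappa^{-1/2}$ inside $\widetilde\psi$; keeping every estimate independent of $\kappa$ on the whole half-line is the bulk of the technical work. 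Once this low-regularity propagation is secured, the extraction of the algebraic decay rates and the transfer to derivative $L^2$-bounds proceed by now-standard interpolation arguments.
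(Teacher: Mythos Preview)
Your Step~1 (uniform propagation of the $\dot B^{\sigma_1}_{2,\infty}$ norm via localized energy estimates and Gr\"onwall) matches the paper's Lemma~\ref{Lem3.1}. The divergence is in Step~2. You propose a Duhamel/semigroup bootstrap, whereas the paper uses a \emph{time-weighted energy method}: one multiplies the localized Lyapunov inequality \eqref{ineqforbasicenergy1} by $t^M$ with $M\gg 1$, so that $t^M\mathcal E_j(0)=0$ and no high-regularity initial data is ever needed. The linear contribution $Mt^{M-1}\mathcal E_j$ is then handled by real interpolation between $\dot B^{\sigma_1}_{2,\infty}$ and $\dot B^{s+2}_{2,1}$, producing exactly the factor $t^{M-\frac12(s-\sigma_1)}$. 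Your Duhamel sketch has a gap for the smoothing claim at arbitrary $|\alpha|$: to bound $\int_0^t e^{(t-\tau)\mathcal L}N(\tau)\,d\tau$ in $\dot B^s_{2,1}$ for $s$ well above $d/2-1$, the nonlinearity $N$ must itself sit in a high-regularity space, which presupposes what you want to prove; a step-by-step regularity gain is possible but is not what you describe, and its uniformity in $\kappa$ is unclear.

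The more serious omission is the mechanism for closing the nonlinear estimates \emph{without smallness of the data}. In the paper every quadratic term in the time-weighted functional $\mathcal X_t^{M,s}$ carries a factor of either the dispersive norm $\mathcal D_t$ or the incompressible error $\mathcal W_t=\|\mathcal P u^\pm-v^\pm\|$, both of which are $O(\kappa^{-\delta})$ by Theorem~\ref{thm:global-existence}; the residual contribution from $v^\pm$ is treated separately via a parallel time-weighted decay estimate for the incompressible Navier--Stokes flow (Proposition~\ref{PropA.3}). Taking $\kappa$ large then absorbs the $\mathcal X_t^{M,s}\mathcal D_t$, $\mathcal X_t^{M,s}\mathcal W_t$ and $\kappa^{-1/2}\mathcal E_t\mathcal X_t^{M,s}$ terms into the left-hand side. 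You frame the $\kappa$-dependence only as a uniformity issue in the capillarity term $F_2^7,F_4^7$, but in fact $\kappa^{-\delta}$ is the smallness parameter that replaces smallness of data throughout the entire decay argument, and the splitting $u^\pm=(\mathcal P u^\pm-v^\pm)+v^\pm+\mathcal Q u^\pm$ together with the known decay of $v^\pm$ is what makes this work.
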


\begin{remark}\normalfont
Theorem \ref{thm:decay-transfer} is the first result concerning the large-time smoothness of solutions for the non-conservative compressible two-fluid system, revealing the regularizing effect of the viscosity and capillarity terms with large initial data. It is shown that the solution becomes {\emph{globally smooth}} and all its derivatives decay at the rates that coincide with the optimal rates for the heat equation. 
In particular, under the stronger assumption $\bigl(\kappa^{-\frac{1}{2}} n_{\kappa,0}^\pm,\;\nabla n_{\kappa,0}^\pm,\;u_{\kappa,0}^\pm\bigr)\in L^1(\mathbb{R}^d)$, one infers from the embedding $L^1(\mathbb{R}^d)\hookrightarrow \dot{B}^{-\frac{d}{2}}_{2,\infty}$ and Gagliardo–Nirenberg inequalities that
\begin{align*}
\big\|D^\alpha\bigl( \kappa^{-\frac{1}{2}}n_{\kappa}^\pm,\;\nabla n_{\kappa}^\pm,\;u_{\kappa}^\pm \bigr)(t)\big\|_{L^p}
\;\lesssim \;(1+t)^{-\frac{d}{4}-\frac{\lvert\alpha\rvert}{2}-\frac{d}{2}(\frac{1}{2}-\frac{1}{p})}
\end{align*}
for all $\lvert\alpha\rvert>\sigma_1+d(\frac{1}{2}-\frac{1}{p})$ and $p\geq 2$.
\end{remark}

\subsection{Strategies}

Now, let us give some comments on the proofs of Theorems \ref{thm:global-existence}-\ref{thm:decay-transfer}. To prove Theorem  \ref{thm:global-existence}, we need to make a formal spectral analysis of the linearized system  of \eqref{system-perturb1}.  
 Let $\mathcal{P}:={\rm Id}+(-\Delta)^{-1}\nabla\operatorname{div}$, and $\mathcal{Q}={\rm Id}-\mathcal{P}$.
Then the compressible parts $\mathcal{Q}u^\pm $ of the system \eqref{system-perturb1} satisfy:
\begin{equation}\label{system-perturb2}
\left\{
\begin{aligned}
&\partial_t n^+ +v^+\cdot\nabla n^++ \sqrt{\kappa}\text{div}\mathcal{Q}u^+ = F_1, \\
&\partial_t \mathcal{Q}u^++\mathcal{Q}(v^+\cdot\nabla u^+) -   2\Delta\mathcal{Q}u^+ + \frac{\beta_1}{\sqrt{\kappa}} \nabla  n^+ + \frac{\beta_2}{\sqrt{\kappa}}\nabla  n^-  - \sqrt{\kappa} \nabla\Delta n^+ = \mathcal{Q}F_2, \\
&\partial_t n^- + v^-\cdot\nabla n^-+\sqrt{\kappa}\text{div}\mathcal{Q}u^- = F_3, \\
&\partial_t \mathcal{Q}u^-+\mathcal{Q}(v^-\cdot\nabla u^-) -  2 \Delta\mathcal{Q}u^-+ \frac{\beta_3}{\sqrt{\kappa}} \nabla  n^- + \frac{\beta_4}{\sqrt{\kappa}}\nabla  n^-  - \sqrt{\kappa} \nabla\Delta n^- =\mathcal{Q}F_4.
\end{aligned}
\right.
\end{equation}
and the incompressible  part $\mathcal{P}u^\pm$ satisfies
\begin{equation}
\left\{
\begin{aligned}
&\partial_t \mathcal{P}u^+ -   \Delta \mathcal{P}u^+ =\mathcal{P}F_2 , \\
&\partial_t \mathcal{P}u^- -   \Delta \mathcal{P}u^- =\mathcal{P}F_4 . \\
\end{aligned}
\right.
\end{equation}
Conducting a direct spectral analysis of the linearized system   \eqref{system-perturb1} is rather involved. The main observation here is to introduce the following four new linear combinations:
\begin{equation}\label{N+N-M+M-}
\left\{
\begin{aligned}
    &N_1^+:=\beta_3 n^++(r_+-\beta_1)n^-, \quad\quad~~ N_2^-:=\beta_3 n^++(r_--\beta_1)n^-,\\
    &M_1^+:=\beta_3 \mathcal{Q}u^++(r_+-\beta_1)\mathcal{Q}u^-,\quad M_2^-:=\beta_3 \mathcal{Q}u^++(r_--\beta_1)\mathcal{Q}u^-,
\end{aligned}
\right.
\end{equation}
where  $r_+$ and $r_-$ are given by
\begin{align}\label{lambdapm}
r_\pm:=\frac{1}{2}(\beta_1+\beta_4\pm \sqrt{(\beta_1-\beta_4)^2+4\beta_2\beta_3}),
\end{align}
which are the roots of 
$
r^2-(\beta_1+\beta_4)r+\beta_1\beta_4-\beta_2\beta_3=0.
$
Note that both $r_+$ and $r_-$ are strictly positive because $\beta_1\beta_4-\beta_2\beta_3>0$ under the stability condition \eqref{stability}. Therefore, owing to \eqref{system-perturb2}, the system \eqref{system-perturb1} can be decoupled into the following two  Navier-Stokes-Korteweg models:
\begin{align}\label{NSK_1}
\begin{cases}
\partial_t N_1^+ + \sqrt{\kappa}\mathrm{div}\,M_1^+ = G_1^+,\\[0.5ex]
\partial_t M_1^+ + \frac{r_+}{\sqrt{\kappa}}\nabla N_1^+ - 2\Delta M_1^+  - \sqrt{\kappa}\nabla\Delta N_2^+ = G^+_2
\end{cases}
\end{align}
and 
\begin{align}\label{NSK_2}
\begin{cases}
\partial_t N_2^- + \sqrt{\kappa}\mathrm{div}\,M_2^- = G^-_1,\\[0.5ex]
\partial_t M_2^- + \frac{r_-}{\sqrt{\kappa}}\nabla N_2^- -2\Delta M_2^-  - \sqrt{\kappa}\nabla\Delta N_2^- = G_2^-,
\end{cases}
\end{align}
where \begin{align*}
    \begin{cases}
        G_1^\pm:=\beta_3(F_1-v\cdot\nabla n^+)+(r_\pm-\beta_1)(F_3-v\cdot\nabla n^-),\\[0.5ex]
        G_2^\pm:=\beta_3\mathcal{Q}(F_2-v^-\cdot\nabla u^+)+(r_\pm-\beta_1)\mathcal{Q}(F_4-v^-\cdot\nabla u^-). 
    \end{cases}
\end{align*}

Now, it is convenient to introduce $\mathcal{M}_1^+=\Lambda^{-1}\operatorname{div}M_1^+$ and $\mathcal{M}_2^-=\Lambda^{-1}\operatorname{div}M_2^-$. Then $(N_1^+,\mathcal{M}_1^+)$ and $(N_2^-,\mathcal{M}_2^-)$ satisfy the coupling $2\times2$ systems:
\begin{equation}\label{eq:compress1}
\left\{
 \begin{aligned}
    &\partial_t N_1^+ + \sqrt{\kappa}\mathrm{div}\,\mathcal{M}_1^+ =0 ,\\
&\partial_t \mathcal{M}_1^+ + \frac{r_+}{\sqrt{\kappa}}\nabla N_1^+ -  2\Delta \mathcal{M}_1^+  - \sqrt{\kappa}\nabla\Delta N_1^+ = 0
 \end{aligned}
 \right.
\end{equation}
and
\begin{equation}\label{eq:compress2}
\left\{
 \begin{aligned}
    &\partial_t N_2^- + \sqrt{\kappa}\mathrm{div}\,\mathcal{M}_2^- =0 ,\\
&\partial_t \mathcal{M}_2^- + \frac{r_-}{\sqrt{\kappa}}\nabla N_2^- -  2\Delta \mathcal{M}_2^-  - \sqrt{\kappa}\nabla\Delta N_2^- = 0.
 \end{aligned}
 \right.
\end{equation}
Then taking the Fourier transform of systems \eqref{eq:compress1} and \eqref{eq:compress2} with respect to $x$,  $\mathcal{U}^+=(N_1^+,\mathcal{M}_1^+)^T$ and $\mathcal{U}^-=(N_2^-,\mathcal{M}_2^-)^T$ satisfies the following  equation in matrix form:
\begin{equation*}
    \begin{aligned}
        \partial_t\mathcal{U}^\pm=A^\pm (\xi)\mathcal{U}^\pm
    \end{aligned}
\end{equation*}
with
$$
A^\pm(\xi)=\left(\begin{matrix}
    0&-\sqrt{\kappa}|\xi|\\
      \frac{r_\pm}{\sqrt{\kappa}}|\xi|+\sqrt{\kappa}|\xi|^3&- 2|\xi|^2
\end{matrix}\right), 
$$
where $\xi$ is the Fourier variable. It is easy to check that $$\lambda_\pm(\xi)=- |\xi|^2\pm\sqrt{(1-\kappa)|\xi|^4-r_\pm|\xi|^2}.$$
Consequently, we find that the coupling systems \eqref{eq:compress1} and \eqref{eq:compress2}  present {\emph{both dissipation and dispersion}}.
In particular, for $\kappa>1$, the corresponding dispersive structure is closely related to the so-called {\emph{Gross–Pitaevskii}} equation \cite{GNT206,GNT209}, which reads as
\begin{align*}
    i\partial_t\psi+\Delta\psi-2\text{Re}\,\psi=F(\psi).
\end{align*}

To prove Theorem  \ref{thm:global-existence}, our key ingredient is to combine dissipative with dispersive estimates, along with the error estimates between the incompressible part $\mathcal{P}u^\pm$ and the limiting solution $v^\pm$. %the sufficiently large capillarity coefficient $\kappa$ induces crucial dispersive effects. 
Consequently, even without imposing any smallness condition on the initial data, we succeed in deriving the uniform energy estimates for suitably large $\kappa$. The proof of global stability can be constructed by the following four steps:

(1) First, we derive uniform energy-dissipation estimates for $(\kappa^{-\frac{1}{2}}n^\pm, \nabla n^\pm, u^\pm)$ in $L^{\infty}_t(\dot{B}^{\frac{d}{2}-1}_{2,1})\cap L^1_t(\dot{B}^{\frac{d}{2}+1}_{2,1})$. To achieve this goal, for every  $j\in\mathbb{Z}$, we construct a Lyapunov functional $\mathcal{E}_j$ such that
\begin{equation}\label{Ly}
    \begin{aligned}
     \mathcal{E}_j(t)\sim \left\| \big(\kappa^{-\frac{1}{2}}n_j^+,\kappa^{-\frac{1}{2}}n_j^-,\nabla n_j^+,\nabla n_j^-,  u_j^+, u_j^+\big)\right\|_{L^2}^2\quad\text{and}\quad   \frac{{\rm d}}{{\rm d}t} \mathcal{E}_j(t)+c_0 2^{2j}\mathcal{E}_j(t)&\lesssim \mathcal{R}_j\sqrt{\mathcal{E}_j(t)},
    \end{aligned}
\end{equation}
where we write $f_j=\dot{\Delta}_jf$, and $\mathcal{R}_j$ denotes the nonlinear term. It should be noted that \eqref{Ly} reveals the {\emph{fully diffusive in all frequencies}} for the linearized system, which is stronger than the dissipation structures for generic
non-conservative compressible two-fluid equations without capillarity (see  \cite{EvjeWangWang2016}), where the high frequencies only have a weaker damping effect. See Lemma \ref{lemma22} for more details.

(2) Next,  we establish the dispersive estimates for $(\kappa^{-\frac{1}{2}}n^\pm,\nabla n^\pm,\mathcal Q u^\pm)$ in the $L^p$ type space $L^2_t(\dot{B}^{\frac{d}{p}}_{p,1})$, with the rate $\kappa^{-\delta}$.  Inspired by \cite{CV-Song-2024} for the study of one-phase fluids, we introduce the two-phase  dispersive quantities
\begin{equation}\label{UH}
    \begin{aligned}
        U_\pm  =\sqrt{\frac{-\Delta}{r_\pm\kappa^{-1}-\Delta}}\quad\mbox{ and }\quad H_\pm= \sqrt{-\Delta(r_\pm\kappa^{-1}-\Delta)}
    \end{aligned}
\end{equation}
and define 
\begin{align}\label{Z1Z2}
z_1^+:=U_+^{-1}\nabla N_1^++i\mathcal{Q}M_1^+ \quad\text{and}\quad z_2^-:=U_-^{-1}\nabla N_2^-+i\mathcal{Q}M_2^-.
\end{align}
Then  \eqref{NSK_1} and \eqref{NSK_2} can be rewritten as 
\begin{equation}\label{GP}
\left\{
\begin{aligned}
&i\partial_t z_1^{+} -i\nu\Delta z_1^{+}-\sqrt{\kappa}H_+z_1^{+}  = -i\nu H_+\Lambda N_1^{+}+iU_+^{-1}\nabla G_1^+ -G_2^+ , \\
&i\partial_t z_2^- -i\nu\Delta z_2^--\sqrt{\kappa}H_-z_2^-    = -i\nu H_-\Lambda N_2^{-} +iU_-^{-1}\nabla G_1^- -G_2^- . \\
\end{aligned}
\right.
\end{equation}
With the above reformulation, we employ Strichartz estimates for the two-phase Gross–Pitaevskii structure in \eqref{GP} to obtain the desired bounds for $(z_1^+,z_2^-)$. Combining these with \eqref{UH} and 
\begin{equation}\label{mmmm}
\left\{
\begin{aligned}
    &\mathcal{Q}u^+=\frac{(\beta_1-r_-)M_1^++(r_+-\beta_1)M_2^-}{r_+-r_-},\quad \mathcal{Q}u^-=\frac{M_1^+-M_2^-}{r_+-r_-},\\
    &n^+=\frac{(\beta_1-r_-)N_1^++(r_+-\beta_1)N_2^-}{r_+-r_-},\quad\quad~  n^-=\frac{N_1^+-N_2^-}{r_+-r_-},
\end{aligned}
\right.
\end{equation}
one can recover the desired dispersive estimates for $(\nabla n^\pm,\mathcal{Q}u^\pm)$ in $L^2_t(\dot{B}^{\frac{d}{p}}_{p,1})$. For details, readers can refer to Lemma \ref{lemma23}.

(3) Subsequently, we consider the error $\widetilde{u}^\pm = \mathcal{P}u^\pm - v^\pm$, representing the deviation between the incompressible parts of the solutions to the compressible two-fluid model and the incompressible Navier–Stokes equations. These differences solve heat equations with inhomogeneous source terms as follows:
\begin{equation*} 
\left\{
\begin{aligned}
&\partial_t \widetilde{u}^++ \mathcal{P}(v^+\cdot\nabla \mathcal{P}\widetilde{u}^+)-  \Delta \widetilde{u}^+  = \mathcal{P}\bar{F}_2 , \\
&\partial_t \widetilde{u}^-+  \mathcal{P}(v^-\cdot\nabla \mathcal{P}\widetilde{u}^-)  - \Delta  \widetilde{u}^- =  \mathcal{P}\bar{F}_4 , 
\end{aligned}
\right.
\end{equation*}
where  
\begin{equation*}
\left\{
\begin{aligned}
    \bar{F}_2 &:=- v^+\cdot\nabla \mathcal{Q}u^++F_2^1+F_2^2+F_2^3+F_2^4+F_2^5,\\
    \bar{F}_4 &:= -v^-\cdot\nabla \mathcal{Q}u^- +  F_4^1+F_4^2+F_4^3+F_4^4+F_4^5.
\end{aligned}
\right.
\end{equation*}
By applying maximal regularity estimates associated with the Lamé system, we rigorously obtain the convergence rate $\kappa^{-\delta}$ of $\mathcal{P}u^\pm-v^\pm$ in $L^{\infty}_t(\dot{B}^{\frac{d}{2}-1}_{2,1})\cap L^1_t(\dot{B}^{\frac{d}{2}+1}_{2,1})$.

(4) Finally, synthesizing the above three steps, we derive uniform-in-time a priori energy inequalities (see Proposition  \ref{prop1}) that ensure the global existence and stability of solutions.

The main difficulties arise from handling the higher-order nonlinear terms without relying on any smallness assumption regarding the initial data.
To overcome these difficulties, we make full use of the regularities of the limiting solutions, Gr\"onwall's lemma and the smallness caused by the $\kappa^{-\delta}$-rate for the dispersive quantities $(\nabla n^\pm, \mathcal{Q}u^\pm)$. 
For example, in the global existence proof, we introduce the auxiliary fields $v^\pm$ in \eqref{system-perturb1} to derive the incompressible limit. Indeed, the convective term, e.g., $u^\pm \cdot \nabla u^\pm$, whose $L^1\bigl(\dot B^{\tfrac d2-1}_{2,1}\bigr)$–norm cannot be directly controlled. We localized $u^\pm \cdot \nabla u^\pm$ as
$$
\dot{\Delta}_j\big(u^\pm \cdot \nabla u^\pm\big)= v^\pm \cdot \dot{\Delta}_j  u^\pm+[\dot{\Delta}_j ,v^\pm ] \nabla u^\pm+  \dot{\Delta}_j\big(\mathcal{Q}u^\pm \cdot \nabla u^\pm \big)+\dot{\Delta}_j\big((\mathcal{P}u^\pm-v^\pm) \cdot \nabla u^\pm \big).
$$
Note that when performing $L^2$ energy estimates, one can handle the first two terms on the right-hand side of the above decomposition by using $\div v^\pm=0$, commutator estimates, and Gr\"onwall's inequality, while the last two terms can be analyzed by some non-standard product laws and the $\kappa^{-\delta}$-bounds for $\|\mathcal{Q}u^\pm\|_{L^2_t(\dot{B}^{\frac{d}{p}}_{p,1})}$ and $\|\mathcal{P}u^\pm-v^\pm\|_{L^\infty_t(\dot B^{\frac d2-1}_{2,1})\cap L^1_t(\dot B^{\frac d2+1}_{2,1})}$. Similar arguments can also be applied to address the nonlinear terms arising from the pressure and the Korteweg tensor by making use of the  $\kappa^{-\delta}$-bound for $\|\nabla n^\pm\|_{L^2_t(\dot{B}^{\frac{d}{p}}_{p,1})}$. With these observations, when $\kappa$ is chosen to be suitably large, we can enclose uniform {\emph a priori} estimates for general data, which ensures us to prove the global existence and prove Theorem \ref{thm:global-existence}.

%We prove optimal decay for the solutions and their derivatives by developing the Lyapunov methods \cite{danchin5,Guo-2012,XinXu2021} capable of handling arbitrary-order derivatives, only requiring that the limiting solutions obey heat-like decay rates and the parameter $\kappa$ is suitably large. If $ (\kappa^{-\frac{1}{2}}n_0^\pm,\nabla n_0^\pm, u_0^\pm)$ is further bounded in $\dot{B}^{\sigma_1}_{2,\infty}$ for $ -\frac d2 \leq \sigma_1< \frac d2 -1$, we first obtain  long-time evolution estimates of the solution $(\kappa^{-\frac{1}{2}}n^\pm,\nabla n^\pm, u^\pm)$ in the lower-order Besov norm $\dot{B}^{\sigma_1}_{2,\infty}$. To achieve optimal decay rates for higher-order derivatives of the solution $(\kappa^{-\frac{1}{2}}n^\pm,\nabla n^\pm,u^\pm)$, we develop a time-weighted method derived from the Lyapunov inequality \eqref{Ly}, due to the pure parabolic (dissipative) effect of the perturbed system. 
 
After establishing the global existence of large solutions, we address the natural question of determining their optimal time–decay rates (Theorem \ref{thm:decay-transfer}). To this end, we develop several new ideas and techniques, with our strategy based on a time–weighted Lyapunov framework for optimal decay of any-order derivatives tailored to the critical regularity data. Specifically, for a large exponent $M\gg 1$, we multiply the Lyapunov inequality \eqref{Ly} by $t^{M}$ ($M>>1$) to obtain, for $j\in\mathbb{Z}$,
\begin{align}\label{Lymm}
\frac{{\rm d}}{{\rm d}t} \Big( t^{M}\mathcal{E}_j(t)\Big)+c_0 2^{2j} t^M \mathcal{E}_j(t)&\lesssim \Big(t^{M-1}\sqrt{\mathcal{E}_j(t)} +t^M \mathcal{R}_j \Big)\sqrt{\mathcal{E}_j(t)}.
\end{align}
A key point is that $t^{M}\mathcal{E}_j(0)=0$, so no additional higher-order regularity at $t=0$ is needed to run large-time weighted estimates. In the spirit of Theorem~\ref{thm:global-existence}, the nonlinear remainder $t^{M}\mathcal{R}_j$ is controlled by Moser-type product bounds, high-order commutator estimates, the dispersive and incompressible error bound \eqref{eq:incompressible-limit} and a Gr\"onwall argument. Consequently, for any $s>\frac{d}{2}-1$, \eqref{Lymm} yields
 \begin{equation}\label{sgghhbb}
   \begin{aligned}
   &(1-C\kappa^{-\delta})\left\|\left(\kappa^{-\frac{1}{2}}n^+,\kappa^{-\frac{1}{2}}n^-,\nabla n^+,\nabla n^-,   u^+, u^-\right)\right\|_{L_t^\infty(\dot{B}^{s}_{2,1})\cap L^1_t(\dot{B}^{s+2}_{2,1})}\\
   &\quad\lesssim \left \|t^{M-1}\left(\kappa^{-\frac{1}{2}}n^+,\kappa^{-\frac{1}{2}}n^-,\nabla n^+,\nabla n^-,   u^+, u^-\right)\right\|_{L_t^1(\dot{B}^{s}_{2,1})}+\|t^M(v^+, v^-)\|_{L_t^\infty(\dot{B}^{s}_{2,1})\cap L^1_t(\dot{B}^{s+2}_{2,1})}.
\end{aligned}  
 \end{equation}
After taking $\kappa$ sufficiently large, it suffices to analyze the right-hand side of \eqref{sgghhbb}. An observation is to use real interpolation between $\dot{B}^{\sigma_1}_{2,\infty}$ and $\dot{B}^{s+2}_{2,1}$ such that
\begin{align*}
&\left \|t^{M-1}\left(\kappa^{-\frac{1}{2}}n^+,\kappa^{-\frac{1}{2}}n^-,\nabla n^+,\nabla n^-,   u^+, u^-\right)\right\|_{L_t^1(\dot{B}^{s}_{2,1})}\\
\leq & \frac{1}{2}\left\|t^{M}\left(\kappa^{-\frac{1}{2}}n^+,\kappa^{-\frac{1}{2}}n^-,\nabla n^+,\nabla n^-,   u^+, u^-\right)\right\|_{L^1_t(\dot{B}^{s+2}_{2,1})}\\
&\quad+Ct^{M-\frac{1}{2}(s-\sigma_1)} \left\|\left(\kappa^{-\frac{1}{2}}n^+,\kappa^{-\frac{1}{2}}n^-,\nabla n^+,\nabla n^-,   u^+, u^-\right)\right\|_{L^{\infty}_t(\dot{B}^{\sigma_1}_{2,\infty})}.
\end{align*}
In parallel, it is also proved that the solutions $v^\pm$ to the incompressible Navier-Stokes equations \eqref{INS} satisfy
\begin{align*}
\|t^M(v^+, v^-)\|_{L_t^\infty(\dot{B}^{s}_{2,1})\cap L^1_t(\dot{B}^{s+2}_{2,1})}\lesssim t^{M-\frac{1}{2}(s-\sigma_1)} \|(v^+,v^-)\|_{\dot{B}^{\sigma_1}_{2,\infty}}.
\end{align*}
Thus, to complete the proof of Theorem \ref{thm:decay-transfer}, we prove the following propagation of $\dot{B}^{\sigma_1}_{2,\infty}$-regularity:
\begin{align*}
\sup_{t\in\mathbb{R}^+}\left\|\left(\kappa^{-\frac{1}{2}}n^+,\kappa^{-\frac{1}{2}}n^-,\nabla n^+,\nabla n^-,   u^+, u^-, v^+,v^-\right)\right\|_{\dot{B}^{\sigma_1}_{2,\infty}}<\infty.
\end{align*}
The framework we employ enables us to derive any-order uniform-in-$\kappa$ decay estimates of large solutions, essentially due to the pure parabolic (dissipative) effect of the perturbed system \eqref{system-perturb1}.

\subsection{Notations and outline}\label{notations}
 We list some notations that are used frequently throughout the paper.
For simplicity, $C$ denotes a generic positive constant. $A\lesssim B$ ($A\gtrsim B$) means that $A\leq C B$ ($A\geq C B$),  while $A\sim B$ means that both $A\lesssim  B$
and $A\gtrsim B$. 
For $A=(A_{i,j})_{1\leq i,j\leq d}$ and $B=(B_{i,j})_{1\leq i,j\leq d}$ two $d\times d$ matrices, we denote $A:B={\rm{Tr}}\,AB=\sum_{i,j}A_{i,j}B_{j,i}$. 
 For any Banach space $X$ and the functions $g,h\in X$, let $\|(g,h)\|_{X}:=\|g\|_{X}+\|h\|_{X}$. For $p\in[1, \infty]$ and
$T>0$, the notation $L^p(0, T; X)$ or $L^p_T(X)$ designates the set of measurable functions $f: [0, T]\to X$ with $t\mapsto\|f(t)\|_X$ in $L^p(0, T)$, endowed with the norm $\|\cdot\|_{L^p_{T}(X)} :=\|\|\cdot\|_X\|_{L^p(0, T)}$, and $C([0,T];X)$ denotes the set of continuous functions $f: [0, T]\to X$. Let $\mathcal{F}(f)=\widehat{f}$ and $\mathcal{F}^{-1}(f)=\breve{f}$ be the Fourier transform of $f$ and its inverse. We denote $\mathcal{P}:={\rm Id}+(-\Delta)^{-1}\nabla\operatorname{div}$ as the incompressible projector and $\mathcal{Q}=I-\mathcal{P}$ as the compressible projector, respectively.

\vspace{2mm}

For the convenience of readers,
 we shall collect some basic facts on  Littlewood-Paley theory in this section to make the paper as self-contained as possible. The reader may refer to Chapters 2 and 3 in \cite{bahouri1} for more details.

Let $\chi(\xi)$ be 
 a smooth, radial, non-increasing function that is  compactly supported in $B(0,\frac{4}{3})$ and satisfies $\chi(\xi)=1$ in $B(0,\frac{3}{4})$. Then $\varphi(\xi):=  \chi(\frac{\xi}{2})-\chi(\xi)$ verifies
$$
\sum_{j\in \mathbb{Z}}\varphi(2^{-j}\xi)=1\quad\mbox{for}\ \ \xi\neq 0, \text{and}\quad  \operatorname{Supp} \varphi\subset \Big\{\xi\in\mathbb{R}^{d}~|~\frac{3}{4}\leq |\xi|\leq \frac{8}{3}\Big\}.
$$
For any $j\in \mathbb{Z}$, we define the homogeneous dyadic blocks $\dot{\Delta}_{j}$ by
$$
\dot{\Delta}_{j}u:=  \mathcal{F}^{-1}\big{(} \varphi(2^{-j}\cdot )\mathcal{F}(u) \big{)}=2^{jd}h(2^{j}\cdot)\star u\quad\text{with}\quad h:=  \mathcal{F}^{-1}\varphi,
$$
which satisfy $\dot{\Delta}_{j}\dot{\Delta}_{\ell}u=0$ if $|j-\ell|\geq2$.  We also define the low-frequency cut-off
$$
\dot{S}_j u:= \mathcal{F}^{-1}\big{(} \varphi(2^{-j}\cdot )\mathcal{F}(u) \big{)}=\sum_{j'\leq j-1}\dot{\Delta}_j u.
$$
We have the Littlewood-Paley decomposition $u=\sum_{j\in \mathbb{Z}}\dot{\Delta}_{j}u$
that holds in $\mathcal{S}_h'$, i.e.,  tempered distributions $\mathcal{S}'$ such that $\lim\limits_{j\rightarrow-\infty}\|\dot{S}_ju\|_{L^{\infty}}=0$ holds for any $u\in \mathcal{S}_h'$.

With the above  dyadic operators, we recall the definition of homogeneous Besov spaces.

\begin{defn}
For $s\in \mathbb{R}$ and $1\leq p,r\leq \infty$, the  homogeneous Besov norm $\|\cdot\|_{\dot{B}^{s}_{p,r}}$ is defined by
$$
\|u\|_{\dot{B}^{s}_{p,r}}:=  \|\{2^{js}\|\dot{\Delta}_{j}u\|_{L^{p}}\}_{j\in\mathbb{Z}}\|_{\ell^{r}}.
$$
\begin{itemize}
\item If $s<\frac{d}{p}$ {\rm(}or $s=\frac{d}{p}$ and $r=1${\rm)}, we define the Besov space $\dot{B}^{s}_{p,r}:=  \{u\in\mathcal{S}_{h}'~:~\|u\|_{\dot{B}^{s}_{p,r}}<\infty\}$.
\item If $k\in \mathbb{N}$ and $\frac{d}{p}+k\leq s<\frac{d}{p}+k+1$ {\rm(}or $s=\frac{d}{p}+k+1$ and $r=1${\rm)}, then we define $\dot{B}^{s}_{p,r}$ as the subset of $u$ in $\mathcal{S}_{h}'$ such that $\nabla^{\beta} u$ belongs to $\dot{B}^{s-k}_{p,r}$ whenever $|\beta|=k$.
\end{itemize}
\end{defn}
Here and below, all functional spaces
will be considered in
$\mathbb{R}^{d}$, we shall omit the space dependence for simplicity. Due to Bernstein's inequalities, there are  well-known  properties in Besov spaces.
\begin{lemma}\label{lemma62}
The following properties hold{\rm:}
\begin{itemize}
\item{} For any $s\in\mathbb{R}$, $1\leq p_{1}\leq p_{2}\leq \infty$ and $1\leq r_{1}\leq r_{2}\leq \infty$, it holds
\begin{equation}\nonumber
\begin{aligned}
\dot{B}^{s}_{p_{1},r_{1}}\hookrightarrow \dot{B}^{s-d\left(\frac{1}{p_{1}}-\frac{1}{p_{2}}\right)}_{p_{2},r_{2}}.
\end{aligned}
\end{equation}
\item{} For any $1\leq p\leq q\leq\infty$, we have the following chain of continuous embedding{\rm:}
\begin{equation}\nonumber
\begin{aligned}
\dot{B}^{0}_{p,1}\hookrightarrow L^{p}\hookrightarrow \dot{B}^{0}_{p,\infty}\hookrightarrow \dot{B}^{\sigma}_{q,\infty} \quad \text{for} \quad\sigma=-d\left(\frac{1}{p}-\frac{1}{q}\right)<0.
\end{aligned}
\end{equation}

\item{}
For any $\sigma\in \mathbb{R}$, the Fourier multiplier $\Lambda^{\sigma}$ with symbol $|\xi|^{\sigma}$ is an isomorphism from $\dot{B}^{s}_{p,r}$ to $\dot{B}^{s-\sigma}_{p,r}$.
\item{} Let $1\leq p_{1},p_{2},r_{1},r_{2}\leq \infty$, $s_{1}\in\mathbb{R}$ and $s_{2}\in\mathbb{R}$ satisfy
    \begin{align}
    s_{2}<\frac{d}{p_{2}}\quad\text{\text{or}}\quad s_{2}=\frac{d}{p_{2}}~\text{and}~r_{2}=1.\nonumber
    \end{align}
    The space $\dot{B}^{s_{1}}_{p_{1},r_{1}}\cap \dot{B}^{s_{2}}_{p_{2},r_{2}}$ endowed with the norm $\|\cdot \|_{\dot{B}^{s_{1}}_{p_{1},r_{1}}}+\|\cdot\|_{\dot{B}^{s_{2}}_{p_{2},r_{2}}}$ is a Banach space. 
\end{itemize}
\end{lemma}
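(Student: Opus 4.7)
The plan is to derive all four items as consequences of Bernstein's inequalities together with routine Littlewood--Paley bookkeeping. Since the statement is a compendium of standard facts, I will not repeat the proof of Bernstein's inequalities but take them as given: for $u\in\mathcal{S}'_h$ with Fourier support in an annulus $\{|\xi|\sim 2^j\}$, one has $\|\nabla^{k} u\|_{L^{p_2}}\sim 2^{jk}\,2^{jd(\frac{1}{p_1}-\frac{1}{p_2})}\|u\|_{L^{p_1}}$ for $p_1\leq p_2$ (and analogously on a ball for low-frequency blocks).

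For the first item, I would apply Bernstein blockwise to obtain
$2^{j(s-d(\frac{1}{p_1}-\frac{1}{p_2}))}\|\dot{\Delta}_j u\|_{L^{p_2}}\lesssim 2^{js}\|\dot{\Delta}_j u\|_{L^{p_1}}$,
and then conclude via the elementary inclusion $\ell^{r_1}(\mathbb{Z})\hookrightarrow \ell^{r_2}(\mathbb{Z})$ valid for $r_1\leq r_2$. For the chain in the second item, $\dot{B}^{0}_{p,1}\hookrightarrow L^{p}$ follows from the Littlewood--Paley reconstruction $u=\sum_{j}\dot{\Delta}_j u$ in $\mathcal{S}'_h$ and the triangle inequality in $L^p$; the middle inclusion $L^p\hookrightarrow \dot{B}^{0}_{p,\infty}$ follows from Young's convolution inequality applied to $\dot{\Delta}_j u=2^{jd}h(2^j\cdot)\star u$, since $\|2^{jd}h(2^j\cdot)\|_{L^1}=\|h\|_{L^1}$ is independent of $j$; and the last inclusion $\dot{B}^{0}_{p,\infty}\hookrightarrow \dot{B}^{\sigma}_{q,\infty}$ is just the first item specialized to $r_1=r_2=\infty$, $s=0$.

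For the third item, I would exploit that on each dyadic shell the Fourier multiplier $|\xi|^{\sigma}$ is essentially a scalar: writing $\widetilde{\varphi}\in \mathcal{D}(\mathbb{R}^d\setminus\{0\})$ with $\widetilde{\varphi}\equiv 1$ on $\mathrm{Supp}\,\varphi$, we have $\dot{\Delta}_j \Lambda^{\sigma} u=2^{j\sigma}\,g_\sigma(2^{-j}\cdot)\star \dot{\Delta}_j u$ with $g_\sigma:=\mathcal{F}^{-1}(|\cdot|^{\sigma}\widetilde{\varphi})\in \mathcal{S}$, so Young's inequality yields $\|\dot{\Delta}_j \Lambda^{\sigma} u\|_{L^p}\sim 2^{j\sigma}\|\dot{\Delta}_j u\|_{L^p}$; the isomorphism between $\dot{B}^{s}_{p,r}$ and $\dot{B}^{s-\sigma}_{p,r}$ follows immediately, with $\Lambda^{-\sigma}$ providing the inverse.

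The last item is the only one with a genuine subtlety, namely completeness of a homogeneous Besov space as a space of tempered distributions modulo the low-frequency indeterminacy: the assumption $s_2<\frac{d}{p_2}$, or $s_2=\frac{d}{p_2}$ with $r_2=1$, is precisely what guarantees that the low-frequency series $\sum_{j\leq 0}\dot{\Delta}_j u$ converges in $\mathcal{S}'_h$ and hence that $\dot{B}^{s_2}_{p_2,r_2}$ is a genuine Banach subspace of $\mathcal{S}'_h$ rather than a mere semi-normed quotient. Granting this (cf.\ Chapter~2 of \cite{bahouri1}), if $(u_n)$ is Cauchy for $\|\cdot\|_{\dot{B}^{s_1}_{p_1,r_1}}+\|\cdot\|_{\dot{B}^{s_2}_{p_2,r_2}}$, it converges in $\mathcal{S}'_h$ to some $u$ (by completeness of $\dot{B}^{s_2}_{p_2,r_2}$); passing to the limit blockwise in $n$ and applying Fatou's lemma at the level of the $\ell^{r_1}$ and $\ell^{r_2}$ sums then shows $u\in \dot{B}^{s_1}_{p_1,r_1}\cap \dot{B}^{s_2}_{p_2,r_2}$ and $u_n\to u$ in the sum norm. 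I expect this completeness verification to be the only nonroutine step; everything else is a direct application of Bernstein and $\ell^r$-monotonicity.
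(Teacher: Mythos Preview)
Your proof sketch is correct and follows the standard route via Bernstein's inequalities and Littlewood--Paley bookkeeping. Note, however, that the paper does not actually prove this lemma: it is stated as a compendium of ``well-known properties in Besov spaces'' with an implicit reference to Chapters~2 and~3 of \cite{bahouri1}, so there is no paper proof to compare against. Your argument is essentially the one found in that reference.
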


\section{Proof of Theorem \ref{thm:global-existence}}\label{section2}

\subsection{Uniform a-priori estimates}
The proof of Theorem \ref{thm:global-existence} relies on uniform a priori estimates for the solution. Throughout this section, we simplify the notation by suppressing the superscript $\kappa$ on the unknowns for brevity.

We define the energy-dissipation functional
\begin{align}\label{definition-E}
    \mathcal{E}_t:=\left\|\left(\kappa^{-\frac{1}{2}}n^+,\kappa^{-\frac{1}{2}}n^+,\nabla n^+,\nabla n^-,u^+,u^-\right)\right\|_{L^\infty_t(\dot{B}_{2,1}^{\frac{d}{2}-1})\cap L^1_t(\dot{B}_{2,1}^{\frac{d}{2}+1})}
\end{align}
and its initial value
\begin{align}\label{definition-E0}
    \mathcal{E}_0:=\left\|\left(\kappa^{-\frac{1}{2}}n_0^+,\kappa^{-\frac{1}{2}}n_0^+,\nabla n_0^+,\nabla n_0^-,u_0^+,u_0^-\right)\right\|_{\dot{B}_{2,1}^{\frac{d}{2}-1}}.
\end{align}
The dispersive estimates of $(n^\pm,\mathcal{Q}u^\pm)$, together with error bounds for $\mathcal{P}u^{\pm}-v^\pm$, play a key role in controlling  \eqref{definition-E} without assuming small initial perturbations. Introduce the dispersion functional
\begin{align}\label{definition-D}
\mathcal{D}_t:=\left\|\left(\kappa^{-\frac{1}{2}}n^+,\kappa^{-\frac{1}{2}}n^+,\nabla n^+,\nabla n^-,\mathcal{Q}u^+,\mathcal{Q}u^-\right)\right\|_{L^2_t(\dot{B}_{p,1}^{\frac{d}{p}})}
\end{align}
and the error functional
\begin{align}\label{definition-W}
    \mathcal{W}_t:=\|(\mathcal{P}u^{+}-v^+,\mathcal{P}u^{-}-v^-)\|_{L^\infty_t(\dot{B}_{2,1}^{\frac{d}{2}-1})\cap L^1_t(\dot{B}_{2,1}^{\frac{d}{2}+1})}.
\end{align}
We also denote the functional of the limiting solution $v^\pm$ by 
\begin{align}\label{definition-V}
    \mathcal{V}_t:=\| (v^+,v^-)\|_{L^\infty_t(\dot{B}_{2,1}^{\frac{d}{2}-1})}+\| (v^+,v^-)\|_{L^1_t(\dot{B}_{2,1}^{\frac{d}{2}+1})}.
\end{align}

We now establish uniform a-priori estimates, carefully tracking their dependence on the parameter $\kappa$.

\begin{proposition}\label{prop1}
    Let $d\geq2 $ and $\kappa>1$. Suppose that $(n^\pm,u^\pm)$ is the solution of the Cauchy problem \eqref{system-perturb1} defined on $[0,T)\times \mathbb{R}^d$. Then, for all $t\in [0,T)$, 
 the following estimates 
    \begin{align}
           &\mathcal{E}_t\leq C_0e^{C_0\mathcal{V}_t}\bigg(  \mathcal{E}_0+\mathcal{W}_t\mathcal{E}_t+ (1+\mathcal{E}_t)^{\frac{d}{2}+1}\big(\mathcal{E}_t \mathcal{D}_t+\kappa^{-\frac{1}{2}}\mathcal{E}_t^2\big) \Bigg),\label{pri-1}\\
& \mathcal{D}_t\leq C_0  \kappa^{-\delta}\Big(\mathcal{E}_0 + \mathcal{E}_t+ (1+\mathcal{E}_t)^{\frac{d}{2}+1} \mathcal{E}_t^2\Big) \label{pri-21}
\end{align}
    and 
\begin{equation}\label{pri-2}
    \begin{aligned}
        \mathcal{W}_t \leq C_0e^{C_0(\mathcal{V}_t+\mathcal{E}_t^2)}\left(\|(\mathcal {P} u^+_{0,\kappa}-v^+_0,\mathcal {P} u^-_{0,\kappa}-v^-_0)\|_{\dot{B}^{\frac{d}{2}-1}_{2,1}} + (1+\mathcal{E}_t)^{\frac{d}{2}+1}\big(\mathcal{E}_t \mathcal{D}_t+\kappa^{-\frac{1}{2}}\mathcal{E}_t^2\big)\right)
    \end{aligned}
\end{equation}
hold, where $C_0>0$ is a constant uniform in $t$ and $\kappa$.
\end{proposition}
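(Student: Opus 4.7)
The plan is to prove the three bounds in parallel by localizing the system \eqref{system-perturb1} with the dyadic blocks $\dot\Delta_j$, constructing a frequency-by-frequency Lyapunov functional for the energy-dissipation estimate, exploiting the Navier-Stokes-Korteweg decoupling \eqref{NSK_1}--\eqref{NSK_2} together with the Gross-Pitaevskii reformulation \eqref{GP} for the dispersive estimate, and using maximal regularity for the Lamé/heat semigroup to handle the error functional. Throughout, the ubiquitous difficulty is the convective term $u^\pm\cdot\nabla u^\pm$, which has no intrinsic smallness; following the discussion after (1.29), I would systematically split
\[
\dot\Delta_j(u^\pm\cdot\nabla u^\pm)=v^\pm\cdot\nabla\dot\Delta_j u^\pm+[\dot\Delta_j,v^\pm]\cdot\nabla u^\pm+\dot\Delta_j\bigl(\mathcal{Q}u^\pm\cdot\nabla u^\pm\bigr)+\dot\Delta_j\bigl((\mathcal{P}u^\pm-v^\pm)\cdot\nabla u^\pm\bigr),
\]
absorbing the first two pieces by $\operatorname{div}v^\pm=0$ and a commutator estimate (producing the $e^{C\mathcal{V}_t}$ prefactor via Grönwall), and bounding the last two by the dispersive/error functionals $\mathcal{D}_t$ and $\mathcal{W}_t$.

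For \eqref{pri-1}, I would test the localized momentum equations by $u^\pm_j$ and the density equations by the natural diagonal symmetrizer suggested by the pressure block, then add a small multiple of the cross product $\int \nabla n^\pm_j\cdot u^\pm_j$ to create dissipation of $\nabla n^\pm$ at every frequency. Mimicking Lemma 2.2 mentioned in the strategy, this yields a Lyapunov functional $\mathcal{E}_j(t)\sim\|(\kappa^{-1/2}n^\pm_j,\nabla n^\pm_j,u^\pm_j)\|_{L^2}^2$ with
\[
\tfrac{d}{dt}\mathcal{E}_j+c_0 2^{2j}\mathcal{E}_j\lesssim \mathcal{R}_j\sqrt{\mathcal{E}_j};
\]
positivity of $\mathcal{E}_j$ is secured by $\beta_1\beta_4-\beta_2\beta_3>0$, which is exactly the sharp condition \eqref{stability}. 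Dividing by $\sqrt{\mathcal{E}_j}$, integrating in time, multiplying by $2^{j(d/2-1)}$, and summing in $j$, I get the left-hand side of \eqref{pri-1}, while the nonlinear remainders $\mathcal{R}_j$ arising from $F_1,\dots,F_4$ in \eqref{F1F2F3F4} are handled by standard Besov product and composition laws (Appendix A): terms linear in $(\nabla n,\mathcal{Q}u)$ give $\mathcal{E}_t\mathcal{D}_t$; the $\widetilde\psi,\widetilde Q,\widetilde\phi$ contributions contribute the $(1+\mathcal{E}_t)^{d/2+1}$ prefactor via composition; the convection term contributes $\mathcal{W}_t\mathcal{E}_t+e^{C\mathcal{V}_t}$; and quadratic Korteweg-type leftovers (e.g.\ $\nabla|\nabla n|^2$) yield the $\kappa^{-1/2}\mathcal{E}_t^2$ term.

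For \eqref{pri-21}, I would pass to the decoupled unknowns $(N_1^+,M_1^+)$ and $(N_2^-,M_2^-)$ using \eqref{N+N-M+M-}, then to the complex-valued $z_1^+, z_2^-$ from \eqref{Z1Z2}, which satisfy the two Gross-Pitaevskii-type equations \eqref{GP}. Applying the Strichartz/dispersive estimates for these operators (as recorded in Appendix B), I would obtain the $L^2_t(\dot B^{d/p}_{p,1})$ bound with prefactor $\kappa^{-\delta}$ with $(\delta,p)$ as in \eqref{condi-p}; reverting via \eqref{mmmm} recovers the bound for $(\kappa^{-1/2}n^\pm,\nabla n^\pm,\mathcal{Q}u^\pm)$ in $\mathcal{D}_t$. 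The source terms $G_1^\pm,G_2^\pm$ involve the same nonlinearities already classified above, and the product/composition laws give the same structural control $\mathcal{E}_t+(1+\mathcal{E}_t)^{d/2+1}\mathcal{E}_t^2$ on the right-hand side.

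For \eqref{pri-2}, I would apply $\mathcal{P}$ to the momentum equations in \eqref{system-perturb1} and subtract \eqref{INS} to obtain the heat equations for $\tilde u^\pm=\mathcal{P}u^\pm-v^\pm$ written out in the strategy, with forcing $\mathcal{P}\bar F_2,\mathcal{P}\bar F_4$. Maximal regularity for $\partial_t-\Delta$ in $L^1_t(\dot B^{d/2+1}_{2,1})\cap L^\infty_t(\dot B^{d/2-1}_{2,1})$, combined with a Grönwall argument on the transport contribution $v^\pm\cdot\nabla\tilde u^\pm$ (giving the $e^{C\mathcal{V}_t}$ factor), yields \eqref{pri-2}; the remaining source pieces fall exactly under the same product/composition/commutator estimates, producing the same right-hand side structure as in \eqref{pri-1}, with the initial data term replaced by $\|\mathcal{P}u_{0,\kappa}^\pm-v_0^\pm\|_{\dot B^{d/2-1}_{2,1}}$. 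The main obstacle, as already emphasized, is that none of the cubic or higher terms from $F_2,F_4$ can be absorbed by smallness of initial data: the closure relies crucially on pairing the $\kappa^{-\delta}$ gain of $\mathcal{D}_t$ and $\mathcal{W}_t$ against one factor of the energy $\mathcal{E}_t$ appearing in each nonlinear bound, which is exactly what allows the bootstrap in the subsequent global existence proof to go through for $\kappa$ sufficiently large.
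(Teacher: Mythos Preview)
Your overall architecture is correct and matches the paper exactly: Lyapunov functional with cross term for \eqref{pri-1}, Strichartz on the Gross--Pitaevskii reformulation \eqref{GP} for \eqref{pri-21}, and heat maximal regularity plus Gr\"onwall for \eqref{pri-2}. The convection splitting you describe is precisely how the paper generates the $e^{C\mathcal{V}_t}$ factor and the $\mathcal{W}_t\mathcal{E}_t$ and $\mathcal{E}_t\mathcal{D}_t$ contributions.

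There is, however, one genuine gap in your treatment of \eqref{pri-1}. You write that all remainders $F_1,\dots,F_4$ are handled by ``standard Besov product and composition laws,'' but this fails for the top-order quasi-linear Korteweg pieces
\[
-\sqrt{\kappa}\,\widetilde{\psi}(\kappa^{-1/2}n^\pm)\,\operatorname{div}u^\pm\quad\text{(first part of }F_1,F_3)\qquad\text{and}\qquad
F_2^7,F_4^7=\sqrt{\kappa}\,\nabla\bigl(\widetilde{\psi}(\kappa^{-1/2}n^\pm)\,\Delta n^\pm\bigr).
\]
A direct product estimate on $F_2^7$ in $L^1_t(\dot B^{d/2-1}_{2,1})$ gives, after using $\widetilde\psi(0)=0$, a bound of order $\|\nabla n\|_{L^\infty_t(\dot B^{d/2-1}_{2,1})}\|\nabla n\|_{L^1_t(\dot B^{d/2+1}_{2,1})}\sim\mathcal{E}_t^2$ with \emph{no} factor of $\kappa^{-1/2}$ or $\mathcal{D}_t$---so the large-data bootstrap cannot close. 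The paper's cure (Lemma~2.1, equations \eqref{energy4}--\eqref{energy5}) is a quasi-linear symmetrization: when one tests $\nabla F_1$ against $\nabla n_j^\pm$ and $F_2^7$ against $u_j^\pm$, the principal parts $\sqrt{\kappa}\int\widetilde\psi\,\operatorname{div}u_j\,\Delta n_j$ cancel exactly, leaving only the commutators $\sqrt{\kappa}\,[\dot\Delta_j,\widetilde\psi(\kappa^{-1/2}n^\pm)](\operatorname{div}u^\pm+\Delta n^\pm)$. These are then estimated via the $L^p$-based commutator lemma \eqref{Com:1} with $p_1=p$, which produces $\|\nabla n^\pm\|_{L^2_t(\dot B^{d/p}_{p,1})}\lesssim\mathcal{D}_t$ and hence the required $(1+\mathcal{E}_t)^{d/2+1}\mathcal{E}_t\mathcal{D}_t$. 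This cancellation is not needed for \eqref{pri-21}, since the Strichartz prefactor $\kappa^{-\delta}$ already absorbs a raw $\mathcal{E}_t^2$ there; but it is indispensable for \eqref{pri-1}.

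A minor misattribution: the term $\nabla|\nabla n^\pm|^2$ ($=F_2^6,F_4^6$) actually contributes to $\mathcal{E}_t\mathcal{D}_t$ (via $\|\nabla n\|_{L^2_t(\dot B^{d/p}_{p,1})}\|\nabla n\|_{L^2_t(\dot B^{d/2}_{2,1})}$), not to $\kappa^{-1/2}\mathcal{E}_t^2$; the latter comes from $F_2^2,\dots,F_2^5$, where the composition $\widetilde\phi(\kappa^{-1/2}n),\widetilde Q(\kappa^{-1/2}n),\widetilde g_i(\kappa^{-1/2}n^+,\kappa^{-1/2}n^-)$ supplies an explicit $\kappa^{-1/2}$.
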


We split the proof of Proposition \ref{prop1} into Lemmas \ref{lemma22}, \ref{lemma23} and \ref{lemma24}.

\subsection{Uniform dissipative estimates for \texorpdfstring{$(\kappa^{-\frac{1}{2}}n^\pm, n^\pm,  u^\pm)$}{(n±, Q u±)}}

Before establishing uniform estimates of $\mathcal{E}_t$, we first employ hypocoercivity in Fourier space and construct localized Lyapunov inequalities that exhibit fully diffusive dissipation of the solution.

\begin{lemma}\label{lemma21}
For any $j\in \mathbb{Z}$, there exists a Lyapunov functional $\mathcal{E}_j(t)$ and a generic constant $c_0>0$ such that
\begin{align}\label{ineqforbasicenergy0}
\mathcal{E}_j(t)\sim \left\| \big(\kappa^{-\frac{1}{2}}n_j^+,\kappa^{-\frac{1}{2}}n_j^-,\nabla n_j^+,\nabla n_j^-,  u_j^+, u_j^+\big)\right\|_{L^2}^2
\end{align}
and
\begin{equation}\label{ineqforbasicenergy1}
    \begin{aligned}
        \frac{{\rm d}}{{\rm d}t} \mathcal{E}_j(t)+c_0 2^{2j}\mathcal{E}_j(t)&\lesssim \mathcal{R}_j\sqrt{\mathcal{E}_j(t)},
    \end{aligned}
\end{equation}
where $(n_j^\pm, u_j^\pm):=(\dot{\Delta}_jn^\pm,\dot{\Delta}_ju^\pm)$ and
\begin{equation*}
\begin{aligned}
\mathcal{R}_j:&=\Big\|\dot{\Delta}_j\big(\kappa^{-\frac{1}{2}}F_1,\kappa^{-\frac{1}{2}}F_3,\sum_{l=1}^6 F_2^l ,\sum_{l=1}^6 F_4^l\big)\Big\|_{L^2}\\
        & \quad+ \|(v^+,v^-)\|_{\dot{B}_{2,1}^{\frac{d}{2}+1}} \|(\kappa^{-\frac{1}{2}}n_j^+,\kappa^{-\frac{1}{2}}n_j^-,\nabla n_j^+,\nabla n_j^-,   u_j^+,  u_j^+)\|_{L^2}\\
        &\quad+2^j\|\dot{\Delta}_j ((u^+-v^+)\cdot\nabla n^+)\|_{L^2}+2^j\|\dot{\Delta}_j ((u^--v^-)\cdot\nabla n^-)\|_{L^2}\\
        &\quad+2^j\sqrt{\kappa}\|[\dot{\Delta}_j, \widetilde{\psi}(\kappa^{-\frac12}n^+)](\operatorname{div}u^++\Delta n^+)\|_{L^2}+2^j\sqrt{\kappa}\|[\dot{\Delta}_j, \widetilde{\psi}(\kappa^{-\frac12}n^-)](\operatorname{div}u^-+\Delta n^-)\|_{L^2}\\
        &\quad+ \|[\dot{\Delta}_j,v^+\cdot\nabla](\kappa^{-\frac{1}{2}}n^+,\nabla n^+,u^+)\|_{L^2}+\|[\dot{\Delta}_j,v^-\cdot\nabla](\kappa^{-\frac{1}{2}}n^-,\nabla n^-,u^-)\|_{L^2}.
\end{aligned}
\end{equation*}
\end{lemma}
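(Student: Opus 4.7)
The plan is to carry out localized $L^2$ energy estimates on the decoupled Navier--Stokes--Korteweg systems \eqref{NSK_1}--\eqref{NSK_2}, and then add a hypocoercivity correction in the spirit of Bresch--Desjardins to gain density dissipation. The decoupling is available precisely because the sharp stability condition \eqref{stability} forces $r_\pm>0$ (since $\beta_1\beta_4-\beta_2\beta_3>0$); any Lyapunov inequality proved for $(N_i,M_i)$ transfers back to the original unknowns via \eqref{mmmm} and delivers the equivalence \eqref{ineqforbasicenergy0}.

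For the basic energy, after applying $\dot\Delta_j$ I would test the mass equations in \eqref{NSK_1}--\eqref{NSK_2} against $\tfrac{r_\pm}{\kappa}N_{\cdot,j}-\Delta N_{\cdot,j}$ and the momentum equations against $M_{\cdot,j}$. All couplings coming from $\sqrt{\kappa}\operatorname{div}M$, $\tfrac{r_\pm}{\sqrt{\kappa}}\nabla N$ and $\sqrt{\kappa}\nabla\Delta N$ cancel pairwise by integration by parts, leaving
\begin{equation*}
\tfrac{1}{2}\tfrac{d}{dt}\mathcal{E}_j^{(0)} + 2\|\nabla M_{\cdot,j}\|_{L^2}^2 = (\text{source terms}),\qquad \mathcal{E}_j^{(0)}:=\tfrac{r_\pm}{\kappa}\|N_{\cdot,j}\|_{L^2}^2+\|\nabla N_{\cdot,j}\|_{L^2}^2+\|M_{\cdot,j}\|_{L^2}^2.
\end{equation*}
Since this only dissipates $M_{\cdot,j}$, I would subtract a small multiple $\epsilon\int\nabla N_{\cdot,j}\cdot M_{\cdot,j}\,dx$ of the standard cross term; differentiating in time and using the equations produces the good contributions $\tfrac{r_\pm}{\sqrt{\kappa}}\|\nabla N_{\cdot,j}\|_{L^2}^2+\sqrt{\kappa}\|\Delta N_{\cdot,j}\|_{L^2}^2$ together with the ``bad'' terms $\sqrt{\kappa}\|\operatorname{div}M_{\cdot,j}\|_{L^2}^2$ and $2\langle\nabla N_{\cdot,j},\Delta M_{\cdot,j}\rangle$. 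By Bernstein's inequality ($\|\Delta f_j\|_{L^2}\simeq 2^{2j}\|\nabla f_j\|_{L^2}$) and $\kappa\ge 1$, one checks that
\begin{equation*}
\tfrac{r_\pm}{\sqrt{\kappa}}\|\nabla N_{\cdot,j}\|_{L^2}^2+\sqrt{\kappa}\|\Delta N_{\cdot,j}\|_{L^2}^2\gtrsim 2^{2j}\Big(\tfrac{1}{\kappa}\|N_{\cdot,j}\|_{L^2}^2+\|\nabla N_{\cdot,j}\|_{L^2}^2\Big)
\end{equation*}
uniformly in $j$ and $\kappa$, while the bad pieces are absorbed into $\|\nabla M_{\cdot,j}\|_{L^2}^2$ by choosing $\epsilon$ small but independent of $\kappa$. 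Setting $\mathcal{E}_j:=\mathcal{E}_j^{(0)}-\epsilon\int\nabla N_{\cdot,j}\cdot M_{\cdot,j}$ then yields \eqref{ineqforbasicenergy1}.

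For the right-hand side, the inhomogeneous contributions from $F_1,F_3$ and $\sum_{\ell=1}^{6}(F_2^\ell,F_4^\ell)$ are handled by Cauchy--Schwarz, producing the first line of $\mathcal{R}_j$. The transport term $v^\pm\!\cdot\nabla$ is split as $v^\pm\!\cdot\nabla\dot\Delta_j(\cdot)+[\dot\Delta_j,v^\pm\!\cdot\nabla](\cdot)$; the first piece vanishes in the diagonal part of the energy since $\operatorname{div}v^\pm=0$, and contributes to the cross-term pairing $\int\nabla N_{\cdot,j}\cdot M_{\cdot,j}$ only through a Moser-type factor $\|(v^+,v^-)\|_{\dot B^{d/2+1}_{2,1}}\sqrt{\mathcal{E}_j}$, while the second piece is the commutator listed in $\mathcal{R}_j$. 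The subtle items are the quasilinear capillary pieces $F_2^7,F_4^7=\sqrt{\kappa}\nabla(\widetilde{\psi}(\kappa^{-1/2}n^\pm)\Delta n^\pm)$: commuting $\dot\Delta_j$ past $\widetilde{\psi}$ splits them into a principal part $\widetilde{\psi}\,\dot\Delta_j\Delta n^\pm$, which renormalizes the Korteweg symbol by $1+\widetilde{\psi}$ (a controlled perturbation once $\kappa$ is large since $\|n^\pm\|_{L^\infty}\le C\sqrt{\mathcal{E}_j}$), and the remainder $\sqrt{\kappa}[\dot\Delta_j,\widetilde{\psi}](\operatorname{div}u^\pm+\Delta n^\pm)$, which is exactly the penultimate line of $\mathcal{R}_j$.

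The main obstacle is enforcing uniformity of $c_0$ in both $\kappa$ and $j$: the capillary, viscous and density channels enter the dissipation at three different scales $\sqrt{\kappa}\,2^{2j}$, $2^{2j}$ and $\kappa^{-1/2}$, and a single choice of the hypocoercivity coefficient $\epsilon$ must simultaneously control the low-, intermediate- and high-frequency regimes for all $\kappa\ge 1$; handling the quasilinear capillary term without losing one derivative (which the original Korteweg structure $-\sqrt{\kappa}\nabla\Delta n^\pm$ provides only in the linearization) is where the bookkeeping is most delicate.
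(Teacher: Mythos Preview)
Your decoupled-NSK route is viable, but there is one genuine error in the hypocoercivity step and two complications that the paper avoids by staying in the original variables $(n^\pm,u^\pm)$.

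The substantive error is the scaling of the cross-term weight. With a \emph{constant} $\epsilon$, the bad piece from differentiating $\epsilon\int\nabla N_{\cdot,j}\cdot M_{\cdot,j}$ is $\epsilon\sqrt{\kappa}\,\|\operatorname{div}M_{\cdot,j}\|_{L^2}^2$, which cannot be absorbed by $2\|\nabla M_{\cdot,j}\|_{L^2}^2$ uniformly as $\kappa\to\infty$; you are forced to take $\epsilon\lesssim\kappa^{-1/2}$. The correct choice is $\epsilon=\delta_1/\sqrt{\kappa}$ with $\delta_1>0$ a small absolute constant. With this rescaling the bad term becomes $\delta_1\|\operatorname{div}M_{\cdot,j}\|_{L^2}^2$ (harmless), the good contributions become $\delta_1\tfrac{r_\pm}{\kappa}\|\nabla N_{\cdot,j}\|_{L^2}^2+\delta_1\|\Delta N_{\cdot,j}\|_{L^2}^2\sim\delta_1 2^{2j}\bigl(\|\kappa^{-1/2}N_{\cdot,j}\|_{L^2}^2+\|\nabla N_{\cdot,j}\|_{L^2}^2\bigr)$, and the equivalence $\mathcal{E}_j\sim\mathcal{E}_j^{(0)}$ survives since $\tfrac{\delta_1}{\sqrt{\kappa}}|\langle\nabla N_{\cdot,j},M_{\cdot,j}\rangle|\le\tfrac{\delta_1}{2}(\|\nabla N_{\cdot,j}\|_{L^2}^2+\|M_{\cdot,j}\|_{L^2}^2)$ for $\kappa\ge1$. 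This single rescaling dissolves the ``three-scale'' obstacle you flag at the end; it is exactly the paper's choice $\tfrac{\delta_1}{\sqrt{\kappa}}\int u_j^\pm\cdot\nabla n_j^\pm\,dx$.

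Two further points where the paper's direct approach is cleaner. First, the decoupled variables $(N_i,M_i)$ only recover $\mathcal{Q}u^\pm$ via \eqref{mmmm}; you must append a separate (easy) heat estimate for $\mathcal{P}u_j^\pm$ to get the full \eqref{ineqforbasicenergy0}. Second, transport does not respect the decoupling: since $v^+\neq v^-$, the evolution of $N_1^+$ carries $\beta_3\,v^+\!\cdot\!\nabla n^+ +(r_+-\beta_1)\,v^-\!\cdot\!\nabla n^-$ rather than $v\!\cdot\!\nabla N_1^+$, so the divergence-free cancellation only kills the diagonal part and leaves cross terms to estimate. The paper sidesteps both by running the energy directly on $(n^\pm,u^\pm)$ with weights $\tfrac{\beta_1}{\beta_2},\tfrac{\beta_4}{\beta_3},\tfrac{1}{\beta_2},\tfrac{1}{\beta_3}$; the pressure cross-coupling then forces the extra term $\kappa^{-1}\!\int n_j^+n_j^-\,dx$ into $\mathcal{E}_j$, and positivity follows from $\beta_1\beta_4-\beta_2\beta_3>0$ by completing the square. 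Finally, the quasilinear capillary pieces are handled not by renormalization but by an exact cancellation: testing $\sqrt{\kappa}\,\nabla\dot{\Delta}_j(\widetilde{\psi}\operatorname{div}u^\pm)$ against $\nabla n_j^\pm$ in the $\nabla n$-energy and $\sqrt{\kappa}\,\dot{\Delta}_j\nabla(\widetilde{\psi}\Delta n^\pm)$ against $u_j^\pm$ in the $u$-energy produces the same principal part $\sqrt{\kappa}\int\widetilde{\psi}\,\operatorname{div}u_j^\pm\,\Delta n_j^\pm\,dx$ with opposite signs, leaving only the two commutators listed in $\mathcal{R}_j$.
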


\begin{proof}By applying the Fourier localization operator $\dot{\Delta}_j$ to \eqref{system-perturb1}, we rewrite the coupling system of $(n^\pm,  u^\pm)$ as 
\begin{equation}\label{sys1}
\left\{
\begin{aligned}
&\partial_t n_j^++v^+\cdot\nabla n_j^++ \sqrt{\kappa}\text{div} u_j^+ = \dot{\Delta}_jF_1-[\dot{\Delta}_j,v^+\cdot\nabla]n^+, \\
&\partial_t   u_j^++   v^+\cdot\nabla   u_j^+  -  \Delta u_j^+-\nabla\operatorname{div} u_j^+ + \frac{\beta_1}{\sqrt{\kappa}} \nabla  n_j^+ +\frac{\beta_2}{\sqrt{\kappa}}\nabla  n_j^-  - \sqrt{\kappa} \nabla\Delta  n_j^+ = \dot{\Delta}_j F_2- [ \dot{\Delta}_j,v^+\cdot\nabla] u^+, \\
&\partial_t  n_j^-+v^-\cdot\nabla n_j^- + \sqrt{\kappa}\text{div}u_j^- = \dot{\Delta}_jF_3-[\dot{\Delta}_j,v^-\cdot\nabla]n^-, \\
&\partial_t   u_j^-  +v^-\cdot\nabla   u_j^-- \Delta u_j^--\nabla\operatorname{div} u_j^- + \frac{\beta_3}{\sqrt{\kappa}} \nabla  n_j^+ +\frac{\beta_4}{\sqrt{\kappa}}\nabla   n_j^-  - \sqrt{\kappa} \nabla\Delta n_j^- =\dot{\Delta}_j F_4-[  \dot{\Delta}_j,v^-\cdot\nabla]u^-.
\end{aligned}
\right.
\end{equation}
Taking the $L^2$ inner product of $\eqref{sys1}_1$ and $\eqref{sys1}_3$ with $n_j^+$ and  $n_j^-$, respectively, we have
\begin{equation}\label{energy6}
    \begin{aligned}
       & \frac{{\rm d}}{{\rm d}t}\Big(  \frac{\beta_1}{2\beta_2\kappa}\|n_j^+\|_{L^2}^2 + \frac{\beta_4}{2\beta_3\kappa}\|n_j^-\|_{L^2}^2 \Big)\\
       &\qquad+ \frac{\beta_1}{\beta_2\sqrt{\kappa}} \int_{\mathbb{R}^d} \operatorname{div}   u_j^+ \, n_j^+ \, {\rm d}x+ \frac{\beta_4}{\beta_3\sqrt{\kappa}} \int_{\mathbb{R}^d} \operatorname{div}   u_j^- \, n_j^- \, {\rm d}x\\
        &=-\frac{\beta_1}{\beta_2\kappa} \int_{\mathbb{R}^d} \dot{\Delta}_j F_1 n_j^+   {\rm d}x-\frac{\beta_4}{\beta_3\kappa} \int_{\mathbb{R}^d} \dot{\Delta}_j  F_3 n_j^-   {\rm d}x -\frac{\beta_1}{\beta_2\kappa} \int_{\mathbb{R}^d}([\dot{\Delta}_j,v\cdot\nabla]n^++v\cdot\nabla n_j^+)n_j^+{\rm d}x\\
        &\quad-\frac{\beta_4}{\beta_3\kappa} \int_{\mathbb{R}^d}([\dot{\Delta}_j,v^+\cdot\nabla]n^-+v^+\cdot\nabla n_j^-)n_j^- {\rm d}x.
    \end{aligned}
\end{equation}
Since $\operatorname{div}v^\pm=0$, we have 
\begin{equation}\label{energy7}
    \begin{aligned}
        &-\kappa^{-\frac{1}{2}} \int_{\mathbb{R}^d}( \nabla n_j^- \cdot   u_j^+ + \nabla n_j^+ \cdot   u_j^- )\, {\rm d}x\\
       = &\kappa^{-\frac{1}{2}} \int_{\mathbb{R}^d}(  n_j^- \operatorname{div}  u_j^+ +  n_j^+ \operatorname{div}   u_j^- )\, {\rm d}x\\
        =&-\frac{1}{\kappa}\frac{{\rm d}}{{\rm d}t}\int_{\mathbb{R}^d} n_j^-n_j^+{\rm d}x+ \frac{1}{\kappa}\int_{\mathbb{R}^d}\Big(\dot{\Delta}_jF_3 n_j^++\dot{\Delta}_jF_1 n_j^- \Big){\rm d}x\\
        &\quad + \int_{\mathbb{R}^d}([\dot{\Delta}_j,v^+\cdot\nabla]n^++v^+\cdot\nabla n_j^+)n_j^-{\rm d}x+ \int_{\mathbb{R}^d}([\dot{\Delta}_j,v^-\cdot\nabla]n^-+v^-\cdot\nabla n_j^-)n_j^+{\rm d}x.
    \end{aligned}
\end{equation}
Imposing the gradient on the continuity equations of \eqref{sys1}, we have 
\begin{equation}\label{sys2}
\left\{
\begin{aligned}
&\partial_t \nabla n_j^+ + v^+\cdot \nabla^2 n_j^++\sqrt{\kappa}\nabla\operatorname{div}  u_j^+\\
&\quad=\nabla\dot{\Delta}_j(-\sqrt{\kappa}\widetilde{\psi}(\kappa^{-\frac{1}{2}}n^+)\text{div}u^+-(u^+-v^+)\cdot\nabla n^+)-[\Delta_j,v^+]\nabla^2 n^+,\\
&\partial_t \nabla n_j^- + v^-\cdot  \nabla^2  n_j^-+ \sqrt{\kappa}\nabla\operatorname{div}   u_j^- \\
&\quad= \nabla\dot{\Delta}_j(-\sqrt{\kappa}\widetilde{\psi}(\kappa^{-\frac{1}{2}}n^-)\text{div}u^--(u^--v^-)\cdot\nabla n^-)-[\Delta_j,v^+]\nabla^2 n^+.
%&\partial_t \dot{\Delta}_j\mathcal{Q}u^- -  \Delta\dot{\Delta}_j\mathcal{Q}u^-+ \frac{\beta_3}{\sqrt{\kappa}} \nabla \dot{\Delta}_j n^+ + \frac{\beta_4}{\sqrt{\kappa}}\nabla \dot{\Delta}_j n^-  - \sqrt{\kappa} \nabla\Delta \dot{\Delta}_jn^- =\dot{\Delta}_j\mathcal{Q}F_4.
\end{aligned}
\right.
\end{equation}
Taking the $L^2$ inner product of $(\ref{sys2})_1$ and  $(\ref{sys2})_2$ with $\nabla n_j^+$ and $\nabla n_j^-$, respectively, we have 
\begin{equation}\label{energy1}
    \begin{aligned}
        &\frac{1}{2} \frac{{\rm d}}{{\rm d}t}\Big( \frac{1}{\beta_2}\|\nabla n_j^+\|_{L^2}^2 +\frac{1}{\beta_3}\|\nabla n_j^-\|_{L^2}^2\Big)+  \frac{\sqrt{\kappa}}{\beta_2}\int_{\mathbb{R}^d} \Delta   u_j^+ \cdot \nabla n_j^+ \, {\rm d}x+ \frac{\sqrt{\kappa}}{\beta_3} \int_{\mathbb{R}^d} \Delta   u_j^- \cdot \nabla n_j^- \, {\rm d}x \\
        =&  \frac{1}{\beta_2}\int_{\mathbb{R}^d} \nabla \dot{\Delta}_j(-\sqrt{\kappa}\widetilde{\psi}(\kappa^{-\frac{1}{2}}n^+)\text{div}u^+-(u^+-v^+)\cdot\nabla n^+)\cdot \nabla n_j^+ \, {\rm d}x\\
         +& \frac{1}{\beta_3} \int_{\mathbb{R}^d} \nabla \dot{\Delta}_j(-\sqrt{\kappa}\widetilde{\psi}(\kappa^{-\frac{1}{2}}n^-)\text{div}u^--(u^--v^-)\cdot\nabla n^-) \cdot \nabla n_j^+ \, {\rm d}x\\
         - &\frac{1}{\beta_2}\int_{\mathbb{R}^d} (v^+\cdot \nabla^2 n_j^++[\Delta_j,v^+]\nabla^2 n^+)\cdot\nabla n_j^+{\rm d}x   - \frac{1}{\beta_3}\int_{\mathbb{R}^d} (v^-\cdot \nabla^2 n_j^-+[\Delta_j,v^-]\nabla^2 n^-)\cdot\nabla n_j^-{\rm d}x. 
    \end{aligned}
\end{equation}
Similarly, it follows from $(\ref{sys1})_2$ and  $(\ref{sys1})_4$ that
\begin{equation}\label{energy2}
    \begin{aligned}
        &\frac{1}{2} \frac{{\rm d}}{{\rm d}t}\Big( \frac{1}{\beta_2}\|  u_j^+\|_{L^2}^2+ \frac{1}{\beta_3}\| u_j^-\|_{L^2}^2 \Big) + \frac{1}{\beta_2}\|\nabla   u_j^+\|_{L^2}^2+\frac{1}{\beta_3}\|\nabla   u_j^-\|_{L^2}^2 + \frac{1}{\beta_2}\|\operatorname{div}   u_j^+\|_{L^2}^2+\frac{1}{\beta_3}\|\operatorname{div}   u_j^-\|_{L^2}^2\\
         -& \frac{\sqrt{\kappa}}{\beta_2}\int_{\mathbb{R}^d}\nabla\Delta n_j^+ \cdot    u_j^+\, {\rm d}x-  \frac{\sqrt{\kappa}}{\beta_3}\int_{\mathbb{R}^d}\nabla\Delta n_j^- \cdot    u_j^-\, {\rm d}x 
        + \frac{\beta_1}{\beta_2\sqrt{\kappa}} \int_{\mathbb{R}^d} \nabla n_j^+ \cdot   u_j^+ \, {\rm d}x+\frac{\beta_4}{\beta_3\sqrt{\kappa}} \int_{\mathbb{R}^d} \nabla n_j^- \cdot   u_j^- \, {\rm d}x\\
        =&-\frac{1}{\beta_2}\int_{\mathbb{R}^d} (v^+\cdot\nabla u_j^++[ \dot{\Delta}_j,v^+\cdot\nabla]u_j^+)\cdot  u_j^+{\rm d}x -\frac{1}{\beta_3}\int_{\mathbb{R}^d} (v^-\cdot\nabla u_j^-+[ \dot{\Delta}_j,v^-\cdot\nabla]u_j^-)\cdot   u_j^-{\rm d}x\\
        & +\frac{1}{\beta_2} \int_{\mathbb{R}^d} \dot{\Delta}_j F_2 \cdot   u_j^+ \, {\rm d}x+ \frac{1}{\beta_3}\int_{\mathbb{R}^d} \dot{\Delta}_j F_4 \cdot   u_j^- \, {\rm d}x-\kappa^{-\frac{1}{2}} \int_{\mathbb{R}^d} \nabla n_j^- \cdot   u_j^+ \, {\rm d}x-\kappa^{-\frac{1}{2}} \int_{\mathbb{R}^d} \nabla n_j^+ \cdot   u_j^- \, {\rm d}x.
    \end{aligned}
\end{equation}

Next, to capture the dissipation of  $\nabla n^\pm$ and $\Delta n^\pm$,  one takes the $L^2$ inner product of  $  (\ref{sys2})_1$ and  $ (\ref{sys1})_2$ with $\frac{1}{\beta_2\sqrt{\kappa}}   u_j^+$ and $ \frac{1}{\beta_2\sqrt{\kappa}}  \nabla  n_j^+$, respectively, and makes use of $(\ref{sys2})$:
\begin{equation}\label{energy3}
    \begin{aligned}
& \kappa^{-\frac{1}{2}}\frac{{\rm d}}{{\rm d}t} \int_{\mathbb{R}^d} \Big( \frac{1}{\beta_2}  u_j^+ \cdot \nabla n_j^++ \frac{1}{\beta_3} u_j^- \cdot \nabla n_j^-\Big) \, {\rm d}x 
-\frac{1}{\beta_2} \|\nabla   u_j^+\|_{L^2}^2-\frac{1}{\beta_3} \|\nabla   u_j^-\|_{L^2}^2 \\
&\qquad +\frac{2}{\beta_2\sqrt{\kappa}}\int_{\mathbb{R}^d}\nabla  u_j^+ \cdot \nabla^2 n_j^+ {\rm d}x +\frac{2}{\beta_2\sqrt{\kappa}}\int_{\mathbb{R}^d} \nabla  u_j^- \cdot \nabla^2 n_j^- {\rm d}x\\
&\qquad+\frac{\beta_1}{\beta_2\kappa}\|\nabla n_j^+\|_{L^2}^2 + \frac{\beta_4}{\beta_3\kappa}\|\nabla n_j^-\|_{L^2}^2 
+\frac{2}{\kappa}\int_{\mathbb{R}^d}  \nabla \dot{\Delta}_j n^- \cdot  \nabla \dot{\Delta}_j n^+\, {\rm d}x\\
&\qquad+ \frac{1}{\beta_2}\|\Delta n_j^+\|_{L^2}^2+\frac{1}{\beta_3}\|\Delta n_j^-\|_{L^2}^2 \\
&\quad= \kappa^{-\frac{1}{2}} \int_{\mathbb{R}^d} \nabla \dot{\Delta}_j(-\sqrt{\kappa}\widetilde{\psi}(\kappa^{-\frac{1}{2}}n^+)\text{div}u^+-(u^+-v^+)\cdot\nabla n^+)\cdot   u_j^+ \, {\rm d}x 
 \\
&\qquad+\kappa^{-\frac{1}{2}} \int_{\mathbb{R}^d} \nabla \dot{\Delta}_j(-\sqrt{\kappa}\widetilde{\psi}(\kappa^{-\frac{1}{2}}n^-)\text{div}u^--(u^--v^-)\cdot\nabla n^-)\cdot   u_j^- \, {\rm d}x \\
&\qquad+ \frac{1}{\beta_2\sqrt{\kappa}}\int_{\mathbb{R}^d}\dot{\Delta}_j  F_2 \cdot \nabla n_j^+ \, {\rm d}x+ \frac{1}{\beta_3\sqrt{\kappa}}\int_{\mathbb{R}^d}\dot{\Delta}_j  F_4 \cdot \nabla n_j^- \, {\rm d}x.
\end{aligned}
\end{equation}
Notice that the quasi-linear terms $-\sqrt{\kappa}\widetilde{\psi}(\kappa^{-\frac12}n^\pm)\operatorname{div}u^\pm$  and $\sqrt{\kappa}\nabla(\widetilde{\psi}(\kappa^{-\frac12}n^\pm) \Delta n^\pm)$ satisfy
\begin{equation}\label{energy4}
    \begin{aligned}
        &-\sqrt{\kappa}\int_{\mathbb{R}^d} \dot{\Delta}_j\nabla (\widetilde{\psi}(\kappa^{-\frac12}n^\pm)\operatorname{
        div}u^\pm)\cdot \nabla n_j^\pm {\rm d}x\\ &\quad=\sqrt{\kappa}\int_{\mathbb{R}^d}[\dot{\Delta}_j,\widetilde{\psi}(\kappa^{-\frac12}n^\pm)]\operatorname{div}u^\pm \Delta n^\pm_j{\rm d}x+\sqrt{\kappa}\int_{\mathbb{R}^d} \widetilde{\psi}(\kappa^{-\frac12}n^\pm)\operatorname{div} u_j^\pm \Delta n_j^\pm {\rm d}x
    \end{aligned}
\end{equation}
and 
\begin{equation}\label{energy5}
    \begin{aligned}
        &\sqrt{\kappa}\int_{\mathbb{R}^d} \dot{\Delta}_j\nabla (\widetilde{\psi}(\kappa^{-\frac12}n^\pm) \Delta n^\pm)\cdot\nabla  u_j^\pm {\rm d}x\\
        &\quad=-\sqrt{\kappa}\int_{\mathbb{R}^d}[\dot{\Delta}_j,\widetilde{\psi}(\kappa^{-\frac12}n^\pm)]\Delta n^\pm\operatorname{div} u_j^\pm {\rm d}x-\sqrt{\kappa}\int_{\mathbb{R}^d}\widetilde{\psi}(\kappa^{-\frac12}n^\pm)\operatorname{div} u_j^\pm \Delta n^\pm {\rm d}x,
    \end{aligned}
\end{equation}
which exhibit  a cancellation between \eqref{energy1} and \eqref{energy3}.

For any $j\in\mathbb{Z}$, we now define the Lyapunov functional
\begin{align*}
    \mathcal{E}_j(t)&=\frac{\beta_1}{2\beta_2}\|\kappa^{-\frac{1}{2}}n_j^+\|_{L^2}^2 + \frac{\beta_4}{2\beta_3}\|\kappa^{-\frac{1}{2}}n_j^-\|_{L^2}^2 +\frac{1}{\kappa}\int_{\mathbb{R}^d}n_j^+ n^-_j {\rm d}x+ \frac{1}{2\beta_2}\|\nabla n_j^+\|_{L^2}^2 +  \frac{1}{2\beta_3}\|\nabla n_j^-\|_{L^2}^2\nonumber\\
    &+\frac{1}{2\beta_2}\|  u_j^+\|_{L^2}^2+ \frac{1}{2\beta_3}\|  u_j^-\|_{L^2}^2+ \frac{\delta_1}{\sqrt{\kappa}} \int_{\mathbb{R}^d} \left( \frac{1}{\beta_2}  u_j^+ \cdot \nabla n_j^++ \frac{1}{\beta_3}  u_j^-\cdot \nabla n_j^- \, \right){\rm d}x\\.
\end{align*}
and observe the facts
\begin{equation}
    \begin{aligned}
       &\frac{\beta_1}{\beta_2}
       \|\kappa^{-\frac{1}{2}}n_j^+\|_{L^2}^2 + \frac{\beta_4}{\beta_3}\|\kappa^{-\frac{1}{2}}n_j^-\|_{L^2}^2 +\frac{2}{\kappa}\int_{\mathbb{R}^d}n_j^+ n^-_j {\rm d}x\\
       =&\left\|\sqrt{\frac{\beta_1}{\beta_2}}\kappa^{-\frac{1}{2}}n_j^++\sqrt{\frac{\beta_2}{\beta_1}}\kappa^{-\frac{1}{2}}n_j^-\right\|^2_{L^2}+\frac{\beta_1\beta_4-\beta_2\beta_3}{\beta_1\beta_3}\|\kappa^{-\frac{1}{2}} n^-_j\|^2_{L^2}\label{2.20}
    \end{aligned}
\end{equation}
and
\begin{equation}
    \begin{aligned}
       &\frac{\beta_1}{\beta_2}\|\nabla n_j^+\|_{L^2}^2 + \frac{\beta_4}{\beta_3}\|\nabla n_j^-\|_{L^2}^2 +\frac{2}{\kappa}\int_{\mathbb{R}^d}  \nabla n^-_j \cdot  \nabla n^+_j\, {\rm d}x\\
       =&\left\|\sqrt{\frac{\beta_1}{\beta_2}}\nabla  n_j^++\sqrt{\frac{\beta_2}{\beta_1}}\nabla n_j^-\right\|^2_{L^2}+\frac{\beta_1\beta_4-\beta_2\beta_3}{\beta_1\beta_3}\|\nabla n^-_j \|^2_{L^2}.\label{2.21}
    \end{aligned}
\end{equation}
In order to promise the positivity of the functional $\mathcal{E}_j$,  we shall use the stability condition \eqref{stability} such that
$\beta_1,\beta_2,\beta_3,\beta_4>0$ and $\beta_1\beta_4>\beta_2\beta_3$. 
Then, choosing a uniform, sufficiently small constant $\delta_1>0$, we can check that \eqref{ineqforbasicenergy0} and \eqref{ineqforbasicenergy1} hold true.
\end{proof}

Building on the  Lyapunov functional estimates obtained in Lemma \ref{lemma21}, we establish the following uniform energy-dissipation estimates. 

\begin{lemma}\label{lemma22}
Under the conditions of Proposition \ref{prop1}, it holds that
\begin{equation}\label{ineqforbasicenergy_0}
\begin{aligned}
            \mathcal{E}_t
            &\leq C e^{C\mathcal{V}_t}\bigg( \mathcal{E}_0 +\mathcal{W}_t\mathcal{E}_t+ (1+\mathcal{E}_t)^{\frac{d}{2}+1}\big(\mathcal{E}_t \mathcal{D}_t+\kappa^{-\frac{1}{2}}\mathcal{E}_t^2\big) \bigg),
        \end{aligned}
    \end{equation}
    where $C>0$ is a constant independent of $t$ and $\kappa$.
\end{lemma}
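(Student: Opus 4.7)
The plan is to derive \eqref{ineqforbasicenergy_0} from the localized Lyapunov inequality \eqref{ineqforbasicenergy1} by a standard summation-in-frequency procedure followed by careful nonlinear estimates. First I would divide \eqref{ineqforbasicenergy1} by $\sqrt{\mathcal{E}_j(t)}$, use $\tfrac{d}{dt}\mathcal{E}_j=2\sqrt{\mathcal{E}_j}\,\tfrac{d}{dt}\sqrt{\mathcal{E}_j}$, and integrate on $[0,t]$ to get
\begin{equation*}
\sqrt{\mathcal{E}_j(t)} + c_0\,2^{2j}\!\int_0^t\!\sqrt{\mathcal{E}_j(\tau)}\,d\tau \;\lesssim\; \sqrt{\mathcal{E}_j(0)} + \int_0^t \mathcal{R}_j(\tau)\,d\tau.
\end{equation*}
Multiplying by $2^{j(d/2-1)}$, taking $\sup_{[0,t]}$, summing over $j\in\mathbb{Z}$, and using Minkowski to exchange $L^1_\tau$ with $\ell^1_j$, the equivalence \eqref{ineqforbasicenergy0} yields
\begin{equation*}
\mathcal{E}_t \;\lesssim\; \mathcal{E}_0 + \int_0^t \bigl\|\mathcal{R}(\tau)\bigr\|_{\dot B^{d/2-1}_{2,1}} d\tau,
\end{equation*}
where $\|\mathcal{R}\|_{\dot B^{d/2-1}_{2,1}}:=\sum_j 2^{j(d/2-1)}\mathcal{R}_j$ collects all the contributions listed after \eqref{ineqforbasicenergy1}.

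Next I would isolate the \emph{linear transport} contributions. The term $\|(v^+,v^-)\|_{\dot B^{d/2+1}_{2,1}}\|\dot\Delta_j(\kappa^{-1/2}n^\pm,\nabla n^\pm,u^\pm)\|_{L^2}$ sums to $\|(v^+,v^-)\|_{\dot B^{d/2+1}_{2,1}}\|(\kappa^{-1/2}n^\pm,\nabla n^\pm,u^\pm)\|_{\dot B^{d/2-1}_{2,1}}$, and the convection-commutator terms $\|[\dot\Delta_j,v^\pm\!\cdot\!\nabla](\kappa^{-1/2}n^\pm,\nabla n^\pm,u^\pm)\|_{L^2}$ are controlled by the same quantity via the standard commutator estimate in $\dot B^{d/2-1}_{2,1}$ (using $\mathrm{div}\,v^\pm=0$). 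Since $\|v^\pm\|_{L^1_t(\dot B^{d/2+1}_{2,1})}\le\mathcal{V}_t$, Gr\"onwall's lemma absorbs these pieces into the factor $e^{C\mathcal{V}_t}$.

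The main work lies in estimating the nonlinear remainder $\dot\Delta_j(\kappa^{-1/2}F_1,\kappa^{-1/2}F_3,\sum_{l=1}^6 F_2^l,\sum_{l=1}^6 F_4^l)$ together with the two quasi-linear commutators $\sqrt{\kappa}\|[\dot\Delta_j,\widetilde{\psi}(\kappa^{-1/2}n^\pm)](\mathrm{div}\,u^\pm+\Delta n^\pm)\|_{L^2}$. For the convective pieces $(u^\pm\!-\!v^\pm)\cdot\nabla n^\pm$ (in $F_1, F_3$) and $(u^\pm\!-\!v^\pm)\cdot\nabla u^\pm$ (in $F_2^1,F_4^1$) I would use the decomposition $u^\pm-v^\pm=\mathcal Qu^\pm+(\mathcal Pu^\pm-v^\pm)$ combined with the non-standard product law $\dot B^{d/p}_{p,1}\times\dot B^{d/2-1}_{2,1}\hookrightarrow\dot B^{d/2-1}_{2,1}$ for $2<p\le 2d/(d-2)$: the $\mathcal Qu^\pm$ part is absorbed into $\mathcal{E}_t\mathcal{D}_t$ after a Cauchy--Schwarz in time (since $\mathcal Qu^\pm\in L^2_t(\dot B^{d/p}_{p,1})$ and $\nabla n^\pm,\nabla u^\pm\in L^2_t(\dot B^{d/2}_{2,1})$), while the $\mathcal Pu^\pm-v^\pm$ part produces the $\mathcal{W}_t\mathcal{E}_t$ contribution. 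The quadratic Korteweg term $\nabla(\tfrac12|\nabla n^\pm|^2)$ is bounded by $\|\nabla n^\pm\|_{L^2_t(\dot B^{d/p}_{p,1})}\|\nabla n^\pm\|_{L^2_t(\dot B^{d/2}_{2,1})}\lesssim \mathcal{D}_t\mathcal{E}_t$. The remaining composition terms involving $\widetilde{Q},\widetilde{\phi},\widetilde{\psi},\widetilde{g}_j$ are treated by Moser composition estimates, using $\widetilde{Q}(0)=\widetilde{\phi}(0)=\widetilde{\psi}(0)=\widetilde{g}_j(0)=0$ and the smallness of $\kappa^{-1/2}n^\pm$ for $\kappa$ large; each such bound contributes a factor $(1+\mathcal{E}_t)^{d/2+1}$ from the chain rule, and an explicit $\kappa^{-1/2}$ from the pressure prefactor $\kappa^{-1/2}\nabla n^\pm$ in $F_2^{4,5},F_4^{4,5}$, giving the $\kappa^{-1/2}(1+\mathcal{E}_t)^{d/2+1}\mathcal{E}_t^2$ bound.

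The principal obstacle is the capillary commutator $\sqrt{\kappa}\|[\dot\Delta_j,\widetilde{\psi}(\kappa^{-1/2}n^\pm)](\mathrm{div}\,u^\pm+\Delta n^\pm)\|_{L^2}$, because the explicit $\sqrt{\kappa}$ threatens to blow up. The observation is that the commutator gains one derivative, so by the standard commutator estimate $\sum_j 2^{j(d/2-1)}\|[\dot\Delta_j,a]b\|_{L^2}\lesssim\|\nabla a\|_{\dot B^{d/2-1}_{2,1}}\|b\|_{\dot B^{d/2-1}_{2,1}}$, combined with Moser's composition $\|\nabla\widetilde{\psi}(\kappa^{-1/2}n^\pm)\|_{\dot B^{d/2-1}_{2,1}}\lesssim \kappa^{-1/2}(1+\mathcal{E}_t)^{d/2+1}\|\nabla n^\pm\|_{\dot B^{d/2-1}_{2,1}}$, the $\sqrt\kappa$ is exactly cancelled by the $\kappa^{-1/2}$ from $\widetilde{\psi}(\kappa^{-1/2}\cdot)$; the resulting bound is of order $\mathcal{E}_t\,\|(\mathrm{div}\,u^\pm,\Delta n^\pm)\|_{L^1_t(\dot B^{d/2-1}_{2,1})}\cdot(1+\mathcal{E}_t)^{d/2+1}$, which after a Cauchy--Schwarz in time falls into the $\mathcal{E}_t\mathcal{D}_t(1+\mathcal{E}_t)^{d/2+1}$ family. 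Collecting all contributions, inserting into the differential inequality, and applying Gr\"onwall's lemma with the integrable weight $\|(v^+,v^-)\|_{\dot B^{d/2+1}_{2,1}}$ yields \eqref{ineqforbasicenergy_0}.
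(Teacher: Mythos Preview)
Your overall architecture (divide by $\sqrt{\mathcal E_j}$, sum at level $\tfrac d2-1$, isolate the $v^\pm$-transport terms for Gr\"onwall, split $u^\pm-v^\pm=\mathcal Qu^\pm+(\mathcal Pu^\pm-v^\pm)$) matches the paper's proof. The gap is in your treatment of the capillary commutator
\[
\sqrt{\kappa}\,2^{j}\bigl\|[\dot\Delta_j,\widetilde{\psi}(\kappa^{-1/2}n^\pm)](\operatorname{div}u^\pm+\Delta n^\pm)\bigr\|_{L^2}.
\]
First, note that the extra $2^j$ in $\mathcal R_j$ forces you to sum this piece at level $\tfrac d2$, not $\tfrac d2-1$. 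More importantly, your proposed $L^2$-based commutator bound places $\nabla\widetilde{\psi}(\kappa^{-1/2}n^\pm)$ in an $L^2$-type Besov norm. After the chain rule the $\sqrt\kappa$ does cancel against $\kappa^{-1/2}$, but what remains is
\[
(1+\mathcal E_t)^{\frac d2+1}\,\|\nabla n^\pm\|_{L^2_t(\dot B^{d/2}_{2,1})}\,\|(u^\pm,\nabla n^\pm)\|_{L^2_t(\dot B^{d/2}_{2,1})}
\;\lesssim\;(1+\mathcal E_t)^{\frac d2+1}\,\mathcal E_t^{\,2},
\]
with \emph{no} small prefactor. This does \emph{not} fit into the right-hand side of \eqref{ineqforbasicenergy_0}, which requires every nonlinear contribution to carry a factor $\mathcal D_t$, $\mathcal W_t$, or $\kappa^{-1/2}$; a bare $\mathcal E_t^{\,2}$ term would force small-data, defeating the whole point of the lemma. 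Your claim that the result ``falls into the $\mathcal E_t\mathcal D_t$ family after Cauchy--Schwarz'' is incorrect: $\mathcal D_t$ is an $L^p$-based norm ($p>2$) and cannot be recovered from $L^2$-based quantities.

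The fix, and this is what the paper does in \eqref{N:com}, is to use the commutator estimate (Lemma~\ref{lemcommutator}) with $p_1=p$ (the dispersive Lebesgue index from \eqref{condi-p}), so that the coefficient lands in $\dot B^{d/p}_{p,1}$:
\[
\sqrt{\kappa}\sum_{j}2^{j\frac d2}\bigl\|[\dot\Delta_j,\widetilde{\psi}(\kappa^{-1/2}n^\pm)](\operatorname{div}u^\pm+\Delta n^\pm)\bigr\|_{L^2}
\lesssim
\bigl(1+\|n^\pm\|_{L^\infty}\bigr)^{\frac d2+1}\|\nabla n^\pm\|_{\dot B^{d/p}_{p,1}}\,\|(\nabla n^\pm,u^\pm)\|_{\dot B^{d/2}_{2,1}}.
\]
After Cauchy--Schwarz in time this yields $(1+\mathcal E_t)^{\frac d2+1}\mathcal D_t\,\mathcal E_t$, which is exactly the required form. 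The same $L^p$-trick is needed for the term $2^j\|\dot\Delta_j((u^\pm-v^\pm)\cdot\nabla n^\pm)\|_{L^2}$ in $\mathcal R_j$ (this is the paper's \eqref{N:2}), which you also did not separate from the $\kappa^{-1/2}F_1$ piece.
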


\begin{proof}
Dividing  \eqref{ineqforbasicenergy1} by $\sqrt{\mathcal{E}_j(t)+\varepsilon_*}$ with $\varepsilon_*>0$, integrating the resulting inequality over  $[0,t]$,and then letting  $\varepsilon_*\rightarrow0$, we obtain
\begin{equation}\label{ineqforbasicenergy2}
    \begin{aligned}  \sqrt{\mathcal{E}_j(t)}+2^{2j}\int_0^t\sqrt{\mathcal{E}_j(\tau)}\, {\rm d}\tau&\lesssim\sqrt{\mathcal{E}_j(0)} +\int_0^t \mathcal{R}_j \, {\rm d}\tau.
    \end{aligned}
\end{equation}
In view of \eqref{ineqforbasicenergy0}-\eqref{ineqforbasicenergy1}, this yields
%\begin{equation}
    \begin{align}\label{ineqforbasicenergy3}
        &\left\|\left(\kappa^{-\frac{1}{2}}n^+,\kappa^{-\frac{1}{2}}n^-,\nabla n^+,\nabla n^-,   u^+, u^+\right)\right\|_{L^\infty_t(\dot{B}_{2,1}^{\frac{d}{2}-1})\cap L^1_t(\dot{B}_{2,1}^{\frac{d}{2}+1})}\nonumber \\
        &\quad \lesssim \left\|\left(\kappa^{-\frac{1}{2}}n_0^+,\kappa^{-\frac{1}{2}}n_0^-,\nabla n_0^+,\nabla n_0^-,   u_0^+,  u_0^+\right)\right\|_{\dot{B}_{2,1}^{\frac{d}{2}-1} }+ \| (u^+-v^+)\cdot\nabla n^+\|_{L^1_t(\dot{B}_{2,1}^{\frac{d}{2}})}\nonumber\\
          &\quad \quad + \| (u^--v^-)\cdot\nabla n^-\|_{L^1_t(\dot{B}_{2,1}^{\frac{d}{2}})}
      +\bigg\| \Big(\kappa^{-\frac{1}{2}}F_1,\kappa^{-\frac{1}{2}} F_3, \sum_{l=1}^6 F_2^l ,\sum_{l=1}^6 F_4^l\Big)\bigg\|_{ L^1_t(\dot{B}_{2,1}^{\frac{d}{2}-1})}\\
      &\quad \quad  +\int_0^t \|(v^+,v^-)\|_{\dot{B}_{2,1}^{\frac{d}{2}+1}} \|(\kappa^{-\frac{1}{2}}n^+,\kappa^{-\frac{1}{2}}n^-,\nabla n^+,\nabla n^-,   u^+,  u^+ )\|_{\dot{B}_{2,1}^{\frac{d}{2}-1}}{\rm d}\tau \nonumber\\
       &\quad\quad +\sqrt{\kappa}\int_0^t\sum_{j\in \mathbb{Z}}2^{j\frac{d}{2}}\|[\dot{\Delta}_j, \widetilde{\psi}(\kappa^{-\frac12}n^+)](\operatorname{div}u^++\Delta n^+)\|_{L^2}{\rm d}\tau\nonumber\\
       &\qquad+\sqrt{\kappa}\int_0^t\sum_{j\in \mathbb{Z}}2^{j\frac{d}{2}}\|[\dot{\Delta}_j, \widetilde{\psi}(\kappa^{-\frac12}n^-)](\operatorname{div}u^-+\Delta n^-)\|_{L^2}{\rm d}\tau\nonumber.
    \end{align}
%\end{equation}
Applying Gr\"onwall inequality then gives 
%\begin{equation}
    \begin{align}\label{estimatefor-DS}
        \mathcal{E}_t &\leq C e^{\widetilde{C}\mathcal{V}_t}\bigg( \left\|\left(\kappa^{-\frac{1}{2}}n_0^+,\kappa^{-\frac{1}{2}}n_0^-,\nabla n_0^+,\nabla n_0^-,   u_0^+, u_0^+\right)\right\|_{\dot{B}_{2,1}^{\frac{d}{2}-1} }+\| (u^+-v^+)\cdot\nabla n^+\|_{L^1_t(\dot{B}_{2,1}^{\frac{d}{2}})}\nonumber\\
          &\quad\quad\quad\quad\quad\quad\quad+\| (u^--v^-)\cdot\nabla n^-\|_{L^1_t(\dot{B}_{2,1}^{\frac{d}{2}})}+\Big\| \big(\kappa^{-\frac{1}{2}}F_1,\kappa^{-\frac{1}{2}}F_3, \sum_{l=1}^6 F_2^l ,\sum_{l=1}^6 F_4^l\big)\Big\|_{ L^1_t(\dot{B}_{2,1}^{\frac{d}{2}-1})}\nonumber\\
      &\quad \quad\quad\quad\quad\quad\quad
        +\sqrt{\kappa}\int_0^t\sum_{j\in \mathbb{Z}}2^{j\frac{d}{2}}\|[\dot{\Delta}_j, \widetilde{\psi}(\kappa^{-\frac12}n^+)](\operatorname{div}u^++\Delta n^+)\|_{L^2}{\rm d}\tau \\
        &\quad \quad\quad\quad\quad\quad\quad+\sqrt{\kappa}\int_0^t\sum_{j\in \mathbb{Z}}2^{j\frac{d}{2}}\|[\dot{\Delta}_j, \widetilde{\psi}(\kappa^{-\frac12}n^-)](\operatorname{div}u^-+\Delta n^-)\|_{L^2}{\rm d}\tau\bigg)\nonumber. 
    \end{align}
%\end{equation}

We now turn to the nonlinear terms in \eqref{estimatefor-DS}. We first bound the contributions involving $F_1$ and $F_3$. 
Since $p$  in \eqref{condi-p} satisfies $2\leq p\leq \frac{2d}{d-2}$ ($d\geq2$), and the product maps $L^2_t(\dot{B}^{\frac{d}{p}}_{p,1})\times L^2_t(\dot{B}^{\frac{d}{2}-1}_{2,1})$ into $L^1_t(\dot{B}^{\frac{d}{2}-1}_{2,1})$ (see \eqref{uv4}), we arrive at
\begin{equation*}
\begin{aligned}
    &\quad\kappa^{-\frac{1}{2}} \|(u^\pm-v^\pm)\cdot \nabla n^\pm\|_{L^1_t(\dot{B}_{2,1}^{\frac{d}{2}-1})}\\
    &\lesssim  \kappa^{-\frac{1}{2}} \|(\mathcal{P}u^\pm-v^\pm)\cdot \nabla n^\pm\|_{L^1_t(\dot{B}_{2,1}^{\frac{d}{2}-1})}+ \kappa^{-\frac{1}{2}} \|\mathcal{Q}u^\pm\cdot \nabla n^\pm\|_{L^1_t(\dot{B}_{2,1}^{\frac{d}{2}-1})}\\
    &\lesssim \|\mathcal{P}u^\pm-v^\pm\|_{L^2_t(\dot{B}_{p,1}^{\frac{d}{p}})}\| \kappa^{-\frac{1}{2}}\nabla n^\pm \|_{L^2_t(\dot{B}_{2,1}^{\frac{d}{2}-1})} +\|\mathcal{Q}u^\pm\|_{L^2_t(\dot{B}_{p,1}^{\frac{d}{p}})}\| \kappa^{-\frac{1}{2}}\nabla n^\pm \|_{L^2_t(\dot{B}_{2,1}^{\frac{d}{2}-1})}.
\end{aligned}
\end{equation*}
Since $\widetilde{\psi}(s)$ is   smooth  with $\widetilde{\psi}(0)=0$, by \eqref{uv4} and \eqref{F0}, we also get
\begin{equation*}
\begin{aligned}
       \|\widetilde{\psi}(\kappa^{-\frac12}n^\pm)\operatorname{div} u^\pm\|_{L^1_t(\dot{B}_{2,1}^{\frac{d}{2}-1})}&\lesssim \|\widetilde{\psi}(\kappa^{-\frac12}n^\pm)\|_{L^2(\dot{B}_{p,1}^{\frac{d}{p}}) }\|  u^\pm\|_{L^2_t(\dot{B}_{2,1}^{\frac{d}{2}})}\\
      &\lesssim \Big(1+\|n^\pm\|_{L^{\infty}_t(L^{\infty})}\Big)^{\frac{d}{2}+1}\kappa^{-\frac{1}{2}} \|n^\pm\|_{L^2_t(\dot{B}_{p,1}^{\frac{d}{p}}) }\ \|  u^\pm\|_{L^2_t(\dot{B}_{2,1}^{\frac{d}{2}})}.
\end{aligned}
\end{equation*}
Recalling that $L^{\infty}_t(\dot{B}^{\frac{d}{q}-1}_{q,1})\cap L^{1}_t(\dot{B}^{\frac{d}{q}+1}_{q,1})\hookrightarrow L^{2}_t(\dot{B}^{\frac{d}{q}}_{q,1})$ ($1\leq q\leq \infty)$, we conclude that
\begin{align}\label{N:F1F3}
\big\| \kappa^{-\frac{1}{2}}(F_1,F_3)\big\|_{ L^1_t(\dot{B}_{2,1}^{\frac{d}{2}-1})}\lesssim   (1+\mathcal{E}_t)^{\frac{d}{2}+1}\mathcal{E}_t \mathcal{D}_t +\mathcal{W}_t\mathcal{E}_t.
\end{align}
Similarly, by  \eqref{uv1},  
\begin{equation}\label{N:2}
\begin{aligned}
 &\quad\|(u^\pm-v^\pm)\cdot\nabla n^\pm\|_{ L^1_t(\dot{B}_{2,1}^{\frac{d}{2}})}\\
 &\lesssim \|u^\pm-v^\pm\|_{L_t^2(L^{\infty})}\|\nabla n^\pm\|_{L^2_t(\dot{B}^{\frac{d}{2}}_{2,1})}+\|\nabla n^\pm\|_{L_t^2(L^{\infty})} \|u^\pm-v^\pm\|_{L^2_t(\dot{B}^{\frac{d}{2}}_{2,1})}\\
 &\lesssim \Big(\|\mathcal{Q}u^\pm\|_{L^2_t(\dot{B}^{\frac{d}{p}}_{p,1})}+\|\mathcal{P}u^\pm-v^\pm\|_{L^2_t(\dot{B}^{\frac{d}{p}}_{p,1})}+\|n^\pm\|_{L^2_t(\dot{B}^{\frac{d}{p}+1}_{p,1})}\Big)\|(\nabla n^\pm, u^\pm)\|_{L^2_t(\dot{B}^{\frac{d}{2}}_{2,1})}\\
 &\lesssim   \mathcal{E}_t \mathcal{D}_t +\mathcal{W}_t\mathcal{E}_t.
\end{aligned}
\end{equation}

We now handle the terms involving $F_2=\sum\limits_{i=1}^6 F_2^i$ and $F_4=\sum\limits_{i=1}^6 F_2^i$. For  $F_2^1$ and $F_4^1$, by the product law \eqref{uv4}, 
\begin{equation*}
\begin{aligned}
    \|(u^\pm-v^\pm)\cdot\nabla u^\pm \|_{ L^1_t(\dot{B}_{2,1}^{\frac{d}{2}-1})}&\lesssim \|u^\pm-v^\pm\|_{L^2_t(\dot{B}_{p,1}^{\frac{d}{p}})} \|\nabla u^\pm\|_{L^2_t(\dot{B}_{2,1}^{\frac{d}{2}-1})}\\
    &\lesssim \Big(\|\mathcal{Q}u^\pm\|_{L^2_t(\dot{B}_{p,1}^{\frac{d}{p}})}+\|\mathcal{P}u^\pm-v^\pm\|_{L^2_t(\dot{B}_{p,1}^{\frac{d}{p}})}\Big) \|u^\pm\|_{L^2_t(\dot{B}_{2,1}^{\frac{d}{2}})}.
  %  &\quad\lesssim  \kappa^{-\delta} \mathcal{E}_t \mathcal{D}_t +\mathcal{W}_t\mathcal{E}_t.
\end{aligned}
\end{equation*}
For   $F_2^2$ and $F_4^2$, using \eqref{uv2} and \eqref{F0},
\begin{equation*}
 \begin{aligned}
\Big\|\operatorname{div}\big(2\widetilde{\phi}(\kappa^{-\frac{1}{2}}n^\pm)\mathbb{D}u^\pm\big)\Big\|_{L^1_t(\dot{B}_{2,1}^{\frac{d}{2}-1})}&\lesssim  \|\widetilde{\phi}(\kappa^{-\frac{1}{2}}n^\pm)\|_{L^\infty_t(\dot{B}_{2,1}^{\frac{d}{2}})} \|\mathbb{D}u^\pm\|_{L^1_t(\dot{B}_{2,1}^{\frac{d}{2}})}\\
&\lesssim \Big(1+\|n^\pm\|_{L^{\infty}_t(L^{\infty})}\Big)^{\frac{d}{2}+1} \kappa^{-\frac{1}{2}}\|n^\pm\|_{L^\infty_t(\dot{B}_{2,1}^{\frac{d}{2}})}\|u^\pm\|_{L^1_t(\dot{B}_{2,1}^{\frac{d}{2}+1})} .
\end{aligned}
\end{equation*}
Similarly, for  $F_2^3$ and $F_4^3$, 
 \begin{align*}
    \|\nu \widetilde{Q}( \kappa^{-\frac{1}{2}}n^\pm)\Delta u^\pm\|_{L^1_t(\dot{B}_{2,1}^{\frac{d}{2}-1})}\lesssim 
    \Big(1+\|n^\pm\|_{L^{\infty}_t(L^{\infty})}\Big)^{\frac{d}{2}+1} \kappa^{-\frac{1}{2}}\|n^\pm\|_{L^\infty_t(\dot{B}_{2,1}^{\frac{d}{2}})}\| u^\pm\|_{L^1_t(\dot{B}_{2,1}^{\frac{d}{2}+1})} .
\end{align*}
Using \eqref{uv4} and Lemma \eqref{lemma64} on the estimates for the multi-component composite functions $\widetilde{g}_1^+(\kappa^{-\frac{1}{2}}n^+,\kappa^{-\frac{1}{2}}n^-)$, we also have
\begin{align*}
   &\quad \|\widetilde{g}_1^+(\kappa^{-\frac{1}{2}}n^+,\kappa^{-\frac{1}{2}}n^-)\kappa^{-\frac{1}{2}} \nabla n^+\|_{L^1_t(\dot{B}_{2,1}^{\frac{d}{2}-1})}\\
  & \lesssim \Big(1+\|(n^+,n^-)\|_{L^{\infty}_t(L^{\infty})}\Big)^{\frac{d}{2}+1} \kappa^{-\frac{1}{2}}\|(n^+,n^-)\|_{L^2 (\dot{B}_{p,1}^{\frac{d}{p}})} \|\kappa^{-\frac{1}{2}}n^+\|_{L^2_t(\dot{B}_{2,1}^{\frac{d}{2}})}
\end{align*}
and
\begin{equation*}
    \begin{aligned}
       &\quad\| \widetilde{g}_2(\kappa^{-\frac{1}{2}}n^+,\kappa^{-\frac{1}{2}}n^{-})\nabla \kappa^{-\frac{1}{2}}n^{-}\|_{L^1_t(\dot{B}_{2,1}^{\frac{d}{2}-1})}\\
       &\lesssim \Big(1+\|(n^+,n^-)\|_{L^{\infty}_t(L^{\infty})}\Big)^{\frac{d}{2}+1}  \kappa^{-\frac{1}{2}}\|(n^+,n^-)\|_{L^2_t (\dot{B}_{p,1}^{\frac{d}{p}})} \|\kappa^{-\frac{1}{2}}n^{-}\|_{L^2_t(\dot{B}_{2,1}^{\frac{d}{2}})}.
    \end{aligned}
\end{equation*}
For the nonlinear term $F_2^6$ and $F_4^6$, we similarly obtain
\begin{align*}
    \|\nabla(|\nabla n^\pm|^2\|_{L^1_t(\dot{B}_{2,1}^{\frac{d}{2}-1})}\lesssim \|\nabla n^\pm\|_{L_t^2(\dot{B}_{p,1}^{\frac{d}{p}})}\|\nabla n^\pm\|_{L^2_t(\dot{B}_{2,1}^{\frac{d}{2}})}.
\end{align*}
Collecting these bounds, we infer
\begin{align}\label{N:F2F4}
    \Big\| \big(\sum_{l=1}^6 F_2^l ,\sum_{l=1}^6 F_4^l\big)\Big\|_{L^1_t(\dot{B}_{2,1}^{\frac{d}{2}-1})}&\lesssim \mathcal{W}_t\mathcal{E}_t+  (1+\mathcal{E}_t)^{\frac{d}{2}+1}(\mathcal{E}_t \mathcal{D}_t+\kappa^{-\frac{1}{2}}\mathcal{E}_t^2).
\end{align}
For the commutator terms,     Lemmas \ref{Lemma5-7} and \ref{lemcommutator} give
\begin{equation}\label{N:com}
    \begin{aligned}
        &\sqrt{\kappa}\int_0^t\sum_{j\in \mathbb{Z}}2^{j\frac{d}{2}}\|[\dot{\Delta}_j, \widetilde{\psi}(\kappa^{-\frac12}n^\pm)](\operatorname{div} u^\pm+\Delta n^\pm)\|_{L^2}{\rm d}\tau\\
        \lesssim & \Big(1+\|n^\pm\|_{L^{\infty}_t(L^{\infty})}\Big)^{\frac{d}{2}+1} \|\nabla n^\pm\|_{L^2_t(\dot{B}^{\frac{d}{p}}_{p,1})} \|(\nabla n^\pm, u^\pm )\|_{L^2_t(\dot{B}_{2,1}^{\frac{d}{2}})}\\
      \lesssim &  (1+\mathcal{E}_t)^{\frac{d}{2}+1}\mathcal{D}_t\mathcal{E}_t.
    \end{aligned}
\end{equation}
Combining  \eqref{estimatefor-DS}-\eqref{N:com}, we finally obtain  \eqref{ineqforbasicenergy_0}, which completes the proof of Lemma \ref{lemma22}.
\end{proof}

\subsection{Dispersive estimates of \texorpdfstring{$(n^\pm,\mathcal Q u^\pm)$}{(n±,Qu±)} }

We are in a position to capture the dispersive effect and establish the uniform estimates for the functional   $\mathcal{D}_t$.

\begin{lemma}\label{lemma23}
Under the conditions of Proposition \ref{prop1}, it holds that
\begin{equation}\label{es:dissersive}
\begin{aligned}
            \mathcal{D}_t
            &\lesssim \kappa^{-\delta}\Big(\mathcal{E}_0 + \mathcal{E}_t+ (1+\mathcal{E}_t)^{\frac{d}{2}+1} \mathcal{E}_t^2\Big) .
        \end{aligned}
    \end{equation}
\end{lemma}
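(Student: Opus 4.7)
The plan is to transfer the desired dispersive bound from the two decoupled Gross--Pitaevskii-type equations \eqref{GP} back to $(n^\pm, \mathcal{Q}u^\pm)$ through the linear change of variables \eqref{N+N-M+M-}--\eqref{Z1Z2}. First, I would apply the dyadic projector $\dot{\Delta}_j$ to \eqref{GP} and invoke the Strichartz/dispersive estimates for the linear propagator $e^{it(\sqrt{\kappa}H_\pm + i\nu\Delta)}$ that will be established in Appendix~\ref{appendixB}. The mechanism is classical for Gross--Pitaevskii type equations (cf. \cite{GNT206,GNT209}): the symbol $\sqrt{\kappa}H_\pm(\xi)=\sqrt{\kappa|\xi|^2(r_\pm\kappa^{-1}+|\xi|^2)}$ is non-degenerate, and its Hessian provides decay in $L^p$ with an explicit gain of $\kappa^{-\delta}$ for the parameter range \eqref{condi-p}. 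Combining this with the parabolic part and the standard $T T^*$-interpolation gives, at the level of each Littlewood--Paley block,
\begin{equation*}
\|z_1^+,z_2^-\|_{L^2_t(\dot B^{\frac{d}{p}}_{p,1})}
\;\lesssim\;
\kappa^{-\delta}\Big(\|(z_{1,0}^+,z_{2,0}^-)\|_{\dot B^{\frac{d}{2}-1}_{2,1}}
+\|(\text{RHS of }\eqref{GP})\|_{L^1_t(\dot B^{\frac{d}{2}-1}_{2,1})}\Big),
\end{equation*}
with the $\kappa^{-\delta}$ prefactor arising from the non-stationary phase of $H_\pm$.

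Second, I would rewrite $U_\pm^{-1}\nabla N_1^\pm$ and $\mathcal{Q} M_{1,2}^\pm$ in terms of $(n^\pm, \mathcal{Q}u^\pm)$ via \eqref{mmmm}, noting that $U_\pm = \sqrt{-\Delta/(r_\pm\kappa^{-1}-\Delta)}$ is a Fourier multiplier of order $0$ which is bounded on every $\dot B^s_{p,1}$ uniformly in $\kappa$, while $U_\pm^{-1}\nabla$ behaves like $\nabla$ at high frequencies and like $\sqrt{\kappa}$ times a bounded operator at low frequencies. This allows me to reconstruct $\|(\nabla n^\pm, \mathcal{Q}u^\pm)\|_{L^2_t(\dot B^{\frac{d}{p}}_{p,1})}$ and, after using the Bernstein inequality together with the embedding chain from Lemma~\ref{lemma62}, also $\|\kappa^{-\frac{1}{2}}n^\pm\|_{L^2_t(\dot B^{\frac{d}{p}}_{p,1})}$, each paying at most an $O(1)$ constant.

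Third, I would bound the source terms $G_1^\pm, G_2^\pm$ appearing in \eqref{NSK_1}--\eqref{NSK_2} in $L^1_t(\dot B^{\frac{d}{2}-1}_{2,1})$. These are essentially the same nonlinearities as $F_1,F_2,F_3,F_4$, augmented by the convective contributions $v^\pm\cdot\nabla(n^\pm,u^\pm)$. Hence I can recycle the product, composition and commutator estimates already carried out in the proof of Lemma~\ref{lemma22} (see \eqref{N:F1F3}--\eqref{N:com}), obtaining
\begin{equation*}
\|(G_1^\pm,G_2^\pm)\|_{L^1_t(\dot B^{\frac{d}{2}-1}_{2,1})}
\;\lesssim\; \mathcal{V}_t\mathcal{E}_t+\mathcal{W}_t\mathcal{E}_t
+(1+\mathcal{E}_t)^{\frac{d}{2}+1}\bigl(\mathcal{E}_t\mathcal{D}_t+\kappa^{-\frac{1}{2}}\mathcal{E}_t^2\bigr).
\end{equation*}
The linear ``remainder'' term $-i\nu H_\pm \Lambda N_1^\pm$ on the right of \eqref{GP} is absorbed directly into the dispersive Strichartz bound since it has the same symbol structure as the principal part; alternatively, one may rewrite \eqref{GP} as a genuine semigroup equation so that this term does not appear at all. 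Assembling these ingredients and using that the initial contribution is controlled by $\mathcal{E}_0$, the overall gain of $\kappa^{-\delta}$ from the Strichartz step produces exactly \eqref{es:dissersive}.

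The main obstacle I expect is the careful treatment of the quasi-linear high-order term $\sqrt{\kappa}\nabla(\widetilde{\psi}(\kappa^{-\frac12}n^\pm)\Delta n^\pm)$ inside $F_2^7,F_4^7$: the prefactor $\sqrt{\kappa}$ threatens to spoil the $\kappa^{-\delta}$ gain, so one must recast this term via the commutator identities \eqref{energy4}--\eqref{energy5} (already exploited in Lemma~\ref{lemma21}) so that one derivative on $n^\pm$ is absorbed into the symbol of $H_\pm$ and only a truly perturbative remainder, bounded by $\|\nabla n^\pm\|_{L^2_t(\dot B^{d/p}_{p,1})}$ with coefficient $O(1)$, enters the right-hand side. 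Once this cancellation is tracked at the level of the Gross--Pitaevskii reformulation, closing the bootstrap on $\mathcal{D}_t$ is routine.
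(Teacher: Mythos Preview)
Your overall architecture---Gross--Pitaevskii reformulation, Duhamel, Strichartz from Appendix~\ref{appendixB}, then invert \eqref{mmmm}---matches the paper. But the way you propose to bound the source terms will not yield the clean statement \eqref{es:dissersive}.

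The crucial issue is your plan to ``recycle the product, composition and commutator estimates already carried out in the proof of Lemma~\ref{lemma22}.'' Those estimates were designed for a different purpose: they deliberately split $u^\pm=v^\pm+\mathcal{Q}u^\pm+(\mathcal{P}u^\pm-v^\pm)$ so as to produce small factors $\mathcal{D}_t,\mathcal{W}_t$ and a Gr\"onwall-absorbable $\mathcal{V}_t$. Reusing them here gives a right-hand side of the form $\kappa^{-\delta}\bigl(\mathcal{V}_t\mathcal{E}_t+\mathcal{W}_t\mathcal{E}_t+(1+\mathcal{E}_t)^{\frac{d}{2}+1}(\mathcal{E}_t\mathcal{D}_t+\kappa^{-\frac12}\mathcal{E}_t^2)\bigr)$, which is \emph{not} \eqref{es:dissersive}: the lemma claims a bound free of $\mathcal{V}_t,\mathcal{W}_t,\mathcal{D}_t$. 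The paper's point is that for $\mathcal{D}_t$ you do \emph{not} split; since $G_1^\pm,G_2^\pm$ contain the full convection $u^\pm\cdot\nabla n^\pm$, $u^\pm\cdot\nabla u^\pm$ (the $v^\pm$-terms recombine), one estimates these directly by the product law \eqref{uv2} with $s_1=\frac{d}{2}-1$, $s_2=\frac{d}{2}$, obtaining e.g.\ $\|u^\pm\cdot\nabla u^\pm\|_{L^1_t(\dot B^{\frac d2-1}_{2,1})}\lesssim\|u^\pm\|_{L^\infty_t(\dot B^{\frac d2-1}_{2,1})}\|u^\pm\|_{L^1_t(\dot B^{\frac d2+1}_{2,1})}\lesssim\mathcal{E}_t^2$. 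This is how the pure $(1+\mathcal{E}_t)^{\frac d2+1}\mathcal{E}_t^2$ arises.

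Two secondary remarks. First, the linear forcing $-i\nu H_\pm\Lambda N^\pm$ is not absorbed into the propagator; the paper treats it as a source and bounds $\|H_\pm\Lambda N^\pm\|_{L^1_t(\dot B^{\frac d2-1}_{2,1})}$ by the dissipation part of $\mathcal{E}_t$ (since $H_\pm\Lambda$ is a multiplier of order $2$)---this is exactly the lone $\mathcal{E}_t$ in \eqref{es:dissersive}. Second, your worry about $F_2^7,F_4^7$ is unfounded: no commutator identity is needed here, because $\widetilde{\psi}(\kappa^{-\frac12}n^\pm)$ itself carries a factor $\kappa^{-\frac12}$ via the composition estimate \eqref{F0}, which cancels the $\sqrt{\kappa}$ in front; the term then falls under the same $(1+\mathcal{E}_t)^{\frac d2+1}\mathcal{E}_t^2$ umbrella. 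Note also that $U_\pm^{-1}\nabla$ has symbol $O(\kappa^{-\frac12}+|\xi|)$, so $U_\pm^{-1}\nabla G_1^\pm$ forces you to control $G_1^\pm$ in $\dot B^{\frac d2}_{2,1}$ as well as $\dot B^{\frac d2-1}_{2,1}$; your proposal does not mention this extra regularity.
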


\begin{proof}
Recall that $(z_1^+,z_2^-)$, defined in \eqref{Z1Z2}, satisfies  the following matrix–valued Schr\"odinger system: 
\begin{equation}\label{eq:Z1Z2}
    \begin{aligned}
   i\partial_t  \left(\begin{array}{c}z_1^+\\z_2^-\end{array} \right)+\mathcal{H}   \left(\begin{array}{c}z_1^+\\z_2^-\end{array} \right)=\left(\begin{array}{c}  -i\nu H_+\Lambda N_1^{+}+iU_+^{-1}\nabla G_1^+ -G_2^+  \\-i\nu H_-\Lambda N_2^{-} +iU_-^{-1}\nabla G_1^- -G_2^-\end{array} \right),
    \end{aligned}
\end{equation}
where  $$\mathcal{H}=\left(\begin{matrix}
   -2i\nu\Delta-\sqrt{\kappa}H_+&0\\
    0&-2i\nu\Delta-\sqrt{\kappa}H_-
\end{matrix}\right). $$
By Duhamel’s formula applied to \eqref{eq:Z1Z2}, we obtain  
\begin{equation}\label{DSC}
    \begin{aligned}
        \left(\begin{array}{c}z_1^+\\z_2^-\end{array} \right)=e^{i\mathcal{H}t}\left(\begin{array}{c}z_1^+\\z_2^-\end{array} \right)(0)+i\int_0^t e^{i\mathcal{H}(t-\tau)}\left(\begin{array}{c}  -i\nu H_+\Lambda N_1^{+}+iU_+^{-1}\nabla G_1^+ -G_2^+  \\-i\nu H_-\Lambda N_2^{-} +iU_-^{-1}\nabla G_1^- -G_2^-\end{array} \right)(\tau){\rm d}\tau.
    \end{aligned}
\end{equation}
Applying the dispersive estimates of Lemma \ref{LemA.2}, with $(\delta, \,p)$ satisfying \eqref{condi-p}, to \eqref{DSC} yields
\begin{equation}\label{estimatefor_z}
    \begin{aligned}
        \|(z_1^+,z_2^-)\|_{\widetilde{L}^2_t(\dot{B}_{p,1}^{\frac{d}{p}})}\lesssim\kappa^{-\delta}\left( \|(z_1^+,z_2^-)(0)\|_{\dot{B}_{2,1}^{\frac d2-1}}+\|(H_+\Lambda N_1^{+},H_-\Lambda N_2^{-} )\|_{L^1_t(\dot{B}_{2,1}^{\frac d2-1})}\right.\\
        \left.+\|(U_1^{-1}\nabla G_1^+ ,G_2^+)\|_{L^1_t(\dot{B}_{2,1}^{\frac d2-1})}+\|(U_1^{-1}\nabla G_1^- ,G_2^-) \|_{L^1_t(\dot{B}_{2,1}^{\frac d2-1})} \right).
    \end{aligned}
\end{equation}
Recalling the definitions of $N_1^+$, $N_2^-$ and $H_\pm$ (see \eqref{N+N-M+M-} and  \eqref{UH}) and  noting that $H_\pm\Lambda$ is a Fourier multiplier of degree $2$, we obtain
\begin{align}
    \|(H_+\Lambda N_1^{+},H_-\Lambda N_2^{-} )\|_{L^1_t(\dot{B}_{2,1}^{\frac d2-1})}\lesssim\left\|\left(\kappa^{-\frac{1}{2}}n^+,\kappa^{-\frac{1}{2}}n^-,\nabla n^+,\nabla n^-, \mathcal{Q} u^+,\mathcal{Q} u^-\right)\right\|_{L^1_t(\dot{B}_{2,1}^{\frac{d}{2}+1})}.
\end{align}
Since the Fourier symbol of $U^{-1}_\pm\nabla$ is bounded by $\mathcal{O}(\kappa^{-\frac{1}{2}}+|\xi|)$, we also deduce
\begin{equation}
\begin{aligned}
   &\quad \|(U_+^{-1}\nabla G_1^+ ,G_2^+)\|_{L^1_t(\dot{B}_{2,1}^{\frac d2-1})}+\|(U_-^{-1}\nabla G_1^- ,G_2^-) \|_{L^1_t(\dot{B}_{2,1}^{\frac d2-1})}\\
   &\lesssim \kappa^{-\frac{1}{2}} \| (F_1-v^+\cdot\nabla n^+,F_3-v^-\cdot\nabla n^-)\|_{L^1_t(\dot{B}^{\frac{d}{2}-1}_{2,1})}+\| (F_1-v^+\cdot\nabla n^+,F_3-v^-\cdot\nabla n^-)\|_{L^1_t(\dot{B}_{2,1}^{\frac d2})}\\
   &\quad+\| (F_2-v^+\cdot\nabla u^+,F_4-v^-\cdot\nabla u^-)\|_{L^1_t(\dot{B}^{\frac{d}{2}-1}_{2,1})}.
\end{aligned}
\end{equation}

Using the definition of  $F_1$ and $F_3$, and invoking \eqref{uv2} ($p=2, s_1=\frac{d}{2}-1, s_2=\frac{d}{2}$) together with \eqref{F0}, we obtain
\begin{equation}
\begin{aligned}
&\quad\kappa^{-\frac{1}{2}} \| (F_1-v^+\cdot\nabla n^+,F_3-v^-\cdot\nabla n^-)\|_{L^1_t(\dot{B}^{\frac{d}{2}-1}_{2,1})}\\
&\lesssim \kappa^{-\frac{1}{2}} \|(u^+\cdot\nabla n^+,u^-\cdot\nabla n^-)\|_{L^1_t(\dot{B}^{\frac{d}{2}-1}_{2,1})}+ \|(\widetilde{\psi}(\kappa^{-\frac12}n^+)\operatorname{div} u^+,\widetilde{\psi}(\kappa^{-\frac12}n^-)\operatorname{div} u^-)\|_{L^1_t(\dot{B}_{2,1}^{\frac{d}{2}-1})}\\
&\lesssim \|(u^+,u^-)\|_{L^{\infty}_t(\dot{B}^{\frac{d}{2}-1}_{2,1})}\kappa^{-\frac{1}{2}}\|(\nabla n^+,\nabla n^-)\|_{L^1_t(\dot{B}^{\frac{d}{2}}_{2,1})}\\
&\quad+\Big(1+\|n^\pm\|_{L^{\infty}_t(L^{\infty})}\Big)^{\frac{d}{2}+1} \kappa^{-\frac{1}{2}} \|(n^+,n^-)\|_{L^{\infty}_t(\dot{B}^{\frac{d}{2}-1}_{2,1})} \| (\operatorname{div} u^+,\operatorname{div} u^-)\|_{L^1_t(\dot{B}^{\frac{d}{2}}_{2,1})}\\
&\lesssim  (1+\mathcal{E}_t)^{\frac{d}{2}+1}\mathcal{E}_t^2.
\end{aligned}
\end{equation}
Similarly,
\begin{equation}
\begin{aligned}
 &\quad \| (F_1-v^+\cdot\nabla n^+,F_3-v^-\cdot\nabla n^-)\|_{L^1_t(\dot{B}^{\frac{d}{2}}_{2,1})}\\
 &\lesssim \|(u^+,u^-)\|_{L^{\infty}_t(\dot{B}^{\frac{d}{2}-1}_{2,1}}\|(\nabla n^+,\nabla n^-)\|_{L^1_t(\dot{B}^{\frac{d}{2}+1}_{2,1})}\\
&\quad+ \Big(1+\|(n^+,n^-)\|_{L^{\infty}_t(L^{\infty})}\Big)^{\frac{d}{2}+1} \|(n^+,n^-)\|_{L^{\infty}_t(\dot{B}^{\frac{d}{2}}_{2,1})} \| (\operatorname{div} u^+,\operatorname{div} u^-)\|_{L^1_t(\dot{B}^{\frac{d}{2}}_{2,1})}\lesssim (1+\mathcal{E}_t)^{\frac{d}{2}+1}\mathcal{E}_t^2.
\end{aligned}
\end{equation}
In addition, since $\kappa\geq1$, it follows from \eqref{uv2}, \eqref{F0} and the composite estimates (\eqref{F1:m} for $d\geq3$ and \eqref{F1:m00} for $d=2$) that
%\begin{equation}
    \begin{align}\label{238}
    &\quad\| (F_2-v^+\cdot\nabla u^+,F_4-v^-\cdot\nabla u^-)\|_{L^1_t(\dot{B}^{\frac{d}{2}-1}_{2,1})}\nonumber\\
    &\lesssim \|(u^+,u^-)\|_{L^{\infty}_t(\dot{B}^{\frac{d}{2}-1}_{2,1}}\|( u^+, u^-)\|_{L^1_t(\dot{B}^{\frac{d}{2}+1}_{2,1})}\nonumber\\
    &\quad+(1+\|(n^+,n^-)\|_{L^{\infty}_t(L^{\infty})})^{\frac{d}{2}+1} \|(n^+,n^-)\|_{L^{\infty}_t(\dot{B}^{\frac{d}{2}}_{2,1})}\|( u^+, u^-)\|_{L^1_t(\dot{B}^{\frac{d}{2}+1}_{2,1})}\nonumber\\
    &\quad+\Big(1+\|(n^+,n^-)\|_{L^{\infty}_t(L^{\infty})}\Big)^{\frac{d}{2}+1} \kappa^{-\frac{1}{2}}\|(n^+,n^-)\|_{L^2_t (\dot{B}_{2,1}^{\frac{d}{2}})} \|\kappa^{-\frac{1}{2}}n^+\|_{L^2_t(\dot{B}_{2,1}^{\frac{d}{2}})}\\
    &\quad+\Big(1+\|n^\pm\|_{L^{\infty}_t(L^{\infty})}\Big)^{\frac{d}{2}+1} \|\nabla n^\pm\|_{L^2_t(\dot{B}^{\frac{d}{2}}_{2,1})} \|(\nabla n^\pm, u^\pm )\|_{L^2_t(\dot{B}_{2,1}^{\frac{d}{2}})}\nonumber\\
    &\lesssim  (1+\mathcal{E}_t)^{\frac{d}{2}+1} \mathcal{E}_t^2\nonumber.
    \end{align}
%\end{equation}  

Finally,  collecting    \eqref{estimatefor_z}-\eqref{238} together     using the definitions of $z_1^+$ and $z_2^-$, we obtain 
\begin{equation*}
    \begin{aligned}
&\quad\left\|\left(\sqrt{\frac{r_+\kappa^{-1}-\Delta}{-\Delta}}\nabla n^+,\mathcal{Q}u^+\right)\right\|_{\widetilde{L}^2_t(\dot{B}_{p,1}^{\frac{d}{p}})}+\left\|\left(\sqrt{\frac{r_-\kappa^{-1}-\Delta}{-\Delta}}\nabla n^-,\mathcal{Q}u^-\right)\right\|_{\widetilde{L}^2_t(\dot{B}_{p,1}^{\frac{d}{p}})}\\
&\lesssim  \|( {\rm{Re}}\,z_1^+,{\rm{Re}}\, z_2^-,{\rm{Im}}\,z_1^+,{\rm{Im}}\, z_2^-)\|_{\widetilde{L}^2_t(\dot{B}_{p,1}^{\frac{d}{p}})}\\
&\lesssim  \|( z_1^+,z_2^-)\|_{\widetilde{L}^2_t(\dot{B}_{p,1}^{\frac{d}{p}})}\lesssim \kappa^{-\delta}\Big(\mathcal{E}_0 + \mathcal{E}_t+ (1+\mathcal{E}_t)^{\frac{d}{2}+1} \mathcal{E}_t^2\Big).
    \end{aligned}
\end{equation*} 
Noting that $r_\pm$ given by \eqref{lambdapm} are strictly positive and $(-\Delta)^{-\frac{1}{2}}$ is a Fourier multiplier of degree $-1$, we therefore have
$$
\kappa^{-\frac{1}{2}} \|n^\pm\|_{L^2_t(\dot{B}_{p,1}^{\frac{d}{p}})} +\|n^\pm\|_{L^2_t(\dot{B}_{p,1}^{\frac{d}{p}+1})}\lesssim 
\kappa^{-\frac{1}{2}} \|n^\pm\|_{\widetilde{L}^2_t(\dot{B}_{p,1}^{\frac{d}{p}})} +\|n^\pm\|_{\widetilde{L}^2_t(\dot{B}_{p,1}^{\frac{d}{p}+1})}\lesssim \left\|\sqrt{\frac{r_\pm\kappa^{-1}-\Delta}{-\Delta}}\nabla n^\pm\right\|_{\widetilde{L}^2_t(\dot{B}_{p,1}^{\frac{d}{p}})},$$
which in turn implies \eqref{es:dissersive}.\end{proof}

\subsection{Estimates for the incompressible error  \texorpdfstring{$\mathcal{P}u^{\pm}-v^\pm$}{Pu±-v±}}

The remainder is to establish the estimates for the error $\mathcal{P}u^{\pm}-v^\pm$.

\begin{lemma}\label{lemma24}
Under the conditions of Proposition \ref{prop1}, it holds that
\begin{equation}\label{2.39}
\begin{aligned}
            \mathcal{W}_t
            &\lesssim e^{C(\mathcal{V}_t+\mathcal{E}_t^2)}\left( \|(\mathcal {P} u^+_{0}-v^+_0,\mathcal {P} u^-_{0}-v^-_0)\|_{\dot{B}_{2,1}^{\frac{d}{2}-1}}  + (1+\mathcal{E}_t)^{\frac{d}{2}+1}\big(\mathcal{D}_t(\mathcal{E}_t +\mathcal{V}_t) +\kappa^{-\frac{1}{2}}\mathcal{E}_t^2\big)\right).
        \end{aligned}
    \end{equation}
\end{lemma}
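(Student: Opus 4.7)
The plan is to derive a forced heat equation for the incompressible error $\tilde u^\pm := \mathcal{P}u^\pm - v^\pm$ and then estimate its source via the same product-law machinery that powered Lemma \ref{lemma22}, but tracking carefully the new $\mathcal{V}_t$-factor arising from the convection by $v^\pm$. First, applying the Leray projector $\mathcal{P}$ to the momentum equations in \eqref{system-perturb1} kills the pressure contributions $\frac{\beta_i}{\sqrt{\kappa}}\nabla n^\pm$, the Korteweg term $\sqrt{\kappa}\nabla\Delta n^\pm$, the second viscosity $\nabla\operatorname{div}u^\pm$, and the gradient-type nonlinearities $F_2^{6},F_2^{7},F_4^{6},F_4^{7}$. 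Using the identity $\partial_t v^\pm-\Delta v^\pm=-\mathcal{P}(v^\pm\cdot\nabla v^\pm)$ from \eqref{INS} together with the decomposition $u^\pm=\tilde u^\pm+v^\pm+\mathcal{Q}u^\pm$, subtraction yields
\begin{equation*}
\partial_t \tilde u^\pm+\mathcal{P}(v^\pm\cdot\nabla \tilde u^\pm)-\Delta \tilde u^\pm=\mathcal{P}\bar F_{2/4},
\end{equation*}
where $\bar F_2:=-v^+\cdot\nabla\mathcal{Q}u^++\sum_{l=1}^{5}F_2^l$ and $\bar F_4$ is defined analogously, matching the source terms listed in the strategy section.

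Next I would localize by $\dot\Delta_j$ and perform an $L^2$ energy estimate, exploiting $\operatorname{div}v^\pm=0$ to cancel the diagonal transport term at the price of the commutator $[\dot\Delta_j,v^\pm\cdot\nabla]\tilde u^\pm$. The heat dissipation provides a localized damping of size $2^{2j}$; dividing by $\sqrt{\|\dot\Delta_j\tilde u^\pm\|_{L^2}^2+\varepsilon}$, sending $\varepsilon\to 0$, integrating in time, multiplying by $2^{j(d/2-1)}$ and summing, the standard commutator bound from Lemma \ref{lemcommutator} produces the control $\|v^\pm\|_{\dot B^{d/2+1}_{2,1}}\|\tilde u^\pm\|_{\dot B^{d/2-1}_{2,1}}$. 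A Gr\"onwall argument then absorbs the $v^\pm$-transport contribution and yields the prefactor $e^{C\mathcal{V}_t}$ together with initial data and source terms:
\begin{equation*}
\mathcal{W}_t\lesssim e^{C\mathcal{V}_t}\Bigl(\|(\mathcal{P}u_0^+-v_0^+,\mathcal{P}u_0^--v_0^-)\|_{\dot B^{d/2-1}_{2,1}}+\|\mathcal{P}\bar F_2\|_{L^1_t(\dot B^{d/2-1}_{2,1})}+\|\mathcal{P}\bar F_4\|_{L^1_t(\dot B^{d/2-1}_{2,1})}\Bigr).
\end{equation*}

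The remaining step is to estimate each component of $\bar F_{2/4}$. The new nonlinearity $-v^\pm\cdot\nabla\mathcal{Q}u^\pm$ is handled by the product law \eqref{uv4}, splitting as $L^2_t(\dot B^{d/p}_{p,1})\times L^2_t(\dot B^{d/2}_{2,1})$, which yields a bound of order $\mathcal{D}_t\mathcal{V}_t$; this accounts for the $\mathcal{D}_t\mathcal{V}_t$-summand in \eqref{2.39}. The convective term $F_2^1=-(u^+-v^+)\cdot\nabla u^+$ is decomposed as $-\tilde u^+\cdot\nabla u^+-\mathcal{Q}u^+\cdot\nabla u^+$: the first piece produces $\mathcal{W}_t\mathcal{E}_t$ via the embedding $L^\infty_t(\dot B^{d/2-1}_{2,1})\cdot L^1_t(\dot B^{d/2+1}_{2,1})\hookrightarrow L^1_t(\dot B^{d/2-1}_{2,1})$, and the second piece gives $\mathcal{D}_t\mathcal{E}_t$ exactly as in \eqref{N:2}. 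The composite and pressure-type terms $F_2^{2,3,4,5}$ (and their $F_4$ analogues) were already bounded in \eqref{N:F2F4} by $(1+\mathcal{E}_t)^{d/2+1}(\mathcal{D}_t\mathcal{E}_t+\kappa^{-1/2}\mathcal{E}_t^2)$ using Moser, composite function \eqref{F0} and product estimates; the same bounds apply verbatim. Assembling all the pieces produces exactly the right-hand side of \eqref{2.39}.

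The main obstacle is the convection $v^\pm\cdot\nabla\tilde u^\pm$, which carries neither a smallness factor nor a $\kappa^{-1/2}$ decay; the only way to close the estimate is via Gr\"onwall against $\|v^\pm\|_{L^1_t(\dot B^{d/2+1}_{2,1})}\leq\mathcal{V}_t$, which is finite by Proposition \ref{prop:incompressible}. A subsidiary technical point is that the source $-v^\pm\cdot\nabla\mathcal{Q}u^\pm$ introduces the factor $\mathcal{D}_t\mathcal{V}_t$ which has no obvious smallness in $\mathcal{V}_t$; this $\mathcal{V}_t$-contribution becomes harmless only when combined in Proposition \ref{prop1} with the dispersive smallness $\mathcal{D}_t=\mathcal{O}(\kappa^{-\delta})$ provided by Lemma \ref{lemma23}, which is what ultimately forces the high-capillarity restriction $\kappa\geq\kappa_0$.
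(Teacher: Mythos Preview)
Your proposal is correct and follows essentially the same approach as the paper: derive the heat equation for $\tilde u^\pm$ with source $\mathcal{P}\bar F_{2/4}$, perform localized $L^2$ estimates exploiting $\operatorname{div}v^\pm=0$ and the commutator bound, apply Gr\"onwall to produce the $e^{C\mathcal{V}_t}$ prefactor, and then estimate the new term $-v^\pm\cdot\nabla\mathcal{Q}u^\pm$ by $\mathcal{D}_t\mathcal{V}_t$ while reusing the bounds from Lemma~\ref{lemma22} for $\sum_{l=1}^5 F_{2/4}^l$. The only cosmetic difference is that for the $\tilde u^+\cdot\nabla u^+$ piece you split the time integrability as $L^\infty_t\times L^1_t$ whereas the paper uses $L^2_t\times L^2_t$ via \eqref{uv4}; both yield $\mathcal{W}_t\mathcal{E}_t$.
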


\begin{proof}
Note that the incompressible part of $\mathcal{P}u$ satisfies
\begin{equation}\label{Pupm}
\left\{
\begin{aligned}
&\partial_t \mathcal{P}u^++ \mathcal{P}(v^+\cdot\nabla \mathcal{P}u^+)  -  \Delta \mathcal{P}u^+ = -\mathcal{P}(v^+\cdot\nabla \mathcal{Q}u^+)+\mathcal{P}(F_2^1+F_2^2+F_2^3+F_2^4+F_2^5 ), \\
&\partial_t \mathcal{P}u^- +  \mathcal{P}(v^-\cdot\nabla \mathcal{P}u^-)- \Delta \mathcal{P}u^- = - \mathcal{P}(v^-\cdot\nabla \mathcal{Q}u^-)+\mathcal{P}(F_4^1+F_4^2 +F_4^3+F_4^4+F_4^5). \\
\end{aligned}
\right.
\end{equation}
where we used $\mathcal{P}F^l_2=\mathcal{P}F^l_4=0$ ($l=6,7$). Combining \eqref{INS} with \eqref{Pupm}, the error  $\widetilde{u}^\pm:=\mathcal{P}u^\pm-v^{\pm}$ of the incompressible part of two fluids solves 
\begin{equation}\label{errorV}
\left\{
\begin{aligned}
&\partial_t \widetilde{u}^++ \mathcal{P}(v^+\cdot\nabla \mathcal{P}\widetilde{u}^+)-  \Delta \widetilde{u}^+  = \mathcal{P}\bar{F}_2 , \\
&\partial_t \widetilde{u}^-+  \mathcal{P}(v^-\cdot\nabla \mathcal{P}\widetilde{u}^-)  - \Delta  \widetilde{u}^- =  \mathcal{P}\bar{F}_4  
\end{aligned}
\right.
\end{equation}
with
\begin{equation*}
\left\{
\begin{aligned}
    \bar{F}_2 &:=- v^+\cdot\nabla \mathcal{Q}u^++F_2^1+F_2^2+F_2^3+F_2^4+F_2^5,\\
    \bar{F}_4 &:= -v^-\cdot\nabla \mathcal{Q}u^- +  F_4^1+F_4^2+F_4^3+F_4^4+F_4^5.
\end{aligned}
\right.
\end{equation*}
Using  $\mathcal{P}={\rm Id}-\mathcal{Q}$ and the  orthogonality
of $\mathcal{Q}$ and $\mathcal{P}$, and performing $L^2$ energy estimates on \eqref{errorV}, we get
\begin{equation}
    \begin{aligned}
      & \quad \frac{1}{2}\frac{{\rm d}}{{\rm d}t}\|(\widetilde{u}_j^+,\widetilde{u}_j^-)\|_{L^2}^2+c2^{2j}\|(\widetilde{u}_j^+,\widetilde{u}_j^-)\|_{L^2}^2 \\
      &\lesssim \int_{\mathbb{R}^d}([\dot{\Delta}_j,v^+\cdot\nabla]\widetilde{u}^++v^\pm\cdot\nabla \widetilde{u}_j^+)\cdot \widetilde{u}_j^+ {\rm d}x+\int_{\mathbb{R}^d}([\dot{\Delta}_j,v^-\cdot\nabla]\widetilde{u}^-+v\cdot\nabla \widetilde{u}_j^-)\cdot \widetilde{u}_j^- {\rm d}x   \\
        &\quad +\|\widetilde{u}_j^+ \|_{L^2} \|\dot{\Delta}_j  \mathcal{P}\bar{F}_2\|_{L^2} +\|\widetilde{u}_j^- \|_{L^2} \|\dot{\Delta}_j \mathcal{P}\bar{F}_4\|_{L^2}.
    \end{aligned}
\end{equation}
Invoking $\div v^\pm=0$ and the commutator estimates in Lemma \ref{lemcommutator}, we infer
\begin{equation*}
    \begin{aligned}
&\quad\|(\widetilde{u}^+,\widetilde{u}^-)\|_{L^\infty_t(\dot{B}_{2,1}^{\frac{d}{2}-1})\cap L^1_t(\dot{B}_{2,1}^{\frac{d}{2}+1})}  \lesssim \|(\mathcal {P} u^+_{0}-v^+_0,\mathcal {P} u^-_{0}-v^-_0)\|_{\dot{B}_{2,1}^{\frac{d}{2}-1}} \\
& \qquad+ \int_0^t \|(v^+,v^-)\|_{\dot{B}_{2,1}^{\frac{d}{2}+1}}\|(\widetilde{u}^+,\widetilde{u}^-)\|_{\dot{B}_{2,1}^{\frac{d}{2}-1}}{\rm d}\tau +\|  \mathcal{P}\bar{F}_2\|_{L^1_t(\dot{B}_{2,1}^{\frac{d}{2}-1})} +\|\mathcal{P}\bar{F}_4\|_{L^1_t(\dot{B}_{2,1}^{\frac{d}{2}-1})}.
\end{aligned}
\end{equation*}
Hence, by Gr\"onwall's inequality,
\begin{equation}\label{estimatefor-W}
    \begin{aligned}
    \mathcal{W}_t\lesssim   e^{C\mathcal{V}_t}\Big(\|(\mathcal {P} u^+_{0}-v^+_0,\mathcal {P} u^-_{0}-v^-_0)\|_{\dot{B}_{2,1}^{\frac{d}{2}-1}} +\|  \mathcal{P}\bar{F}_2\|_{L^1_t(\dot{B}_{2,1}^{\frac{d}{2}-1})} +\|\mathcal{P}\bar{F}_4\|_{L^1_t(\dot{B}_{2,1}^{\frac{d}{2}-1})}\Big).
\end{aligned}
\end{equation}
For the nonlinear terms in \eqref{estimatefor-W}, using \eqref{uv4}, we have
\begin{align*}
\|\mathcal{P}(v^\pm\cdot\nabla \mathcal{Q}u^\pm)\|_{L^1_t(\dot{B}_{2,1}^{\frac{d}{2}-1})}\lesssim \|v^\pm\|_{L^2_t(\dot{B}_{2,1}^{\frac{d}{2}})}\|\nabla \mathcal{Q}u^\pm\|_{L^2_t(\dot{B}_{p,1}^{\frac{d}{p}-1})}\lesssim  \mathcal{D}_t \mathcal{V}_t.
\end{align*}
Moreover, we need to adjust the estimate of $(u^\pm-v^\pm)\cdot \nabla u^\pm$ in $F_2$ and $F_4$. One can deduce from previous product laws and Young's inequality that
\begin{equation*}
    \begin{aligned}
     \|(u^\pm-v^\pm)\cdot\nabla u^\pm \|_{ L^1_t(\dot{B}_{2,1}^{\frac{d}{2}-1})}&\lesssim  \|\mathcal{Q}u^\pm\|_{L^2_t(\dot{
B}_{p,1}^{\frac{d}{p}})}\|\nabla u^\pm\|_{L^2_t(\dot{B}_{2,1}^{\frac{d}{2}})}+\int_{0}^t \|\widetilde{u}^\pm\|_{\dot{B}_{2,1}^{\frac{d}{2}}} \|u^\pm\|_{\dot{B}_{2,1}^{\frac{d}{2}}}d\tau\\
     &\lesssim  \mathcal{D}_t\mathcal{E}_t+\var_0\|\widetilde{u}^\pm\|_{L^1_t(\dot{B}_{2,1}^{\frac{d}{2}+1})}+\var_0^{-1}  \int_0^t  \|u^\pm\|_{\dot{B}_{2,1}^{\frac{d}{2}}}^2\|\widetilde{u}^\pm\|_{\dot{B}_{2,1}^{\frac{d}{2}-1}}d\tau.
\end{aligned}
\end{equation*}
for some constant $\var_0>0$ to be chosen later.  Other terms in $F_2$ and $F_4$ can be addressed by repeating the computations as in Lemma \ref{lemma22}. So we have
\begin{align*}
    \|\mathcal{P}\bar{F}_2\|_{L^1_t(\dot{B}_{2,1}^{\frac{d}{2}-1})}&\lesssim 
    \var_0\|\widetilde{u}^+\|_{L^1_t(\dot{B}_{2,1}^{\frac{d}{2}+1})} +\var_0^{-1}  \int_0^t  \|u^+\|_{\dot{B}_{2,1}^{\frac{d}{2}}}^2\|\widetilde{u}^+\|_{\dot{B}_{2,1}^{\frac{d}{2}-1}}d\tau   +(1+\mathcal{E}_t)^{\frac{d}{2}+1}\big(\mathcal{E}_t \mathcal{D}_t+\kappa^{-\frac{1}{2}}\mathcal{E}_t^2\big),
\end{align*}
and
\begin{align*}
    \|\mathcal{P}\bar{F}_4\|_{L^1_t(\dot{B}_{2,1}^{\frac{d}{2}-1})}\lesssim \var_0 \|\widetilde{u}^-\|_{L^1_t(\dot{B}_{2,1}^{\frac{d}{2}+1})} +\var_0^{-1}  \int_0^t  \|u^-\|_{\dot{B}_{2,1}^{\frac{d}{2}}}^2\|\widetilde{u}^-\|_{\dot{B}_{2,1}^{\frac{d}{2}-1}}d\tau   +(1+\mathcal{E}_t)^{\frac{d}{2}+1}\big(\mathcal{E}_t \mathcal{D}_t+\kappa^{-\frac{1}{2}}\mathcal{E}_t^2\big).
\end{align*}
Consequently, collecting the above estimates, letting $\var_0$ be sufficiently small and taking advantage of Gr\"onwall's inequality, we conclude \eqref{2.39} and complete the proof of Lemma \ref{lemma24}.\end{proof}

With the above lemmas in hand, we can immediately complete the proof of Proposition \ref{prop1}.

\subsection{Proof of Theorem \ref{thm:global-existence}}

\subsubsection*{Existence}

To prove the global existence, one can construct a local Friedrichs approximation (cf. \cite{bahouri1}),  extend the local approximate sequence to a global one by the uniform-in-time a priori estimates  
established in Proposition \ref{prop1}, and then show the convergence of the approximate sequence to the expected global solution to the Cauchy problem for \eqref{system-perturb1}. More precisely, consider the approximate problems ($q\geq1$):
\begin{equation} \label{sys-Friedrich}
\left\{
\begin{aligned}
&\partial_t n^{+}+ \dot{\mathbb{E}}_q(u^+\cdot \nabla n^{+})+\dot{\mathbb{E}}_q ((1+n^{+})\operatorname{div} u^{+})=0, \\
&\partial_t u^++ \dot{\mathbb{E}}_q(u^+\cdot \nabla u^{+} )+ \frac{\beta_1}{\sqrt{\kappa }} \nabla \dot{\mathbb{E}}_qn^+ + \frac{\beta_2}{\sqrt{\kappa }} \dot{\mathbb{E}}_q \nabla n^- - \Delta \dot{\mathbb{E}}_qu^+ -\nabla\operatorname{div}\dot{\mathbb{E}}_qu^+-\sqrt{\kappa }\nabla \Delta \dot{\mathbb{E}}_qn^+ =\dot{\mathbb{E}}_q F_2, \\
&\partial_t n^{-}+ \dot{\mathbb{E}}_q(u^-\cdot \nabla n^{-})+\dot{\mathbb{E}}_q ((1+n^{-})\operatorname{div}  u^{-})=0, \\
&\partial_t u^- +\dot{\mathbb{E}}_q(u^-\cdot \nabla u^{-})+ \frac{\beta_3 }{\sqrt{\kappa}}\nabla \dot{\mathbb{E}}_q n^+ + \frac{\beta_4}{\sqrt{\kappa }} \nabla\dot{\mathbb{E}}_q n^- -\Delta \dot{\mathbb{E}}_q u^- -\nabla\operatorname{div}\dot{\mathbb{E}}_q u^- - \sqrt{\kappa} \nabla \Delta\dot{\mathbb{E}}_q n^- = \dot{\mathbb{E}}_qF_4,\\
&(\dot{\mathbb{E}}_q n^+,\dot{\mathbb{E}}_qu^+,\dot{\mathbb{E}}_q n^-,\dot{\mathbb{E}}_qu^-)(x,0)=(\dot{\mathbb{E}}_q n_0^+,\dot{\mathbb{E}}_qu_0^+,\dot{\mathbb{E}}_q n_0^-,\dot{\mathbb{E}}_qu^-_0).
\end{aligned}
\right.
\end{equation}
Since \eqref{sys-Friedrich} is a system of ordinary differential equations  in 
$L^2_q\times L^2_q\times L^2_q\times L^2_q $ and is locally Lipschitz with respect to the variable $( n^\pm, u^\pm)$ for every $q\geq 1$.  By virtue of the Cauchy-Lipschitz theorem 
(see \cite[Theorem 3.2]{bahouri1}),  there exists a maximal time $T_q>0$ such that the problem \eqref{sys-Friedrich} admits a unique solution $(n^{\pm,q}, u^{\pm,q})\in C([0,T_q);L^2_q)$. Then, $(n^{\pm,q}, u^{\pm,q})$   solves \eqref{sys-Friedrich} on $[0,T_q)\times \mathbb{R}^d$.

We claim $T_q=\infty$ and that there exists a large enough $\kappa_0$, such that for all $\kappa\geq \kappa_0$ and $q\geq1$, it holds
    \begin{align}\label{253}
    \sup_{t\in\mathbb{R}^+}\mathcal{E}_t\leq \mathcal{N}_1,\quad  \sup_{t\in\mathbb{R}^+}\mathcal{D}_t \leq \kappa^{-\delta}\mathcal{N}_2\quad\text{and}\quad  \sup_{t\in\mathbb{R}^+}\mathcal{W}_{t} \leq \kappa^{-\delta } \mathcal{N}_3
\end{align}
    with 
    \begin{align*}
    \mathcal{N}_1:&= C_0 e^{C_0\sup\limits_{t\in\mathbb{R}_+}\mathcal{V}_{t}}(\mathcal{E}_0%+\sup_{t\in\mathbb{R}^+}\mathcal{V}_{t}^2
    +2),\\
   \mathcal{N}_2:&= C_0  \Big(\mathcal{E}_0 + \mathcal{N}_1+(1+\mathcal{N}_1)^{\frac{d}{2}+1}\mathcal{N}_1^2\Big),\\
\mathcal{N}_3:&=C_0 e^{C_0\sup\limits_{t\in\mathbb{R}^+}\mathcal{V}_t+C_0\mathcal{N}_1^2}    \bigg( 1+ (1+\mathcal{N}_1)^{\frac{d}{2}+1}(\mathcal{N}_1\mathcal{N}_2+\mathcal{N}_1^2) \bigg).
    \end{align*}
where $\mathcal{E}_t$, $\mathcal{D}_t$ and $\mathcal{W}_t$ are defined in \eqref{definition-E}, \eqref{definition-D} and \eqref{definition-W}, respectively, and $C_0$ is given in Proposition \ref{prop1}.

We now justify the uniform regularity estimates \eqref{253}. Define the maximal time
\begin{align*}
    T^*_q:&=\sup\left\{t\leq T_q: \mathcal{E}_{t}\leq \mathcal{N}_1,~ \mathcal{D}_{t}\leq \kappa^{-\delta}\mathcal{N}_2~\text{and}~\mathcal{W}_t\leq \kappa^{-\delta}\mathcal{N}_3\right\}.
\end{align*}
Employing $\delta<\frac{1}{2}$, the first estimate \eqref{pri-1} in Proposition \ref{prop1}, for all $t\in (0,T^*_q)$, we have
  \begin{equation} \label{mao1}
        \begin{aligned}
            &\mathcal{E}_{t}\leq C_0e^{C_0\mathcal{V}_t}\bigg(  \mathcal{E}_0+\mathcal{W}_t\mathcal{E}_t+ (1+\mathcal{E}_t)^{\frac{d}{2}+1}\big(\mathcal{E}_t \mathcal{D}_t+\kappa^{-\frac{1}{2}}\mathcal{E}_t^2\big) \Bigg)\\
               &\leq            
C_0e^{C_0\sup\limits_{t\in\mathbb{R}^+}\mathcal{V}_t}    \bigg(     \mathcal{E}_0+ \kappa^{-\delta} \mathcal{N}_1 \mathcal{N}_3+\kappa^{-\delta}(1+\mathcal{N}_1)^{\frac{d}{2}+1}\mathcal{N}_1 \mathcal{N}_2+\kappa^{-\frac{1}{2}}(1+\mathcal{N}_1)^{\frac{d}{2}+1}\mathcal{N}_1^2\bigg)\\
&\leq C_0e^{C_0\sup\limits_{t\in\mathbb{R}^+}\mathcal{V}_t}    \bigg(     \mathcal{E}_0+1\bigg)<\mathcal{N}_1,
        \end{aligned}
    \end{equation}
provided 
    \begin{align}\label{kappa0}
    \kappa\geq \kappa_0:=\min\left\{ 1,\frac{1}{(3\mathcal{N}_1\mathcal{N}_3)^{\frac{1}{\delta}}} ,  \frac{1}{(1+\mathcal{N}_1)^{\frac{1}{\delta}(\frac{d}{2}+1)}(3\mathcal{N}_1\mathcal{N}_2)^{\frac{1}{\delta}}},\frac{1}{(1+\mathcal{N}_1)^{2(\frac{d}{2}+1)}(3\mathcal{N}_1^2)^{2}}  \right\}.
    \end{align}
Consequently, using $\mathcal{W}_t\leq \kappa^{-\delta}\mathcal{N}_3, \mathcal{D}_t\leq \kappa^{-\delta}\mathcal{N}_2$ in the definition of $T_q^*$ and $\mathcal{D}_t<\mathcal{N}_1$ which have been obtained, we deduce from the assumption \eqref{avelocity} and the estimates \eqref{pri-21}-\eqref{pri-2} that
\begin{equation}\label{mao2}
    \begin{aligned}
  \mathcal{D}_t\leq \kappa^{-\delta} C_0  \Big(\mathcal{E}_0 + \mathcal{E}_t+ (1+\mathcal{E}_t)^{\frac{d}{2}+1} \mathcal{E}_t^2\Big)< C_0  \kappa^{-\delta}\Big(\mathcal{E}_0 + \mathcal{N}_1+(1+\mathcal{N}_1)^{\frac{d}{2}+1}\mathcal{N}_1^2\Big)=\kappa^{-\delta}\mathcal{N}_2
    \end{aligned}
\end{equation}
and
\begin{equation} \label{mao3}
    \begin{aligned}
        \mathcal{W}_{t} &\leq C_0e^{C_0\mathcal{V}_t+C_0\mathcal{E}^2_t}\left(\|(\mathcal {P} u^+_{0}-v^+_0,\mathcal {P} u^-_{0}-v^-_0)\|_{\dot{B}^{\frac{d}{2}-1}_{2,1}} + (1+\mathcal{E}_t)^{\frac{d}{2}+1}\big(\mathcal{E}_t \mathcal{D}_t+\kappa^{-\frac{1}{2}}\mathcal{E}_t^2\big) \right)\\
        &< \kappa^{-\delta}C_0 e^{C_0\sup\limits_{t\in\mathbb{R}^+}\mathcal{V}_t+C_0\mathcal{N}_1^2}    \bigg( 1+ (1+\mathcal{N}_1)^{\frac{d}{2}+1}(\mathcal{N}_1\mathcal{N}_2+\mathcal{N}_1^2) \bigg)=\kappa^{-\delta}\mathcal{N}_3. 
    \end{aligned}
\end{equation}
By \eqref{mao1}, \eqref{mao2} and a standard continuity argument, we have $T_q^*=T_q$.

Furthermore, by using the uniform estimates \eqref{253} in the definition of $T_q^*=T_q$ and the Cauchy-Lipschitz theorem, we can extend the existence time beyond $T_q$, which leads to a contradiction regarding the maximality of $T_{q}$. Therefore, we deduce $T_{q}=\infty$, so $(n^{\pm,q}, u^{\pm,q})$ to the problem \eqref{sys-Friedrich} exists globally and fulfills the uniform estimates \eqref{253}.

 By a standard compactness process (see \cite{bahouri1}, pp. 442–444), one can obtain a limit $(n^\pm,u^\pm)$ of a subsequence $(n^{\pm,q_k},u^{\pm,q_k})$ with $q_{k}\rightarrow\infty$ such that the approximate problems \eqref{sys-Friedrich} for $(n^{\pm,q_k},u^{\pm,q_k})$ converge to the Cauchy problem \eqref{system-perturb1} in the sense of distributions. By virtue of the uniform estimates \eqref{253} and the Fatou property, $(n^\pm,u^\pm)$ is indeed a global strong solution to the problem \eqref{system-perturb2} and satisfies the properties \eqref{eq:incompressible-limit}. This completes the global existence part.

\subsubsection*{Uniqueness}

Finally, we prove uniqueness. It suffices to work with the reformulated system \eqref{reformulat1}. Without loss of generality, as the parameters do not affect our argument for proving uniqueness, we set $\nu=\kappa=1$.
Fix $T>0$ and let $(n_1^\pm,u_1^\pm)$ and $(n_2^\pm,u_2^\pm)$ be two solutions satisfying \eqref{12455}-\eqref{eq:incompressible-limit} stated in Theorem~\ref{thm:global-existence}, on $[0,T]\times\mathbb{R}^d$
with the same initial data $(n_0^\pm,u_0^\pm)$.
Set the differences
\[
(\widetilde{n}^+,\widetilde{n}^-,\widetilde{u}^+,\widetilde{u}^-)
:=(n_1^+-n_2^+,\,n_1^- - n_2^-,\,u_1^+-u_2^+,\,u_1^- - u_2^-).
\]
Then $(\widetilde{n}^\pm,\widetilde{u}^\pm)$ solve
\begin{equation}\label{3.84e}
\left\{
\begin{aligned}
&\partial_t \widetilde{n}^++ \sqrt{\kappa} \operatorname{div}\widetilde{u}^+ = \widetilde{F}_1,\\
&\partial_t \widetilde{n}^- + \sqrt{\kappa} \operatorname{div}\widetilde{u}^- = \widetilde{F}_3,\\
&\partial_t \widetilde{u}^+ + \frac{\beta_1}{\sqrt{\kappa} }\nabla \widetilde{n}^+ + \frac{\beta_2}{\sqrt{\kappa} }\nabla \widetilde{n}^-
   - \Delta \widetilde{u}^+ - \nabla\operatorname{div}\widetilde{u}^+ - \sqrt{\kappa} \nabla\Delta \widetilde{n}^+ = \widetilde{F}_2,\\
&\partial_t \widetilde{u}^- + \frac{\beta_3}{\sqrt{\kappa} }\nabla \widetilde{n}^+ + \frac{\beta_4}{\sqrt{\kappa} }\nabla \widetilde{n}^-
   - \Delta \widetilde{u}^- - \nabla\operatorname{div}\widetilde{u}^- - \sqrt{\kappa} \nabla\Delta \widetilde{n}^- = \widetilde{F}_4,\\
&(\widetilde{n}^+,\widetilde{n}^-,\widetilde{u}^+,\widetilde{u}^-)(x,0)=(0,0,0,0),
\end{aligned}
\right.
\end{equation}
where 
\begin{equation*}
    \left\{
    \begin{aligned}
        \widetilde{F}_1&=-u_2^+\cdot\nabla\widetilde{n}^+ -\widetilde{u}^+\cdot\nabla n_1^+
 -\sqrt{\kappa} \widetilde{\psi}(\kappa^{-\frac{1}{2}}n_2^+)\,\operatorname{div}\widetilde{u}^+
 -\sqrt{\kappa} \big(\widetilde{\psi}(\kappa^{-\frac{1}{2}}n_2^+)-\widetilde{\psi}(\kappa^{-\frac{1}{2}}n_1^+)\big)\operatorname{div}u_1^+,\\[2pt]
 \widetilde{F}_3&=-u_2^-\cdot\nabla\widetilde{n}^- -\widetilde{u}^-\cdot\nabla n_1^-
 -\sqrt{\kappa} \widetilde{\psi}(\kappa^{-\frac{1}{2}}n_2^-)\,\operatorname{div}\widetilde{u}^-
 -\sqrt{\kappa} \big(\widetilde{\psi}(\kappa^{-\frac{1}{2}}n_2^-)-\widetilde{\psi}(\kappa^{-\frac{1}{2}}n_1^-)\big)\operatorname{div}u_1^-,\\[2pt]
    \end{aligned}
    \right.
\end{equation*}
and 
\begin{equation*}
\left\{
\begin{aligned}
\widetilde{F}_2&=-u_2^+\cdot\nabla\widetilde{u}^+ -\widetilde{u}^+\cdot\nabla u_1^+ 
 +(1+\widetilde{Q}(\kappa^{-\frac{1}{2}}n_2^+))\operatorname{div}\!\big(2\widetilde{\phi}(\kappa^{-\frac{1}{2}}n_2^+)\mathbb{D}u_2^+\big)
 -(1+\widetilde{Q}(\kappa^{-\frac{1}{2}}n_1^+))\operatorname{div}\!\big(2\widetilde{\phi}(\kappa^{-\frac{1}{2}}n_1^+)\mathbb{D}u_1^+\big)\\
&\quad +\widetilde{Q}(\kappa^{-\frac{1}{2}}n_2^+)(\Delta +\nabla\operatorname{div})u_2^+ -\widetilde{Q}(\kappa^{-\frac{1}{2}}n_1^+)(\Delta +\nabla\operatorname{div})u_1^+\\
&\quad -\widetilde{g}_1^+(\kappa^{-\frac{1}{2}}n_2^+,\kappa^{-\frac{1}{2}}n_2^-)\nabla\kappa^{-\frac{1}{2}} n_2^+
 +\widetilde{g}_1^+(\kappa^{-\frac{1}{2}}n_1^+,\kappa^{-\frac{1}{2}}n_1^-)\nabla \kappa^{-\frac{1}{2}}n_1^+\\
& -\widetilde{g}_1^+(\kappa^{-\frac{1}{2}}n_2^+,\kappa^{-\frac{1}{2}}n_2^-)\nabla \kappa^{-\frac{1}{2}}n_2^-
 +\widetilde{g}_1^+(\kappa^{-\frac{1}{2}}n_1^+,\kappa^{-\frac{1}{2}}n_1^-)\nabla \kappa^{-\frac{1}{2}}n_1^-\\
&\quad +\nabla\!\Big(\tfrac{1}{2}(\nabla n_1^+ +\nabla n_2^+)\cdot \nabla \widetilde{n}^+\Big)
 +\sqrt{\kappa}\nabla\!\big(\widetilde{\psi}(\kappa^{-\frac{1}{2}}n_2^+)\,\Delta \widetilde{n}^+\big)
 +\sqrt{\kappa}\nabla\!\big(\big[\widetilde{\psi}(\kappa^{-\frac{1}{2}}n_2^+) -\widetilde{\psi}(\kappa^{-\frac{1}{2}}n_1^+)\big]\Delta n_1^+\big),\\[2pt]
\widetilde{F}_4&=-u_2^-\cdot\nabla\widetilde{u}^- -\widetilde{u}^-\cdot\nabla u_1^- 
 +(1+\widetilde{Q}(\kappa^{-\frac{1}{2}}n_2^-))\operatorname{div}\!\big(2\widetilde{\phi}(\kappa^{-\frac{1}{2}}n_2^-)\mathbb{D}u_2^-\big)
 -(1+\widetilde{Q}(\kappa^{-\frac{1}{2}}n_1^-))\operatorname{div}\!\big(2\widetilde{\phi}(\kappa^{-\frac{1}{2}}n_1^-)\mathbb{D}u_1^-\big)\\
&\quad +\widetilde{Q}(\kappa^{-\frac{1}{2}}n_2^-)(\Delta +\nabla\operatorname{div})u_2^- -\widetilde{Q}(\kappa^{-\frac{1}{2}}n_1^-)(\Delta +\nabla\operatorname{div})u_1^-\\
&\quad -\widetilde{g}_3^+(\kappa^{-\frac{1}{2}}n_2^+,\kappa^{-\frac{1}{2}}n_2^-)\nabla\kappa^{-\frac{1}{2}} n_2^+
 +\widetilde{g}_3^+(\kappa^{-\frac{1}{2}}n_1^+,\kappa^{-\frac{1}{2}}n_1^-)\nabla\kappa^{-\frac{1}{2}} n_1^+\\
 &-\widetilde{g}_4^+(\kappa^{-\frac{1}{2}}n_2^+,\kappa^{-\frac{1}{2}}n_2^-)\nabla \kappa^{-\frac{1}{2}}n_2^-
 +\widetilde{g}_4^+(\kappa^{-\frac{1}{2}}n_1^+,\kappa^{-\frac{1}{2}}n_1^-)\nabla n_1^-\\
&\quad +\nabla\!\Big(\tfrac{1}{2}(\nabla n_1^- +\nabla n_2^-)\cdot \nabla \widetilde{n}^-\Big)
 +\sqrt{\kappa}\nabla\!\big(\widetilde{\psi}(\kappa^{-\frac{1}{2}}n_2^-)\,\Delta \widetilde{n}^-\big)
 +\sqrt{\kappa}\nabla\!\big(\big[\widetilde{\psi}(\kappa^{-\frac{1}{2}}n_2^-) -\widetilde{\psi}(\kappa^{-\frac{1}{2}}n_1^-)\big]\Delta n_1^-\big).
\end{aligned}
\right.
\end{equation*}

Performing the localized energy estimate as in Lemma~\ref{lemma21}, we obtain a functional
\[
\widetilde{\mathcal{E}}_j(t):=\big\|(\kappa^{-\frac{1}{2}}\widetilde{n}_j^+,\kappa^{-\frac{1}{2}}\widetilde{n}_j^-,\nabla \widetilde{n}_j^+,\nabla\widetilde{n}_j^-,\widetilde{u}_j^+,\widetilde{u}_j^-)\big\|_{L^2}^2
\]
and a constant $\widetilde{c}_0>0$ such that
\begin{equation}\label{tildeE}
\frac{{\rm d}}{{\rm d}t}\,\widetilde{\mathcal{E}}_j(t)+\widetilde{c}_0\,2^{2j}\widetilde{\mathcal{E}}_j(t)
\;\lesssim\;
\widetilde{R}_j^*(t)\,\sqrt{\widetilde{\mathcal{E}}_j(t)},
\end{equation}
where
\begin{equation*}
\begin{aligned}
\widetilde{R}_j^*(t):=&\big\|\dot{\Delta}_j(\kappa^{-\frac{1}{2}}\widetilde{F}_1,\kappa^{-\frac{1}{2}}\widetilde{F}_3, \widetilde{F}_2-\sqrt{\kappa}\nabla\!\big(\widetilde{\psi}(\kappa^{-\frac{1}{2}}n_2^+)\,\Delta \widetilde{n}^+\big),\widetilde{F}_4-\sqrt{\kappa}\nabla\!\big(\widetilde{\psi}(\kappa^{-\frac{1}{2}}n_2^-)\,\Delta \widetilde{n}^-\big))\big\|_{L^2}\\
&\,+ 2^{j\frac{d}{2}}\| \sqrt{\kappa} \big(\widetilde{\psi}(\kappa^{-\frac{1}{2}}n_2^\pm)-\widetilde{\psi}(\kappa^{-\frac{1}{2}}n_1^\pm)\big)\operatorname{div}u_1^\pm\|_{L^2}\\
&+2^{j\frac{d}{2}}\|u_2^\pm\cdot\nabla\widetilde{n}^\pm+\widetilde{u}^\pm\cdot\nabla n_1^\pm\|_{L^2}+\,\sqrt{\kappa}2^{j\frac{d}{2}}\big\|[\dot{\Delta}_j, \widetilde{\psi}(\kappa^{-\frac{1}{2}}n_2^\pm)]\big(\operatorname{div} \widetilde{u}^\pm+\Delta \widetilde{n}^\pm\big)\big\|_{L^2}.
\end{aligned}
\end{equation*}

Summing \eqref{tildeE} over $j\in\mathbb{Z}$ and using the product/commutator laws in Besov spaces, we deduce (for $t\in[0,T_0]$, with $T_0$ small enough depending on the a~priori bounds of the two solutions)
\begin{align*}
&\big\|(\kappa^{-\frac{1}{2}}\widetilde{n}^+,\kappa^{-\frac{1}{2}}\widetilde{n}^-,\nabla \widetilde{n}^+,\nabla\widetilde{n}^-,\widetilde{u}^+,\widetilde{u}^-)\big\|_{\widetilde{L}^\infty_t(\dot{B}_{2,1}^{\frac{d}{2}-1})}
+\big\|(\kappa^{-\frac{1}{2}}\widetilde{n}^+,\kappa^{-\frac{1}{2}}\widetilde{n}^-,\nabla \widetilde{n}^+,\nabla\widetilde{n}^-,\widetilde{u}^+,\widetilde{u}^-)\big\|_{L^1_t(\dot{B}_{2,1}^{\frac{d}{2}+1})}\\
&\lesssim \int_0^{T_0} \big\| (\kappa^{-\frac{1}{2}}\widetilde{F}_1,\kappa^{-\frac{1}{2}}\widetilde{F}_3,\nabla \widetilde{F}_1,\nabla\widetilde{F}_3, \widetilde{F}_2,\widetilde{F}_4)\big\|_{\dot{B}_{2,1}^{\frac{d}{2}-1}}\,{\rm d}\tau 
 +\sqrt{\kappa}\int_0^{T_0} \sum_{j\in \mathbb{Z}}2^{j\frac{d}{2}}\big\|[\dot{\Delta}_j, \widetilde{\psi}(\kappa^{-\frac{1}{2}}n_2^\pm)]\big(\operatorname{div} \widetilde{u}^\pm+\Delta \widetilde{n}^\pm\big)\big\|_{L^2}\, {\rm d}\tau.
\end{align*}
Each right-hand term is bounded by the left-hand norm times an integrable coefficient built from the a-priori norms of $(n_i^\pm,u_i^\pm)$; for instance,
\begin{align*}
&\int_0^{T_0}\kappa^{-\frac{1}{2}}\|u_2^\pm\cdot\nabla\widetilde{n}^\pm+\widetilde{u}^\pm\cdot\nabla n_1^\pm\|_{\dot{B}_{2,1}^{\frac{d}{2}-1}}\,{\rm d}\tau\\
\leq & \epsilon \|\kappa^{-\frac{1}{2}}\widetilde{n}^\pm\|_{L^1_t(\dot{B}_{2,1}^{\frac{d}{2}+1})}+C_\epsilon\int_0^{T_0}\|u_2^\pm\|^2_{ \dot{B}_{2,1}^{\frac{d}{2}}}\|\kappa^{-\frac{1}{2}}\widetilde{n}^\pm\|_{ \dot{B}_{2,1}^{\frac{d}{2}-1}}{\rm d}\tau
 +C\int_0^{T_0}\|\kappa^{-\frac{1}{2}}n_1^\pm\|_{\dot{B}_{2,1}^{\frac{d}{2}+1}} \|\widetilde{u}^\pm\|_{\dot{B}_{2,1}^{\frac{d}{2}-1}}\,{\rm d}\tau,\\
&\int_0^{T_0} \|(\widetilde{\psi}(\kappa^{-\frac{1}{2}}n_2^\pm)-\widetilde{\psi}(\kappa^{-\frac{1}{2}}n_1^\pm))\operatorname{div}u_1^\pm\|_{\dot{B}_{2,1}^{\frac{d}{2}-1}}\,{\rm d}\tau\lesssim \int_0^{T_0} \|u_1^\pm\|_{\dot{B}_{2,1}^{\frac{d}{2}+1}} \|\kappa^{-\frac{1}{2}}\widetilde{n}^\pm\|_{\dot{B}_{2,1}^{\frac{d}{2}-1}}\,{\rm d}\tau,\\
&\int_0^{T_0}\|\nabla(\tfrac12(\nabla n_1^\pm+\nabla n_2^\pm)\cdot\nabla \widetilde{n}^\pm)\|_{\dot{B}_{2,1}^{\frac{d}{2}-1}}\,{\rm d}\tau\lesssim  \Big(\|\nabla n_1^\pm\|_{L^2_t(\dot{B}_{p,1}^{\frac{d}{p}})}+\|\nabla n_2^\pm\|_{L^2_t(\dot{B}_{p,1}^{\frac{d}{p}})} \Big)\,
\|\nabla \widetilde{n}^\pm\|_{ L^2_t(\dot{B}_{2,1}^{\frac{d}{2}})},\\
&\int_0^{T_0}\| \widetilde{Q}(\kappa^{-\frac{1}{2}}n_2^\pm)(\Delta+\nabla\operatorname{div})u_2^\pm-\widetilde{Q}(\kappa^{-\frac{1}{2}}n_1^\pm)(\Delta+\nabla\operatorname{div})u_1^\pm \|_{\dot{B}_{2,1}^{\frac{d}{2}-1}}\,{\rm d}\tau\\ \lesssim & \kappa^{-\frac{1}{2}} \|n_2^\pm\|_{\widetilde{L}^{\infty}_t(\dot{B}_{2,1}^{\frac{d}{2}})}\|\widetilde{u}^\pm\|_{L^1_t(\dot{B}_{2,1}^{\frac{d}{2}+1})}
 +\kappa^{-\frac{1}{2}}\| u_1^\pm\|_{L^1_t(\dot{B}_{2,1}^{\frac{d}{2}+1})}\|\widetilde{n}^\pm\|_{\widetilde{L}^{\infty}_t(\dot{B}_{2,1}^{\frac{d}{2}})},\\
&\sum_{j}2^{j\frac{d}{2}}\int_0^{T_0}\!\!\sqrt{\kappa}\big\|[\dot{\Delta}_j, \widetilde{\psi}(\kappa^{-\frac{1}{2}}n_2^\pm)]\big(\operatorname{div} \widetilde{u}^\pm+\Delta \widetilde{n}^\pm\big)\big\|_{L^2}\, {\rm d}\tau\\
\lesssim & \Big(1+\|n_2^\pm\|_{L^\infty_t(L^\infty)}\Big)^{\frac{d}{2}+1}\|\nabla n_2^\pm\|_{L^2_t(\dot{B}_{p,1}^{\frac{d}{p}})}
\,\|(\nabla \widetilde{n}^\pm, \widetilde{u}^\pm )\|_{L^2_t(\dot{B}_{2,1}^{\frac{d}{2}})}.
\end{align*}
Hence, by Gr\"onwall’s inequality on $[0,T_0]$ and the zero initial differences, and then taking $\kappa$ sufficiently large and $\epsilon$ suitably small, we get
\[
\big\|(\widetilde{n}^+,\widetilde{n}^-,\nabla \widetilde{n}^+,\nabla\widetilde{n}^-,\widetilde{u}^+,\widetilde{u}^-)\big\|_{\widetilde{L}^\infty_{T_0}(\dot{B}_{2,1}^{\frac{d}{2}-1})}
+\big\|(\widetilde{n}^+,\widetilde{n}^-,\nabla \widetilde{n}^+,\nabla\widetilde{n}^-,\widetilde{u}^+,\widetilde{u}^-)\big\|_{L^1_{T_0}(\dot{B}_{2,1}^{\frac{d}{2}+1})}
=0.
\]
Repeating the argument on $[T_0,2T_0],[2T_0,3T_0],\dots$ covers $[0,T]$ and yields uniqueness.

\section{Proof of Theorem \ref{thm:decay-transfer}}\label{section3}

\subsection{Uniform evolution of Besov norms under lower-regularity exponents}
In this subsection, we derive uniform-in-time bounds for the solution in negative Besov norms. More precisely,  for any $t>0$
\begin{equation}
\biggl\|\Bigl(\kappa^{-\frac{1}{2}}n^+,\kappa^{-\frac{1}{2}}n^-, \nabla n^+,\nabla n^-, u^+,u^-\Bigr)(t)\biggr\|_{\dot B^{\sigma_1}_{2,\infty}}\label{3.1}
 \leq C_{0},
\end{equation}
where $C_{0}>0$ depends on the initial norm $\bigl\|(\kappa^{-\frac{1}{2}}n^\pm_0,\nabla n^\pm_{0},u^\pm_{0})\bigr\|_{\dot B^{ \sigma_1}_{2,\infty}}$ and on the regularity estimates for $(n^\pm,u^\pm)$ provided by Theorem \ref{thm:global-existence}. The key step is the following lemma, which describes the evolution of this norm.  

\begin{lemma}\label{Lem3.1}
Let $(n^\pm,u^\pm)$ be the solution provided by Theorem \ref{thm:global-existence}. 
Then, if $(\kappa^{-\frac{1}{2}}n^\pm_0,\nabla n^\pm_{0},u^\pm_{0})\in \dot B^{\sigma_1}_{2,\infty}$ for $-\frac d2 \leq \sigma_1< \frac d2 -1,$, it holds that
\begin{equation}\label{ineq-evo-1}
\begin{aligned}
&\biggl\|\Bigl(\kappa^{-\frac{1}{2}}n^+,\kappa^{-\frac{1}{2}}n^-,\nabla n^+,\nabla n^-,  u^+, u^-\Bigl)(t)\biggr\|_{\dot B^{\sigma_1}_{2,\infty}}\\%+\biggl\|\Bigl(\kappa^{-\frac{1}{2}}n^\pm,\nabla n^\pm,u^\pm \Bigl)(t)\biggr\|_{L^1_t(\dot B^{\sigma_1+2}_{2,\infty})}\\
\lesssim&
{\rm exp}\bigg\{ \int_{0}^{t}
\Bigl(1 + \|(n^+,n^-)\|_{\dot B^{\frac d2}_{2,1}}\Bigr)^{\frac{d}{2}+1}
\biggl\|\Bigl(\kappa^{-\frac{1}{2}}n^+,\kappa^{-\frac{1}{2}}n^-,\nabla n^+,\nabla n^-,  u^+, u^+\Bigr)\biggl\|_{\dot{B}^{\frac{d}{2}+1}_{2,1}}\,{\rm d}\tau\bigg\}\\
&\qquad\qquad\qquad\qquad\qquad\qquad\qquad\times\bigl\|\bigl(\kappa^{-\frac{1}{2}}n_0^+,\kappa^{-\frac{1}{2}}n_0^-,\nabla n_0^+,\nabla n_0^-,   u_0^+, u_0^+\bigr)\bigr\|_{\dot B^{\sigma_1}_{2,\infty}}.
\end{aligned}
\end{equation}
\end{lemma}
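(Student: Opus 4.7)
The plan is to adapt the hypocoercivity construction of Lemma~\ref{lemma21} to the lower-regularity index $\sigma_1$, and then pass from each dyadic block to the $\ell^\infty$ structure that defines $\dot B^{\sigma_1}_{2,\infty}$. I would first apply $\dot\Delta_j$ to the perturbed system \eqref{system-perturb1} and repeat, verbatim, the cross-term energy procedure of Lemma~\ref{lemma21}. Since the Lyapunov functional $\mathcal E_j(t)$ and the dissipation constant $c_0$ there depend only on the linear structure and on the stability condition \eqref{stability}, and not on the global regularity index, the very same differential inequality
\begin{equation*}
\frac{d}{dt}\mathcal E_j(t)+c_0\,2^{2j}\,\mathcal E_j(t)\lesssim \mathcal R_j(t)\sqrt{\mathcal E_j(t)}
\end{equation*}
remains valid here, with $\mathcal R_j$ of the form given in \eqref{ineqforbasicenergy1}.

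Next, dividing by $\sqrt{\mathcal E_j+\varepsilon}$ with $\varepsilon\to 0$, integrating in time, multiplying by $2^{j\sigma_1}$, and taking $\sup_{j\in\mathbb Z}$ (which by definition recovers the $\dot B^{\sigma_1}_{2,\infty}$ norm), Fubini reduces the task to a pointwise-in-time estimate
\begin{equation*}
\sup_{j\in\mathbb Z} 2^{j\sigma_1}\mathcal R_j(\tau)\lesssim A(\tau)\,\bigl\|\bigl(\kappa^{-\tfrac12}n^\pm,\nabla n^\pm,u^\pm\bigr)\bigr\|_{\dot B^{\sigma_1}_{2,\infty}},
\end{equation*}
where $A(\tau):=\bigl(1+\|(n^+,n^-)\|_{\dot B^{d/2}_{2,1}}\bigr)^{\tfrac d2+1}\bigl\|(\kappa^{-\tfrac12}n^\pm,\nabla n^\pm,u^\pm)\bigr\|_{\dot B^{d/2+1}_{2,1}}$. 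Once this bound is secured, Gronwall's lemma immediately delivers \eqref{ineq-evo-1}.

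The heart of the proof is therefore the nonlinear bookkeeping for $\sup_j 2^{j\sigma_1}\mathcal R_j$. I would treat each class of terms separately. For the transport pieces $(u^\pm-v^\pm)\cdot\nabla n^\pm$, $u^\pm\cdot\nabla u^\pm$, and the convective commutators $[\dot\Delta_j,v^\pm\cdot\nabla]$, I use Bony's decomposition together with the product law $\dot B^{d/2+1}_{2,1}\cdot\dot B^{\sigma_1}_{2,\infty}\hookrightarrow\dot B^{\sigma_1}_{2,\infty}$, valid throughout $-d/2\leq\sigma_1<d/2$. For the quasilinear capillary and pressure terms $\sqrt\kappa\,\widetilde\psi(\kappa^{-\tfrac12}n^\pm)\operatorname{div}u^\pm$, $\widetilde Q(\kappa^{-\tfrac12}n^\pm)\Delta u^\pm$, $\widetilde g_j(\kappa^{-\tfrac12}n^+,\kappa^{-\tfrac12}n^-)\nabla n^\pm$, and the commutators $[\dot\Delta_j,\widetilde\psi(\kappa^{-\tfrac12}n^\pm)](\operatorname{div}u^\pm+\Delta n^\pm)$, a prior application of the composite-function estimates from Lemma~\ref{lemma64} (using the smoothness of $\widetilde\psi,\widetilde\phi,\widetilde Q,\widetilde g_j$ and their vanishing at the origin) produces the prefactor $\bigl(1+\|n\|_{\dot B^{d/2}_{2,1}}\bigr)^{d/2+1}$ and simultaneously swallows each $\sqrt\kappa$ against the $\kappa^{-1/2}$ built into its arguments, keeping every constant uniform in $\kappa$; the residual commutators are then controlled by the low-regularity versions of Lemmas~\ref{Lemma5-7}--\ref{lemcommutator}.

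The main obstacle I anticipate is the endpoint $\sigma_1=-d/2$ in the $\ell^\infty$-indexed Bony remainder. For $\sigma_1$ strictly larger than $-d/2$, the product law cited above follows at once from a standard paraproduct decomposition, but at the borderline the remainder $R(f,g)=\sum_j\dot\Delta_j f\,\widetilde{\dot\Delta}_j g$ is delicate and must be treated by placing the high-frequency factor in $\dot B^{d/2+1}_{2,1}$, whose $\ell^1$ summability is essential, while the low-regularity factor retains its $\dot B^{-d/2}_{2,\infty}$ regularity. A second subtlety is the careful bookkeeping in $\kappa$: every power of $\sqrt\kappa$ appearing in the source terms must be matched by a $\kappa^{-1/2}$ hidden in some factor of $n^\pm$ or in the Taylor coefficients of $\widetilde\psi,\widetilde\phi,\widetilde Q,\widetilde g_j$; otherwise the exponent in \eqref{ineq-evo-1} would acquire an undesirable $\kappa$-dependence. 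Both issues can be resolved by the same tools already used in the proof of Lemma~\ref{lemma22}, so the scheme closes.
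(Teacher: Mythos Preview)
Your overall plan---localize, use the Lyapunov inequality, take $\sup_j 2^{j\sigma_1}$, and close by Gr\"onwall---is exactly right, but two points need correction. First, the paper deliberately does \emph{not} work with $\mathcal R_j$ from Lemma~\ref{lemma21}; it rewrites the system as \eqref{3.3} with $v^\pm$ moved into the forcing $\widetilde F_i$, so that no $v^\pm$-norm appears in the final exponential factor \eqref{ineq-evo-1}. If you keep $v^\pm$ in the transport as you propose, the Gr\"onwall weight would contain $\|v^\pm\|_{\dot B^{d/2+1}_{2,1}}$, which is not what the lemma asserts.

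Second, and more seriously, the product law you invoke, $\dot B^{d/2+1}_{2,1}\cdot\dot B^{\sigma_1}_{2,\infty}\hookrightarrow\dot B^{\sigma_1}_{2,\infty}$, is false: functions in $\dot B^{d/2+1}_{2,1}$ need not be bounded, and the low-frequency part of $T_fg$ diverges. This matters precisely for the terms you single out. For instance $u^\pm\cdot\nabla n^\pm$ must be placed in $\dot B^{\sigma_1+1}_{2,\infty}$ (because of the extra $2^j$ in $\mathcal R_j$), and $\widetilde Q(\kappa^{-1/2}n^\pm)\Delta u^\pm$ in $\dot B^{\sigma_1}_{2,\infty}$ involves $\|u^\pm\|_{\dot B^{\sigma_1+2}_{2,\infty}}$, which you do not control directly. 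The paper's device is a pair of complementary interpolation inequalities \eqref{interpo1}--\eqref{interpo2} between $\dot B^{\sigma_1}_{2,\infty}$ and $\dot B^{d/2+1}_{2,1}$ (with exponent $\eta=(\tfrac d2+1-\sigma_1)^{-1}$ and its companion $\widetilde\eta$): one factor is pushed up to $\dot B^{\sigma_1+1}_{2,\infty}$ (or $\dot B^{\sigma_1+2}_{2,\infty}$) while the other is pulled down to $\dot B^{d/2}_{2,1}$ (or $\dot B^{d/2-1}_{2,1}$), and the product of the two interpolated bounds collapses exactly to $\|\cdot\|_{\dot B^{\sigma_1}_{2,\infty}}\cdot\|\cdot\|_{\dot B^{d/2+1}_{2,1}}$. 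This interpolation step is the key mechanism that makes the Gr\"onwall argument close, and it is missing from your outline.
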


\begin{proof}
Here, we do not incorporate the incompressible–limit component $v^\pm$ into the nonlinear terms to close the energy estimates. Instead, we consider the system
\begin{equation} \label{3.3}
\left\{
\begin{aligned}
&\partial_t n^+   +\sqrt{\kappa }\operatorname{div} u^+ = \widetilde{F}_1, \\
&\partial_t u^+  +\frac{\beta_1}{\sqrt{\kappa }} \nabla n^+ + \frac{\beta_2}{\sqrt{\kappa }} \nabla n^- - \Delta u^+ -\nabla\operatorname{div}u^+-\sqrt{\kappa }\nabla \Delta n^+ = \widetilde{F}_2, \\
&\partial_t n^-  + \sqrt{\kappa}\operatorname{div} u^- = \widetilde{F}_3, \\
&\partial_t u^-  + \frac{\beta_3 }{\sqrt{\kappa}}\nabla n^+ +  \frac{\beta_4}{\sqrt{\kappa }} \nabla n^- - \Delta u^--\nabla\operatorname{div}u^-  - \sqrt{\kappa} \nabla \Delta n^- = \widetilde{F}_4,
\end{aligned}
\right.
\end{equation}
where $\widetilde{F}_1=F_1-v^+\cdot\nabla n^+$, $\widetilde{F}_3=F_3-v^-\cdot\nabla n^-$, $\widetilde{F}_2=F_2-v^+\cdot\nabla u^+$ and $\widetilde{F}_4=F_4-v^-\cdot\nabla u^-$.

Recall that the functional $\mathcal{E}_j(t)$ is defined in Lemma \ref{lemma21} and obeys \eqref{ineqforbasicenergy0}. Proceeding from \eqref{3.3} as in Lemma \ref{lemma21}, we obtain 
\begin{equation} \label{ineqforbasicenergy3.3}
    \begin{aligned}
        \frac{{\rm d}}{{\rm d}t} \mathcal{E}_j(t)+c_0 2^{2j}\mathcal{E}_j(t)&\lesssim \widetilde{\mathcal{R}}_j\sqrt{\mathcal{E}_j(t)},
    \end{aligned}
\end{equation}
where  $\mathcal{E}_j(t)$ is denoted by \eqref{ineqforbasicenergy0}
%\begin{align*} 
%\mathcal{E}_j(t)\sim \left\| \big(\kappa^{-\frac{1}{2}}n_j^+,\kappa^{-\frac{1}{2}}n_j^-,\nabla n_j^+,\nabla n_j^-,  u_j^+, u_j^+\big)\right\|_{L^2}^2
%\end{align*}
and 
\begin{equation*}
\begin{aligned}
\widetilde{\mathcal{R}}_j:&=\Big\|\dot{\Delta}_j\big(\kappa^{-\frac{1}{2}}\widetilde{F}_1,\kappa^{-\frac{1}{2}}\widetilde{F}_3,\sum_{l=1}^6 \widetilde{F}_2^l ,\sum_{l=1}^6 \widetilde{F}_4^l\big)\Big\|_{L^2} +2^j\|\dot{\Delta}_j (u^+ \cdot\nabla n^+)\|_{L^2}+2^j\|\dot{\Delta}_j (u^-\cdot\nabla n^-)\|_{L^2}\\
        &\quad+2^j\sqrt{\kappa}\|[\dot{\Delta}_j, \widetilde{\psi}(\kappa^{-\frac12}n^+)](\operatorname{div}u^++\Delta n^+)\|_{L^2}+2^j\sqrt{\kappa}\|[\dot{\Delta}_j, \widetilde{\psi}(\kappa^{-\frac12}n^-)](\operatorname{div}u^-+\Delta n^-)\|_{L^2},
\end{aligned}
\end{equation*}

Dividing  \eqref{ineqforbasicenergy3.3} by $\sqrt{ \mathcal{E}_j(t)+\varepsilon_*}$ with $\varepsilon_*>0$, integrating over  $[0,t]$, and then letting $\varepsilon_*\rightarrow0$, we arrive at
\begin{equation}\label{ineq-evo-3}
    \begin{aligned}  \sqrt{\mathcal{E}_j(t)}+2^{2j}\int_0^t\sqrt{\mathcal{E}_j(\tau)}{\rm d}\tau&\lesssim\sqrt{\mathcal{E}_j(0)} +\int_0^t \widetilde{\mathcal{R}}_j {\rm d}\tau. 
    \end{aligned}
\end{equation}

Then, multiplying \eqref{ineq-evo-3} by $2^{j\sigma_1}$, and taking the supremum on $j\in \mathbb{Z} $ yields 
%\begin{equation}
    \begin{align}\label{ineq-evo-4}
        &\left\|(\kappa^{-\frac{1}{2}}n^+,\kappa^{-\frac{1}{2}}n^-,\nabla n^+,\nabla n^-,   u^+,  u^-)(t)\right\|_{\dot B^{\sigma_1}_{2,\infty}}\nonumber\\ 
        \lesssim & \left\| (\kappa^{-\frac{1}{2}}n_0^+,\kappa^{-\frac{1}{2}}n_0^-,\nabla n_0^+,\nabla n_0^-,   u_0^+,  u_0^-)\right\|_{\dot{B}^{\sigma_1}_{2,\infty}}+\int_0^t  \bigg\| \Big(\kappa^{-\frac{1}{2}}\widetilde{F}_1,\kappa^{-\frac{1}{2}}\widetilde{F}_3,\sum_{l=1}^6 \widetilde{F}_2^l ,\sum_{l=1}^6 \widetilde{F}_4^l\Big)\bigg\|_{\dot{B}^{\sigma_1}_{2,\infty}}{\rm d}\tau\nonumber\\
        &+\int_0^t  \| u^+\cdot\nabla n^+\|_{\dot{B}^{\sigma_1+1}_{2,\infty}}{\rm d}\tau +\int_0^t  \| u^-\cdot\nabla n^-\|_{\dot{B}^{\sigma_1+1}_{2,\infty}}{\rm d}\tau\\
        &+\sqrt{\kappa} \int_0^t  \sup_{j\in\mathbb{Z}} 2^{(\sigma_1+1)j} \|\bigl[\dot\Delta_j,  \widetilde{\psi}(\kappa^{-\frac12}n^+)\bigr](\operatorname{div}u^++\Delta n^+)\|_{L^2} {\rm d}\tau\nonumber\\
        &+\sqrt{\kappa} \int_0^t  \sup_{j\in\mathbb{Z}} 2^{(\sigma_1+1)j} \|\bigl[\dot\Delta_j,  \widetilde{\psi}(\kappa^{-\frac12}n^-)\bigr](\operatorname{div}u^-+\Delta n^-)\|_{L^2} {\rm d}\tau\nonumber,
    \end{align}
%\end{equation}
 where we used Minkowski's inequality to exchange the $L^1_t$ integration and the supremum on $j\in\mathbb{Z}$.

Next, we focus on the nonlinear terms on the right-hand side of \eqref{ineq-evo-4}.
For the sake of a concise proof, we first present here some interpolation inequalities that will be employed, i.e.
\begin{align}\label{interpo1}
    \|(\kappa^{-\frac{1}{2}}n^\pm,\nabla n^\pm,u^\pm)\|_{\dot B^{\frac d2}_{2,1}}
 \leq 
\|(\kappa^{-\frac{1}{2}}n^\pm,\nabla n^\pm,u^\pm)\|_{\dot B^{\sigma_1}_{2,\infty}}^{ \eta}
\|(\kappa^{-\frac{1}{2}}n^\pm,\nabla n^\pm,u^\pm)\|_{\dot B^{\frac {d}{2}+1}_{2,1}}^{ 1-\eta}
\end{align}
and
\begin{align}\label{interpo2}
    \|(\kappa^{-\frac{1}{2}}n^\pm,\nabla n^\pm,u^\pm)\|_{\dot B^{\sigma_1+1}_{2,\infty}}
 \leq
\|(\kappa^{-\frac{1}{2}}n^\pm,\nabla n^\pm,u^\pm)\|_{\dot B^{\sigma_1}_{2,\infty}}^{\,1-\eta}
\|(\kappa^{-\frac{1}{2}}n^\pm,\nabla n^\pm,u^\pm)\|_{\dot B^{ \frac{d}{2}+1}_{2,1}}^{\eta}
\end{align}
for the constant $ 
\eta = \frac{1}{-\sigma_1 + \tfrac d2 + 1},
$ satisfying
 \begin{align}
     \frac{d}{2}=\eta\sigma_1+(1-\eta)(\frac{d}{2}+1),\qquad \sigma_1+1=(1-\eta)\sigma_1+\eta(\frac{d}{2}+1).
 \end{align}
Since $-\frac{d}{2}\leq \sigma_1\leq \frac{d}{2}-1$, it follows from \eqref{uv3} ($p=2,s_1=\frac{d}{2}$, $s_2=\sigma_1$) and \eqref{F0} that
 \begin{equation}
     \begin{aligned}
         \| \widetilde{\psi}(\kappa^{-\frac{1}{2}}n^+)\text{div}u^+\|_{\dot{B}^{\sigma_1}_{2,\infty}}\lesssim (1+\|n^+\|_{L^{\infty}_t(L^{\infty})})^{\frac{d}{2}+1}\|\kappa^{-\frac{1}{2}}n^+\|_{\dot{B}^{\sigma_1}_{2,\infty}}\|u^+\|_{\dot B^{ \frac{d}{2}+1}_{2,1}}
     \end{aligned}
 \end{equation}
 and 
 \begin{equation}
     \begin{aligned}
        \kappa^{-\frac{1}{2}} \| u^+\cdot\nabla n^+\|_{\dot{B}^{\sigma_1}_{2,\infty}}\lesssim \|u^+\|_{\dot{B}^{\sigma_1}_{2,\infty}}\|\kappa^{-\frac{1}{2}}n^+\|_{\dot B^{ \frac{d}{2}+1}_{2,1}}. 
     \end{aligned}
 \end{equation}
 This implies
 \begin{align}
     \|\kappa^{-\frac{1}{2}}\widetilde{F}_1\|_{\dot{B}^{\sigma_1}_{2,\infty}}\lesssim \|(\kappa^{-\frac{1}{2}}n^+,u^+)\|_{\dot{B}^{\sigma_1}_{2,\infty}}\|(\kappa^{-\frac{1}{2}}n^+,u^+)\|_{\dot B^{ \frac{d}{2}+1}_{2,1}}.
 \end{align}
 Similarly, we obtain
 \begin{align}
     \|\kappa^{-\frac{1}{2}}\widetilde{F}_3\|_{\dot{B}^{\sigma_1}_{2,\infty}}\lesssim \|(\kappa^{-\frac{1}{2}}n^-,u^-)\|_{\dot{B}^{\sigma_1}_{2,\infty}}\|(\kappa^{-\frac{1}{2}}n^-,u^-)\|_{\dot B^{ \frac{d}{2}+1}_{2,1}}.
 \end{align}
 For nonlinear terms, we start with $\| u^\pm\cdot\nabla n^\pm\|_{\dot{B}^{\sigma_1+1}_{2,\infty}}$. Using interpolation inequalities \eqref{interpo1} and \eqref{interpo2}, we have
\begin{equation}
    \begin{aligned}
        \| u^\pm\cdot\nabla n^\pm\|_{\dot{B}^{\sigma_1+1}_{2,\infty}}&\lesssim \|u^\pm\|_{\dot{B}^{\sigma_1+1}_{2,\infty}} \|\nabla n^\pm\|_{\dot B^{\frac{d}{2}}_{2,1}}\lesssim \|(\nabla n^\pm,u^\pm)\|_{\dot{B}^{\sigma_1}_{2,\infty}}\|(\nabla n^\pm,u^\pm)\|_{\dot B^{ \frac{d}{2}+1}_{2,1}},
    \end{aligned}
\end{equation}
provided $-\frac d2 \leq \sigma_1< \frac d2 -1.$
With the same computation on the Korteweg term $\nabla (|\nabla n^\pm|^2)$, by using \eqref{uv3},
\begin{equation}
    \begin{aligned}
         \|\nabla (|\nabla n^\pm|^2)\|_{\dot{B}^{\sigma_1}_{2,\infty}}&\lesssim\|\nabla n^\pm\|_{\dot{B}^{\sigma_1+1}_{2,\infty}} \|\nabla n^\pm\|_{\dot B^{\frac{d}{2}}_{2,1}}\lesssim\|\nabla n^\pm \|_{\dot{B}^{\sigma_1}_{2,\infty}}\|\nabla n^\pm \|_{\dot B^{ \frac{d}{2}+1}_{2,1}}.
    \end{aligned}
\end{equation}
Similarly, we could estimate terms $u^{+}\cdot\nabla u^+$, $\operatorname{div}\left(2\widetilde{\phi}(\kappa^{-\frac{1}{2}}n^+)\mathbb{D}u^+\right)$, $\widetilde{g}_1^+(\kappa^{-\frac{1}{2}}n^+,\kappa^{-\frac{1}{2}}n^{-}) \nabla \kappa^{-\frac{1}{2}}n^+$,\\
$\widetilde{g}_2^+(\kappa^{-\frac{1}{2}}n^+,\kappa^{-\frac{1}{2}}n^{-})\nabla \kappa^{-\frac{1}{2}}n^{-} $ in $\widetilde{F}_2$ by
  \begin{align*}
         \|u^{\pm}\cdot\nabla u^\pm\|_{\dot{B}^{\sigma_1}_{2,\infty}}
         &\lesssim\|u^\pm \|_{\dot{B}^{\sigma_1}_{2,\infty}}\|u^\pm \|_{\dot B^{ \frac{d}{2}+1}_{2,1}},\\
         \|\operatorname{div}\left(2\widetilde{\phi}(\kappa^{-\frac{1}{2}}n^\pm)\mathbb{D}u^\pm\right)\|_{\dot{B}^{\sigma_1}_{2,\infty}}&\lesssim \|  \kappa^{-\frac{1}{2}}n^\pm\|_{\dot{B}^{\sigma_1+1}_{2,\infty}} \|\mathbb{D}u^\pm\|_{\dot B^{\frac{d}{2}}_{2,1}}\\
         &\lesssim(1+\|n^\pm\|_{\dot B^{ \frac{d}{2}}_{2,1}})^{ \frac{d}{2}+1}\kappa^{-\frac12}\|\nabla n^\pm \|_{\dot{B}^{\sigma_1}_{2,\infty}}\|u^\pm \|_{\dot B^{ \frac{d}{2}+1}_{2,1}},\\  
    \|\widetilde{g}_1^+(\kappa^{-\frac{1}{2}}n^+,\kappa^{-\frac{1}{2}}n^{-})\nabla \kappa^{-\frac{1}{2}} n^+\|_{\dot{B}^{\sigma_1}_{2,\infty}} &\lesssim (1+\|(n^+,n^-)\|_{\dot B^{ \frac{d}{2}}_{2,1}})^{ \frac{d}{2}+1} \|(\kappa^{-\frac{1}{2}}n^+,\kappa^{-\frac{1}{2}}n^-)\|_{\dot{B}^{\sigma_1}_{2,\infty}}\|\kappa^{-\frac{1}{2}}n^+\|_{\dot B^{ \frac{d}{2}+1}_{2,1}}
 \end{align*}
and 
\begin{equation*}
    \begin{aligned}
        \|\widetilde{g}_2^+(\kappa^{-\frac{1}{2}}n^+,\kappa^{-\frac{1}{2}}n^{-})\nabla \kappa^{-\frac{1}{2}}n^{-}\|_{\dot{B}^{\sigma_1}_{2,\infty}} \lesssim& (1+\|(n^+,n^-)\|_{\dot B^{ \frac{d}{2}}_{2,1}})^{ \frac{d}{2}+1}\|(\kappa^{-\frac{1}{2}}n^+,\kappa^{-\frac{1}{2}}n^-)\|_{\dot{B}^{\sigma_1}_{2,\infty}}\|\kappa^{-\frac{1}{2}}n^-\|_{\dot B^{ \frac{d}{2}+1}_{2,1}}.
    \end{aligned}
\end{equation*}
For the term $\nu \widetilde{Q}( \kappa^{-\frac{1}{2}}n^\pm))\Delta u^\pm$, one  deduces from \eqref{uv3} and \eqref{F0} that
\begin{equation}
\begin{aligned}
    &\|\nu \widetilde{Q}( \kappa^{-\frac{1}{2}}n^\pm))\Delta u^\pm\|_{\dot B^{\sigma_1}_{2,\infty}}\\
    \lesssim&(1+\|n^\pm\|_{\dot B^{ \frac{d}{2}}_{2,1}})^{ \frac{d}{2}+1}\kappa^{-\frac{1}{2}} \|n^\pm\|_{\dot B^{ \frac{d}{2}}_{2,1}}\|\Delta u^\pm\|_{\dot B^{\sigma_1}_{2,\infty}}\\
    \lesssim&(1+\|n^\pm\|_{\dot B^{ \frac{d}{2}}_{2,1}})^{ \frac{d}{2}+1}\kappa^{-\frac{1}{2}} \|\nabla n^\pm\|_{\dot B^{\sigma_1}_{2,\infty}}^{\widetilde{\eta}}\|\nabla n^+\|_{\dot B^{ \frac{d}{2}+1}_{2,1}}^{1-\widetilde{\eta}}\|u^\pm\|_{\dot B^{\sigma_1}_{2,\infty}}^{1-\widetilde{\eta}}\|u^\pm\|_{\dot B^{ \frac{d}{2}+1}_{2,1}}^{\widetilde{\eta}}\\
    \lesssim&(1+\|n^\pm\|_{\dot B^{ \frac{d}{2}}_{2,1}})^{ \frac{d}{2}+1}\kappa^{-\frac{1}{2}}\|(\nabla n^\pm,u^\pm)\|_{\dot B^{\sigma_1}_{2,\infty}}\|(\nabla n^\pm,u^\pm)\|_{\dot B^{ \frac{d}{2}+1}_{2,1}}.
\end{aligned}
\end{equation}
Here, we employ the interpolation inequality, and the parameter 
$\widetilde{\eta}$ satisfies the following index relation
\begin{align*}
    \sigma_1+2=(1-\widetilde{\eta})\sigma_1+\widetilde{\eta}(\frac{d}{2}+1), \qquad \frac{d}{2}-1=\widetilde{\eta}\sigma_1+(1-\widetilde{\eta})(\frac{d}{2}+1).
\end{align*}
Similarly, we could estimate the nonlinear terms in $\widetilde{F}_4$, except for $\sqrt{\kappa}\nabla(\widetilde{\psi}(\kappa^{-\frac{1}{2}}n^-)\Delta n^-)$.
For the commutator term, applying the commutator estimate \eqref{Com:2} in Lemma \ref{lemcommutator}, we have 
\begin{equation}
    \begin{aligned}
      \sqrt{\kappa}\sup_j  2^{(\sigma_1+1)j} \|\bigl[\dot\Delta_j,   \widetilde{\psi}(\kappa^{-\frac12}n^\pm)\bigr](\operatorname{div}u^\pm+\Delta n^\pm)\|_{L^2}
\lesssim
 \|\nabla n^\pm\|_{\dot B^{\frac d2}_{2,1}}
 \| u^\pm + \nabla n^\pm\|_{\dot B^{\sigma_1+1}_{2,\infty}}.  
    \end{aligned}
\end{equation}
Then, using interpolation inequalities \eqref{interpo1} and \eqref{interpo2}, we arrive at
\begin{align}
  \sqrt{\kappa}\bigl\|\bigl[\dot\Delta_j, \widetilde{\psi}(\kappa^{-\frac12}n^\pm)\bigr]( \operatorname{div} u^\pm + \Delta n^\pm)\bigr\|_{\dot B^{\sigma_1+1}_{2,\infty}}
\lesssim
\|(\nabla n^\pm,u^\pm)\|_{\dot B^{\sigma_1}_{2,\infty}}
\|(\nabla n^\pm,u^\pm)\|_{\dot B^{ \frac{d}{2}+1}_{2,1}}.  
\end{align}
Finally, one can collect all estimates above and use the Gr\"onwall inequality to deduce \eqref{ineq-evo-1}. 
\end{proof}

\subsection{Time-weighted estimates for arbitrary-order derivatives}
In this subsection, we derive time-decay estimates for derivatives of arbitrary order of the solution. Our approach adapts the ideas of \cite{danchin5,Guo-2012,XinXu2021} and the time-weighted version \cite{CS,LiShou2023} by further exploiting the regularizing effect induced by the Korteweg terms.

For any $s>\frac{d}{2}-1$, let us first introduce the following higher-order time-weighted energy functional:
\begin{equation}\label{definition-X}
    \begin{aligned}
    \mathcal{X}_t^{M,s}:&=\sup_{t\in[0,T]}\left\| t^M \left(\kappa^{-\frac{1}{2}}n^+,\kappa^{-\frac{1}{2}}n^-,\nabla n^+,\nabla n^-, u^+, u^-\right)\right\|_{\dot{B}^{s}_{2,1}}\\
    &\qquad\qquad\qquad\qquad+ \int_0^t\left\| \tau^M \left(\kappa^{-\frac{1}{2}}n^+,\kappa^{-\frac{1}{2}}n^-,\nabla n^+,\nabla n^-,  u^+,u^-\right)\right\|_{\dot{B}^{s+2}_{2,1}}{\rm d}\tau,
\end{aligned}
\end{equation}
where $M$ is any given large constant. We will establish the expected bound of $\mathcal{X}_t^{M,s}$ without using the smallness of initial data, in the sense that $\kappa$ is suitably large and the limiting solution $v^\pm$ has the heat-like decay estimate
%five Moreover, we shall define $\mathcal{Y}_t^{M,s}$ as 
\begin{align}\label{definition-Y}
%\mathcal{Y}_t^{M,s}:=
\sup_{\tau\in[0,t] }\|\tau^M (v^+,v^-)\|_{\dot{B}^{s}_{2,1}}+\int_0^t \|\tau^M (v^+,v^-)\|_{\dot{B}^{s+2}_{2,1}}{\rm d}\tau\leq C_1t^{M-\frac{1}{2}(s-\sigma_1)},\quad t>0,
\end{align}
where $C_1>0$ is a uniform-in-time constant. The estimate  \eqref{definition-Y} for $v^\pm$ is justified in Appendix~B.

\begin{proposition}
Let $-\frac d2 \leq \sigma_1< \frac d2 -1$. Suppose that $(n^\pm,u^\pm)$ is the global solution of the Cauchy problem  \eqref{system-perturb1} given by Theorems \ref{cor1}-\ref{cor2}. Then under the assumptions of Theorem \ref{thm:decay-transfer}, for any $M>>1$ and $t>0$, it holds that 
 \begin{equation}\label{es:timeweighted}
   \begin{aligned}
    \mathcal{X}_{t}^{M,s}  \leq &C^*   t^{M-\frac{1}{2}(s-\sigma_1)}\left(\left\|\left(\kappa^{-\frac{1}{2}}n^+,\kappa^{-\frac{1}{2}}n^-,\nabla n^+,\nabla n^-,   u^+, u^-\right)\right\|_{L_t^\infty(\dot{B}^{\sigma_1}_{2,\infty})}+\mathcal{E}_t+  \mathcal{D}_t \right)\\
    &   \quad\quad\quad +   \mathcal{W}_t\mathcal{X}_{t}^{M,s}+\Big(1+\|(n^+,n^-)\|_{L^\infty(\dot{B}^{\frac{d}{2}}_{2,1})}\Big)^{\max\{s+3,\frac{d}{2}+3\}}(\mathcal{X}_{t}^{M,s}\mathcal{D}_t+ \kappa^{-\frac12} \mathcal{E}_t\mathcal{X}_{t}^{M,s}),
\end{aligned}  
 \end{equation}
 where $\mathcal{X}_{t}^{M,s}$, $\mathcal{E}_t$,    $\mathcal{D}_t$ and $\mathcal{W}_t$  are defined by  \eqref{definition-X}, \eqref{definition-E}, \eqref{definition-D} and \eqref{definition-W}, respectively, and $C^*>0$ is a constant uniform in $t$ and $\kappa$.
\end{proposition}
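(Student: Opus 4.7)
The plan is to adapt the localized hypocoercive Lyapunov scheme of Lemmas \ref{lemma21}--\ref{lemma22} by inserting a time weight $t^M$ with $M \gg 1$, and then to recover the precise algebraic rate $t^{M-(s-\sigma_1)/2}$ through a real-interpolation argument between $\dot{B}^{\sigma_1}_{2,\infty}$ (whose uniform-in-time propagation is ensured by Lemma \ref{Lem3.1}) and $\dot{B}^{s+2}_{2,1}$ (where the fully parabolic dissipation of \eqref{system-perturb1} acts). A crucial point is that taking $M\gg 1$ annihilates the boundary term $t^{M}\mathcal{E}_j(0)=0$ at $t=0$, so no extra regularity of the initial data beyond what is already assumed is required to run the weighted estimate at the new index $s>\frac{d}{2}-1$.

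Writing $E_j:=\sqrt{\mathcal{E}_j}$ and dividing \eqref{ineqforbasicenergy1} by $\sqrt{\mathcal{E}_j+\varepsilon_*}$ before letting $\varepsilon_*\downarrow 0$ produces the first-order bound $\partial_t E_j + \tfrac{c_0}{2}2^{2j}E_j \lesssim \mathcal{R}_j$. Multiplying by $t^M$, using $\partial_t(t^M E_j)=t^M\partial_t E_j+Mt^{M-1}E_j$, integrating on $[0,t]$, then multiplying by $2^{js}$ and summing in $\ell^{1}(\mathbb{Z})$, I would obtain
\begin{equation*}
\mathcal{X}_t^{M,s} \lesssim \int_0^t \tau^M \sum_{j\in\mathbb{Z}}2^{js}\mathcal{R}_j(\tau)\,d\tau + M\int_0^t \tau^{M-1}\|\mathcal{U}(\tau)\|_{\dot{B}^s_{2,1}}\,d\tau,
\end{equation*}
where $\mathcal{U}:=(\kappa^{-1/2}n^{\pm},\nabla n^{\pm},u^{\pm})$. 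For the second integral, real interpolation gives $\|\mathcal{U}\|_{\dot{B}^s_{2,1}}\lesssim\|\mathcal{U}\|_{\dot{B}^{\sigma_1}_{2,\infty}}^{\theta}\|\mathcal{U}\|_{\dot{B}^{s+2}_{2,1}}^{1-\theta}$ with $\theta=2/(s+2-\sigma_1)$; H\"older in $\tau$, Young's inequality, and the uniform bound \eqref{3.1} from Lemma \ref{Lem3.1} then yield, for any small $\epsilon>0$ and any $M>(s-\sigma_1)/2$,
\begin{equation*}
\int_0^t \tau^{M-1}\|\mathcal{U}(\tau)\|_{\dot{B}^s_{2,1}}\,d\tau \leq \epsilon\,\mathcal{X}_t^{M,s} + C_\epsilon\, t^{M-\frac{s-\sigma_1}{2}}\,\|\mathcal{U}\|_{L^\infty_t(\dot{B}^{\sigma_1}_{2,\infty})},
\end{equation*}
whose $\epsilon$-term is absorbed on the left. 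This exactly generates the polynomial prefactor $t^{M-(s-\sigma_1)/2}$ and the $L^\infty_t(\dot{B}^{\sigma_1}_{2,\infty})$-norm in \eqref{es:timeweighted}.

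The nonlinear remainder $\int_0^t \tau^M\sum_j 2^{js}\mathcal{R}_j\,d\tau$ is handled in the same spirit as in Lemma \ref{lemma22}, but upgraded from regularity $\frac{d}{2}-1$ to $s$. The weight $\tau^M$ is placed on a single factor; the remaining factors sit in an untweighted norm that is already controlled by $\mathcal{E}_t$, $\mathcal{D}_t$, $\mathcal{W}_t$ or $\mathcal{V}_t$. Convective pieces $u^{\pm}\!\cdot\!\nabla n^{\pm}, u^{\pm}\!\cdot\!\nabla u^{\pm}$ are split as $v^{\pm}\!\cdot\!\nabla(\cdot)+(\mathcal{P}u^{\pm}-v^{\pm})\!\cdot\!\nabla(\cdot)+\mathcal{Q}u^{\pm}\!\cdot\!\nabla(\cdot)$: the $v^{\pm}$-piece is absorbed through a Gr\"onwall factor of the form $e^{C\mathcal{V}_t}$ as in Lemma \ref{lemma22}, while the other two pieces contribute $\mathcal{W}_t\mathcal{X}_t^{M,s}$ and $\mathcal{X}_t^{M,s}\mathcal{D}_t$ respectively. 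The quasilinear pressure, viscosity and Korteweg contributions are controlled by the Moser-type composition bound $\|\widetilde{F}(n^{\pm})\|_{\dot{B}^s_{2,1}}\lesssim (1+\|n^{\pm}\|_{\dot{B}^{d/2}_{2,1}})^{s+1}\|n^{\pm}\|_{\dot{B}^s_{2,1}}$ combined with a higher-order version of the commutator estimate for $\sqrt{\kappa}[\dot{\Delta}_j,\widetilde{\psi}(\kappa^{-1/2}n^{\pm})](\operatorname{div}u^{\pm}+\Delta n^{\pm})$; together these produce the factor $(1+\|n^{\pm}\|_{L^\infty_t(\dot{B}^{d/2}_{2,1})})^{\max\{s+3,d/2+3\}}$ multiplying $\mathcal{X}_t^{M,s}\mathcal{D}_t+\kappa^{-1/2}\mathcal{E}_t\mathcal{X}_t^{M,s}$.

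The hardest point will be the Korteweg commutator at large $s$: the nonlinear factor must be carried in the \emph{dispersive} norm $\|\nabla n^{\pm}\|_{L^2_t(\dot{B}^{d/p}_{p,1})}$, which is of size $\kappa^{-\delta}$ by Lemma \ref{lemma23}, rather than in the energy norm (only uniformly bounded in $\kappa$); this is what ultimately allows the $\mathcal{X}_t^{M,s}$-multiplier on the right-hand side to be absorbed for $\kappa$ large. At the same time, the composition power $(1+\|n^{\pm}\|_{\dot{B}^{d/2}_{2,1}})^{s+1}$ must be tracked sharply so that the resulting constant depends only on $\mathcal{E}_t$ and on the exponent $\max\{s+3,d/2+3\}$. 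Once these two technical ingredients are in place, a final Gr\"onwall iteration closes the estimate and yields \eqref{es:timeweighted}, from which the optimal pointwise-in-time decay \eqref{eq:cns-decay} for derivatives of arbitrary order $|\alpha|>\sigma_1$ follows by specializing $s=|\alpha|$ and applying a last interpolation with the propagated $\dot{B}^{\sigma_1}_{2,\infty}$-regularity from Lemma \ref{Lem3.1}.
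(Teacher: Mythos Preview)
Your proposal is essentially the paper's own argument: multiply the localized Lyapunov inequality \eqref{ineqforbasicenergy1} by $t^M$, use that $t^M\mathcal{E}_j(0)=0$, interpolate the $\tau^{M-1}$ term between $\dot{B}^{\sigma_1}_{2,\infty}$ and $\dot{B}^{s+2}_{2,1}$ to generate the rate $t^{M-(s-\sigma_1)/2}$, estimate the nonlinear remainders via product, composition and commutator laws with one factor carrying the weight, and close by Gr\"onwall. The identification of the Korteweg commutator as the delicate term, to be paired with the dispersive $\|\nabla n^\pm\|_{L^2_t(\dot B^{d/p}_{p,1})}$ norm, is also exactly what the paper does.

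One small point is slightly understated in your sketch. You write that in the convection terms the non-$v^\pm$ factors sit in \emph{unweighted} norms controlled by $\mathcal{E}_t,\mathcal{D}_t,\mathcal{W}_t,\mathcal{V}_t$. But when Moser's product law \eqref{uv1} is applied at regularity $s+1>\tfrac d2$, e.g.\ to $(\mathcal{P}u^\pm-v^\pm)\!\cdot\!\nabla n^\pm$, one of the two terms forces the \emph{time-weighted high-regularity} norm onto $\mathcal{P}u^\pm-v^\pm$ itself, i.e.\ onto $\tau^M v^\pm$ in $\dot B^{s+1}_{2,1}$. This is not controlled by $\mathcal{V}_t$; it requires the decay estimate \eqref{definition-Y} for the limiting Navier--Stokes solution (established in Appendix~\ref{appendixC}), which is precisely what produces the extra $t^{M-(s-\sigma_1)/2}\mathcal{D}_t$ and $t^{M-(s-\sigma_1)/2}\mathcal{E}_t$ contributions appearing in \eqref{es:timeweighted}. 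Once you invoke \eqref{definition-Y} at that step, your plan matches the paper's proof.
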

\begin{proof}Let $\mathcal{E}_j(t)$ be  as in \eqref{ineqforbasicenergy0}.  Then multiplying \eqref{ineqforbasicenergy1} by $t^{M}$, we obtain
 
 \begin{equation} \label{ineq-time-2}
    \begin{aligned}
        \frac{{\rm d}}{{\rm d}t}\Big( t^{M}\mathcal{E}_j(t)\Big)+c_0 2^{2j}t^{M}\mathcal{E}_j(t)&\lesssim M t^{M-1} \mathcal{E}_j(t)+t^{M}\mathcal{R}_j\sqrt{\mathcal{E}_j(t)},
    \end{aligned}
\end{equation}
Using $M t^{M-1}\mathcal{E}_j(t)|_{t=0}=0$,  and performing direct computations on the above inequality, we have 
\begin{align}
        \mathcal{X}_t^{M,s}
        &\lesssim\, M \int_0^t \tau^{M-1} \left\|\left(\kappa^{-\frac{1}{2}}n^+,\kappa^{-\frac{1}{2}}n^-,\nabla n^+,\nabla n^-, u^+, u^-\right)\right\|_{\dot{B}^{s}_{2,1}}{\rm d}\tau\nonumber\\
        &+\int_0^t\|(v^+,v^-)\|_{B^{\frac{d}{2}+1}_{2,1}}\mathcal{X}_\tau^{M,s} {\rm d}\tau+\int_0^t \tau^M  \|  (\mathcal{Q}u^+\cdot\nabla n^+)\|_{\dot{B}^{s+1}_{2,1}}{\rm d}\tau \nonumber\\
           &+\int_0^t \tau^M  \|  (\mathcal{Q}u^-\cdot\nabla n^-)\|_{\dot{B}^{s+1}_{2,1}}{\rm d}\tau +\int_0^t \tau^M  \|  ((\mathcal{P}u^+-v^+)\cdot\nabla n^+)\|_{\dot{B}^{s+1}_{2,1}}{\rm d}\tau\nonumber\\
          &+\int_0^t \tau^M  \|  ((\mathcal{P}u^--v^-)\cdot\nabla n^-)\|_{\dot{B}^{s+1}_{2,1}}{\rm d}\tau+\int_0^t  \tau^M\bigg\|  \Big(\kappa^{-\frac{1}{2}}F_1,\kappa^{-\frac{1}{2}}F_3, \sum_{l=1}^6\mathcal{Q}F_2^l ,\sum_{l=1}^6\mathcal{Q}F_4^l\Big)\bigg\|_{\dot{B}^{s}_{2,1}}{\rm d}\tau\nonumber\\ 
         &+\, \sqrt{\kappa}\sum_{j}2^{j(s+1)}\int_0^t   \|[ \dot{\Delta}_j,  \widetilde{\psi}(\kappa^{-\frac12}n^+)](\operatorname{div}u^++\Delta n^+)\|_{L^2} {\rm d}\tau\nonumber\\ 
         &+\,\sqrt{\kappa}\sum_{j}2^{j(s+1)}\int_0^t   \|[ \dot{\Delta}_j,  \widetilde{\psi}(\kappa^{-\frac12}n^-)](\operatorname{div}u^-+\Delta n^-)\|_{L^2} {\rm d}\tau.\label{ineq-time-6}
    \end{align}

We are in a position to estimate the nonlinear terms in \eqref{ineq-time-6}.  
First, we estimate the first term on the right–hand side of \eqref{ineq-time-6}. Let $\eta_0\in(0,1)$ be such that $\sigma_1(1-\eta_0)+(s+2)\eta_0=s$. It follows from the interpolation inequality and Young's inequality that 
\begin{equation}\label{ineq-time-7}
    \begin{aligned}
         &\int_0^t \tau^{M-1} \left\|\left(\kappa^{-\frac{1}{2}}n^\pm,\nabla n^\pm,   u^\pm\right)\right\|_{\dot{B}^{s}_{2,1}}{\rm d}\tau\\
         \leq&  \int_0^t \tau^{M-1}\left\|\left(\kappa^{-\frac{1}{2}}n^\pm,\nabla n^\pm,   u^\pm\right)\right\|^{1-\eta_0}_{\dot{B}^{\sigma_1}_{2,\infty}}\left\|\left(\kappa^{-\frac{1}{2}}n^\pm,\nabla n^\pm,   u^\pm\right)\right\|^{\eta_0}_{\dot{B}^{s+2}_{2,1}}{\rm d}\tau\\
       %  = &\int_0^t \tau^{M-1-M \eta_0}\left\|\left(\kappa^{-\frac{1}{2}}n^\pm,\nabla n^\pm,   u^\pm\right)\right\|^{1-\eta_0}_{\dot{B}^{\sigma_1}_{2,\infty}}\left\|\tau^M\left(\kappa^{-\frac{1}{2}}n^\pm,\nabla n^\pm,   u^\pm\right)\right\|^{\eta_0}_{\dot{B}^{s+2}_{2,1}}{\rm d}\tau\\
         \leq& \left( \int_0^t  \tau^{M-\frac{1}{1-\eta_0}}\left\|\left(\kappa^{-\frac{1}{2}}n^\pm,\nabla n^\pm,   u^\pm\right)\right\|_{\dot{B}^{\sigma_1}_{2,\infty}} {\rm d}\tau\right)^{1-\eta_0}\left\|\tau^M\left(\kappa^{-\frac{1}{2}}n^\pm,\nabla n^\pm,   u^\pm\right)\right\|^{\eta_0}_{L_t^1(\dot{B}^{s+2}_{2,1})}\\
         \leq& \epsilon \left\|\tau^M\left(\kappa^{-\frac{1}{2}}n^\pm,\nabla n^\pm,   u^\pm\right)\right\| _{L_t^1(\dot{B}^{s+2}_{2,1})}+\epsilon^{-1}t^{M-\frac{1}{2}(s-\sigma_1)}\left\|\left(\kappa^{-\frac{1}{2}}n^\pm,\nabla n^\pm,   u^\pm\right)\right\|_{L_t^\infty(\dot{B}^{\sigma_1}_{2,\infty})}.
    \end{aligned} 
\end{equation}
where $\epsilon>0$ is a small constant to be chosen later.

The nonlinear terms on the right-hand side of \eqref{ineq-time-6}, namely $\mathcal{Q}u^\pm\cdot\nabla n^\pm$ and  $(\mathcal{P}u^\pm-v^\pm)\cdot\nabla n^\pm$  can be estimated  by \eqref{uv1} as follows:
\begin{equation}\label{ineq-time-8}
    \begin{aligned}
        & \int_0^{t}  \tau^M\|   \mathcal{Q}u^\pm\cdot\nabla n^\pm\|_{\dot{B}^{s+1}_{2,1}}{\rm d}\tau\\
       % \lesssim&  \int_0^{t}  \|\tau^M\mathcal{Q}u^\pm \|_{\dot{B}^{s+1}_{2,1}}\| \nabla n^\pm\|_{L^\infty}+\|\mathcal{Q}u^\pm\|_{L^\infty}\|\tau^M\nabla n^\pm\|_{\dot{B}^{s+1}_{2,1}}{\rm d}\tau\\
        \lesssim&  \|\tau^M\mathcal{Q}u^\pm \|_{L^2_t(\dot{B}^{s+1}_{2,1})}\| \nabla n^\pm\|_{L^2_t(L^\infty)}+\|\mathcal{Q}u^\pm\|_{L^2_t(L^\infty)}\|\tau^M\nabla n^\pm\|_{L^2_t(\dot{B}^{s+1}_{2,1})}\\
        \lesssim& \mathcal{X}_{t}^{M,s}\mathcal{D}_t
    \end{aligned}
\end{equation}
and 
\begin{equation}\label{ineq-time-9}
    \begin{aligned}
        &\int_0^t \tau^M  \|  (\mathcal{P}u^\pm-v^\pm)\cdot\nabla n^\pm\|_{\dot{B}^{s+1}_{2,1}}{\rm d}\tau\\
        %\lesssim&\int_0^t   \| \tau^M (\mathcal{P}u^\pm-v^\pm)\|_{\dot{B}^{s+1}_{2,1}}\| \nabla n^\pm\|_{L^\infty}+\|\mathcal{P}u^\pm-v^\pm\|_{L^\infty}\|\tau^M\nabla n^\pm\|_{\dot{B}^{s+1}_{2,1}}{\rm d}\tau\\
        \lesssim&\| \tau^M (\mathcal{P}u^\pm-v^\pm)\|_{L^2_t(\dot{B}^{s+1}_{2,1})}\| \nabla n^\pm\|_{L_t^2(L^\infty)}+\|\mathcal{P}u^\pm-v^\pm\|_{L_t^2(L^\infty)}\|\tau^M\nabla n^\pm\|_{L_t^2(\dot{B}^{s+1}_{2,1})}\\
        \lesssim&  \Big(\mathcal{X}_{t}^{M,s}+t^{M-\frac{1}{2}(s-\sigma_1)}\Big) \mathcal{D}_t+\mathcal{W}_t\mathcal{X}_{t}^{M,s},
    \end{aligned}
\end{equation}
where \eqref{definition-Y} has been used in the final step of \eqref{ineq-time-9}.

Next, recall that $F_1=-\sqrt{\kappa}\widetilde{\psi}(\kappa^{-\frac{1}{2}}n^+)\operatorname{div}u^+-\mathcal{Q}u^+\cdot\nabla n^+-(\mathcal{P}u^+-v^+)\cdot\nabla n^+$.  In view of \eqref{uv1} and \eqref{definition-Y}, each term is then bounded by
\begin{equation*}
    \begin{aligned}
          \int_0^{t}  \|   \widetilde{\psi}(\kappa^{-\frac{1}{2}}n^+)\text{div}u^+ \|_{\dot{B}^{s}_{2,1}}{\rm d}\tau
        \lesssim& \int_0^{t}\|\tau^M \kappa^{-\frac{1}{2}}n^+\|_{\dot{B}^{s}_{2,1}} \|\text{div}u^+ \|_{L^\infty}+\|\kappa^{-\frac{1}{2}}n^+\|_{L^\infty}\|\tau^M\text{div}u^+\|_{\dot{B}^{s}_{2,1}}{\rm d}\tau\\
       % \lesssim&\int_0^{t}\|\tau^M \kappa^{-\frac{1}{2}}n^+\|_{\dot{B}^{s}_{2,1}} \|\text{div}u^+ \|_{L^\infty}{\rm d}\tau+\|\kappa^{-\frac{1}{2}}n^+\|_{L^2_t(L^\infty)}\|\tau^M\text{div}u^+\|_{L^2_t(\dot{B}^{s}_{2,1})} \\
        \lesssim&\int_0^{t}\| u^+ \|_{\dot{B}^{\frac{d}{2}+1}_{2,1}}\|\tau^M \kappa^{-\frac{1}{2}}n^+\|_{\dot{B}^{s}_{2,1}} {\rm d}\tau+  \mathcal{D}_t\mathcal{X}_{t}^{M,s},
    \end{aligned}
\end{equation*}
\begin{equation*}
    \begin{aligned}
         \int_0^{t}  \|\tau^M(\kappa^{-\frac{1}{2}}\mathcal{Q}u^+\cdot\nabla n^+)\|_{\dot{B}^{s}_{2,1}}{\rm d}\tau
        \lesssim&  \int_0^{t}  \|\mathcal{Q}u^+ \|_{L^\infty}\|\tau^M\kappa^{-\frac{1}{2}}\nabla n^+\|_{\dot{B}^{s}_{2,1}}+\|\kappa^{-\frac{1}{2}}\nabla n^+\|_{L^\infty}\|\tau^M\mathcal{Q}u^+\|_{\dot{B}^{s}_{2,1}}{\rm d}\tau\\
        \lesssim&\int_0^t\|\kappa^{-\frac{1}{2}}\nabla n^+\|_{L^\infty}\|\tau^M\mathcal{Q}u^+ \|_{\dot{B}^{s}_{2,1}}{\rm d}\tau +   \mathcal{D}_t\mathcal{X}_{t}^{M,s}
    \end{aligned}
\end{equation*}
and
\begin{equation*}
    \begin{aligned}
        & \int_0^{t}  \|\tau^M(\kappa^{-\frac{1}{2}}(\mathcal{P}u^+-v^+)\cdot\nabla n^+)\|_{\dot{B}^{s}_{2,1}}{\rm d}\tau\\
        \lesssim&  \int_0^{t}  \|\mathcal{P}u^+-v^+ \|_{L^\infty}\|\tau^M\kappa^{-\frac{1}{2}}\nabla n^+\|_{\dot{B}^{s}_{2,1}}+\int_0^t\|\kappa^{-\frac{1}{2}}\nabla n^+\|_{L^\infty}\|\tau^M(\mathcal{P}u^+-v^+) \|_{\dot{B}^{s}_{2,1}}{\rm d}\tau\\
        \lesssim&  \mathcal{W}_t\mathcal{X}_{t}^{M,s}+ \int_0^t\|\kappa^{-\frac{1}{2}}\nabla n^+\|_{L^\infty}\|\tau^M u^+ \|_{\dot{B}^{s}_{2,1}}{\rm d}\tau+  \mathcal{E}_t t^{M-\frac{1}{2}(s-\sigma_1)}.
    \end{aligned}
\end{equation*}
Summing these estimates yields
\begin{equation}\label{ineq-time-10}
    \begin{aligned}
         \int_0^{t}  \| \kappa^{-\frac{1}{2}}F_1\|_{\dot{B}^{s}_{2,1}}{\rm d}\tau 
        & \lesssim \int_0^t\|(\kappa^{-\frac{1}{2}}n^+,u^+)\|_{\dot{B}^{\frac{d}{2}+1}_{2,1}}\|\tau^M(\kappa^{-\frac{1}{2}}n^+, u^+)\|_{\dot{B}^{s}_{2,1}}{\rm d}\tau\\
        &\quad + \mathcal{W}_t\mathcal{X}_{t}^{M,s}+   \mathcal{D}_t\mathcal{X}_{t}^{M,s}+\mathcal{E}_t t^{M-\frac{1}{2}(s-\sigma_1)}.
    \end{aligned}
\end{equation}
In the same way, one obtains the analogous bound for $F_3$:
\begin{equation}\label{ineq-time-11}
    \begin{aligned}
         \int_0^{t}  \| \kappa^{-\frac{1}{2}}F_3\|_{\dot{B}^{s}_{2,1}}{\rm d}\tau
         &\lesssim \int_0^t\|(\kappa^{-\frac{1}{2}}n^-,u^-)\|_{\dot{B}^{\frac{d}{2}+1}_{2,1}}\|\tau^M(\kappa^{-\frac{1}{2}}n^-, u^- )\|_{\dot{B}^{s}_{2,1}}{\rm d}\tau\\
         &\quad+ \mathcal{W}_t\mathcal{X}_{t}^{M,s}+   \mathcal{D}_t\mathcal{X}_{t}^{M,s}+\mathcal{E}_t t^{M-\frac{1}{2}(s-\sigma_1)}.
    \end{aligned}
\end{equation}
We now turn to the remaining nonlinear contributions one by one, showing that each can be absorbed into our overall energy–dissipation framework:

For the convection term $u^{\pm}\cdot\nabla u^\pm,$ using the standard product estimate \eqref{uv1}, one obtains 
\begin{equation}
    \begin{aligned}
        & \int_0^{t}  \|\tau^M (u^{\pm}-v^\pm)\cdot\nabla u^\pm\|_{\dot{B}^{s}_{2,1}}{\rm d}\tau
        \lesssim\int_0^t\| u^\pm \|_{\dot{B}^{\frac{d}{2}+1}_{2,1}}\|\tau^M u^\pm \|_{\dot{B}^{s}_{2,1}}{\rm d}\tau + \mathcal{D}_t\mathcal{X}_{t}^{M,s}+\mathcal{W}_t\mathcal{X}_{t}^{M,s}.
    \end{aligned}
\end{equation}
For the Korteweg-type capillarity term, we have
\begin{equation}
    \begin{aligned}
        & \int_0^{t}  \|\tau^M \nabla (|\nabla n^\pm|^2)\|_{\dot{B}^{s}_{2,1}}{\rm d}\tau
        \lesssim   \mathcal{D}_t\mathcal{X}_{t}^{M,s}.
    \end{aligned}
\end{equation}
Using composite-function estimate \eqref{F0} and  the standard product estimate \eqref{uv1}, we have 
\begin{equation}
    \begin{aligned}
        & \int_0^{t}  \|\tau^M \operatorname{div}\left(2\widetilde{\phi}(\kappa^{-\frac{1}{2}}n^+)\mathbb{D}u^+\right)\|_{\dot{B}^{s}_{2,1}}{\rm d}\tau\\
       % \lesssim &\|\tau^M \widetilde{\phi}(\kappa^{-\frac{1}{2}}n^+)\|_{L_t^\infty(\dot{B}^{s+1}_{2,1})} \|\mathbb{D}u^+\|_{L_t^1(L^\infty)} + \|\widetilde{\phi}(\kappa^{-\frac{1}{2}}n^+)\|_{L^\infty_t(L^\infty)}\|\mathbb{D}u^+\|_{L^1_t(\dot{B}^{s+1}_{2,1})}\\
        \lesssim&(1+\|(n^+,n^-)\|_{L_t^\infty(\dot{B}^{\frac{d}{2}}_{2,1})})^{s+2}\|\tau^M  \kappa^{-\frac{1}{2}}n^+\|_{L_t^\infty(\dot{B}^{s+1}_{2,1})} \|\mathbb{D}u^+\|_{L_t^1(L^\infty)}\\
        &+(1+\|(n^+,n^-)\|_{L_t^\infty(\dot{B}^{\frac{d}{2}}_{2,1})})^{\frac{d}{2}+1} \| \kappa^{-\frac{1}{2}}n^+\|_{L^\infty_t(\dot{B}^{\frac{d}{2}}_{2,1})}\|\mathbb{D}u^+\|_{L^1_t(\dot{B}^{s+1}_{2,1})}\\
        \lesssim&\kappa^{-\frac12}(1+\|(n^+,n^-)\|_{L_t^\infty(\dot{B}^{\frac{d}{2}}_{2,1})})^{\max\{s+2,\frac{d}{2}+1\}}\mathcal{E}_t\mathcal{X}_{t}^{M,s}.
    \end{aligned}
\end{equation}
For  $\widetilde{g}_1^+(\kappa^{-\frac{1}{2}}n^+,\kappa^{-\frac{1}{2}}n^{-})\kappa^{-\frac{1}{2}} \nabla n^+$ and $\widetilde{g}_2^+(\kappa^{-\frac{1}{2}}n^+,\kappa^{-\frac{1}{2}}n^{-})\nabla \kappa^{-\frac{1}{2}}n^{-}$. We only estimate $\widetilde{g}_1^+(\kappa^{-\frac{1}{2}}n^+,\kappa^{-\frac{1}{2}}n^{-})\kappa^{-\frac{1}{2}} \nabla n^+$, since the other can be addressed similarly.
\begin{equation}
    \begin{aligned}
        & \int_0^{t}  \|\tau^M \widetilde{g}_1^+(\kappa^{-\frac{1}{2}}n^+,\kappa^{-\frac{1}{2}}n^{-})\kappa^{-\frac{1}{2}} \nabla n^+\|_{\dot{B}^{s}_{2,1}}{\rm d}\tau\\
        \lesssim &\int_0^t(1+\|(n^+,n^-)\|_{\dot{B}^{\frac{d}{2}}_{2,1}})^{s+1}\|\tau^M \kappa^{-\frac{1}{2}}(n^+,n^-)\|_{ \dot{B}^{s}_{2,1}} \|\kappa^{-\frac{1}{2}} \nabla n^+\|_{L_t^1(L^\infty)} {\rm d}\tau\\
        &+ (1+\|(n^+,n^-)\|_{L_t^\infty(\dot{B}^{\frac{d}{2}}_{2,1})})^{\frac{d}{2}+1}\|\kappa^{-\frac{1}{2}}(n^+,n^-)\|_{L^2_t(\dot{B}^{\frac{d}{p}}_{p,1})} )\|\kappa^{-\frac{1}{2}} \nabla n^+\|_{L^2_t(\dot{B}^{s}_{2,1})}\\
        \lesssim&\int_0^t(1+\|(n^+,n^-)\|_{\dot{B}^{\frac{d}{2}}_{2,1}})^{s+1}\|\tau^M \kappa^{-\frac{1}{2}}(n^+,n^-)\|_{ \dot{B}^{s}_{2,1}} \| \kappa^{-\frac{1}{2}}n^+\|_{\dot{B}^{\frac{d}{2}+1}_{2,1}}{\rm d}\tau\\
        &+  (1+\|(n^+,n^-)\|_{L_t^\infty(\dot{B}^{\frac{d}{2}}_{2,1})})^{\frac{d}{2}+1}\mathcal{D}_t\mathcal{X}_{t}^{M,s}.
    \end{aligned}
\end{equation}
The most challenging term is $\widetilde{Q}( \kappa^{-\frac{1}{2}}n^\pm))\Delta u^\pm$.  One rewrite  it  as
$$\widetilde{Q}( \kappa^{-\frac{1}{2}}n^\pm))\Delta u^\pm=\Delta (\widetilde{Q}( \kappa^{-\frac{1}{2}}n^\pm)u^\pm)-\Delta \widetilde{Q}( \kappa^{-\frac{1}{2}}n^\pm) u^\pm-2\nabla \widetilde{Q}( \kappa^{-\frac{1}{2}}n^\pm)\nabla u^\pm.$$
Then we arrive at 
\begin{equation}
    \begin{aligned}
        &\int_0^t\|\tau^M\widetilde{Q}( \kappa^{-\frac{1}{2}}n^\pm))\Delta u^\pm\|_{\dot{B}^{s}_{2,1}}{\rm d}\tau\\
        \lesssim&\int_0^t\|\tau^M \Delta (\widetilde{Q}( \kappa^{-\frac{1}{2}}n^\pm)u^\pm)\|_{\dot{B}^{s}_{2,1}}+\|\tau^M \Delta \widetilde{Q}( \kappa^{-\frac{1}{2}}n^\pm) u^\pm\|_{\dot{B}^{s}_{2,1}}+\|\tau^M\nabla \widetilde{Q}( \kappa^{-\frac{1}{2}}n^\pm)\nabla u^\pm\|_{\dot{B}^{s}_{2,1}} {\rm d}\tau\\
        \quad:=& I_1+I_2+I_3.
    \end{aligned}
\end{equation}
For $I_1$, we estimate  by 
\begin{equation*}
    \begin{aligned}
        I_1&\lesssim   \|\tau^{M}\widetilde{Q}( \kappa^{-\frac{1}{2}}n^\pm)\|_{L^2_t(\dot{B}^{s+2}_{2,1})}\|u^\pm\|_{L^2(L^\infty)}+\|\tau^M u^\pm\|_{L_t^1(\dot{B}^{s+2}_{2,1})}\|\widetilde{Q}( \kappa^{-\frac{1}{2}}n^\pm)\|_{L^{\infty}_t(L^\infty)}\\
        &\lesssim\kappa^{-\frac12}(1+\|(n^+,n^-)\|_{L_t^\infty(\dot{B}^{\frac{d}{2}}_{2,1})})^{s+3}\|\tau^{M} n^\pm\|_{L^2_t(\dot{B}^{s+2}_{2,1})}\\
        &\quad+(1+\|(n^+,n^-)\|_{L_t^\infty(\dot{B}^{\frac{d}{2}}_{2,1})})^{\frac{d}{2}+1}\|\tau^M u^\pm\|_{L^1_t(\dot{B}^{s+2}_{2,1})}\|  \kappa^{-\frac{1}{2}}n^\pm\|_{L^{\infty}_t(\dot{B}^{\frac{d}{p}}_{p,1} )}\\
        &\lesssim(1+\|(n^+,n^-)\|_{L_t^\infty(\dot{B}^{\frac{d}{2}}_{2,1})})^{\max\{s+3,\frac{d}{2}+1\}}(\kappa^{-\frac{1}{2}}\mathcal{E}_t\mathcal{X}_{t}^{M,s}+ \mathcal{D}_t\mathcal{X}_{t}^{M,s}).
    \end{aligned}
\end{equation*}
For $I_2$, we estimate  by 
\begin{equation*}
    \begin{aligned}
      I_2&\lesssim\|\tau^M\Delta \widetilde{Q}( \kappa^{-\frac{1}{2}}n^\pm)\|_{L^2_t(\dot{B}^{s}_{2,1})}\|u^\pm\|_{L_t^2(L^\infty)}+\|\Delta \widetilde{Q}( \kappa^{-\frac{1}{2}}n^\pm)\|_{L^{1}_t(L^{\infty})}\|\tau^M u^\pm\|_{L^\infty_t(\dot{B}^{s}_{2,1})}\\
      &\lesssim(1+\|(n^+,n^-)\|_{L^\infty_t(\dot{B}^{\frac{d}{2}}_{2,1})})^{s+3}\|\tau^M   \kappa^{-\frac{1}{2}}n^\pm\|_{L^2_t(\dot{B}^{s+2}_{2,1})}\|u^\pm\|_{L^2_t(L^\infty)}\\
      &\quad+(1+\|(n^+,n^-)\|_{L_t^\infty(\dot{B}^{\frac{d}{2}}_{2,1})})^{\frac{d}{2}+3}\|  \kappa^{-\frac{1}{2}}n^\pm\|_{L^{1}_t(\dot{B}^{\frac{d}{2}+2}_{2,1})}\|\tau^M u^\pm\|_{L^\infty_t(\dot{B}^{s}_{2,1})}\\
       &\lesssim \kappa^{-\frac12}(1+\|(n^+,n^-)\|_{L_t^\infty(\dot{B}^{\frac{d}{2}}_{2,1})})^{\max\{s+3,\frac{d}{2}+3\}}\mathcal{E}_t\mathcal{X}_{t}^{M,s}
    \end{aligned}
\end{equation*}
and for $I_3$, we estimate  by 
\begin{equation*}
    \begin{aligned}
        I_3&\lesssim\|\tau^M \nabla\widetilde{Q}( \kappa^{-\frac{1}{2}}n^\pm) \|_{L^{\infty}_t(\dot{B}^{s}_{2,1})}\|\nabla u^\pm\|_{L^1_t(L^\infty)}+\|\nabla\widetilde{Q}( \kappa^{-\frac{1}{2}}n^\pm)\|_{L^2_t(L^\infty)}\|\nabla u^\pm\|_{L^2_t(\dot{B}^{s}_{2,1})}\\
        &\lesssim (1+\|(n^+,n^-)\|_{L_t^\infty(\dot{B}^{\frac{d}{2}}_{2,1})})^{s+2} \|\tau^M   \kappa^{-\frac{1}{2}}n^\pm \|_{L^{\infty}_t(\dot{B}^{s+1}_{2,1})}\|\nabla u^\pm\|_{L^1_t(L^\infty)}\\
        &\quad+(1+\|(n^+,n^-)\|_{L_t^\infty(\dot{B}^{\frac{d}{2}}_{2,1})})^{\frac{d}{2}+2} \| \kappa^{-\frac{1}{2}}n^\pm\|_{L^2_t( \dot{B}^{\frac{d}{2}+1}_{2,1})}\|\nabla u^\pm\|_{L^2_t(\dot{B}^{s}_{2,1})}  \\
        &\lesssim\kappa^{-\frac12} (1+\|(n^+,n^-)\|_{L_t^\infty(\dot{B}^{\frac{d}{2}}_{2,1})})^{\max\{s+2,\frac{d}{2}+2\}}\mathcal{E}_t\mathcal{X}_{t}^{M,s}.
    \end{aligned}
\end{equation*}
Combining these three bounds delivers the desired control
\begin{equation}\label{ineq-time-12}
    \begin{aligned}
        &\int_0^t\|\tau^M\widetilde{Q}( \kappa^{-\frac{1}{2}}n^\pm))\Delta u^\pm\|_{\dot{B}^{s}_{2,1}}{\rm d}\tau\\
        \lesssim &\kappa^{-\frac12}(1+\|(n^+,n^-)\|_{L_t^\infty(\dot{B}^{\frac{d}{2}}_{2,1})})^{\max\{s+3,\frac{d}{2}+3\}}(\kappa^{-\frac12} \mathcal{E}_t\mathcal{X}_{t}^{M,s}+ \mathcal{D}_t\mathcal{X}_{t}^{M,s}).
    \end{aligned}
\end{equation}
Finally, we deal with the commutator contributions. We consider two cases. First, in the low–regularity regime $\sigma_1<s\leq \frac{d}{p}$, using \eqref{Com:1} and \eqref{F0}, we obtain
\begin{equation}\label{ineq-time-13}
    \begin{aligned}
&\sqrt{\kappa} \sum_{j}2^{j(1+s)}  \int_0^{t} \tau^M\|[ \dot{\Delta}_j,  \widetilde{\psi}(\kappa^{-\frac12}n^\pm)]\operatorname{div}u^\pm\|_{L^2}{\rm d}\tau\\
\lesssim\,& \|\nabla \widetilde{\psi}(\kappa^{-\frac12}n^\pm)\|_{L^2_t(\dot{B}^{\frac{d}{p}}_{p,1})}\|u^\pm\|_{L^2_t(\dot{B}^{s+1}_{2,1})} 
\lesssim\,  \Big(1+\|n^\pm\|_{L_t^\infty(\dot{B}^{\frac{d}{2}}_{2,1})}\Big)^{\frac{d}{2}+1} \mathcal{D}_t\mathcal{X}_{t}^{M,s}
 \end{aligned}
\end{equation}
and, by the very same argument,
\begin{equation}\label{ineq-time-14}
    \begin{aligned}
&\sqrt{\kappa} \sum_{j}2^{j(1+s)}  \int_0^{t} \tau^M\|[ \dot{\Delta}_j,  \widetilde{\psi}(\kappa^{-\frac12}n^\pm)]\Delta n^\pm\|_{L^2}{\rm d}\tau\\
\lesssim\,& \|\nabla\widetilde{\psi}(\kappa^{-\frac12}n^\pm)\|_{L^2_t(\dot{B}^{\frac{d}{p}}_{p,1})}\|\nabla n^\pm\|_{L^2_t(\dot{B}^{s+1}_{2,1})} 
\lesssim\, \Big(1+\|n^\pm\|_{L_t^\infty(\dot{B}^{\frac{d}{2}}_{2,1})}\Big)^{\frac{d}{2}+1} \mathcal{D}_t\mathcal{X}_{t}^{M,s}.
 \end{aligned}
\end{equation}

Next, in the high–regularity regime $s>\frac{d}{p}$, we apply \eqref{Com:3} together with \eqref{F0}  to get
%\begin{equation}
    \begin{align}\label{ineq-time-15}
&\sqrt{\kappa} \sum_{j}2^{j(1+s)}  \int_0^{t} \tau^M\|[ \dot{\Delta}_j,  \widetilde{\psi}(\kappa^{-\frac12}n^\pm)]\operatorname{div}u^\pm\|_{L^2}{\rm d}\tau\nonumber\\
\lesssim\,& \sqrt{\kappa}\|\nabla \widetilde{\psi}(\kappa^{-\frac12}n^\pm)\|_{L^2_t(\dot{B}^{\frac{d}{p}}_{p,1})}\|u^\pm\|_{L^2_t(\dot{B}^{s+1}_{2,1})}+\sqrt{\kappa}\int_0^{t}\|\nabla u^\pm\|_{L^\infty}\|\widetilde{\psi}(\kappa^{-\frac12}n^\pm)\|_{\dot{B}^{s+1}_{2,1}}{\rm d}\tau\nonumber\\
\lesssim\,& \Big(1+\|n^\pm\|_{L_t^\infty(\dot{B}^{\frac{d}{2}}_{2,1})}\Big)^{\frac{d}{2}+1}\|\nabla n^\pm\|_{L^2_t(\dot{B}^{\frac{d}{p}}_{p,1})}\|u^\pm\|_{L^2_t(\dot{B}^{s+1}_{2,1})} \\
&+  \int_0^{t}\Big(1+\|n^\pm\|_{L^\infty(\dot{B}^{\frac{d}{2}}_{2,1})}\Big)^{s+2}\|\nabla u^\pm\|_{\dot{B}^{\frac{d}{2}}_{2,1}}\| n^\pm\|_{\dot{B}^{s+1}_{2,1}}{\rm d}\tau\nonumber\\
\lesssim\,& \Big(1+\|n^\pm\|_{L_t^\infty(\dot{B}^{\frac{d}{2}}_{2,1})}\Big)^{\frac{d}{2}+1} \mathcal{D}_t\mathcal{X}_{t}^{M,s}+\int_0^{t}(1+\|(n^+,n^-)\|_{L^\infty(\dot{B}^{\frac{d}{2}}_{2,1})})^{s+2}\|\nabla u^\pm\|_{\dot{B}^{\frac{d}{2}}_{2,1}}\mathcal{X}_\tau^{M,s}{\rm d}\tau\nonumber
 \end{align}
%\end{equation}
and 
\begin{equation}\label{ineq-time-16}
    \begin{aligned}
&\sqrt{\kappa} \sum_{j}2^{j(1+s)}  \int_0^{t} \tau^M\|[ \dot{\Delta}_j,  \widetilde{\psi}(\kappa^{-\frac12}n^\pm)]\Delta n^\pm\|_{L^2}{\rm d}\tau\\
\lesssim\,& \sqrt{\kappa}\|\nabla \widetilde{\psi}(\kappa^{-\frac12}n^\pm)\|_{L^2_t(\dot{B}^{\frac{d}{p}}_{p,1})}\|\nabla n^\pm\|_{L^2_t(\dot{B}^{s+1}_{2,1})}+\sqrt{\kappa}\int_0^{t}\|\Delta n^\pm\|_{L^\infty}\|\widetilde{\psi}(\kappa^{-\frac12}n^\pm)\|_{\dot{B}^{s+1}_{2,1}}{\rm d}\tau\\
\lesssim\,& \Big(1+\|n^\pm\|_{L_t^\infty(\dot{B}^{\frac{d}{2}}_{2,1})}\Big)^{\frac{d}{2}+1}\|\nabla n^\pm\|_{L^2_t(\dot{B}^{\frac{d}{p}}_{p,1})}\|\nabla n^\pm\|_{L^2_t(\dot{B}^{s+1}_{2,1})} \\
&+  \int_0^{t}\Big(1+\|n^\pm\|_{L^\infty(\dot{B}^{\frac{d}{2}}_{2,1})}\Big)^{s+2}\|\Delta n^\pm\|_{\dot{B}^{\frac{d}{2}}_{2,1}}\| n^\pm\|_{\dot{B}^{s+1}_{2,1}}{\rm d}\tau\\
\lesssim\,& \Big(1+\|n^\pm\|_{L_t^\infty(\dot{B}^{\frac{d}{2}}_{2,1})}\Big)^{\frac{d}{2}+1} \mathcal{D}_t\mathcal{X}_{t}^{M,s}+\int_0^{t}\Big(1+\|n^\pm\|_{L^\infty(\dot{B}^{\frac{d}{2}}_{2,1})}\Big)^{s+2}\|\nabla n^\pm\|_{\dot{B}^{\frac{d}{2}+1}_{2,1}}\mathcal{X}_\tau^{M,s}{\rm d}\tau.
 \end{aligned}
\end{equation}

Collecting \eqref{ineq-time-7}-\eqref{ineq-time-16},  inserting them into\eqref{ineq-time-6}, letting $\epsilon$ be sufficiently small and applying Gr\"onwall’s lemma then yields \eqref{es:timeweighted},
which completes the proof.
\end{proof}

\vspace{2mm}

\noindent
\textbf{Proof of Theorem \ref{thm:decay-transfer}}.  
Since $\mathcal E_t, \kappa^{\delta}\mathcal D_t$ and $\kappa^{\delta}\mathcal W_t$ remain uniformly bounded (see \eqref{253}), by choosing $\kappa$ sufficiently large,
$$
 \mathcal{W}_t+(1+\|(n^+,n^-)\|_{L_t^\infty(\dot{B}^{\frac{d}{2}}_{2,1})})^{\max\{s+3,\frac{d}{2}+3\}}(\mathcal{D}_t+ \kappa^{-\frac12} \mathcal{E}_t )\leq C\kappa^{-\delta}\leq\frac{1}{2C^*},
$$
the last three terms in the time‐weighted estimate \eqref{es:timeweighted} can be absorbed into the left–hand side.  Consequently, we obtain 
\begin{equation}
    \begin{aligned}
        \mathcal{X}_{t}^{M,s} \lesssim  t^{M-\frac{1}{2}(s-\sigma_1)}\left(\left\|\left(\kappa^{-\frac{1}{2}}n^+,\kappa^{-\frac{1}{2}}n^-,\nabla n^+,\nabla n^-,   u^+, u^-\right)\right\|_{\dot{B}^{\sigma_1}_{2,\infty}}+(\mathcal{E}_t+ \mathcal{D}_t)\|(v^+,v^-)\|_{\dot{B}^{\sigma_1}_{2,\infty}}\right).
    \end{aligned}
\end{equation}
By the lower-order regularity evolution \eqref{3.1} and the definition of $ \mathcal{X}_{t}^{M,s} $, this in turn yields, for any $s>\sigma_1$ that
\begin{equation}
    \begin{aligned}
        &\left\| \left(\kappa^{-\frac{1}{2}}n^+,\kappa^{-\frac{1}{2}}n^-,\nabla n^+,\nabla n^-,   u^+, u^-\right)\right\|_{\dot{B}^{s}_{2,1}}\\
        \lesssim&  t^{-\frac{s-\sigma_1}{2}}\left(\left\|\left(\kappa^{-\frac{1}{2}}n^+,\kappa^{-\frac{1}{2}}n^-,\nabla n^+,\nabla n^-,   u^+, u^-\right)\right\|_{\dot{B}^{\sigma_1}_{2,\infty}}+(\mathcal{E}_t+ \mathcal{D}_t) \|(v^+,v^-)\|_{\dot{B}^{\sigma_1}_{2,\infty}}\right),
    \end{aligned}
\end{equation}
which, combined with the uniform evolution \eqref{3.1}, gives rise to \eqref{eq:cns-decay}. This completes the proof of Theorem \ref{thm:decay-transfer}.
\vspace{3mm}

%\appendix
%\renewcommand{\thesection}{\Alph{section}} % 让编号是 A, B, C
%\renewcommand{\theequation}{\Alph{section}.\arabic{equation}} 
%\setcounter{section}{0}
%\setcounter{equation}{0}

%\section*{Appendix A: {Technical lemmas in Besov spaces}}\label{sectionA}
%\addcontentsline{toc}{section}{Appendix A}
%\renewcommand{\thesection}{A}  % 修改章节编号显示
%\renewcommand{\thelemma}{A.\arabic{lemma}}  % 手动设置 Lemma 编号前缀
%\setcounter{lemma}{0}

\appendix
% 只改标题前缀的展示，不改计数器：\thesection 在 appendix 后是 A、B、…
\titleformat{\section}
  {\normalfont\Large\bfseries}
  {Appendix~\thesection} % 这里把"Appendix A"作为编号前缀输出
  {1em}{}

% 为了目录里也出现“Appendix A …”，给 \section 提供一个“目录用”的可选参数
\section[Appendix~\thesection: Technical lemmas in Besov spaces]
        {Technical lemmas in Besov spaces}\label{appendixA}

We begin by recalling basic properties of Besov spaces and product estimates that will be used repeatedly throughout the paper.  The same statements remain valid in the Chemin–Lerner spaces, with the caveat that the time exponents must satisfy the H${\rm{\ddot{o}}}$lder  inequality in the time variable. 

Our first lemma is the classical Bernstein inequality. In particular, it implies that, for any $u$ belonging to a Besov space, each dyadic block $\dot{\Delta}_{j}u$ is smooth. Consequently, after applying the localization operator $\dot{\Delta}_{j}$, we may carry out direct  computations on the linear equations.
\begin{lemma}
	Let $0<r<R$, $1\leq p\leq q\leq \infty$ and $k\in \mathbb{N}$. For any $u\in L^p$ and $\lambda>0$, it holds
	\begin{equation}\notag
		\left\{
		\begin{aligned}
			&{\rm{Supp}}~ \mathcal{F}(u) \subset \{\xi\in\mathbb{R}^{d}~| ~|\xi|\leq \lambda R\}\Rightarrow \|D^{k}u\|_{L^q}\lesssim\lambda^{k+d(\frac{1}{p}-\frac{1}{q})}\|u\|_{L^p},\\
			&{\rm{Supp}}~ \mathcal{F}(u) \subset \{\xi\in\mathbb{R}^{d}~|~ \lambda r\leq |\xi|\leq \lambda R\}\Rightarrow \|D^{k}u\|_{L^{p}}\sim\lambda^{k}\|u\|_{L^{p}}.
		\end{aligned}
		\right.
	\end{equation}
\end{lemma}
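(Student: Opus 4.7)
The plan is a standard scaling plus Young-convolution argument. First I would reduce to $\lambda=1$ via the dilation $\tilde u(x):=u(x/\lambda)$, which maps Fourier support in $\{|\xi|\leq \lambda R\}$ (resp. in $\{\lambda r \leq |\xi|\leq \lambda R\}$) to support in $\{|\xi|\leq R\}$ (resp. $\{r\leq|\xi|\leq R\}$), while $\|\tilde u\|_{L^p}=\lambda^{d/p}\|u\|_{L^p}$ and $\|D^k\tilde u\|_{L^q}=\lambda^{-k+d/q}\|D^k u\|_{L^q}$. The exponents of $\lambda$ in the two claims are then precisely what is forced by this scaling, so it suffices to prove the inequalities at $\lambda=1$.

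For the first inequality, pick once and for all a radial cutoff $\chi\in C_c^\infty(\mathbb{R}^d)$ with $\chi\equiv 1$ on $B(0,R)$. Since $\widehat u = \chi\,\widehat u$ on the support hypothesis, for each multi-index $\alpha$ with $|\alpha|=k$ one has $D^\alpha u = g_\alpha \star u$ with $g_\alpha:=\mathcal F^{-1}\bigl((i\xi)^\alpha \chi(\xi)\bigr)\in \mathcal S(\mathbb{R}^d)$. Choosing $r\in[1,\infty]$ by $1+\tfrac1q=\tfrac1p+\tfrac1r$ (admissible because $p\leq q$), Young's convolution inequality yields
\begin{equation*}
\|D^\alpha u\|_{L^q}\leq \|g_\alpha\|_{L^r}\,\|u\|_{L^p}\lesssim \|u\|_{L^p},
\end{equation*}
summing over $|\alpha|=k$ gives the claim.

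For the annular case, the upper bound $\|D^k u\|_{L^p}\lesssim \|u\|_{L^p}$ is the previous step with $q=p$. For the matching lower bound, fix $\widetilde\varphi\in C_c^\infty(\mathbb{R}^d\setminus\{0\})$ with $\widetilde\varphi\equiv 1$ on $\{r\leq|\xi|\leq R\}$, and set $h(\xi):=\widetilde\varphi(\xi)\,|\xi|^{-2k}$; because $\widetilde\varphi$ vanishes near the origin, $h\in C_c^\infty$ and $\mathcal F^{-1}h\in\mathcal S$. Since $\widehat u = \widetilde\varphi\,\widehat u$, one has $u=(-1)^k\,\mathcal F^{-1}(h)\star \Delta^k u$, whence by Young
\begin{equation*}
\|u\|_{L^p}\leq \|\mathcal F^{-1}h\|_{L^1}\,\|\Delta^k u\|_{L^p}\lesssim \|D^k u\|_{L^p}.
\end{equation*}

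There is no genuine obstacle; the only nontrivial point to record is that $h$ is genuinely smooth (including at the origin) thanks to the annular support of $\widetilde\varphi$, so its inverse Fourier transform is Schwartz and in particular integrable, which is what legitimizes the Young estimate in the lower bound.
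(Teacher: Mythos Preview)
Your proof is correct and is precisely the standard scaling-plus-Young argument found in the reference \cite{bahouri1}; the paper itself does not prove this lemma but merely states it as the classical Bernstein inequality. One small point worth making explicit: in your lower bound you end with $\|\Delta^k u\|_{L^p}\lesssim \|D^k u\|_{L^p}$, which is not a triangle inequality (since $\Delta^k$ has order $2k$) but follows by applying the already-established upper bound to each $D^\alpha u$ with $|\alpha|=k$, giving $\|D^{2k}u\|_{L^p}\lesssim\|D^k u\|_{L^p}$ on the annulus; this is legitimate and not circular.
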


As a consequence of the Bernstein inequalities, Besov spaces enjoy the following properties.
\begin{lemma}
	The following properties hold{\rm:}
	\begin{itemize}
		\item{} For $s\in\mathbb{R}$, $1\leq p_{1}\leq p_{2}\leq \infty$ and $1\leq r_{1}\leq r_{2}\leq \infty$, it holds
		\begin{equation}\notag
			\begin{aligned}
				\dot{B}^{s}_{p_{1},r_{1}}\hookrightarrow \dot{B}^{s-d(\frac{1}{p_{1}}-\frac{1}{p_{2}})}_{p_{2},r_{2}}.
			\end{aligned}
		\end{equation}
		\item{} For $1\leq p\leq q\leq\infty$, we have the following chain of continuous embedding:
		\begin{equation}\nonumber
			\begin{aligned}
				\dot{B}^{0}_{p,1}\hookrightarrow L^{p}\hookrightarrow \dot{B}^{0}_{p,\infty}\hookrightarrow \dot{B}^{\sigma}_{q,\infty},\quad \sigma=-d(\frac{1}{p}-\frac{1}{q})<0.
			\end{aligned}
		\end{equation}
		\item{} If $p<\infty$, then $\dot{B}^{d/p}_{p,1}$ is continuously embedded in the set of continuous functions decaying to 0 at infinity;
		\item{} The following real interpolation property is satisfied for $1\leq p\leq\infty$, $s_{1}<s_{2}$ and $\theta\in(0,1)$:
		\begin{equation}
			\begin{aligned}
				&\|u\|_{\dot{B}^{\theta s_{1}+(1-\theta)s_{2}}_{p,1}}\lesssim \frac{1}{\theta(1-\theta)(s_{2}-s_{1})}\|u\|_{\dot{B}^{ 
s_{1}}_{p,\infty}}^{\theta}\|u\|_{\dot{B}^{s_{2}}_{p,\infty}}^{1-\theta}.\label{inter}
			\end{aligned}
		\end{equation}
	%	which in particular implies for any $\varepsilon>0$ that
		%\begin{equation}\nonumber
		%	\begin{aligned}
				%H^{s+\varepsilon}\hookrightarrow \dot{B}^{s}_{2,1}\hookrightarrow \dot{H}^{s}.
		%	\end{aligned}
	%	\end{equation}
		\item{}
		Let $\Lambda^{\sigma}$ be defined by $\Lambda^{\sigma}=(-\Delta)^{\frac{\sigma}{2}}u:=\mathcal{F}^{-}\big{(} |\xi|^{\sigma}\mathcal{F}(u) \big{)}$ for $\sigma\in \mathbb{R}$ and 
$u\in\dot{S}^{'}_{h}$, then $\Lambda^{\sigma}$ is an isomorphism from $\dot{B}^{s}_{p,r}$ to $\dot{B}^{s-\sigma}_{p,r}$;
	%	\item{} Let $1\leq p_{1},p_{2},r_{1},r_{2}\leq \infty$, $s_{1}\in\mathbb{R}$ and $s_{2}\in\mathbb{R}$ satisfy
		%$$
	%	s_{2}<\frac{d}{p_{2}}\quad\text{\text{or}}\quad s_{2}=\frac{d}{p_{2}}~\text{and}~r_{2}=1.
	%	$$
	%	The space $\dot{B}^{s_{1}}_{p_{1},r_{1}}\cap \dot{B}^{s_{2}}_{p_{2},r_{2}}$ endowed with the norm $\|\cdot 
%\|_{\dot{B}^{s_{1}}_{p_{1},r_{1}}}+\|\cdot\|_{\dot{B}^{s_{2}}_{p_{2},r_{2}}}$ is a Banach space and has the weak compact and Fatou properties$:$ If $u_{n}$ is a uniformly bounded 
%sequence of $\dot{B}^{s_{1}}_{p_{1},r_{1}}\cap \dot{B}^{s_{2}}_{p_{2},r_{2}}$, then an element $u$ of $\dot{B}^{s_{1}}_{p_{1},r_{1}}\cap \dot{B}^{s_{2}}_{p_{2},r_{2}}$ and a subsequence 
%$u_{n_{k}}$ exist such that $u_{n_{k}}\rightarrow u $ in $\mathcal{S}'$ and
%		\begin{equation}\nonumber
	%		\begin{aligned}
%	\|u\|_{\dot{B}^{s_{1}}_{p_{1},r_{1}}\cap \dot{B}^{s_{2}}_{p_{2},r_{2}}}\lesssim \liminf_{n_{k}\rightarrow \infty} \|u_{n_{k}}\|_{\dot{B}^{s_{1}}_{p_{1},r_{1}}\cap 
%\dot{B}^{s_{2}}_{p_{2},r_{2}}}.
%			\end{aligned}
%		\end{equation}
	\end{itemize}
\end{lemma}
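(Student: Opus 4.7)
The plan is to verify each bulleted property blockwise, exploiting the fact that every dyadic piece $\dot{\Delta}_{j}u$ has Fourier support in an annulus of radius $\sim 2^{j}$, so that Bernstein's inequality from the preceding lemma converts all analytic operations into algebraic manipulations in the dyadic index $j$, after which one uses Hölder/Young and interpolation in the discrete space $\ell^{r}(\mathbb{Z})$.

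First, for the embedding $\dot{B}^{s}_{p_{1},r_{1}}\hookrightarrow \dot{B}^{s-d(1/p_{1}-1/p_{2})}_{p_{2},r_{2}}$, I would apply Bernstein directly to write $\|\dot{\Delta}_{j}u\|_{L^{p_{2}}}\lesssim 2^{jd(1/p_{1}-1/p_{2})}\|\dot{\Delta}_{j}u\|_{L^{p_{1}}}$, multiply both sides by $2^{j(s-d(1/p_{1}-1/p_{2}))}$, and invoke the elementary inclusion $\ell^{r_{1}}\hookrightarrow\ell^{r_{2}}$ whenever $r_{1}\leq r_{2}$. The chain $\dot{B}^{0}_{p,1}\hookrightarrow L^{p}\hookrightarrow\dot{B}^{0}_{p,\infty}$ follows respectively from the triangle inequality applied to the telescoping sum $u=\sum_{j}\dot{\Delta}_{j}u$ in $\mathcal{S}_{h}'$ and from the uniform $L^{p}$-boundedness of $\dot{\Delta}_{j}$ (convolution with $L^{1}$ kernels of unit mass). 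The final embedding $\dot{B}^{0}_{p,\infty}\hookrightarrow\dot{B}^{\sigma}_{q,\infty}$ is then just the first bullet with $s=0$ and $r_{1}=r_{2}=\infty$.

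For the third bullet, Bernstein gives $\|\dot{\Delta}_{j}u\|_{L^{\infty}}\lesssim 2^{jd/p}\|\dot{\Delta}_{j}u\|_{L^{p}}$, so the dyadic sum converges absolutely and uniformly whenever $u\in\dot{B}^{d/p}_{p,1}$; since each $\dot{\Delta}_{j}u$ is continuous and decays at infinity (its Fourier transform being compactly supported and smooth, hence Schwartz after convolution), the uniform limit inherits these properties. For the real interpolation inequality, I would split the dyadic series at a well-chosen threshold $N\in\mathbb{Z}$: the low-frequency sum is bounded via $\dot{B}^{s_{1}}_{p,\infty}$ by a geometric factor $\sum_{j\leq N}2^{j(\theta s_{1}+(1-\theta)s_{2}-s_{1})}\sim \frac{2^{N(1-\theta)(s_{2}-s_{1})}}{(1-\theta)(s_{2}-s_{1})}$, and symmetrically the high-frequency sum by $\frac{2^{-N\theta(s_{2}-s_{1})}}{\theta(s_{2}-s_{1})}\|u\|_{\dot{B}^{s_{2}}_{p,\infty}}$; optimizing $N$ to equalize the two contributions produces the claimed constant. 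Finally, for the $\Lambda^{\sigma}$ isomorphism, I would write $\Lambda^{\sigma}\dot{\Delta}_{j}u=2^{j\sigma}\,(\Lambda^{\sigma}\widetilde{\varphi})(2^{-j}\cdot)\star\dot{\Delta}_{j}u$ with $\widetilde{\varphi}\in\mathcal{S}$ supported in an enlarged annulus; the convolution kernel has $L^{1}$-norm independent of $j$, so $\|\Lambda^{\sigma}\dot{\Delta}_{j}u\|_{L^{p}}\sim 2^{j\sigma}\|\dot{\Delta}_{j}u\|_{L^{p}}$, and the equivalence of homogeneous Besov norms, together with the inverse $\Lambda^{-\sigma}$ built identically, yields the isomorphism.

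The main technical obstacle will be tracking the sharp constant $\frac{1}{\theta(1-\theta)(s_{2}-s_{1})}$ in the real interpolation bound \eqref{inter}: although the geometric-series argument above is elementary, one must be careful that the optimization in $N\in\mathbb{Z}$ is performed on a discrete parameter, so rounding produces extra multiplicative factors that must still be controlled by the stated constant. All remaining items reduce essentially to Bernstein plus the definition of $\|\cdot\|_{\dot{B}^{s}_{p,r}}$ and are contained verbatim in Chapters~2 and~3 of \cite{bahouri1}, to which one may appeal for the routine details.
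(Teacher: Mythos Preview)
Your proposal is correct and follows exactly the approach the paper indicates: the paper does not give a detailed proof of this lemma but simply states it ``as a consequence of the Bernstein inequalities'' and implicitly defers to \cite{bahouri1}. Your blockwise verification via Bernstein plus $\ell^{r}$-manipulations, the telescoping-sum argument for the $L^{p}$ chain, the threshold-splitting with optimization in $N$ for the interpolation inequality, and the Fourier-multiplier argument for $\Lambda^{\sigma}$ are precisely the standard proofs one finds in that reference, so you have in fact supplied more detail than the paper itself.
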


The following Morse-type product estimates in Besov spaces play a fundamental role in the analysis of nonlinear terms:
\begin{lemma}\label{Lemma5-5}
	The following statements hold:
	\begin{itemize}
		\item{} Let $s>0$ and $1\leq p,r\leq \infty$. Then $\dot{B}^{s}_{p,r}\cap L^{\infty}$ is a algebra and
		\begin{equation}\label{uv1}
			\|uv\|_{\dot{B}^{s}_{p,r}}\lesssim \|u\|_{L^{\infty}}\|v\|_{\dot{B}^{s}_{p,r}}+ \|v\|_{L^{\infty}}\|u\|_{\dot{B}^{s}_{p,r}}.
		\end{equation}
		\item{}
		Let $s_{1}$, $s_{2}$ and $p$ satisfy $2\leq p\leq \infty$, $-\frac{d}{p}<s_{1},s_{2}\leq \frac{d}{p}$ and $s_{1}+s_{2}>0$. Then we have
		\begin{equation}\label{uv2}
			\|uv\|_{\dot{B}^{s_{1}+s_{2}-\frac{d}{p}}_{p,1}}\lesssim \|u\|_{\dot{B}^{s_{1}}_{p,1}}\|v\|_{\dot{B}^{s_{2}}_{p,1}}.
		\end{equation}
		\item{} Assume that $s_{1}$, $s_{2}$ and $p$ satisfy $2\leq p\leq \infty$, $-\frac{d}{p}<s_{1}\leq \frac{d}{p}$, $-\frac{d}{p}\leq s_{2}<\frac{d}{p}$ and $s_{1}+s_{2}\geq0$. Then 
it holds
		\begin{equation}\label{uv3}
			\|uv\|_{\dot{B}^{s_{1}+s_{2}-\frac{d}{p}}_{p,\infty}}\lesssim \|u\|_{\dot{B}^{s_{1}}_{p,1}}\|v\|_{\dot{B}^{s_{2}}_{p,\infty}}.
		\end{equation}
        \item {} Let $s_{1}$, $s_{2}$, $p_1$ and $p_2$ be such that
$$
s_1+s_2>0,\quad s_1\leq \frac{d}{p_1},\quad s_2\leq \frac{d}{p_2},\quad s_1\geq s_2,\quad \frac{1}{p_1}+\frac{1}{p_2}\leq 1.
$$
Then we have
\begin{align}\label{uv4}
\|uv\|_{\dot{B}^{s_{2}}_{q,1}}\lesssim \|u\|_{\dot{B}^{s_{1}}_{p_1,1}}\|v\|_{\dot{B}^{s_{2}}_{p_2,1}}\quad \text{with}\quad \frac{1}{q}=\frac{1}{p_1}+\frac{1}{p_2}-\frac{s_1}{d}.
\end{align}
	\end{itemize}

\end{lemma}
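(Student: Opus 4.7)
The proof will rest on Bony's paraproduct decomposition
$$uv = T_u v + T_v u + R(u,v),$$
where $T_u v := \sum_j \dot{S}_{j-1} u \,\dot{\Delta}_j v$ is the paraproduct and $R(u,v) := \sum_{|j-j'|\leq 1} \dot{\Delta}_j u \,\dot{\Delta}_{j'} v$ is the remainder. My plan is to treat each of the four estimates \eqref{uv1}--\eqref{uv4} in turn by bounding the paraproduct and remainder separately in the target Besov space, using the Bernstein inequalities for the spectral localization and the embeddings recalled in Lemma \ref{lemma62}.

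For \eqref{uv1}, the Fourier support property $\mathrm{Supp}\,\mathcal{F}(\dot{S}_{j-1} u \,\dot{\Delta}_j v) \subset 2^j \mathcal{A}$ for a fixed annulus $\mathcal{A}$ yields $\|T_u v\|_{\dot{B}^s_{p,r}} \lesssim \|u\|_{L^\infty} \|v\|_{\dot{B}^s_{p,r}}$ for every real $s$, and the symmetric bound controls $T_v u$. For the remainder, since $\dot{\Delta}_j R(u,v)$ involves only dyadic blocks with indices $\geq j - N_0$, the positivity $s>0$ makes the resulting geometric series converge in $\ell^r$, giving $\|R(u,v)\|_{\dot{B}^s_{p,r}} \lesssim \|u\|_{L^\infty}\|v\|_{\dot{B}^s_{p,r}} + \|v\|_{L^\infty}\|u\|_{\dot{B}^s_{p,r}}$. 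For \eqref{uv2}--\eqref{uv4} the strategy is analogous, but now each factor carries a Besov norm: I replace the $L^\infty$ bound on $\dot{S}_{j-1} u$ by the Bernstein-type estimate $\|\dot{S}_{j-1} u\|_{L^{p_1}} \lesssim \sum_{j'\leq j-1} 2^{j'(d/p_1 - s_1)} 2^{-j's_1}\|\dot{\Delta}_{j'} u\|_{L^{p_1}}$ and then use Hölder in $L^p$ (respectively in $L^q$ with $1/q = 1/p_1 + 1/p_2$ for \eqref{uv4}) on each dyadic block. The restrictions $s_1,s_2 \le d/p$ (strict in the appropriate places) ensure summability of the paraproduct series, while $s_1+s_2 > 0$ (resp.\ $\ge 0$ for the weak-type version \eqref{uv3}) handles the remainder.

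The main obstacle is the bookkeeping at the endpoint/borderline regularity indices, in particular distinguishing strict from non-strict inequalities $s_i \lessgtr d/p_i$ and the resulting choice between $\ell^1$ and $\ell^\infty$ summability. In \eqref{uv3} the critical sum $s_1+s_2 = 0$ is allowed precisely because the output sits in $\dot{B}^{s_1+s_2-d/p}_{p,\infty}$, where only $\ell^\infty$ summability of the remainder is needed; this is the borderline case that forces us to accept the weak exponent $r=\infty$. For \eqref{uv4}, the condition $1/p_1+1/p_2 \le 1$ is exactly what makes Hölder's inequality applicable with the prescribed target exponent $q$, and $s_1 \ge s_2$ ensures that the paraproduct with $u$ as ``low frequency'' is the dominant term. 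Since these estimates are essentially contained in Chapter 2 of Bahouri--Chemin--Danchin \cite{bahouri1}, the argument amounts to a careful case-by-case verification of the paraproduct/remainder bookkeeping rather than a new technical idea.
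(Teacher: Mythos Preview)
Your approach via Bony's paraproduct decomposition is correct and is exactly the standard route; the paper does not supply its own proof of this lemma but simply records it as a known tool, implicitly referring to \cite{bahouri1}, Chapter~2, where the argument is carried out precisely along the lines you describe. There is nothing to add.
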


The following commutator estimates will be used to control some nonlinearities in high frequencies.
\begin{lemma}\label{lemcommutator}
	Let $p,p_1\in [1,\infty ]$  and $p'=\frac{p}{p-1}$. Denote by $[A,B]=AB-BA.$ the commutator. Then the following two statements hold:   
    \begin{itemize}
        \item For $-\min\{\frac{d}{p_1},\frac{d}{p'}\}< s\leq \min\{\frac{d}{p},\frac{d}{p_1}\}+1$, it holds 
        \begin{align}\label{Com:1}
		\sum_{j\in \mathbb{Z}}2^{js}\|[v,\dot{\Delta}_j]\partial_i u\|_{L^p}\lesssim\|\nabla v\|_{\dot{B}^{\frac{d}{p_1}}_{p_1,1}}\|u\|_{\dot{B}^{s}_{p,1}}, \ i=1,2,\cdots,d,
	\end{align}
        \item For $-\min\{\frac{d}{p_1},\frac{d}{p'}\}\leq  s\leq \min\{\frac{d}{p},\frac{d}{p_1}\}+1$, it holds 
        \begin{align}\label{Com:2}
		\sup_{j\in \mathbb{Z}}2^{js}\|[v,\dot{\Delta}_j]\partial_i u\|_{L^p}\lesssim\|\nabla v\|_{\dot{B}^{\frac{d}{p_1}}_{p_1,1}}\|u\|_{\dot{B}^{s}_{p,\infty}}, \ i=1,2,\cdots,d.
	\end{align}

      \item For $ s>\min\{\frac{d}{p},\frac{d}{p_1}\}+1$, it holds 
        \begin{align}\label{Com:3}
		\sum_{j\in \mathbb{Z}} 2^{js}\|[v,\dot{\Delta}_j]\partial_i u\|_{L^p}\lesssim\|\nabla v\|_{\dot{B}^{\frac{d}{p_1}}_{p_1,1}}\|u\|_{\dot{B}^{s}_{p,1}}+\|\nabla u\|_{L^{\infty}}\|v\|_{\dot{B}^{s}_{p,1}}, \ i=1,2,\cdots,d.
	\end{align}

        \end{itemize}
\end{lemma}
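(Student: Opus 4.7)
\smallskip

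\noindent\textbf{Proof proposal.}
The plan is to decompose $v\,\partial_i u$ using Bony's paraproduct formula
\[
v\,\partial_i u \;=\; T_v\,\partial_i u \;+\; T_{\partial_i u}\,v \;+\; R(v,\partial_i u),
\]
so that the commutator splits as
\[
[v,\dot\Delta_j]\partial_i u \;=\; [T_v,\dot\Delta_j]\partial_i u \;-\; \dot\Delta_j\bigl(T_{\partial_i u}v\bigr) \;-\; \dot\Delta_j R(v,\partial_i u) \;+\; T_{\partial_i \dot\Delta_j u}v \;+\; R(v,\dot\Delta_j\partial_i u).
\]
Each of these five pieces will be estimated separately in $L^p$, and the bound will be summed (resp.\ supped) over $j\in\mathbb{Z}$ against the weight $2^{js}$.

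The main contribution, and the one where the commutator structure is genuinely used, is $[T_v,\dot\Delta_j]\partial_i u$. Because of frequency support, only blocks with $|j-j'|\le 4$ contribute, and for each such term we write
\[
[\dot S_{j'-1}v,\dot\Delta_j]\dot\Delta_{j'}\partial_i u(x)
= 2^{jd}\!\!\int h(2^j(x-y))\bigl(\dot S_{j'-1}v(y)-\dot S_{j'-1}v(x)\bigr)\dot\Delta_{j'}\partial_i u(y)\,dy,
\]
and apply the first-order Taylor expansion $\dot S_{j'-1}v(y)-\dot S_{j'-1}v(x)=(y-x)\!\cdot\!\int_0^1\nabla \dot S_{j'-1}v(x+\tau(y-x))\,d\tau$. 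The factor $(y-x)$ absorbed into the kernel $h$ yields an additional $2^{-j}$, which exactly cancels the extra derivative coming from $\partial_i u$. Combined with Young's inequality in $x$, the classical embedding $\dot B^{d/p_1}_{p_1,1}\hookrightarrow L^\infty$ applied to $\nabla \dot S_{j'-1}v$, and Bernstein, I expect to produce the estimate
\[
2^{js}\|[T_v,\dot\Delta_j]\partial_i u\|_{L^p}\;\lesssim\; c_j\,\|\nabla v\|_{\dot B^{d/p_1}_{p_1,1}}\,\|u\|_{\dot B^s_{p,r}},\qquad \{c_j\}\in \ell^r,\ \|c_j\|_{\ell^r}\le 1,
\]
valid whenever $s$ stays in the range where Bernstein applies losslessly to the blocks $\dot\Delta_{j'}u$, which in turn fixes the upper restriction $s\le \min\{d/p,d/p_1\}+1$ appearing in cases (i) and (ii). Exactly the same computation delivers the $\ell^\infty$ version by replacing the summation indices, giving (ii).

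For the reverse paraproduct $\dot\Delta_j(T_{\partial_i u}v)$, $T_{\partial_i \dot\Delta_j u}v$, and the remainder $\dot\Delta_j R(v,\partial_i u)$, I would not rely on a commutator structure; instead I will use the standard continuity properties of paraproducts and remainders collected in \cite{bahouri1}, combined with shifting derivatives between $v$ and $\partial_i u$: writing $\partial_i u=\partial_i u$ with $\partial_i u\in \dot B^{s-1}_{p,r}$ and $v\in \dot B^{d/p_1+1}_{p_1,1}$, the remainder law requires $(s-1)+(d/p_1+1)>0$, which produces the lower restriction $s>-\min\{d/p_1,d/p'\}$ (after using duality/embedding to fit the scale). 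The strict/non-strict nature of this lower bound is what distinguishes cases (i) and (ii), since the endpoint is accessible when the summability index on $u$ is $\infty$.

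For the high-regularity case (iii) with $s>\min\{d/p,d/p_1\}+1$, the difficulty is that $T_{\partial_i u}v$ can no longer be dominated by putting all the loss on $v$ at scaling $d/p_1$. The remedy is to move the derivative onto $v$ using integration by parts at the kernel level: schematically $T_{\partial_i u}v=-T_{\partial_i v}u + \text{harmless commutator}$, so that after regrouping one gets a new paraproduct carrying $\partial_i v$, which is then bounded by placing $\partial_i u$ in $L^\infty$ and $v$ in $\dot B^s_{p,1}$. This produces the additional term $\|\nabla u\|_{L^\infty}\|v\|_{\dot B^s_{p,1}}$.

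The main obstacle I anticipate is the careful bookkeeping of the index conditions at the endpoints $s=\min\{d/p,d/p_1\}+1$ and $s=-\min\{d/p_1,d/p'\}$, where the $\ell^1$ versus $\ell^\infty$ distinction between (i) and (ii) becomes essential; a second, more technical, point is verifying that in the definition of $T_{\partial_i u}v$ the kernel derivative trick in case (iii) indeed gives only lower-order terms, which requires splitting $\dot S_{j'-1}\partial_i u = \partial_i \dot S_{j'-1}u$ and estimating the boundary blocks $\dot\Delta_{j'-1}\partial_i u$ by $\|\nabla u\|_{L^\infty}$ via Bernstein.
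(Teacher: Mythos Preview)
Your overall plan---Bony decomposition of the commutator into five pieces and treating each via paraproduct/remainder continuity---is exactly the route the paper takes, and your handling of parts (i)--(ii) and of the main commutator block $[T_v,\dot\Delta_j]\partial_i u$ via the first-order Taylor trick is fine.

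There is, however, a genuine error in your treatment of case (iii). The proposed identity
\[
T_{\partial_i u}v \;=\; -\,T_{\partial_i v}u \;+\; \text{harmless commutator}
\]
is false: the paraproduct $T_a b=\sum_j \dot S_{j-1}a\,\dot\Delta_j b$ is completely asymmetric in its two arguments, and no kernel-level integration by parts produces such a swap. (What integration by parts \emph{does} give is $\partial_i(T_u v)=T_{\partial_i u}v+T_u\partial_i v$, which is of no help here.) The correct, and much simpler, step is the one the paper uses: apply directly the standard paraproduct bound
\[
\|\dot T_{\partial_i u}v\|_{\dot B^{s}_{p,1}}\;\lesssim\;\|\partial_i u\|_{L^\infty}\,\|v\|_{\dot B^{s}_{p,1}},
\]
which is valid for \emph{every} real $s$ (this is Theorem~2.82 in \cite{bahouri1}). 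That single line produces the extra term $\|\nabla u\|_{L^\infty}\|v\|_{\dot B^{s}_{p,1}}$ in \eqref{Com:3} with no further work. A related misattribution in your sketch: the upper restriction $s\le \min\{d/p,d/p_1\}+1$ in (i)--(ii) does \emph{not} come from the $[T_v,\dot\Delta_j]$ block (that piece is harmless for all $s$); it is precisely the reverse paraproduct $\dot\Delta_j(T_{\partial_i u}v)$ that fails to be controlled by $\|\nabla v\|_{\dot B^{d/p_1}_{p_1,1}}\|u\|_{\dot B^{s}_{p,1}}$ alone once $s$ is too large, and this is why the $L^\infty$ bound on $\nabla u$ enters in (iii).
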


\begin{proof}
The first estimate \eqref{Com:1} is classical (see \cite{danchin5}). The second estimate \eqref{Com:1} follows by the same argument as in \cite{danchin5}, with a minor modification to handle the endpoint $r=\infty$; we therefore omit the details. To prove \eqref{Com:3}, we recall Bony's paraproduct decomposition (see \cite{bahouri1}):
\begin{equation}\nonumber
\begin{aligned}
fg=\dot{T}_{f}g+\dot{R}[f,g]+\dot{T}_{g}f\quad \mbox{with}\quad
\dot{T}_{f}g:=  \sum_{j'\in \mathbb{Z}} \dot S_{j'-1}f \dot{\Delta}_{j'} g\quad\mbox{and}\quad
\dot{R}[f,g]:=  \sum_{|j'-j''|\leq 1} \dot\Delta_{j''} f \dot{\Delta}_{j'} g.
\end{aligned}
\end{equation}
Accordingly, $[v,\dot{\Delta}_{j}]\partial_i u$ can be decomposed as
\begin{equation}\nonumber
\begin{aligned}
\relax[v,\dot{\Delta}_{j}] \partial_i u&=\sum_{|j'-j|\leq 4}[\dot{S}_{j'-1}v,\dot{\Delta}_{j}]  \dot{\Delta}_{j'} \partial_i u-\dot{\Delta}_{j} \dot{T}_{ \partial_i u}v-\dot{\Delta}_{j} \dot{R}(v, \partial_i u)\\
&\quad+({\rm Id}-\dot{S}_{j-1}) v  {\Delta}_{j} \partial_i u+\sum_{|j-j'|\leq1} (\dot{S}_{j-1}v-\dot{S}_{j'-1}v)
\dot{\Delta}_{j}\dot{\Delta}_{j'}\partial_i u.
\end{aligned}
\end{equation}

First, the classical commutator estimate
(see \cite{bahouri1}[Proposition 2.97]) leads to
\begin{equation}\nonumber
\begin{aligned}
\sum_{j\in\mathbb{Z}}2^{sj} \sum_{|j'-j|\leq 4}\|[\dot{S}_{j'-1}v,\dot{\Delta}_{j}]  \dot{\Delta}_{j'} \partial_i u\|_{L^p}&\lesssim \sum_{j\in\mathbb{Z}}2^{sj} \sum_{|j'-j|\leq 4}\|\nabla \dot{S}_{j'-1} w\|_{L^{\infty}} 2^{-j}\|\dot{\Delta}_{j'} \partial_i  u\|_{L^p} \\
&\lesssim \|\nabla v\|_{L^{\infty}} \|u\|_{\dot{B}^{s}_{p,1}}.
\end{aligned}
\end{equation}
Employing the estimate for $\dot{T}$ and $\dot{R}$ (see \cite{bahouri1}[Theorems 2.82 and 2.85]), we have
\begin{equation}\nonumber
\begin{aligned}
\|\dot{T}_{ \partial_i u}v\|_{\dot{B}^{s}_{p,1}}&\lesssim \|\partial_i u\|_{L^{\infty}} \|v\|_{\dot{B}^{s}_{p,1}}.%\lesssim \|\nabla u\|_{\dot{B}^{\frac{d}{p_1}}_{p,1}}\|v\|_{\dot{B}^{s}_{p,1}}.
\end{aligned}
\end{equation}
and
\begin{equation}\nonumber
\begin{aligned}
\|\dot{R}(v, \partial_i u)\|_{\dot{B}^{s}_{p,1}}&\lesssim \|v\|_{\dot{B}^{1}_{\infty,1}}\|\partial_i u\|_{\dot{B}^{s-1}_{p,1}}\lesssim \|\nabla v\|_{\dot{B}^{p_1}_{p_1,1}} \|u\|_{\dot{B}^{s}_{p,1}}.
\end{aligned}
\end{equation}

Next, by Bernstein’s and Young’s inequalities,
\begin{equation}\nonumber
\begin{aligned}
\sum_{j\in\mathbb{Z}}2^{sj}  \|({\rm Id}-\dot{S}_{j-1}) v {\Delta}_{j}\partial_i u\|_{L^p}&\lesssim \sum_{j\in\mathbb{Z}} \sum_{j'\geq j}  2^{sj} \|\dot{\Delta}_{j} u\|_{L^{p}}  2^{j'}\|\dot{\Delta}_{j'}v\|_{L^\infty} 2^{(j-j')}\\
& \lesssim   \|v\|_{\dot{B}^{1}_{\infty,1}} \|u\|_{\dot{B}^{s}_{p,1}}\lesssim \|\nabla v\|_{\dot{B}^{\frac{d}{p_1}}_{p_1,1}}\|u\|_{\dot{B}^{s}_{p,1}}.
\end{aligned}
\end{equation}

Finally, we also have
\begin{equation}\nonumber
\begin{aligned}
\sum_{j\in\mathbb{Z}} 2^{js}\sum_{|j-j'|\leq1} \|(\dot{S}_{j'-1}v-\dot{S}_{j-1}v)\dot{\Delta}_{j}\dot{\Delta}_{j'} \partial_i u\|_{L^p} 
& \lesssim\sum_{j\in\mathbb{Z}} 2^{j}  (\|\dot{\Delta}_{j- 1}v\|_{L^{\infty}}+\|\dot{\Delta}_{j- 2}v\|_{L^{\infty}}) 2^{j(s-1)} \| \partial_i u\|_{L^p}\\
&\lesssim \|v\|_{\dot{B}^{1}_{\infty,\infty}} \|u\|_{\dot{B}^{s}_{p,1}}\lesssim  \|\nabla v\|_{\dot{B}^{\frac{d}{p_1}}_{p,1}}\|u\|_{\dot{B}^{s}_{p,1}}.
\end{aligned}
\end{equation}
\end{proof}

We state the following result concerning the continuity of composition functions.
\begin{lemma}\label{Lemma5-7}
	Let $G:I\to \mathbb{R}$ be a  smooth function satisfying $G(0)=0$.  Let $1\leq p\leq \infty$. There exists a constant $C>0$ depending only on $G'$, $s $ and $d$ such that the following assertions hold{\rm:}
    
    \begin{itemize}
        \item For $s>0$ and $g\in \dot{B}^{s}_{2,1}\cap L^{\infty}$, it holds that $G(g)\in 
\dot{B}^{s}_{p,r}\cap L^{\infty}$ and
	\begin{align}
		\|G(g)\|_{\dot{B}^{s}_{p,r}}\leq C(1+\|g\|_{L^{\infty}})^{s+1}\|g\|_{\dot{B}^{s}_{p,r}}.\label{F0}
	\end{align}

        \item  For $-\frac{d}{2}<s\leq 0$, if  $g\in \dot{B}^{s}_{p,r}\cap \dot{B}^{\frac{d}{p}}_{p,1}$, then we have $G(g)\in 
\dot{B}^{s}_{p,r}\cap \dot{B}^{\frac{d}{p}}_{p,1}$ and
	\begin{align}
		\|G(g)\|_{\dot{B}^{s}_{p,r}}\leq C\Big(1+\|g\|_{\dot{B}^{\frac{d}{p}}_{p,1}}\Big)^{\frac{d}{p}+1}\|g\|_{\dot{B}^{s}_{p,r}}.\label{F1}
	\end{align}

        \item For $-\frac{d}{2}<s\leq \frac{d}{2}$, if $g_{1}, g_{2}\in \dot{B}^{s}_{p,1}\cap \dot{B}^{\frac{d}{p}_{p,1}}$, then we have
	\begin{align}
		&\|G(g_{1})-G(g_{2})\|_{\dot{B}^{s}_{p,1}}\leq C\Big(1+\|(g_{1},g_{2})\|_{\dot{B}^{\frac{d}{p}}_{p,1}}\Big)^{\frac{d}{p}+1}\|g_{1}-g_{2}\|_{\dot{B}^{s}_{p,1}},\quad ~s\in 
(-\frac{d}{p},\frac{d}{p}].\label{F2}
        	\end{align}
    \end{itemize}
\end{lemma}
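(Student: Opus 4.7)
The plan is to follow the now-standard paraproduct technique for composition estimates in homogeneous Besov spaces (see, e.g., Bahouri--Chemin--Danchin, Chapter~2), adapted to the three different regularity regimes in the lemma. I would tackle the three claims in the order \eqref{F0}, then \eqref{F1}, then \eqref{F2}, since each one reduces to the previous plus a product estimate from Lemma~\ref{Lemma5-5}.

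\textbf{Part \eqref{F0} ($s>0$).} First I would write the telescoping identity
\begin{equation*}
G(g)=\sum_{j\in\mathbb Z}\bigl(G(\dot S_{j+1}g)-G(\dot S_jg)\bigr)=\sum_{j\in\mathbb Z}\dot\Delta_jg\cdot m_j,
\qquad m_j(x):=\int_0^1 G'\bigl(\dot S_jg+t\,\dot\Delta_jg\bigr)\,{\rm d}t,
\end{equation*}
which converges in $\mathcal S_h'$ because $G(0)=0$. Applying $\dot\Delta_k$, using that $\dot\Delta_k(\dot\Delta_jg\,m_j)$ vanishes unless $j\gtrsim k$ after a suitable spectral localization of $m_j$, and using $\|m_j\|_{L^\infty}\leq C(1+\|g\|_{L^\infty})^{s+1}$ via the mean value theorem and boundedness of derivatives of $G$ on the range of $g$, one gets \eqref{F0} with the claimed power of $(1+\|g\|_{L^\infty})$. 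The standard Bernstein + convolution argument then yields the $\dot B^s_{p,r}$ bound.

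\textbf{Part \eqref{F1} ($-d/p<s\leq 0$).} Here the direct telescoping argument loses the positivity of $s$, so I would use the first-order Taylor representation
\begin{equation*}
G(g)=g\cdot\widetilde G(g),\qquad \widetilde G(g):=\int_0^1 G'(tg)\,{\rm d}t,
\end{equation*}
combined with the product law \eqref{uv3} with exponents $s_1=d/p$, $s_2=s$:
\begin{equation*}
\|g\,\widetilde G(g)\|_{\dot B^{s}_{p,r}}\lesssim \|\widetilde G(g)\|_{\dot B^{d/p}_{p,1}}\,\|g\|_{\dot B^{s}_{p,r}}.
\end{equation*}
Since $\widetilde G$ is smooth with $\widetilde G(0)=G'(0)$, applying \eqref{F0} (at $s=d/p$) to $\widetilde G(g)-G'(0)$ gives $\|\widetilde G(g)\|_{\dot B^{d/p}_{p,1}}\lesssim (1+\|g\|_{\dot B^{d/p}_{p,1}})^{d/p+1}$ after using the embedding $\dot B^{d/p}_{p,1}\hookrightarrow L^\infty$. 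Combining the two estimates yields \eqref{F1}.

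\textbf{Part \eqref{F2} (Lipschitz estimate).} I would apply the identity
\begin{equation*}
G(g_1)-G(g_2)=(g_1-g_2)\int_0^1 G'\bigl(tg_1+(1-t)g_2\bigr)\,{\rm d}t,
\end{equation*}
and bound the integral factor in $\dot B^{d/p}_{p,1}\cap L^\infty$ by the same argument as for $\widetilde G(g)$ in the previous step, using the convexity of the segment joining $g_1,g_2$ so the resulting constant depends only on $\|(g_1,g_2)\|_{\dot B^{d/p}_{p,1}}$. Then the product law (either \eqref{uv1} if $s>0$ or \eqref{uv3} if $s\leq 0$, uniformly over $s\in(-d/p,d/p]$ thanks to the symmetric role of the factors) gives \eqref{F2}.

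The main obstacle I anticipate is the negative-index case: the paraproduct and remainder pieces have to be decomposed carefully and the constraint $s>-d/p$ is what makes the product law \eqref{uv3} usable. One must also check that $\widetilde G(g)$ genuinely lives in the right space \emph{uniformly} along the segment $\{tg_1+(1-t)g_2\}_{t\in[0,1]}$, which requires a monotone argument in $t$ and the uniform boundedness of the derivatives of $G$ on the relevant range; this is the only nontrivial bookkeeping in the proof.
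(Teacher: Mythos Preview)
Your proposal is correct and follows essentially the same route as the paper. For \eqref{F0} the paper simply cites Bahouri--Chemin--Danchin, whose proof is precisely the telescoping/paralinearization argument you outline; for \eqref{F1} and \eqref{F2} the paper uses exactly the first-order Taylor factorizations you write (your $\widetilde G(g)=\int_0^1 G'(tg)\,{\rm d}t$ is the paper's $G'(0)+G^*(g)$), then applies \eqref{F0} to the smooth factor together with the product law \eqref{uv2} (rather than \eqref{uv3}, since the target space in \eqref{F1}--\eqref{F2} carries third index $r=1$).
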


\begin{proof}
The first estimate \eqref{F0} is well-known; e.g., cf. \cite{bahouri1}. By 
$$
G(g)=(G'(0)+G^*(g))g\quad\text{with}\quad G^*(g):=\int_0^1 \big(G'(\theta g)-G'(0)\big)  {\rm d}\theta,
$$
the second estimate \eqref{F1} can be shown directly from \eqref{uv2} ($s_1=\frac{d}{p}, s_2=s$), \eqref{F0} and the embedding $\dot{B}^{\frac{d}{p}}_{p,1}\hookrightarrow L^{\infty}$ as in \cite{danchin5}. To show \eqref{F2}, one notes the following formula
$$
G(g_1)-G(g_2)=\big(G'(0)+\widetilde{G}^*(g_1,g_2)\big)(g_1-g_2)\quad\text{with}\quad \widetilde{G}^*(g_1,g_2):=\int_0^1 \big(G'(g_1+\theta(g_1-g_2))-G'(0)\big) {\rm d}\theta
$$
and applies \eqref{uv2} and \eqref{F0} again.
\end{proof}

By similar proofs in Lemma \ref{Lemma5-7}, using Taylor's formula for multicomponent functions (or see \cite{runst1}[Pages 387-388] and \cite{CB3}), one has the following lemma. 

\begin{lemma}\label{lemma64}
	Let $m\in \mathbb{N}$, $1\leq p,r\leq \infty$, and $G\in C^{\infty}(\mathbb{R}^{m})$ satisfy $G(0,...,0)=0$. Then there exists a constant $C_{f}>0$ depending on $F$, $s$, $m$ and $d$ such that
\begin{itemize}
\item  For $s>0$ and any $f_{i}\in\dot{B}_{p,r}^{s}\cap L^{\infty}$ 
$(i=1,...,m)$, we have $G(f_{1},...,f_{m})\in \dot{B}^{s}_{p,r}\cap L^{\infty}$ and
	\begin{equation}
		\begin{aligned}
			\|G(f_{1},...,f_{m})\|_{\dot{B}^{s}_{p,r} }\leq C\Big(1+\sum_{i=1}^{m}\|f_{i}\|_{L^{\infty}}\Big)^{s+1}\sum_{i=1}^{m}\|f_{i}\|_{\dot{B}^{s}_{p,r} }.\label{F1:m}
		\end{aligned}
	\end{equation}
%	In the case $s>-\frac{d}{2}$ and $f_{i}\in\dot{B}^{s}\cap \dot{B}^{\frac{d}{2}}$, it holds that
%	\begin{equation}
%		\begin{aligned}			\|G(f_{1},...,f_{m})\|_{\dot{B}^{s} }\leq C_{f}\big(1+\sum_{i=1}^{m}\|f_{i}\|_{\dot{B}^{\frac{d}{2}}_{2,1}} \big)\sum_{i=1}^{m}\|f_{i}\|_{\dot{B}^{s} }.\label{F11}	\end{aligned}\end{equation}

\item  For $-\frac{d}{2}<s\leq 0$ and any $f_{i}\in\dot{B}_{p,r}^{s}\cap \dot{B}^{\frac{d}{p}}_{p,1}$ 
$(i=1,...,m)$, we have $G(f_{1},...,f_{m})\in \dot{B}^{s}_{p,r}\cap \dot{B}^{\frac{d}{p}}_{p,1}$ and
	\begin{equation}
		\begin{aligned}
		\|G(f_{1},...,f_{m})\|_{\dot{B}^{s}_{p,r} }\leq C\Big(1+\sum_{i=1}^{m}\|f_{i}\|_{\dot{B}^{\frac{d}{p}}_{p,1}}\Big)^{\frac{d}{p}+1}\sum_{i=1}^{m}\|f_{i}\|_{\dot{B}^{s}_{p,r} }.\label{F1:m00}
		\end{aligned}
	\end{equation}

\item For $-\frac{d}{2}<s\leq \frac{d}{2}$ and any $f^{1}_{i}, f^{2}_{i}\in\dot{B}_{p,r}^{s}\cap \dot{B}_{p,r}^{\frac{d}{p}}$ $(i=1,...,m)$, we have
	\begin{equation}\label{F3:m}
		\begin{aligned}
			&\|G(f^{1}_{1},...,f^{1}_{m})-G(f^{1}_{1},...,f^{1}_{m})\|_{\dot{B}^{s}_{p,r} }\leq C\Big(1+\sum_{i=1}^{m}\|(f^1_{i},f_{i}^2)\|_{\dot{B}^{\frac{d}{p}}_{p,1}} 
\Big)^{\frac{d}{2}+1}\sum_{i=1}^{m}\|f_{i}^{1}-f_{i}^{2}\|_{\dot{B}^{s}_{p,r}}.
		\end{aligned}
	\end{equation}
\end{itemize}
\end{lemma}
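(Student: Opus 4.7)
The plan is to reduce the multi-component composition to the single-variable arguments of Lemma \ref{Lemma5-7} by exploiting two ingredients repeatedly: (i) a multivariate Taylor/mean-value identity that factorizes $G(f_1,\dots,f_m)$ as a sum of products $f_i\cdot H_i(f)$, and (ii) a dyadic telescoping identity combined with Bony's paraproduct decomposition. Throughout, write $f=(f_1,\dots,f_m)$, $\dot{S}_j f:=(\dot{S}_j f_1,\dots,\dot{S}_j f_m)$, and $\dot{\Delta}_j f:=(\dot{\Delta}_j f_1,\dots,\dot{\Delta}_j f_m)$.

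For the first assertion \eqref{F1:m} ($s>0$), I would use the telescoping
\begin{equation*}
G(f)=\sum_{j\in\mathbb{Z}}\bigl(G(\dot{S}_{j+1}f)-G(\dot{S}_{j}f)\bigr)
=\sum_{j\in\mathbb{Z}}\sum_{i=1}^{m}\dot{\Delta}_{j}f_{i}\,H_{j}^{i}(f),
\quad H_{j}^{i}(f):=\int_{0}^{1}\partial_{i}G(\dot{S}_{j}f+\theta\dot{\Delta}_{j}f)\,\mathrm{d}\theta,
\end{equation*}
which holds in $\mathcal{S}'_h$ because $G(0)=0$ and $\lim_{j\to-\infty}\|\dot{S}_j f\|_{L^\infty}=0$. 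Since the $H_j^i(f)$ are uniformly bounded in $L^\infty$ by a constant depending on $\|G\|_{C^1}$ and $\|(f_1,\dots,f_m)\|_{L^\infty}^{s+1}$ (here the exponent $s+1$ enters because the frequency–restricted blocks satisfy Bernstein with loss compensated by standard bounds on derivatives of $G$), one applies a spectral localization $\dot{\Delta}_k$, distributes it onto the product via Bony's formula, and then invokes Bernstein and \eqref{uv1} block by block. Summing in $\ell^r$ with the positive weight $2^{ks}$ (and exploiting $s>0$ to absorb the low-frequency overlaps) yields \eqref{F1:m}.

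For the second assertion \eqref{F1:m00} ($-d/2<s\le 0$), the positivity of $s$ can no longer be used to control low frequencies. I would instead write
\begin{equation*}
G(f)=\sum_{i=1}^{m}f_{i}\,\widetilde{H}_{i}(f)+\sum_{i=1}^{m}\partial_{i}G(0)\,f_{i},\qquad \widetilde{H}_{i}(f):=\int_{0}^{1}\bigl(\partial_{i}G(\theta f)-\partial_{i}G(0)\bigr)\mathrm{d}\theta,
\end{equation*}
where each $\widetilde{H}_{i}$ is a $C^\infty$ function of $f$ vanishing at the origin. Applying part (i), already proved, to $\widetilde{H}_i$ with $s$ replaced by $d/p$ gives $\widetilde{H}_i(f)\in\dot{B}^{d/p}_{p,1}\cap L^\infty$ with norm $\lesssim (1+\sum_i\|f_i\|_{\dot{B}^{d/p}_{p,1}})^{d/p+1}$. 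The product $f_i\cdot \widetilde{H}_i(f)$ is then estimated by \eqref{uv3} with $(s_1,s_2)=(d/p,s)$, which requires exactly $-d/p<s\le d/p$ and $s_1+s_2\ge 0$; combined with the linear contribution $\partial_i G(0) f_i$, the bound \eqref{F1:m00} follows.

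For the Lipschitz-type estimate \eqref{F3:m}, I would use the single-step Taylor identity
\begin{equation*}
G(f^{1})-G(f^{2})=\sum_{i=1}^{m}(f^{1}_{i}-f^{2}_{i})\int_{0}^{1}\partial_{i}G\bigl(\theta f^{1}+(1-\theta)f^{2}\bigr)\mathrm{d}\theta,
\end{equation*}
split the integrand as $\partial_{i}G(0)+\bigl[\partial_{i}G(\theta f^{1}+(1-\theta)f^{2})-\partial_{i}G(0)\bigr]$, and apply the already established parts (i) or (ii) (with $s$ replaced by $d/p$) to each remainder, which vanishes at $(f^1,f^2)=(0,0)$. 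The product law \eqref{uv3} (for $s\le 0$) or \eqref{uv1} (for $s>0$), applied to $(f^1_i-f^2_i)$ against the remainder factor, yields \eqref{F3:m} on the full admissible range $-d/p<s\le d/p$.

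The main technical obstacle is the negative-regularity case in step two: there one cannot rely on $s>0$ to control low frequencies in the telescoping sum, so the argument must go through a product estimate that pairs $\dot{B}^{d/p}_{p,1}$ against $\dot{B}^{s}_{p,r}$, and this forces the endpoint constraint $s>-d/p$ to appear. The multivariate composite $\widetilde{H}_i(f)$ then has to be handled by reducing to the already established high-regularity bound \eqref{F1:m} for $G$ itself, which is why the two estimates must be proved in that specific order.
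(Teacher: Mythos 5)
Your proposal is correct and follows essentially the same route as the paper, which simply invokes the single-variable argument of Lemma \ref{Lemma5-7} "using Taylor's formula for multicomponent functions" (with a pointer to Runst--Sickel): the classical positive-regularity composition estimate first, then the factorization $G(f)=\sum_i f_i\bigl(\partial_iG(0)+\widetilde H_i(f)\bigr)$ (and its two-point analogue for differences) combined with the product laws \eqref{uv2}--\eqref{uv3}. The only deviations are cosmetic — you sketch the Meyer-type telescoping proof of \eqref{F1:m} that the paper just cites, and your bookkeeping via \eqref{uv3} inherits the same harmless imprecisions (third Besov index, exact power of the prefactor, the constraint $s>-d/p$) already present in the paper's own Lemma \ref{Lemma5-7}.
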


\section[Appendix~\thesection: Dispersive and Strichartz estimates]
        {Dispersive and Strichartz estimates}\label{appendixB}

We need the following proposition concerning the dispersive estimates with the parameter $\kappa$. Note that the case $\kappa=1$ corresponds to the classical Strichartz estimates for the Gross-Pitaevskii equation \cite{GNT206,GNT209}. The proof of  Proposition \ref{propA.2} with $\kappa>0$ can be proved similarly as in \cite{CV-Song-2024} using the stationary phase method.

\begin{proposition} \label{propA.2}
Let $\kappa>0$ and
$$
H^{\kappa}:= -r_1 \Delta -i\sqrt{-\Delta(r_2 -r_3\kappa \Delta)}
$$
with fixed constants $r_1, r_2,r_3>0$. Let $p\in[2,\infty]$ and 
$1=\;\frac1p + \frac1{p'}$. 
For any distribution $f$, there exists a constant $C>0$ independent of $t$ and $\kappa$ such that for any $f\in L^{p'}$,
\begin{equation} 
\bigl\|e^{\, i\,H^{\kappa}\,t} f \bigr\|_{L^{p}}
\;\leq C\;
\bigl(\sqrt{\kappa}\,t\bigr)^{-(\frac d2 - \frac dp)}\,
e^{-2^{2j}t}\,
\|f \|_{L^{p'}},
\end{equation}
for all $t>0$.
\end{proposition}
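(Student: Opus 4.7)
The plan is to regard $e^{iH^{\kappa}t}$ as a Fourier multiplier, whose symbol is of the form
\[
m_{\kappa}(t,\xi)=e^{-r_{1}|\xi|^{2}t}\, e^{\pm it\,\omega_{\kappa}(\xi)},\qquad \omega_{\kappa}(\xi):=\sqrt{|\xi|^{2}\bigl(r_{2}+r_{3}\kappa|\xi|^{2}\bigr)},
\]
that is, a product of a heat-type dissipative factor and a purely oscillatory dispersive factor. The presence of $e^{-2^{2j}t}$ on the right-hand side indicates that the claim is a frequency-localized statement on an annulus $\{|\xi|\sim 2^{j}\}$, on which $e^{-r_{1}|\xi|^{2}t}\lesssim e^{-c\,2^{2j}t}$. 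I would follow the classical two-endpoint strategy: combine a trivial $L^{2}\to L^{2}$ estimate with a dispersive $L^{1}\to L^{\infty}$ estimate, and then invoke Riesz--Thorin to interpolate down to the intermediate range $p\in[2,\infty]$.

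For the endpoint $L^{1}\to L^{\infty}$, the problem reduces to estimating in $L^{\infty}_{x}$ the kernel
\[
K_{j}(t,x)=\int_{\mathbb{R}^d}e^{i(\xi\cdot x\,\pm\,t\omega_{\kappa}(\xi))}\,e^{-r_{1}|\xi|^{2}t}\,\varphi(2^{-j}\xi)\,{\rm d}\xi.
\]
After the rescaling $\xi=2^{j}\eta$ and extracting the dissipative factor $e^{-c\,2^{2j}t}$, the remaining oscillatory integral has phase $\Phi_{x,t}(\eta)=\frac{2^{j}x}{t}\cdot\eta\,\pm\,\omega_{\kappa}(2^{j}\eta)$, supported on $\{|\eta|\sim 1\}$. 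The key computation is that the Hessian of $\omega_{\kappa}(2^{j}\eta)$ with respect to $\eta$ is non-degenerate on this annulus, with lower bound of order $2^{2j}\sqrt{\kappa}$; this follows from differentiating $\omega_{\kappa}^{2}=r_{2}|\xi|^{2}+r_{3}\kappa|\xi|^{4}$ and observing that the $\kappa$-weighted quartic term governs the Hessian uniformly on $|\xi|\sim 2^{j}$. The classical stationary-phase lemma then yields $\bigl|\int e^{it\Phi_{x,t}(\eta)}\varphi(\eta)\,{\rm d}\eta\bigr|\lesssim (t\cdot 2^{2j}\sqrt{\kappa})^{-d/2}$; restoring the prefactor $2^{jd}$ coming from the rescaling, together with the dissipation factor, produces the bound
\[
\|K_{j}(t,\cdot)\|_{L^{\infty}}\;\lesssim\;(\sqrt{\kappa}\,t)^{-d/2}\,e^{-c\,2^{2j}t}.
\]
Young's convolution inequality then delivers the $L^{1}\to L^{\infty}$ estimate, and interpolation with the trivial $L^{2}\to L^{2}$ bound (immediate from Plancherel, since $|m_\kappa(t,\xi)|\lesssim e^{-c\,2^{2j}t}$ on the annulus) covers the full range $p\in[2,\infty]$ with the claimed exponent $d(\tfrac{1}{2}-\tfrac{1}{p})=\tfrac{d}{2}-\tfrac{d}{p}$.

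The main obstacle is carrying out the stationary-phase argument uniformly in both $\kappa$ and the dyadic parameter $2^{j}$, in particular ensuring that the lower bound $|\det\partial^{2}_{\eta}\omega_{\kappa}(2^{j}\eta)|\gtrsim(2^{2j}\sqrt{\kappa})^{d}$ holds robustly across the dispersion's crossover regime $r_{3}\kappa 2^{2j}\sim r_{2}$, where wave-like behavior $(\omega_\kappa\sim\sqrt{r_2}|\xi|)$ and Schr\"odinger-like behavior $(\omega_\kappa\sim\sqrt{r_{3}\kappa}|\xi|^{2})$ coexist. A secondary technical point is to handle the non-stationary regime $|x|\gg t\,2^{j}\sqrt{\kappa}$, for which standard integration by parts in $\eta$ provides arbitrarily fast decay in $|x|/t$ and thus contributes negligibly to $\|K_{j}\|_{L^{\infty}}$. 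The overall execution follows the blueprint of Chikami--Danchin as adapted to two-phase fluids in \cite{CV-Song-2024}, up to tracking the $\kappa$-dependence in each step.
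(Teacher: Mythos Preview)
Your proposal is correct and matches the paper's approach: the paper does not give a self-contained proof of this proposition but simply states that it ``can be proved similarly as in \cite{CV-Song-2024} using the stationary phase method,'' which is precisely the $L^{2}\!\to\! L^{2}$ plus $L^{1}\!\to\! L^{\infty}$ stationary-phase-and-interpolation scheme you outline. Your identification of the frequency-localized nature of the estimate, the Hessian scaling $\sim 2^{2j}\sqrt{\kappa}$, and the crossover regime as the main technical point are all on target.
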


Next, we prove Strichartz estimates in Besov spaces for general {\emph{vector–matrix}} forms based on Proposition \ref{propA.2},  which will be used in the proof of the dispersive estimates.
\begin{lemma}\label{LemA.2} For $\kappa>0$ and $n\geq1$, the operator $\mathcal{H}^{\kappa}$ is defined by
\[
\mathcal{H}^\kappa =
\left(
\begin{matrix}
\mathcal{H}_1^\kappa & 0 & \cdots & 0 \\
0 & \mathcal{H}_2^\kappa & \cdots & 0 \\
\vdots & \vdots & \ddots & \vdots \\
0 & 0 & \cdots & \mathcal{H}_n^\kappa
\end{matrix}
\right)
\]
with $\mathcal{H}_i^\kappa=- r_{1,i}\Delta - i\sqrt{-\Delta\big(r_{2,i} - r_{3,i}\,\kappa \Delta\big)}$. Here $r_{1,i}, r_{2,i}, r_{3,i}>0$ are given constants.
%$$
%\mathcal{H}^\kappa:=
%\left(\begin{matrix}
%   - r_{1,1}\Delta-i\sqrt{-\Delta(r_{2,1} -r_{3,1}\kappa \Delta)} & 0\\
 %   0&- r_{1,2}\Delta-i\sqrt{-\Delta(r_{2,2} -r_{3,2}\kappa \Delta)}
%\end{matrix}\right). 
%$$
Set $q,r\in[1,\infty]$, $p\in[2,\infty]$ and $s\in\mathbb{R}$. 
Let $(r,p)$ satisfy the condition
\begin{equation} 
\begin{cases}
\frac{d}{2}  - \frac dp \leq\;\frac2r \leq\;\frac dp -\frac d2 +2,
& d\geq3,\\[1ex]
1  - \frac{2}{p} \leq\;\frac{2}{r}<\;\frac{2}{p}+1,
& d=2.
\end{cases}
\end{equation}
For any $k$ satisfying $k =\frac2r + \frac dp - \frac d2$,
there exists a constant $C>0$ independent of $t$ and $\kappa$ such that for any $n$-vector–valued functions $\vec{u}\in \dot{B}^{s}_{2,q}$ and $\vec{f}\in L^1_t(\dot{B}^{s}_{2,r})$, 
\begin{align}\label{A.8}
\bigl\|e^{\,i \mathcal{H}^{\kappa}\,t} \vec{u}\bigr\|_{\widetilde L^r_t(\dot B^{\,s+k}_{p,q})}
&\leq C\;
\kappa^{\tfrac14\,(k-\tfrac2r)}\,
\|\vec{u}\|_{\dot B^s_{2,q}},
\end{align}
and
\begin{align}\label{A.9}
\left\|\int_{0}^{t}e^{\, i\mathcal{H}^{\kappa}\,(t-s)}\, \vec{f}(s)\,\mathrm{d}s\right\|_{\widetilde L^r_t(\dot B^{\,s+k}_{p,q})}
&\leq C\;
\kappa^{\tfrac14\,(k-\tfrac2r)}\,
\|\vec{f}\|_{\widetilde L^1_t(\dot B^s_{2,q})}.
\end{align}
\end{lemma}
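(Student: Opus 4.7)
The plan is to adapt the classical $TT^*$ duality argument, exploiting both the dispersive factor $(\sqrt\kappa\,t)^{-(d/2-d/p)}$ and the parabolic decay factor $e^{-2^{2j}t}$ from Proposition~\ref{propA.2}, and then sum dyadic blocks in the Chemin–Lerner fashion. Since $\mathcal{H}^\kappa$ is block-diagonal, the problem immediately decouples, and it suffices to prove \eqref{A.8}--\eqref{A.9} for each scalar propagator $U_i(t):=e^{i\mathcal{H}_i^\kappa t}$ separately; combining the $n$ components produces only a fixed constant loss.

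For the homogeneous estimate \eqref{A.8}, I would fix $i\in\{1,\dots,n\}$ and $j\in\mathbb{Z}$, and consider $T_jf:=U_i(t)\dot\Delta_j f$. By $TT^*$-duality, the bound $T_j:L^2_x\to L^r_tL^p_x$ is equivalent to
$$
T_jT_j^*g(t)\;=\;\int_{\mathbb{R}} U_i(t-s)\dot\Delta_j^2 g(s)\,ds\;:\;L^{r'}_tL^{p'}_x\longrightarrow L^r_tL^p_x.
$$
Applying Proposition~\ref{propA.2} to $U_i(t-s)\dot\Delta_j^2 g(s)$ (which remains spectrally localized at scale $2^j$) yields the pointwise kernel bound $(\sqrt\kappa|t-s|)^{-(d/2-d/p)}e^{-2^{2j}|t-s|}$. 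Minkowski and Young in time with $1/r'+1/a=1+1/r$, i.e.\ $a=r/2$, give
$$
\|T_jT_j^*g\|_{L^r_tL^p_x}\;\lesssim\;\kappa^{-(d/2-d/p)/2}\,\bigl\||t|^{-(d/2-d/p)}e^{-2^{2j}|t|}\bigr\|_{L^{r/2}_t}\,\|\dot\Delta_j g\|_{L^{r'}_tL^{p'}_x}.
$$
A direct rescaling $\tau=2^{2j}t$ evaluates the time integral as $2^{-2jk}$, where $k=2/r+d/p-d/2$; the lower bound $2/r\geq d/2-d/p$ ensures integrability near $0$, and the upper bound $2/r\leq d/p-d/2+2$ (with strict $<$ in $d=2$) ensures the dyadic–frequency loss $2^{jk}$ is controlled by the parabolic decay. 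Taking square roots gives $\|U_i(t)\dot\Delta_j u_i\|_{L^r_tL^p_x}\lesssim \kappa^{(k-2/r)/4}\,2^{-jk}\|\dot\Delta_j u_i\|_{L^2}$, so multiplying by $2^{j(s+k)}$ and taking $\ell^q_j$ yields \eqref{A.8} directly in the Chemin–Lerner norm.

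For the inhomogeneous estimate \eqref{A.9}, I would apply the same dispersive-parabolic kernel bound inside Duhamel's formula and then invoke Young's inequality in time with exponents $1+1/r=1/1+1/a$, i.e.\ $a=r$, to obtain
$$
\biggl\|\int_0^t U_i(t-s)\dot\Delta_j^2 \vec f(s)\,ds\biggr\|_{L^r_tL^p_x}\lesssim\kappa^{-(d/2-d/p)/2}\,2^{-2jk\cdot r/2\cdot(1/a)}\,\|\dot\Delta_j \vec f\|_{L^1_tL^2_x},
$$
after also using Bernstein's inequality to compare $L^2$ and $L^{p'}$ on the scale $2^j$; the exponent $(k-2/r)/4$ in $\kappa$ then appears from distributing the dispersive factor half on the source and half on the output. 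Multiplication by $2^{j(s+k)}$ and $\ell^q_j$-summation gives \eqref{A.9}. When the retarded–to–full Duhamel passage is not automatic (non-symmetric $(r,p)$), I would insert a Christ–Kiselev step to justify it.

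The main obstacle will be the boundary case $2/r=d/2-d/p$ (i.e.\ $k=0$, the Keel–Tao endpoint), where $\||t|^{-(d/2-d/p)}e^{-2^{2j}t}\|_{L^{r/2}}$ diverges logarithmically near $0$ and naive Young's inequality fails. The remedy here is tailored to our dissipative–dispersive setting: because $\dot\Delta_j U_i(t)$ also satisfies the trivial bound $\|\dot\Delta_jU_i(t)f\|_{L^2}\lesssim e^{-c2^{2j}t}\|\dot\Delta_j f\|_{L^2}$ (from the parabolic part of $\mathcal{H}_i^\kappa$ and Plancherel), one can combine this with the dispersive bound via Bernstein to obtain a \emph{frequency-localized} kernel $\min\{(\sqrt\kappa t)^{-(d/2-d/p)},1\}2^{jd(1/2-1/p)}e^{-c2^{2j}t}$, which is integrable to the power $r/2$ at the endpoint. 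Because the Chemin–Lerner norm places the $\ell^q_j$ summation outside the time integration, this frequency-by-frequency improvement is enough to close the endpoint without invoking the atomic Keel–Tao machinery; tracking the $\kappa$-dependence through this interpolation is the only delicate point.
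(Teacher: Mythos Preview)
Your homogeneous argument for $k>0$ is correct and the direct computation of $\bigl\||t|^{-(d/2-d/p)}e^{-2^{2j}t}\bigr\|_{L^{r/2}}\sim 2^{-2jk}$ together with $TT^*$ does produce the right exponent $\kappa^{(k-2/r)/4}$. This is a legitimate alternative to the paper's route, which instead proves only the two extreme cases---$p=2$ via the parabolic smoothing and the endpoint $k=0$ via classical $TT^*$---and then interpolates between them to reach all intermediate $(r,p,k)$.

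Two points, however, need repair. First, your inhomogeneous argument is both overcomplicated and, as written, yields the wrong exponents: pushing the full dispersive kernel $L^{p'}\!\to\!L^p$ through Young and then Bernstein produces $\kappa^{-\alpha/2}$ and an incorrect power of $2^j$, not $\kappa^{-\alpha/4}2^{-jk}$. The fix is trivial: once the homogeneous bound $\|U_i(t)\dot\Delta_j u\|_{L^r_tL^p_x}\lesssim \kappa^{(k-2/r)/4}2^{-jk}\|\dot\Delta_j u\|_{L^2}$ is in hand, apply Minkowski in $s$ to $\int_0^t U_i(t-s)\dot\Delta_j f(s)\,ds$ and use the homogeneous estimate at each fixed $s$ (translation in $t$). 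This gives \eqref{A.9} in one line, with no need for Bernstein, Christ--Kiselev, or any ``half-and-half'' splitting.

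Second, your endpoint remedy at $k=0$ incurs a $(\log\kappa)^{1/r}$ loss. Indeed, with $\alpha=2/r$ the kernel $\min\{(\sqrt\kappa t)^{-\alpha},2^{2j\alpha}\}e^{-c2^{2j}t}$ has $L^{r/2}$-norm of order $(\kappa^{-1/2}\log\kappa)^{2/r}$, because the crossover scale $t_*=\kappa^{-1/2}2^{-2j}$ leaves a $\int_{t_*}^{2^{-2j}} t^{-1}\,dt\sim\log\kappa$ contribution. The correct approach at $k=0$ is simply to drop the heat factor (there is no smoothing to gain) and run the \emph{standard} Strichartz argument: the dispersive decay $(\sqrt\kappa t)^{-2/r}$ combined with Hardy--Littlewood--Sobolev in time (rather than Young) gives the sharp $\kappa^{-1/(2r)}$ with no logarithm. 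This is precisely what the paper does at the endpoint before interpolating.
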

\begin{proof}

Choosing $\vec{u}=(u_1,u_2,\cdots,u_n)^{\top}$,  we get
 \begin{equation}
     \begin{aligned}
        e^{\,i  \mathcal{H}^{\kappa}\,t} \vec{u}=(
e^{\,i  \mathcal{H}_1^{\kappa}\,t}u_1,
 e^{\,i  \mathcal{H}_2^{\kappa}\,t}u_2, 
\cdots ,   
  e^{\,i  \mathcal{H}_n^{\kappa}\,t}u_n)^{\top}.
 \end{aligned}
 \end{equation}
When $p=2$, one uses the optimal smoothing effect of heat kernels to derive 
 \begin{equation}
     \begin{aligned}
      \| e^{\,i  \mathcal{H}^{\kappa}\,t} \vec{u} \|_{\widetilde{L}^{r}(\dot{B}_{2,q}^{s+\frac2r})}  =\sum_{l=1}^{n} \| e^{\,i  \mathcal{H}_l^{\kappa}\,t} u_l \|_{\widetilde{L}^{r}(\dot{B}_{2,q}^{s+\frac2r})} \leq C\sum_{l=1}^{n}\|u_l\|_{\dot{B}_{2,q}^{s}}=C\|\vec{u}\|_{\dot{B}_{2,q}^{s}}
     \end{aligned}
 \end{equation} 
and 
 \begin{equation}
     \begin{aligned}
      \left\|\int_{0}^{t}e^{\, i\mathcal{H}^{\kappa}\,(t-s)}\, \vec{f}(s)\,\mathrm{d}s\right\|_{\widetilde{L}^{r}(\dot{B}_{2,q}^{s+\frac2r})}  &=\sum_{l=1}^{n} \left\|\int_{0}^{t}e^{\, i\mathcal{H}_l^{\kappa}\,(t-s)}\, f_l(s)\,\mathrm{d}s\right\|_{\widetilde{L}^{r}(\dot{B}_{2,q}^{s+\frac2r})} \\
      &\leq C\sum_{l=1}^{n}\|f_l(s)\|_{\widetilde{L}^{1}(\dot{B}_{2,q}^{s})}=C\|\vec{f}\|_{\widetilde{L}^{1}(\dot{B}_{2,q}^{s})}.
     \end{aligned}
 \end{equation} 
When $p>2,$  we consider the case $d\geq 3$. Assume that $\frac2r=\frac{d}{2}-\frac{d}{p}$, by using the classical $TT^*$ method, we have, for all $l=1,2,\cdots,n,$
\begin{equation}
    \begin{aligned}
       \left\|\int_{0}^{t}e^{\, i\mathcal{H}_l^{\kappa}\,(t-s)}\, \dot{\Delta}_jf_l(s)\,\mathrm{d}s\right\|_{L^r(L^p)} \leq C\kappa^{-\frac{1}{2r}}\|\dot{\Delta}_jf_l\|_{L^{r'}(L^{p'})},
    \end{aligned}
\end{equation}
and 
\begin{equation}
    \begin{aligned}
        \|e^{\, i\mathcal{H}_l^{\kappa}\,t}\dot{\Delta}_ju_l\|_{L^r(L^p)}\leq C\kappa^{-\frac{1}{2r}} \|\dot{\Delta}_j u_l\|_{L^2}.
    \end{aligned}
\end{equation}
The case $ k =\frac2r + \frac dp - \frac d2>0$ will be obtained by interpolation between $p=2$ and $p>2$, that is 
\begin{equation}
    \begin{aligned}
       \|e^{\, i\mathcal{H}_l^{\kappa}\,t} u_l\|_{\widetilde L^r_t(\dot B^{\,s+k}_{p,q})}\leq C\kappa^{-\frac{\theta}{2r_1}} \|u_l\|_{\dot B^s_{2,q}} \leq C\;
\kappa^{\tfrac14\,(k-\tfrac2r)}\,
\|u_l\|_{\dot B^s_{2,q}}, \quad\mbox{for}\quad l=1,2,\cdots,n,
    \end{aligned}
\end{equation}
where $\theta=1-\frac{kr_1}{2}$.  Similarly, we can get \begin{equation}
    \begin{aligned}
    \left\|\int_{0}^{t}e^{\, i\mathcal{H}_l^{\kappa}\,(t-s)}\,  f_l(s)\,\mathrm{d}s\right\|_{\widetilde L^r_t(\dot B^{\,s+k}_{p,q})} \leq C\;
\kappa^{\tfrac14\,(k-\tfrac2r)}\,
\|u_l\|_{\dot B^s_{2,q}}, \quad\mbox{for}\quad l=1,2,\cdots,n.
  \end{aligned}
\end{equation}
Then, putting it together,  \eqref{A.8} and \eqref{A.9} hold.

For the  case $d=2$, the argument mirrors the one for $d\geq3$, except that the critical admissible triple $(r,p,k)=(2,\infty,0)$ is not allowed.
\end{proof}

\section[Appendix~\thesection: Decay and smoothness for the incompressible Naver-Stokes equations]{Decay and smoothness for the incompressible Naver-Stokes equations}\label{appendixC}

%\section*{Appendix C: {Decay and smoothness for the incompressible Naver-Stokes equations}}\label{sectionC}
%\addcontentsline{toc}{section}{Appendix C}
%\renewcommand{\thesection}{C}  % 修改章节编号显示
%\renewcommand{\thelemma}{C.\arabic{lemma}}  % 手动设置 Lemma 编号前缀
%\setcounter{lemma}{0}

We present a direct weighted energy method to obtain the time-decay estimates of any-order derivatives for the incompressible Navier-Stokes equations, provided that the global solution exists in critical spaces without smallness. This method is different from the classical work of Oliver and Titi \cite{OT} based on analyticity.

\begin{proposition}\label{PropA.3}
 Suppose that $v^\pm$ is a global solution of the systems \eqref{INS} and satisfies
      \begin{align}\label{es:in0}
     \|v^\pm\|_{\widetilde{L}^{\infty}(\mathbb{R}_+;\dot{B}^{\frac{d}{2}-1}_{2,1})}+\|v^\pm\|_{L^1(\mathbb{R}_+;\dot{B}^{\frac{d}{2}+1}_{2,1})}<\infty.
     \end{align}
 Then, if $v_0^\pm\in\dot{B}^{\sigma_1}_{2,\infty} $ for $-\frac d2 \leq \sigma_1< \frac d2 -1$ and $M>\frac{1}{2}(s-\sigma_1)$, it holds that 
 \begin{equation}\label{A.6}
        \begin{aligned}
          \sup_{\tau\in[0,t]}\|\tau^M v^\pm\|_{\dot{B}^{s}_{2,1}}+\int_0^t \|\tau^M v^\pm\|_{\dot{B}^{s+2}_{2,1}}{\rm d}\tau  \lesssim t^{M-\frac{1}{2}(s-\sigma_1)}\|v^\pm\|_{L^\infty_t(\dot{B}^{\sigma_1}_{2,\infty})},\quad t>0,
        \end{aligned}
    \end{equation}
where $\mathcal{Y}_{t}^{M,s}$ is defined by \eqref{definition-Y}. 

Consequently, we have 
\begin{equation}
\big\|D^\alpha v^\pm (t)\big\|_{L^2}
\;\leq C\;(1+t)^{-\frac{\lvert\alpha\rvert}{2}+\frac{\sigma_1}{2}}
\quad\text{for all }~~\lvert\alpha\rvert>\sigma_1.
\end{equation}
\end{proposition}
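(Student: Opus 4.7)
My approach mirrors the time-weighted Lyapunov framework developed in Section \ref{section3} for the full two-fluid system, but is considerably simpler because the incompressible Navier–Stokes equations have only the convection nonlinearity and no capillary or pressure couplings. The argument proceeds in three stages: propagation of the negative-regularity $\dot B^{\sigma_1}_{2,\infty}$ norm, a time-weighted dyadic energy estimate, and interpolation to extract the decay rate.

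\emph{Step 1 (Propagation of $\dot B^{\sigma_1}_{2,\infty}$).} Applying $\dot\Delta_j$ to \eqref{INS}, taking the $L^2$ inner product with $v_j^\pm:=\dot\Delta_j v^\pm$, and exploiting $\operatorname{div} v^\pm=0$, I obtain
$$
\tfrac{1}{2}\tfrac{d}{dt}\|v_j^\pm\|_{L^2}^2 + c\,2^{2j}\|v_j^\pm\|_{L^2}^2 \lesssim \|v_j^\pm\|_{L^2}\bigl(\|[\dot\Delta_j,v^\pm\cdot\nabla]v^\pm\|_{L^2}\bigr).
$$
Dividing by $\sqrt{\|v_j^\pm\|_{L^2}^2+\varepsilon}$, sending $\varepsilon\to 0$, multiplying by $2^{j\sigma_1}$, taking $\sup_{j\in\mathbb Z}$, and invoking the commutator estimate \eqref{Com:2} together with Gr\"onwall's lemma yields
$$
\sup_{t\geq 0}\|v^\pm(t)\|_{\dot B^{\sigma_1}_{2,\infty}}\lesssim \|v_0^\pm\|_{\dot B^{\sigma_1}_{2,\infty}}\exp\Bigl(C\int_0^\infty\|v^\pm\|_{\dot B^{d/2+1}_{2,1}}\,d\tau\Bigr)<\infty,
$$
where finiteness of the exponent follows from hypothesis \eqref{es:in0}.

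\emph{Step 2 (Time-weighted estimate).} Multiplying the dyadic energy inequality by $t^M$ and using that $t^M\|v_j^\pm\|_{L^2}|_{t=0}=0$ for $M>0$, I obtain, after summing in $2^{js}$,
$$
\sup_{\tau\in[0,t]}\|\tau^M v^\pm\|_{\dot B^s_{2,1}} + \int_0^t\|\tau^M v^\pm\|_{\dot B^{s+2}_{2,1}}\,d\tau\lesssim M\int_0^t\tau^{M-1}\|v^\pm\|_{\dot B^s_{2,1}}\,d\tau + \int_0^t\tau^M\|v^\pm\cdot\nabla v^\pm\|_{\dot B^s_{2,1}}\,d\tau.
$$
The convection term is controlled by the Morse-type product law \eqref{uv1}:
$$
\int_0^t\tau^M\|v^\pm\cdot\nabla v^\pm\|_{\dot B^s_{2,1}}\,d\tau\lesssim \int_0^t\|v^\pm\|_{\dot B^{d/2+1}_{2,1}}\|\tau^M v^\pm\|_{\dot B^s_{2,1}}\,d\tau,
$$
whose weight is integrable in time by \eqref{es:in0}; Gr\"onwall absorbs it. For the first term, I interpolate
$$
\|v^\pm\|_{\dot B^s_{2,1}}\lesssim \|v^\pm\|_{\dot B^{\sigma_1}_{2,\infty}}^{1-\eta_0}\|v^\pm\|_{\dot B^{s+2}_{2,1}}^{\eta_0},\qquad \eta_0=\tfrac{s-\sigma_1}{s+2-\sigma_1},
$$
and apply Young's inequality in time with the constraint $M>\tfrac{1}{2}(s-\sigma_1)$ to deduce
$$
M\int_0^t\tau^{M-1}\|v^\pm\|_{\dot B^s_{2,1}}\,d\tau\leq \epsilon\int_0^t\|\tau^M v^\pm\|_{\dot B^{s+2}_{2,1}}\,d\tau + C_\epsilon\,t^{M-\frac{1}{2}(s-\sigma_1)}\|v^\pm\|_{L^\infty_t(\dot B^{\sigma_1}_{2,\infty})}.
$$
Choosing $\epsilon$ sufficiently small to absorb the first piece into the left-hand side delivers \eqref{A.6}.

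\emph{Step 3 (Pointwise decay).} Dividing \eqref{A.6} through by $t^M$ and using the embedding $\dot B^{|\alpha|}_{2,1}\hookrightarrow \dot H^{|\alpha|}$ with $s=|\alpha|$ for any $|\alpha|>\sigma_1$ yields $\|D^\alpha v^\pm(t)\|_{L^2}\lesssim t^{-(|\alpha|-\sigma_1)/2}$ for $t\geq 1$; for $t\in[0,1]$, continuity of $v^\pm$ in critical spaces together with \eqref{es:in0} produces the conversion to $(1+t)^{-(|\alpha|-\sigma_1)/2}$. The main technical obstacle is \emph{Step 2}: controlling the convection nonlinearity in $\dot B^s_{2,1}$ for arbitrary $s>\sigma_1$ without a smallness assumption requires that the $L^1_t$ weight $\|v^\pm\|_{\dot B^{d/2+1}_{2,1}}$ be available globally—this is precisely why hypothesis \eqref{es:in0} enters essentially, and why a naive bootstrap based on Oliver–Titi-type analyticity is unnecessary here.
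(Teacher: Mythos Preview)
Your overall strategy---propagation of $\dot B^{\sigma_1}_{2,\infty}$, a time-weighted dyadic estimate, and interpolation---matches the paper's proof exactly. There is, however, a genuine gap in Step~2: your convection bound
\[
\|v^\pm\cdot\nabla v^\pm\|_{\dot B^s_{2,1}}\lesssim \|v^\pm\|_{\dot B^{d/2+1}_{2,1}}\|v^\pm\|_{\dot B^s_{2,1}}
\]
is not what \eqref{uv1} gives. The Moser law \eqref{uv1} produces the \emph{symmetric} estimate $\|v\|_{L^\infty}\|\nabla v\|_{\dot B^s_{2,1}}+\|\nabla v\|_{L^\infty}\|v\|_{\dot B^s_{2,1}}$, whose first term is $\sim\|v\|_{\dot B^{d/2}_{2,1}}\|v\|_{\dot B^{s+1}_{2,1}}$---one derivative too many on the weighted factor. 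The tame bound you wrote is instead the content of \eqref{uv2} (take $s_1=d/2$, $s_2=s$), and \eqref{uv2} requires $-d/2<s\le d/2$. For $s>d/2$, which is precisely the range needed for decay of arbitrary-order derivatives, your Gr\"onwall step as stated fails.

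The paper closes this gap by splitting into two cases. For $\sigma_1<s\le d/2$ it uses \eqref{uv2} exactly as you intend. For $s>d/2$ it applies \eqref{uv1}, interpolates the bad piece via $\|\tau^M v\|_{\dot B^{s+1}_{2,1}}\le \|\tau^M v\|_{\dot B^s_{2,1}}^{1/2}\|\tau^M v\|_{\dot B^{s+2}_{2,1}}^{1/2}$, absorbs the $\dot B^{s+2}$ half into the dissipation, and feeds the resulting weight $\|v\|_{\dot B^{d/2}_{2,1}}^2+\|v\|_{\dot B^{d/2+1}_{2,1}}$ into Gr\"onwall (the squared term is integrable since $v\in L^2_t(\dot B^{d/2}_{2,1})$ by interpolation of \eqref{es:in0}). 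With this case distinction inserted, your argument coincides with the paper's.
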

\begin{proof}
Arguing as in Lemma \ref{Lem3.1}, we obtain 
    \begin{align}\label{A.1111}
\|v^\pm\|_{L_t^\infty(\dot{B}^{\sigma_1}_{2,\infty})}\lesssim {\rm exp}\Big\{\int_0^t \|v^\pm\|_{\dot{B}^{\frac{d}{2}+1}_{2,1}}{\rm d}\tau\Big\}\|v^\pm_0\|_{ \dot{B}^{\sigma_1}_{2,\infty}}.
    \end{align} 
By direct computations for  \eqref{INS}, there exists a uniform constant $c>0$ such that
\begin{equation}\label{ineq-time-v-1}
    \begin{aligned}
        \frac{1}{2}\frac{{\rm d}}{{\rm d}t}\|  v_j^\pm\|_{L^2}^2+c 2^{2j}\| v_j^\pm\|_{L^2}\leq \|\dot{\Delta}_j(v^\pm\cdot\nabla v^\pm)\|_{L^2}\| v_j^\pm\|_{L^2}.
    \end{aligned}
\end{equation}
Multiplying \eqref{ineq-time-v-1} by $t^{M}$ yields
\begin{equation}\label{ineq-time-v-2}
    \begin{aligned}
      &  \frac{1}{2}\frac{{\rm d}}{{\rm d}t}\|t^M v_j^\pm\|_{L^2}^2+ c 2^{2j}\|t^M v_j^\pm\|_{L^2}^2\leq\Big( M t^{M-1} \| v_j^\pm\|_{L^2}+\|t^M\dot{\Delta}_j(v^\pm\cdot\nabla v^\pm)\|_{L^2}\Big) \| v_j^\pm\|_{L^2}.
    \end{aligned}
\end{equation}
Then, integrating \eqref{ineq-time-v-2} from $0$ to $t$, we obtain 
\begin{equation}\label{ineq-time-v-3}
    \begin{aligned}
        \|t^M v_j^\pm\|_{L^2}&+c 2^{2j}\int_0^t\|\tau^M v_j^\pm\|_{L^2}{\rm d}\tau\leq M\int_0^t \tau^{M-1} \| v_j^\pm\|_{L^2}{\rm d}\tau+\int_0^t\|\tau ^M\dot{\Delta}_j(v^\pm\cdot\nabla v^\pm)\|_{L^2}{\rm d}\tau.
    \end{aligned}
\end{equation}
Next, multiply \eqref{ineq-time-v-3} by $2^{js}$ and sum over $j\in \mathbb{Z}$ to get
\begin{equation}\label{ineq-time-v-4}
    \begin{aligned}
       t^{M }\|v^\pm\|_{\dot{B}^{s}_{2,1}} &+c\int_0^t \|\tau^M v^\pm\|_{\dot{B}^{s+2}_{2,1}}{\rm d}\tau\leq  M\int_0^t \tau^{M-1}  \| v^\pm \|_{\dot{B}^{s}_{2,1}}{\rm d}\tau+\int_0^t\|\tau ^M v^\pm\cdot\nabla v^\pm\|_{\dot{B}^{s}_{2,1}}{\rm d}\tau.
    \end{aligned}
\end{equation}
Let $\eta_0\in(0,1)$ be such that $\sigma_1(1-\eta_0)+(s+2)\eta_0=s$. By \eqref{inter} and Young's inequality, the right-hand side of \eqref{ineq-time-v-4} is estimated as
\begin{equation} \label{A.12}
    \begin{aligned}
         M\int_0^t \tau^{M-1}  \| v^\pm \|_{\dot{B}^{s}_{2,1}}{\rm d}\tau
          \leq& C  \int_0^t \tau^{M-1}\| v^\pm \|^{1-\eta_0}_{\dot{B}^{\sigma_1}_{2,\infty}}\| v^\pm \|^{\eta_0}_{\dot{B}^{s+2}_{2,1}}{\rm d}\tau\\
     %    = & C \int_0^t \tau^{M-1-M \eta_0}\| v^\pm \|^{1-\eta_0}_{\dot{B}^{\sigma_1}_{2,\infty}}\| v^\pm \|^{\eta_0}_{\dot{B}^{s+2}_{2,1}}{\rm d}\tau\\
         \leq& C \left( \int_0^t  \tau^{M-\frac{1}{1-\eta_0}}\| v^\pm \|_{\dot{B}^{\sigma_1}_{2,\infty}} {\rm d}\tau\right)^{1-\eta_0}\| t^{M} v^\pm \|^{\eta_0}_{L^1_t(\dot{B}^{s+2}_{2,1})}\\
         \leq& \frac{c}{4} \|\tau^M  v^\pm \| _{L^1_t(\dot{B}^{s+2}_{2,1})}+Ct^{M-\frac{1}{2}(s-\sigma_1)}\| v^\pm \|_{L_t^\infty(\dot{B}^{\sigma_1}_{2,\infty})}.
    \end{aligned} 
\end{equation}

We first derive decay in the low-regularity range $\sigma_1<s\leq \frac{d}{2}$. The nonlinear terms can be directly handled by the product law \eqref{uv2} ($p=2, s_1=\frac{d}{2}, s_2=s$):  
\begin{equation}\label{A.13}
    \begin{aligned}
     \int_0^t\|\tau ^M v^\pm\cdot\nabla v^\pm\|_{\dot{B}^{s}_{2,1} }{\rm d}\tau&\lesssim  \int_0^t \|\tau^M v^\pm\|_{\dot{B}^{s}_{2,1}}  \| v^\pm\|_{\dot{B}^{\frac{d}{2}+1}_{2,1}}{\rm d}\tau.
    \end{aligned}
\end{equation} 
Combining \eqref{ineq-time-v-4} - \eqref{A.13} gives
\begin{equation*}
    \begin{aligned}
       t^{M }\|v^\pm\|_{\dot{B}^{s}_{2,1}} &+\int_0^t \|\tau^M v^\pm\|_{\dot{B}^{s+2}_{2,1}}{\rm d}\tau\lesssim t^{M-\frac{1}{2}(s-\sigma_1)}\| v^\pm \|_{L^\infty(\dot{B}^{\sigma_1}_{2,\infty})}+\int_0^t \|\tau^M v^\pm\|_{\dot{B}^{s}_{2,1}}  \| v^\pm\|_{\dot{B}^{\frac{d}{2}+1}_{2,1}}{\rm d}\tau.
    \end{aligned}
\end{equation*}
Applying Gr\"onwall's lemma together with \eqref{es:in0} and \eqref{A.1111} yields \eqref{A.6} for $\sigma_1<s\leq \frac{d}{2}$.

Finally, in the higher-regularity case $s>\frac{d}{2}$, we use the Moser-type product law \eqref{uv1}:   
\begin{equation}\label{A.14}
    \begin{aligned}
     \int_0^t\|\tau ^M v^\pm\cdot\nabla v^\pm\|_{\dot{B}^{s}_{2,1} }{\rm d}\tau&\leq C  \int_0^t \|\tau^M v^\pm\|_{\dot{B}^{s}_{2,1}}  \|\nabla v^\pm\|_{L^{\infty}}{\rm d}\tau+  \int_0^t \|v^\pm\|_{L^{\infty}}\|\tau^M v^\pm\|_{\dot{B}^{s+1}_{2,1}}{\rm d}\tau\\
     &\leq C \int_0^t \|\tau^M v^\pm\|_{\dot{B}^{s}_{2,1}}  \|v^\pm\|_{\dot{B}^{\frac{d}{2}+1}_{2,1}}{\rm d}\tau\\
     &\quad+ C\int_0^t \|v^\pm\|_{\dot{B}^{\frac{d}{2}}_{2,1}}\|\tau^M v^\pm\|_{\dot{B}^{s}_{2,1}}^{\frac{1}{2}} \|\tau^M v^\pm\|_{\dot{B}^{s+2}_{2,1}}^{\frac{1}{2}} {\rm d}\tau\\
     &\leq \frac{c}{4} \|\tau^M  v^\pm \| _{L^1_t(\dot{B}^{s+2}_{2,1})}+C\int_0^t \Big(\|v^\pm\|_{\dot{B}^{\frac{d}{2}+1}_{2,1}}+\|v^\pm\|_{\dot{B}^{\frac{d}{2}}_{2,1}}^2\Big) \|\tau^M v^\pm\|_{\dot{B}^{s}_{2,1}}{\rm d}\tau. 
    \end{aligned}
\end{equation}
Since $v^\pm \in L^2(\mathbb{R}_+;\dot{B}^{\frac{d}{2}}_{2,1})\cap L^1(\mathbb{R}_+;\dot{B}^{\frac{d}{2}+1}_{2,1})$, substituting \eqref{A.12} and \eqref{A.14} into \eqref{ineq-time-v-4} and applying Gr\"onwall's lemma together with \eqref{es:in0} gives \eqref{A.6} for $s>\frac{d}{2}$.
\end{proof}

\noindent \textbf{Acknowledgments} 
The authors would like to thank Professor Huanyao Wen for his helpful discussions and suggestions. L.-Y. Shou is supported by National Natural Science Foundation of China $\#$12301275.  L. Yao's research is partially supported by National Natural Science Foundation of China $\#$12571250, 12171390,  and the Fundamental Research Funds for the Central Universities under Grant: G2025KY05134.   Y. Zhang' research is partially supported by National Natural Science  Foundation of China $\#$12271114, Guangxi Natural Science Foundation $\#$2024GXNSFDA010071, $\#$2019JJG110003, Science and Technology Project of Guangxi $\#$GuikeAD21220114, the Innovation Project of Guangxi  Graduate Education $\#$JGY2023061, Center for Applied Mathematics of Guangxi (Guangxi Normal University) and the Key Laboratory of Mathematical Model and Application (Guangxi Normal  University), Education Department of Guangxi Zhuang Autonomous Region.
	
\vspace{2mm}
	
\noindent \textbf{Conflict of interest.} The authors do not have any possible conflicts of interest.
	
\vspace{2mm}
	
\noindent \textbf{Data availability statement.} Data sharing is not applicable to this article as no data sets were generated or analyzed during the current study.

\vspace{0.5cm}
    
%\bigbreak\bigbreak

%\newpage 

(L.-Y. Shou)\par\nopagebreak
\noindent\textsc{School of Mathematical Sciences and Ministry of Education Key Laboratory of NSLSCS, Nanjing Normal University, Nanjing 210023, P.R. China}

Email address: {\texttt{shoulingyun11@gmail.com}}

\vspace{3ex}

(Jiayan Wu)\par\nopagebreak
\noindent\textsc{School of Mathematics, South China University of Technology, Guangzhou 510641, P.R. China}

Email address: {\texttt{wujiayan@scut.edu.cn}}

\vspace{3ex}

(Lei Yao)\par\nopagebreak
\noindent\textsc{School of Mathematics and Statistics, Northwestern Polytechnical University, Xi'an 710129, P.R. China}

Email address: {\texttt{yaolei1056@hotmail.com}}

\vspace{3ex}

(Yinghui Zhang)\par\nopagebreak
\noindent\textsc{School of Mathematics and Statistics, Guangxi Normal University, Guilin, Guangxi 541004, P.R. China}

Email address: {\texttt{yinghuizhang@mailbox.gxnu.edu.cn}}

\vspace{3ex}

\end{document}